\theoremstyle{plain}
\newtheorem{thm}{Theorem}[section]
\newtheorem{prop}[thm]{Proposition}
\newtheorem{thmx}{Theorem}
\newtheorem{lemma}[thm]{Lemma}
\newtheorem{cor}[thm]{Corollary}
\newtheoremstyle{underline}% name
{}        % Space above, empty = `usual value'
{}              % Space below
{}              % Body font
{}    % Indent amount (empty = no indent, \parindent = para indent)
{\large}              % Thm head font
{:}             % Punctuation after thm head
{1mm}         % Space after thm head: \newline = linebreak
{{\underline{\thmname{#1}\thmnumber{ #2}}}}  % Thm head spec
\theoremstyle{underline}
\newtheorem*{claim*}{Claim}
\theoremstyle{definition}
\newtheorem{defi}[thm]{Definition}
\newtheorem{ex}[thm]{Example}
\newtheorem{exs}[thm]{Examples}
\theoremstyle{remark}
\newtheorem{remark}[thm]{Remark}
\newcommand{\RR}{\ensuremath{\mathbb R}}
\newcommand{\cL}{\mathcal{L}}
\newcommand{\cF}{\mathcal{F}}
\newcommand{\cO}{\mathcal{O}}
\newcommand{\pd}[1]{{\partial_{#1}}} %\pd{x}
\definecolor{forest}{rgb}{0,0.5,0}
\begin{document}
	
	\title{Deformations of Lagrangian submanifolds in log-symplectic manifolds}
	
	\author{Stephane Geudens}
	\address{
	\begin{tabular}[t]{@{}l@{}}
		KU Leuven, Department of Mathematics, Celestijnenlaan 200B, BE-3001 Leuven, Belgium\\
		\hspace{-0.5cm}Current address: Max-Planck-Institut f\"{u}r Mathematik, Vivatsgasse 7, 53111 Bonn, Germany 
		\end{tabular}% 
	}
	\email{stephane\_geudens@hotmail.com}

	\author{Marco Zambon}
	\address{KU Leuven, Department of Mathematics, Celestijnenlaan 200B, BE-3001 Leuven, Belgium}
	\email{marco.zambon@kuleuven.be}  
	
	\subjclass[2020]{Primary: 53D12, 53D17, 58H15; Secondary: 16E45.
%	58H15  	Deformations of general structures on manifolds
%53D12  	Lagrangian submanifolds; Maslov index
%53D17 	Poisson manifolds; Poisson groupoids and algebroids
%16E45	Differential graded algebras and applications (associative algebraic aspects)
}	

	\begin{abstract}
		This paper is devoted to deformations of Lagrangian submanifolds contained in the singular locus of a log-symplectic manifold. We prove a normal form result for the log-symplectic structure around such a Lagrangian, which we use to extract algebraic and geometric information about the Lagrangian deformations. We show that the deformation problem is governed by a DGLA, we discuss whether the Lagrangian admits deformations not contained in the singular locus, and we give precise criteria for unobstructedness of first order deformations. We also address equivalences of deformations, showing that the gauge equivalence relation of the DGLA corresponds with the geometric notion of equivalence by Hamiltonian isotopies. We discuss the corresponding moduli space, and we prove a rigidity statement for the more flexible equivalence relation by Poisson isotopies.
	\end{abstract}

	\maketitle
	
\begin{center}
{\it Dedicated to Olga Radko}
\end{center}		
	
	\setcounter{tocdepth}{1} %doesn't display subsections in TOC 
	\tableofcontents
	
	\section*{Introduction}
	Symplectic manifolds are a key concept in modern geometry and physics.
	A fundamental role in symplectic geometry is played by the
	distinguished class of Lagrangian submanifolds, as emphasized in Weinstein's  symplectic creed \cite{weinstein1981}: \emph{Everything is a Lagrangian submanifold}. 
	
	The deformation theory of Lagrangian submanifolds is well-behaved: as a consequence of Weinstein's Lagrangian neighborhood theorem \cite{Weinstein}, deformations of a Lagrangian submanifold $L$ correspond with small closed one-forms on $L$, and the moduli space under equivalence by Hamiltonian isotopies can be identified with the first de Rham cohomology group $H^1(L)$.
	
	Poisson manifolds are intimately related with symplectic geometry.  
	The non-degenerate Poisson manifolds are exactly the symplectic ones. If one relaxes the non-degeneracy condition, replacing it with a transverse vanishing condition, one obtains a larger class of Poisson manifolds, called log-symplectic manifolds:
	they are symplectic outside of their singular locus, which is a codimension-one submanifold.
	Their first appearance occurs in the work of Tsygan-Nest \cite{tsygan-nest}.
	The study of their geometry was initiated by Radko \cite{Radko}, who classified two-dimensional log-symplectic manifolds (nowadays called \emph{Radko surfaces}). Since the systematic study of their geometry in arbitrary dimension by Guillemin-Miranda-Pires \cite{miranda2}, log-symplectic manifolds  have
	attracted lots of attention. One reason for this is that, despite the presence of  singularities, they behave like symplectic manifolds in many respects. For instance, M\u{a}rcu\c{t}-Osorno Torres \cite{defslog} showed that,
	on a   compact manifold $M$,
	the space of  log-symplectic structures $\mathcal{C}^1$-close to a given one   (modulo  $\mathcal{C}^1$-small diffeomorphisms) is smooth and finite dimensional, parametrized by the second $b$-cohomology of $M$.
	
	This work originated from the following question: \emph{in   log-symplectic geometry, is the deformation theory of Lagrangian submanifolds as nicely behaved  as in   symplectic geometry?}
	
	\noindent
	For Lagrangian submanifolds $L$ transverse to the singular locus of the log-symplectic manifold,
	% of a Log-symplectic manifold $(M,\Pi)$, 
	the answer is easily seen to be positive, as shown by Kirchhoff-Lukat \cite{CharlotteThesis}:
	a neighborhood of $L$ is equivalent to the $b$-cotangent bundle of $L$, and the Lagrangian deformations of $L$ (modulo Hamiltonian isotopy) are parametrized by the first $b$-cohomology group of $L$. In particular, the moduli space of Lagrangian deformations is smooth and finite dimensional for compact Lagrangians $L$. 
	
	%Since the transverse case is straightforward, as mentioned above, 
	This paper focuses on the opposite extreme: we assume that the Lagrangian submanifold $L^n$ is \emph{contained in the singular locus $Z$} of an orientable log-symplectic manifold $M^{2n}$. 
	Note that the $b$-calculus developed by Melrose \cite{Melrose}, which is one of the main tools in log-symplectic geometry, does not apply in our setting, due to the complete lack of transversality to $Z$.
	
	%In that case $L$ has an induced codimension-one foliation $\cF_L$.
	The main geometric questions we address are:
	\begin{itemize}
		\item[1)] Can $L\subset Z$ be deformed smoothly to Lagrangian submanifolds not contained in $Z$?
		\item[2)] Can a first order deformation of $L$ be extended to a  smooth  path of Lagrangian deformations?
		\item[3)] Is the moduli space of Lagrangian deformations  --  under the equivalence by Hamiltonian isotopies  -- smooth at $L$?
	\end{itemize}
	
	For ``many'' Lagrangian submanifolds $L$, the answer to 1) is positive, ensuring that the deformation problem we consider does not boil down to the case of symplectic geometry. The answer to 3) is typically negative, in contrast to the symplectic case. 
	The answer to 2) is striking, and   displays a behaviour that comes close to the symplectic case:   first order deformations are generally obstructed, but if an obvious quadratic obstruction vanishes, then they can be extended to a smooth path of deformations.
	
	\vspace{0.1cm}
	{\bf Summary of results.}
	As in many deformation problems in geometry, the
	first step consists in providing a normal form for the log-symplectic structure in a neighborhood of the Lagrangian $L$. Notice that as $L$ is contained in the singular locus, it carries a  codimension-one foliation $\cF_L$.
	Our normal form around $L$ (Cor. \ref{normal}) is constructed in two steps: we combine a normal form statement around Lagrangian submanifolds transverse to the  symplectic leaves of an arbitrary Poisson manifold (Prop. \ref{normalform}) with the normal form around the singular locus $Z$ of a log-symplectic manifold $(M,\Pi)$ due to Guillemin-Miranda-Pires \cite{miranda2}, \cite{Osorno}. Since the latter involves the modular class of $(M,\Pi)$, we also need
	to express the first Poisson cohomology of a neighborhood of $L$ in  the singular locus $Z$ in terms of $L$ alone (Cor. \ref{cor:isocoho}).	
	The modular class  is then encoded by two objects attached to $L$: 
	\begin{itemize}
		\item[a)] A class in $H^{1}(\mathcal{F}_{L})$, the first foliated de Rham cohomology.\\
		We fix a representative  $\gamma\in\Omega_{cl}^{1}(\mathcal{F}_{L})$.
		\item[b)] An element of $\mathfrak{X}(L)^{\mathcal{F}_{L}}/\Gamma(T\mathcal{F}_{L})\cong H^0(\mathcal{F}_{L})$.\\
		We fix a representative  $X\in \mathfrak{X}(L)^{\mathcal{F}_{L}}$, a vector field on $L$ that preserves the foliation and is nowhere tangent to it.
	\end{itemize}
	
	\begin{thmx}\label{thm:a}
		The log-symplectic structure in a tubular neighborhood of $L$  is isomorphic to
		\[
		\big(U\subset T^{*}\mathcal{F}_{L}\times\mathbb{R},\;\;\left(V_{vert}+V_{lift}\right)\wedge t\partial_{t}+\Pi_{can}\big).
		\]
		Here {$U$ is a neighborhood of the zero section $L$}, $\Pi_{can}$ is the canonical Poisson structure on the cotangent bundle $T^{*}\mathcal{F}_{L}$ of the foliation $\mathcal{F}_{L}$, and $t$ denotes the coordinate on $\RR$.
		Further, $V_{vert}$ is the vertical fiberwise constant vector field on $T^{*}\mathcal{F}_{L}$ which  corresponds to $\gamma\in\Gamma(T^{*}\mathcal{F}_{L})$ under the natural identification, and $V_{lift}$ is the cotangent lift of $X$.
	\end{thmx}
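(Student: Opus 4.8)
The plan is to build the normal form in two stages---first transverse to the singular locus $Z$, then along $L$ inside $Z$---and only afterwards to pin down the modular data, which is where the real work lies. This matches the two-step construction of Cor. \ref{normal} advertised in the summary.

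First I would invoke the Guillemin--Miranda--Pires normal form around the singular locus \cite{miranda2, Osorno}: a tubular neighborhood of $Z$ in $(M,\Pi)$ is Poisson-diffeomorphic to $Z\times\mathbb{R}$ carrying a bivector of the shape $W\wedge t\partial_{t}+\Pi_{Z}$, where $t$ is the coordinate on $\mathbb{R}$ (so that $Z=\{t=0\}$), $\Pi_{Z}$ is the Poisson structure induced on $Z$, and $W$ is a Poisson vector field on $Z$ whose class realizes the modular vector field. This isolates the two ingredients $\Pi_{Z}$ and $W$ to be analyzed near $L$.

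Second I would treat $\Pi_{Z}$ via Prop. \ref{normalform}. The point to verify here is that $L$, being Lagrangian in $M$ and contained in $Z$, is transverse to the symplectic foliation of $(Z,\Pi_{Z})$; a dimension count (leaves of dimension $2n-2$ inside $Z^{2n-1}$) shows the induced foliation $\mathcal{F}_{L}=L\cap\{\text{leaves}\}$ has codimension one, as needed. Prop. \ref{normalform} then furnishes a Poisson diffeomorphism from a neighborhood of $L$ in $Z$ onto a neighborhood $U_{0}$ of the zero section in $(T^{*}\mathcal{F}_{L},\Pi_{can})$. Taking the product with $\mathbb{R}$, I would upgrade this to an isomorphism sending $W\wedge t\partial_{t}+\Pi_{Z}$ to $\widetilde{W}\wedge t\partial_{t}+\Pi_{can}$ on $U_{0}\times\mathbb{R}$, where $\widetilde{W}$ is the image of $W$. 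The compatibility to check is that the diffeomorphism can be chosen to respect the projection to $\mathbb{R}$, so that the mixed term retains the factor $t\partial_{t}$ and $\Pi_{can}$ appears untwisted.

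Third, and this is the heart of the matter, I would identify $\widetilde{W}$ with $V_{vert}+V_{lift}$. Since $\widetilde{W}$ is determined only modulo Hamiltonian vector fields (it represents the modular class) and a residual diffeomorphism freedom fixing the zero section remains, the task is to select a preferred representative. Using Cor. \ref{cor:isocoho}, which computes the first Poisson cohomology of a neighborhood of $L$ in $Z$ purely in terms of $L$, the modular class splits into the two pieces recorded in a) and b): the component in $H^{1}(\mathcal{F}_{L})$ represented by the foliated closed one-form $\gamma$, and the component in $\mathfrak{X}(L)^{\mathcal{F}_{L}}/\Gamma(T\mathcal{F}_{L})\cong H^{0}(\mathcal{F}_{L})$ represented by $X$. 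I would then check that the fiberwise-constant vertical field $V_{vert}$ attached to $\gamma$ and the cotangent lift $V_{lift}$ of $X$ are genuine Poisson vector fields for $\Pi_{can}$ (using closedness of $\gamma$ and the fact that $X$ preserves $\mathcal{F}_{L}$, respectively), and that together they represent exactly these two classes. Subtracting the appropriate Hamiltonian vector field and applying a final fiber-preserving isotopy should then bring $\widetilde{W}$ to $V_{vert}+V_{lift}$. The main obstacle is precisely this normalization: one must match both components of the modular class to the geometric representatives $V_{vert}$ and $V_{lift}$ simultaneously, without disturbing the already-fixed identification with $(T^{*}\mathcal{F}_{L},\Pi_{can})$ or the $t$-direction secured in the previous step.
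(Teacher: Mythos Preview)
Your proposal is correct and matches the paper's approach: Cor.~\ref{normal} combines Prop.~\ref{normZ} and Prop.~\ref{normalform} exactly as in your steps~1--2, and the normalization of the modular representative (your step~3) is carried out via Thm.~\ref{thm:modvfL} together with Remark~\ref{rem:freedom} and Cor.~\ref{cor:isocoho}. The only nuance is that your ``final fiber-preserving isotopy'' is not built directly; instead Remark~\ref{rem:freedom} re-invokes Prop.~\ref{normZ} with a different choice of modular vector field to show that replacing $V$ by any cohomologous Poisson vector field yields an isomorphic log-symplectic structure near $L$.
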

	
	The above normal form theorem gives an explicit model in which the Lagrangian deformations of $L$ can be investigated. We can characterize algebraically the Lagrangian deformations of $L$, as follows (Thm. \ref{equations}, Cor. \ref{DGLA}):
	\begin{thmx}\label{thm:b}
		Lagrangian deformations $\mathcal{C}^1$-close to $L$ are exactly the graphs of  sections $(\alpha,f)$ of the vector bundle $T^{*}\mathcal{F}_{L}\times\mathbb{R}\to L$ satisfying the quadratic equation \begin{equation*}
		\begin{cases}
		d_{\mathcal{F}_{L}}\alpha=0\\
		d_{\mathcal{F}_{L}}f+f(\gamma-\pounds_{X}\alpha)=0,
		\end{cases}
		\end{equation*}
		where $d_{\mathcal{F}_{L}}$ denotes the foliated de Rham differential and $\gamma, X$ are as above.
		
		Further, this equation is the Maurer-Cartan equation of a DGLA.
	\end{thmx}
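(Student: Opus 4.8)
The plan is to work entirely inside the explicit model furnished by Theorem~\ref{thm:a}, where the Poisson bivector reads $\Pi = (V_{vert}+V_{lift})\wedge t\partial_t + \Pi_{can}$ on a neighborhood $U$ of the zero section $L$ in $T^{*}\mathcal{F}_{L}\times\mathbb{R}$. A submanifold $\mathcal{C}^1$-close to $L$ is the graph of a section $(\alpha,f)$ of $T^{*}\mathcal{F}_{L}\times\mathbb{R}\to L$, and that every nearby Lagrangian arises in this way is immediate from the tubular neighborhood together with the $\mathcal{C}^1$-closeness hypothesis. The substance is to translate the Lagrangian condition into equations on $(\alpha,f)$. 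Since $L$ has half dimension, I would use the characterization of a Lagrangian as a coisotropic submanifold, i.e.\ $\Pi^{\sharp}(N^{\circ})\subseteq TN$ for the graph $N$; this is a closed condition that agrees with the genuine Lagrangian condition on the dense symplectic locus and therefore captures the correct notion across the singular locus $Z$, where the usual $b$-symplectic definitions are unavailable.

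The core computation is then to evaluate $\Pi^{\sharp}$ on a frame of the conormal bundle $N^{\circ}$ and to impose that the result lie in $TN$. In foliated coordinates $(y_i)$ along the leaves, a transverse coordinate along $X$, fiber coordinates $p_i$ on $T^{*}\mathcal{F}_{L}$ and the coordinate $t$, the conormal of $N$ is spanned by the corrections of $dp_j$ and $dt$ along the graph. Pairing these with the two summands of $\Pi$ separately, the canonical part $\Pi_{can}$ produces the foliated derivatives, contributing $d_{\mathcal{F}_{L}}\alpha$ and $d_{\mathcal{F}_{L}}f$, while the rank-one part $(V_{vert}+V_{lift})\wedge t\partial_t$ produces the zeroth-order terms: $V_{vert}$ contributes $f\gamma$ (after the restriction $t=f$), and the cotangent lift $V_{lift}$ contributes $-f\,\pounds_X\alpha$, precisely because the coadjoint action of $X$ on $T^{*}\mathcal{F}_{L}$ differentiates the section $\alpha$ by $\pounds_X$. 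Reading off $\Pi^{\sharp}(N^{\circ})\subseteq TN$ component by component yields
\[
d_{\mathcal{F}_{L}}\alpha = 0, \qquad d_{\mathcal{F}_{L}}f + f(\gamma-\pounds_X\alpha) = 0.
\]
I expect the main obstacle to lie here, in handling the cotangent-lift term invariantly: its coordinate expression carries fiber-linear corrections, and one must check that these reorganize into the single intrinsic operator $\pounds_X$ acting on the foliated form $\alpha$, with the coefficient $f$ arising from the restriction $t=f$. One must also confirm that the two families of conormal generators give no conditions beyond the displayed pair (the consistency of the $\partial_t$- and $\partial_{p}$-components).

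For the second assertion I would exhibit the DGLA directly. Set $\mathfrak{g}^{k}=\Omega^{k}(\mathcal{F}_{L})\oplus\Omega^{k-1}(\mathcal{F}_{L})$, so that a Maurer--Cartan element $(\alpha,f)\in\mathfrak{g}^{1}=\Omega^{1}(\mathcal{F}_{L})\oplus\Omega^{0}(\mathcal{F}_{L})$ is exactly a section as above. Equip $\mathfrak{g}^{\bullet}$ with the differential $D(\beta,g)=\big(d_{\mathcal{F}_{L}}\beta,\ d_{\mathcal{F}_{L}}g+\gamma\wedge g\big)$, which squares to zero precisely because $\gamma$ is $d_{\mathcal{F}_{L}}$-closed, and with the bracket that vanishes on the first factor and pairs the two factors via $\pounds_X$ into the second factor, $(\beta,g),(\beta',g')\mapsto\big(0,\ g\wedge\pounds_X\beta'\pm g'\wedge\pounds_X\beta\big)$, with the Koszul sign dictated by graded antisymmetry and normalized so that $\tfrac12[(\alpha,f),(\alpha,f)]=(0,-f\,\pounds_X\alpha)$; this is well defined because $X$ preserves $\mathcal{F}_{L}$, so $\pounds_X$ acts on $\Omega^{\bullet}(\mathcal{F}_{L})$. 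A short check then gives $D(\alpha,f)+\tfrac12[(\alpha,f),(\alpha,f)]=\big(d_{\mathcal{F}_{L}}\alpha,\ d_{\mathcal{F}_{L}}f+f\gamma-f\pounds_X\alpha\big)$, so the Maurer--Cartan equation reproduces the system above.

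It remains to verify the DGLA axioms. Because the first slot of every bracket vanishes, $\Omega^{\bullet}(\mathcal{F}_{L})$ is an abelian subalgebra acting on the abelian ideal $\Omega^{\bullet-1}(\mathcal{F}_{L})$; thus graded antisymmetry and the graded Jacobi identity reduce to the facts that $\pounds_X$ is a derivation of the wedge product and that iterated actions commute up to Koszul sign, which is automatic. The point requiring genuine care is the compatibility of $D$ with the bracket, where the twist $\gamma\wedge(\,\cdot\,)$ in the second slot interacts with the $\pounds_X$-terms; here I would invoke the Cartan identities $[\pounds_X,d_{\mathcal{F}_{L}}]=0$ and $\pounds_X=d_{\mathcal{F}_{L}}\iota_X+\iota_X d_{\mathcal{F}_{L}}$ on foliated forms, together with $d_{\mathcal{F}_{L}}\gamma=0$, after which the $\gamma$-twisted and $\pounds_X$-twisted contributions match term by term. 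Conceptually this presents the deformation complex as the abelian extension of the (abelian) foliated de Rham DGLA by its $\gamma$-twisted, $\pounds_X$-acted module, which is what packages both equations as a single Maurer--Cartan equation.
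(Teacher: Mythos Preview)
Your approach is broadly correct and reaches the same endpoint as the paper, but by a different route, with two small issues worth flagging.

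\textbf{Comparison of methods.} For the equations, the paper does not compute $\Pi^{\sharp}$ on a conormal frame as you propose. Instead it pushes $\widetilde{\Pi}$ forward by the fiber translation $\phi^{(-\alpha,-f)}$ and asks when the zero section is coisotropic for the translated bivector; the computation then splits cleanly into a ``$\Pi_{can}$ part'' yielding $(d_{\mathcal{F}_L}\alpha, d_{\mathcal{F}_L}f)$ and a ``$V\wedge t\partial_t$ part'' yielding $(0, f\gamma - f\pounds_X\alpha)$. Your direct conormal computation is equivalent and arguably more elementary, but the translation trick makes the identification of the $V_{lift}$ contribution with $-\pounds_X\alpha$ immediate (it is literally $\phi^{-\alpha}_*V_{lift} - V_{lift}$ restricted to $L$), which is exactly the step you flag as the main obstacle. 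For the DGLA, the paper does not verify the axioms by hand: it shows that $\widetilde{\Pi}$ is fiberwise entire (in fact at most quadratic in fiber coordinates), so the Cattaneo--Felder $L_\infty[1]$-algebra on $\Gamma(\wedge^\bullet(T^*\mathcal{F}_L\times\mathbb{R}))$ has $\lambda_k = 0$ for $k\geq 3$, and then reads off $d$ and $[\![\cdot,\cdot]\!]$ from $\lambda_1,\lambda_2$. This route gets the DGLA axioms for free and, more importantly, imports the general theorem that Maurer--Cartan elements correspond bijectively to coisotropic sections and that gauge equivalence corresponds to Hamiltonian isotopy --- both used later in the paper. Your hand-built DGLA is correct and more self-contained, but you would have to redo those correspondences separately.

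\textbf{Two repairs.} First, your density argument for ``coisotropic $\Leftrightarrow$ Lagrangian'' on graphs is insufficient: when $f\equiv 0$ the graph lies entirely inside the singular locus $Z$, so the symplectic locus is \emph{not} dense in the graph and you cannot pass to the limit. The paper handles this with a short separate dimension count at points of $Z$ (Lemma~\ref{lagcois}): the intersection of the graph's tangent space with the $(2n-2)$-dimensional symplectic leaf is coisotropic by assumption and has dimension at most $n-1$, hence exactly $n-1$, hence Lagrangian. Second, the Cartan identity $\pounds_X = d_{\mathcal{F}_L}\iota_X + \iota_X d_{\mathcal{F}_L}$ you invoke is not available: $X$ is \emph{transverse} to $\mathcal{F}_L$, so $\iota_X$ is not an operator on $\Omega^\bullet(\mathcal{F}_L)$. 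Fortunately you only need $[\pounds_X, d_{\mathcal{F}_L}] = 0$ (which holds because the flow of $X$ preserves $\mathcal{F}_L$) together with the module-derivation property $d_{\mathcal{F}_L}^\gamma(A\wedge B) = d_{\mathcal{F}_L}A\wedge B + (-1)^{|A|}A\wedge d_{\mathcal{F}_L}^\gamma B$; with these, the Leibniz rule for your DGLA goes through term by term as you outline.
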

	The differential graded Lie algebra mentioned above is the one introduced in greater generality by Cattaneo-Felder \cite{CaFeCo2}, and to ensure that it captures the  Lagrangian deformations we need to check that the Poisson structure of Thm. \ref{thm:a} is fiberwise entire.  
	\bigskip
	
	In turn, Thm. \ref{thm:b} has several geometric consequences. Before explaining them, we discuss briefly two of the tools we use. First, when $L$ is compact and connected, the following dichotomy about the foliation $\cF_L$ is well-known  \cite[Theorem 9.3.13]{conlon}:  either it is the foliation associated to a fibration $L\to S^1$, or all leaves are dense. This allows us to prove several statements in the compact case by considering the two cases separately.
	
	Second, the linear part of the above Maurer-Cartan equation  reads
	\begin{equation}\label{eq:introlinMC}
	d_{\mathcal{F}_{L}}\alpha=0,\;\;\;\;\;\;
	d^{\gamma}_{\mathcal{F}_{L}}f=0
	\end{equation}
	where $d^{\gamma}_{\mathcal{F}_{L}}f=d_{\mathcal{F}_{L}}f+f\gamma$ denotes the foliated de Rham differential \emph{twisted} by $\gamma$. The cohomology associated to $d^{\gamma}_{\mathcal{F}_{L}}$
	is the foliated Morse-Novikov cohomology $H^{\bullet}_{\gamma}(\mathcal{F}_{L})$. We will compute it in degree $0$ for  compact $L$ ({see Theorem}  \ref{H}). The ordinary (untwisted)   foliated cohomology  will be denoted by  $H^{\bullet}(\mathcal{F}_{L})$.

	\bigskip
	If the modular vector field can be chosen to be tangent to $L$, i.e. $[\gamma]=0\in H^{1}(\mathcal{F}_{L})$, then it is easy to see that $L$ can be deformed smoothly to Lagrangian submanifolds outside of the singular locus $Z$. At the opposite end of the spectrum we have (Cor. \ref{constrained}, Prop. \ref{denserestr}):
	\begin{thmx} 
		Assume $L$ is compact and connected.
		
		i) Suppose  $\mathcal{F}_{L}$ is the fiber foliation of a fiber bundle $p:L\rightarrow S^{1}$. If  for every leaf $B$ of $\cF_L$ we have $[\gamma|_B]\neq 0\in H^1(B)$,  
		then $\mathcal{C}^{1}$-small deformations of $L$ stay inside $Z$.  
		
		ii)  
		Suppose  $\mathcal{F}_{L}$ has dense leaves, and that $H^{1}(\mathcal{F}_{L})$ is finite dimensional. If $\gamma\in\Omega^{1}_{cl}(\mathcal{F}_{L})$ is not exact, then 
		$\mathcal{C}^{\infty}$-small deformations of $L$ stay inside $Z$.
	\end{thmx}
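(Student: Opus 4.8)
The plan is to reduce the geometric statement ``deformations stay inside $Z$'' to the algebraic statement ``$f\equiv0$''. Under the normal form of Thm.~\ref{thm:a} the singular locus $Z$ is the zero level $\{t=0\}$, and a deformation $\mathcal C^1$-close to $L$ is the graph of a section $(\alpha,f)$ sitting at height $t=f$; hence it is contained in $Z$ precisely when $f\equiv0$. By Thm.~\ref{thm:b} such a section solves
\begin{equation*}
d_{\mathcal{F}_L}\alpha=0,\qquad d_{\mathcal{F}_L}f+f\,(\gamma-\pounds_X\alpha)=0.
\end{equation*}
Writing $\gamma'=\gamma-\pounds_X\alpha$, the second equation becomes $d_{\mathcal{F}_L}f+f\gamma'=0$, and $\gamma'$ is $d_{\mathcal{F}_L}$-closed because $\pounds_X$ commutes with $d_{\mathcal{F}_L}$ (as $X$ preserves $\mathcal F_L$) and $d_{\mathcal{F}_L}\alpha=0$. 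So in both parts I must show that $f\equiv0$ whenever $d_{\mathcal{F}_L}f+f\gamma'=0$ for a \emph{closed} form $\gamma'$ that is a small perturbation of $\gamma$.

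The common tool I would isolate first is a leafwise dichotomy. Restricting the equation to a leaf $B$ gives $d_B(f|_B)+f|_B\,(\gamma'|_B)=0$, a linear first-order equation whose solution along any path $\sigma$ obeys $f(\sigma(s))=f(\sigma(0))\exp\!\big(-\!\int_0^s\gamma'(\dot\sigma)\big)$. Hence on a connected leaf $f|_B$ is either identically zero or nowhere zero; in the latter case, choosing a sign, $\gamma'|_B=-d_B\log|f|_B|$ is exact, i.e.\ $[\gamma'|_B]=0\in H^1(B)$. I expect this step to be routine.

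For part (i) the leaves are the compact fibers $B=p^{-1}(\theta)$. Since $\pounds_X\alpha$ is $d_{\mathcal{F}_L}$-closed with $\|\pounds_X\alpha\|_{\mathcal C^0}$ controlled by $\|\alpha\|_{\mathcal C^1}$, its restriction gives a small class $[(\pounds_X\alpha)|_B]\in H^1(B)$, uniformly in $\theta$ by compactness of $S^1$ (concretely, bounding it by pairing with a fixed finite family of cycles read off from a trivialization). As $\theta\mapsto[\gamma|_{B}]$ is continuous and nowhere zero on the compact circle, $\|[\gamma|_{B}]\|\ge c>0$ uniformly, so for $\|\alpha\|_{\mathcal C^1}$ small enough $[\gamma'|_B]=[\gamma|_B]-[(\pounds_X\alpha)|_B]\neq0$ on every fiber. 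The dichotomy then forces $f|_B\equiv0$ for all $\theta$, i.e.\ $f\equiv0$.

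For part (ii) density of the leaves upgrades the dichotomy to a global statement: if $f|_B\equiv0$ on one leaf then $f\equiv0$ on $L=\overline{B}$ by continuity, while otherwise $f$ is nowhere zero, hence of constant sign on the connected $L$, so $\gamma'=-d_{\mathcal{F}_L}\log|f|$ is globally exact. It therefore suffices to show $\gamma'$ is \emph{not} exact. Here I would write $[\gamma']=[\gamma]-\pounds_X^{*}[\alpha]$ in $H^1(\mathcal F_L)$, with $\pounds_X^{*}$ the induced operator, and use the finite-dimensionality of $H^1(\mathcal{F}_L)$ (compare the degree-$0$ computation of Thm.~\ref{H}) to conclude that $[\alpha]$, hence $\pounds_X^{*}[\alpha]$, is small, so $[\gamma']$ stays close to $[\gamma]\neq0$ and remains nonzero. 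The main obstacle is exactly this continuity step: unlike the compact leaves of part (i), where $H^1(B)$ is ordinary de Rham cohomology detected by integration over cycles, foliated cohomology may fail to be Hausdorff, and it is the finite-dimensionality hypothesis together with working in the $\mathcal C^\infty$-topology (rather than $\mathcal C^1$) that must be leveraged to guarantee that the quotient map from closed foliated $1$-forms to $H^1(\mathcal F_L)$ is continuous, equivalently that the exact forms are closed among the closed ones. Establishing this is the technical heart of part (ii).
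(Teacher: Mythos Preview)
Your approach is correct and essentially matches the paper's. For part~(i), the paper packages the leafwise classes $[\gamma|_B]$ into a section $\sigma_\gamma$ of a vector bundle $\mathcal{H}^1\to S^1$ and proves the continuity you need as Lemma~\ref{continuous}; your dichotomy is precisely the content of Lemma~\ref{cohomology}~ii) applied leaf by leaf, and the conclusion $f\equiv 0$ is packaged as $H^0_{\gamma-\pounds_X\alpha}(\mathcal{F}_L)=0$ via Theorem~\ref{H}~i). For part~(ii), the paper (Prop.~\ref{denserestr}) argues directly at the level of forms---$\gamma-\pounds_X\alpha$ stays in the open set of non-exact forms---rather than passing to cohomology classes as you do, but the two routes are equivalent once the exact forms are known to be closed. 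The technical ingredient you correctly flagged but left open is supplied by the open mapping theorem for Fr\'echet spaces: since $d_{\mathcal{F}_L}:(C^\infty(L),\mathcal{C}^\infty)\to(\Omega^1_{cl}(\mathcal{F}_L),\mathcal{C}^\infty)$ is continuous linear with finite-codimensional range, that range is automatically closed (cf.~\cite[Remark~3.2]{bertelson}).
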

	
	The finite dimensionality assumption in ii) above is necessary: we show this 
	exhibiting an example, in which $L$ is the 2-torus and $\cF_{L}$ a Kronecker foliation for which the slope  $\lambda\in\mathbb{R}\setminus\mathbb{Q}$ is a Liouville number. 
	The proof of these statements relies on some functional analysis and Fourier analysis.
	\bigskip
	
	A first order deformation is a solution of eq. \eqref{eq:introlinMC}, the linear part of the  Maurer-Cartan equation.
	The deformation problem is obstructed in general: there are first order deformations which do not extend to a (formal or smooth) path of Lagrangian deformations. 
	This is detected by the classical Kuranishi criterium: given a first order deformation $(\alpha,f)$, where $\alpha\in \Omega^1(\mathcal{F}_{L})$ and $f\in C^{\infty}(L)$, the class $Kr\big([(\alpha,f)]\big)$ might not vanish.
	This class lives in the {first foliated Morse-Novikov cohomology group $H^{1}_{\gamma}(\mathcal{F}_{L})$}. For a general deformation problem, the  Kuranishi criterium is a necessary -- but not sufficient  -- condition to extend a first order deformation to a formal curve of deformations. In the case at hand however, we have the following striking result
	(Prop. \ref{prop:krunob}, Cor. \ref{cor:Krsimple}):
	
	\begin{thmx} Assume $L$ is compact and connected. The following are equivalent:
		\begin{itemize}
			\item 	 A first order deformation $(\alpha,f)$ of $L$ is \emph{smoothly} unobstructed,
			\item  $Kr\big([(\alpha,f)]\big)=0$,
			\item $\alpha$ extends to a closed one-form on $ L\setminus\mathcal{Z}_{f}$,
			the complement of the zero locus of $f$.
		\end{itemize}
	\end{thmx}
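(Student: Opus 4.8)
The plan is to establish the cycle of implications (1)$\Rightarrow$(2)$\Rightarrow$(3)$\Rightarrow$(1). Two preliminary observations drive everything. First, unwinding the DGLA of Thm.~\ref{thm:b} with degree-one element $\xi=(\alpha,f)$, one has $d\xi=(d_{\mathcal{F}_L}\alpha,\,d^{\gamma}_{\mathcal{F}_L}f)$, while comparison with the Maurer--Cartan equation shows that the only nonzero component of $\tfrac12[\xi,\xi]$ is $-f\,\pounds_X\alpha$. Hence for a first order deformation the primary obstruction, which is the Kuranishi class, is
\[
Kr\big([(\alpha,f)]\big)=\big[f\,\pounds_X\alpha\big]\in H^{1}_{\gamma}(\mathcal{F}_L).
\]
Second, the linearized equation $d^{\gamma}_{\mathcal{F}_L}f=0$ reads $d_{\mathcal{F}_L}f=-f\gamma$ along the leaves; by uniqueness for this leafwise linear equation, $f$ is either identically zero or nowhere zero on each leaf. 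Thus the zero locus $\mathcal{Z}_f$ is $\mathcal{F}_L$-saturated, and on $L\setminus\mathcal{Z}_f$ we may write $\gamma=-d_{\mathcal{F}_L}\log|f|$.

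The implication (1)$\Rightarrow$(2) is the general fact that the Kuranishi class obstructs even the formal extension of a first order deformation, so it vanishes whenever a smooth path of deformations exists. For (2)$\Rightarrow$(3), I would choose $h\in C^{\infty}(L)$ with $d^{\gamma}_{\mathcal{F}_L}h=f\,\pounds_X\alpha$ and set $g:=h/f$ on $L\setminus\mathcal{Z}_f$. The twisted Leibniz rule together with $\gamma=-d_{\mathcal{F}_L}\log|f|$ yields $\pounds_X\alpha=d_{\mathcal{F}_L}g$ there. Using the splitting $TL=T\mathcal{F}_L\oplus\langle X\rangle$, the one-form $\tilde\alpha$ determined by $\tilde\alpha|_{\mathcal{F}_L}=\alpha$ and $\tilde\alpha(X)=g$ then satisfies $d\tilde\alpha=0$: in a foliated chart adapted to $X$ (so that $X=\partial_w$) the two nontrivial components of $d\tilde\alpha$ vanish precisely because $d_{\mathcal{F}_L}\alpha=0$ and $\pounds_X\alpha=d_{\mathcal{F}_L}g$. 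This is the desired closed extension of $\alpha$ over $L\setminus\mathcal{Z}_f$.

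For (3)$\Rightarrow$(1) I would exhibit the deformation explicitly. Given a closed extension $\tilde\alpha$ on $L\setminus\mathcal{Z}_f$, set $g:=\tilde\alpha(X)$, so $\pounds_X\alpha=d_{\mathcal{F}_L}g$, and define
\[
\alpha_s:=s\alpha,\qquad f_s:=s\,f\,e^{sg}.
\]
The equation $d_{\mathcal{F}_L}\alpha_s=0$ is linear, hence holds for all $s$; and a direct computation using $d^{\gamma}_{\mathcal{F}_L}f=0$ and $d_{\mathcal{F}_L}g=\pounds_X\alpha$ shows that $(\alpha_s,f_s)$ solves the Maurer--Cartan equation of Thm.~\ref{thm:b} on $L\setminus\mathcal{Z}_f$, with first order part $(\dot\alpha_0,\dot f_0)=(\alpha,f)$. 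Expanding $f_s$ in powers of $s$ reproduces the order-by-order solution $f_k=f\,g^{k-1}/(k-1)!$, making transparent that once the primary obstruction vanishes all the higher obstructions vanish automatically.

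The one genuinely delicate point, and the step I expect to be the main obstacle, is that $g$ (equivalently the primitive $h=fg$ relating (2) and (3)) must extend \emph{smoothly} across the saturated zero locus $\mathcal{Z}_f$, where $f\to 0$ while $g$ may a priori blow up; once this is achieved, $f_s=s\,f\,e^{sg}$ is visibly a smooth path and we are done. Here I would argue leafwise. On a leaf $B\subset\mathcal{Z}_f$ the defining equation degenerates to $d^{\gamma}_{\mathcal{F}_L}h=0$, so the boundary behaviour of $h$ is governed by $H^{0}_{\gamma|_B}(B)$, controlled by Thm.~\ref{H} and by the fibration/dense-leaf dichotomy. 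In the dense-leaf case saturation forces $\mathcal{Z}_f\in\{\emptyset,L\}$, so $g$ is already globally smooth and nothing is needed. In the fibration case the part of $\mathcal{Z}_f$ near which $f$ is nonzero lies in the open set where $\gamma$ is leafwise exact, and there $H^{0}_{\gamma}(\mathcal{F}_L)$ carries exactly the freedom (by Thm.~\ref{H}) to modify the primitive $h$ by a $d^{\gamma}_{\mathcal{F}_L}$-closed function so that $h$ vanishes to the order of $f$ along $\mathcal{Z}_f$; a Hadamard-type division argument, together with the boundedness of $\gamma=-d_{\mathcal{F}_L}\log|f|$, then makes $g=h/f$ smooth across $\mathcal{Z}_f$. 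This completes (3)$\Rightarrow$(1) and closes the cycle.
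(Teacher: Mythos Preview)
Your cycle (1)$\Rightarrow$(2)$\Rightarrow$(3) is fine and matches the paper's arguments (Lemma~\ref{lem:Kurexact} and Remark~\ref{rem:geominter}~i)). The gap is in (3)$\Rightarrow$(1), precisely at the point you flag: extending $g$ smoothly across $\mathcal{Z}_f$. Your proposed Hadamard-division argument does not work as stated. You want to modify the primitive $h$ of $f\pounds_X\alpha$ by an element of $H^{0}_{\gamma}(\mathcal{F}_L)$ so that $h$ vanishes \emph{to the order of $f$} along $\mathcal{Z}_f$, but by Theorem~\ref{H} the freedom in $H^{0}_{\gamma}(\mathcal{F}_L)$ is only a function on the base $S^1$; this lets you arrange $h=0$ on $\mathcal{Z}_f$ but gives no control on the \emph{order} of vanishing relative to that of $f$. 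If, say, $f$ vanishes quadratically in the base direction while the adjusted $h$ vanishes only linearly, then $g=h/f$ still blows up. The boundedness of $\gamma=-d_{\mathcal{F}_L}\log|f|$ that you invoke is a \emph{leafwise} statement and says nothing about the transversal vanishing order of $f$, which is completely unconstrained by $d^{\gamma}_{\mathcal{F}_L}f=0$.

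The paper avoids division by $f$ entirely. The key device is Lemma~\ref{function}: in the fibration case one constructs directly a \emph{globally smooth} $h\in C^{\infty}(L)$ which is a leafwise primitive of $\pounds_X\alpha$ on every fiber where $[\pounds_X\alpha|_{\text{fiber}}]=0$ in $H^1$ of that fiber (this set contains $L\setminus\mathcal{Z}_f$ by Lemma~\ref{lem:Kurexact}). The construction takes, on each such fiber, the unique primitive normalized to vanish at the point where the fiber meets a fixed transversal loop, and shows this extends smoothly across the remaining fibers because locally it agrees with a smooth primitive on a foliated tube. With this global $h$ in hand, the path $s\mapsto(s\alpha,\,sfe^{sh})$ is manifestly smooth on all of $L$; the Maurer--Cartan equation holds trivially on $\mathcal{Z}_f$ (where $f=0$) and by your own computation on the complement. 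So the missing idea is not an extension or division argument for $g=h/f$, but an \emph{a priori global} construction of the primitive of $\pounds_X\alpha$, and that is exactly what Lemma~\ref{function} provides.
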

	Notice that the third condition  is independent of the data $(X,\gamma)$ encoding the modular vector field. 
	\bigskip
	
	Finally, we address moduli spaces.
	From a geometric point of view, it is natural to identify two $\mathcal{C}^1$-small Lagrangian deformations of $L$ if they are related by a Hamiltonian isotopy of the ambient log-symplectic manifold $(M,\Pi)$. We show that this is exactly the equivalence relation that the  DGLA of Thm. \ref{thm:b} induces on  Maurer-Cartan elements (Prop. \ref{prop:ham}). Thus by  eq. \eqref{eq:introlinMC}, the resulting moduli space $\mathcal{M} ^{Ham}$ has formal tangent space at $[L]$ given by
	$$
	T_{[L]}\mathcal{M} ^{Ham}=H^{1}(\mathcal{F}_{L})\oplus H^{0}_{\gamma}(\mathcal{F}_{L}).$$
	For most choices of $L$, this is an infinite dimensional vector space, while the formal tangent space to $\mathcal{M} ^{Ham}$ at Lagrangians contained in $M\setminus Z$ is  finite dimensional (at least if $L$ is compact). Hence, for most choices of $L$, the moduli space is not smooth at $[L]$. We also exhibit some choices of $L$ at which the moduli space is smooth, see \S\ref{subsubsec:hammod}.
	
	The same phenomenon occurs for the moduli space $\mathcal{M} ^{Poiss}$ obtained replacing Hamiltonian isotopies by Poisson isotopies (Prop. \ref{prop:poisequiv}). 
	When $L$ is compact with dense leaves, we show that  $L$ being infinitesimally rigid under  Poisson isotopies  (i.e. $T_{[L]}\mathcal{M} ^{Poiss}=0$) implies that  $L$ is rigid in the following sense: any sufficiently $\mathcal{C}^{\infty}$-small deformation of $L$ is related to $L$ by a Poisson diffeomorphism isotopic to the identity (Prop. \ref{rigidity}).
	
	\bigskip
	%\noindent
\textbf{Organization of the paper.} 	
In \S\ref{sec:lagr} and \S\ref{sec:Poisvf} we provide the geometric background and prove the normal form given in  Theorem \ref{thm:a}. In
\S\ref{sec:alg} and \S\ref{sec:geom} we address the deformations of Lagrangian submanifolds in log-symplectic manifolds, exhibiting the underlying algebraic structure and drawing several geometric consequences. 
%The short appendix collects background material used in some of the proofs.
We refer to the introductory text of the individual sections for more details.

	\bigskip
	%\noindent
	\textbf{Acknowledgements.}  
	We thank Ioan M\u{a}rcu\c{t} for his valuable input during many useful discussions. In particular, we thank him for directing  us to \cite[Theorem 9.3.13]{conlon} and \cite{Osorno}, and for suggesting the generalization in Remark \ref{generalization} and the proof of Lemma \ref{continuous}. 
	
	We acknowledge partial support by the long term structural funding -- Methusalem grant of the Flemish Government, the FWO under EOS project G0H4518N, and the FWO research project G083118N (Belgium). {S.G. also thanks the Max Planck Institute for Mathematics in Bonn for its hospitality and financial support.}

	\section{Lagrangian submanifolds in Poisson geometry}\label{sec:lagr}
	In this section, we first recall some concepts in Poisson geometry and we introduce the notion of Lagrangian submanifold. Then we prove a normal form for Poisson structures around Lagrangian submanifolds intersecting the symplectic leaves transversely (Prop. \ref{normalform}), which can be seen as an extension of Weinstein's Lagrangian neighborhood theorem from symplectic geometry. 	Our main motivation is the study of Lagrangian submanifolds contained in the singular locus of a log-symplectic manifold. In \S\ref{subsec:logsym}-\S\ref{subsec:lagrlogsym}
 we use the aforementioned result to find local and semilocal normal forms around them (Prop. \ref{coordinates} and Cor. \ref{normal}). 
	
	\subsection{Poisson structures}
	\begin{defi}
		A \emph{Poisson structure} on a manifold $M$ is a bivector field $\Pi\in\Gamma(\wedge^{2}TM)$ satisfying $[\Pi,\Pi]=0$, where $[\cdot,\cdot]$ is the Schouten-Nijenhuis bracket of multivector fields.
	\end{defi}
	
	The Schouten-Nijenhuis bracket on $\Gamma\left(\wedge^{\bullet}TM\right)$ is a natural extension of the Lie bracket of vector fields, which turns $\Gamma(\wedge^{\bullet}TM)[1]$ into a graded Lie algebra \cite[Section 1.8]{Normalforms}. 
	
	The bivector field $\Pi$ induces a bundle map $\Pi^{\sharp}:T^{*}M\rightarrow TM$, given by contraction of $\Pi$ with covectors. The rank of $\Pi$ at a point $p\in M$ is defined to be the rank of the linear map $\Pi^{\sharp}_{p}:T_{p}^{*}M\rightarrow T_{p}M$. A Poisson structure is called regular if its rank is the same at all points. Poisson structures $\Pi\in\Gamma(\wedge^{2}TM)$ of full rank correspond with symplectic structures $\omega\in\Gamma(\wedge^{2}T^{*}M)$ via $\omega\leftrightarrow-\Pi^{-1}$. In general, a Poisson manifold $(M,\Pi)$ comes with an integrable singular distribution $\text{Im}(\Pi^{\sharp})$. Each leaf $\mathcal{O}$ of the associated foliation has an induced symplectic structure, given by $\omega_{\mathcal{O}}=-\left(\Pi|_{\mathcal{O}}\right)^{-1}$.
	
	A map $\Phi:(M,\Pi_{M})\rightarrow (N,\Pi_{N})$ between Poisson manifolds is a Poisson map if $\Pi_{M}$ and $\Pi_{N}$ are $\Phi$-related, i.e. $\left(\wedge^{2}d_{p}\Phi\right)\left(\Pi_{M}\right)_{p}=\left(\Pi_{N}\right)_{\Phi(p)}$ for all $p\in M$. 
	A vector field $X$ on a Poisson manifold $(M,\Pi)$ is called Poisson if its flow consists of Poisson diffeomorphisms, or equivalently, if $\pounds_{X}\Pi=0$. Each function $f\in C^{\infty}(M)$ determines a Poisson vector field $X_{f}:=\Pi^{\sharp}(df)$, called the Hamiltonian vector field of $f$. The characteristic distribution $\text{Im}(\Pi^{\sharp})$ of a Poisson manifold $(M,\Pi)$ is generated by its Hamiltonian vector fields.
	
	Thanks to the graded Jacobi identity of the Schouten-Nijenhuis bracket $[\cdot,\cdot]$, the operator $[\Pi,\cdot]:\Gamma\left(\wedge^{\bullet}TM\right)\rightarrow\Gamma\left(\wedge^{\bullet+1}TM\right)$ squares to zero. The cohomology of the resulting cochain complex $\left(\Gamma\left(\wedge^{\bullet}TM\right),[\Pi,\cdot]\right)$ is the Poisson cohomology of $(M,\Pi)$, which we denote by $H^{\bullet}_{\Pi}(M)$. The cohomology groups in low degrees have geometric interpretations, see for instance \cite[Section 2.1]{Normalforms}. We will only encounter the first cohomology group $H^{1}_{\Pi}(M)$, which is the quotient of the space of Poisson vector fields by the space of Hamiltonian vector fields.
	
	The modular class of $(M,\Pi)$ is a distinguished element in $H^{1}_{\Pi}(M)$ which will play a key role in this paper. It is defined as follows: upon choosing a volume form $\mu\in\Omega^{top}(M)$, there is a unique vector field $V_{mod}^{\mu}\in\mathfrak{X}(M)$ such that for all $f\in C^{\infty}(M)$, one has
	\[
	\pounds_{X_{f}}\mu=V_{mod}^{\mu}(f)\mu.
	\] 
	The vector field $V_{mod}^{\mu}$ is called the modular vector field associated with $\mu$. One can check that $V_{mod}^{\mu}$ is a Poisson vector field, and that choosing a different volume form $\mu'=g\mu$ changes the modular vector field $V_{mod}^{\mu}$ by a Hamiltonian vector field:
	\begin{equation}\label{changevgl}
	V_{mod}^{\mu'}=V_{mod}^{\mu}-X_{\ln|g|}.
	\end{equation}
	So the Poisson cohomology class $[V_{mod}^{\mu}]\in H^{1}_{\Pi}(M)$ is intrinsically defined; it is called the modular class of $(M,\Pi)$. A Poisson manifold is called unimodular if its modular class vanishes. If $M$ is not orientable, one can still define the modular class using densities instead of volume forms. In this paper, we will only work with modular vector fields on orientable manifolds. For more on the modular class, see \cite{modular}.
	
	\vspace{0.2cm}
	
	We also recall some useful notions from contravariant geometry \cite{realisations}. The general idea behind contravariant geometry on Poisson manifolds $(M,\Pi)$ is to replace the tangent bundle $TM$ by the cotangent bundle $T^{*}M$, using the bundle map $\Pi^{\sharp}:T^{*}M\rightarrow TM$.
	
	\begin{defi}\label{spray}
		Given a Poisson manifold $(M,\Pi)$, a Poisson spray $\chi\in\mathfrak{X}(T^{*}M)$ is a vector field on $T^{*}M$ that satisfies the following properties:
		\begin{enumerate}[i)]
			\item $p_{*}\chi(\xi)=\Pi^{\sharp}(\xi)$ for all $\xi\in T^{*}M$,
			\item $m_{t}^{*}\chi=t\chi$ for all $t>0$,
		\end{enumerate}
		where $p:T^{*}M\rightarrow M$ is the projection and $m_{t}:T^{*}M\rightarrow T^{*}M$ is multiplication by $t$. 
	\end{defi}
	
	Property $ii)$ above implies that $\chi$ vanishes on $M$, so that there exists a neighborhood $U\subset T^{*}M$ of $M$ where the flow $\phi_{\chi}$ of $\chi$ is defined up to time $1$. The contravariant exponential map of $\chi$ is defined as
	\[
	\exp_{\chi}:U\subset T^{*}M\rightarrow M:\xi\mapsto p\circ\phi_{\chi}^{1}(\xi). 
	\]
	The properties of the Poisson spray imply that $\exp_{\chi}$ fixes $M$ and that its derivative at points $x\in M$ is given by
	\[
	d_{x}\exp_{\chi}:T_{x}M\oplus T_{x}^{*}M:\rightarrow T_{x}M:(v,\xi)\mapsto v+\Pi_{x}^{\sharp}(\xi).
	\]
	By property $i)$, $\exp_{\chi}$ maps the fiber $U\cap T_x^{*}M$ into the symplectic leaf through $x$.
	Poisson sprays exist for any Poisson manifold $(M,\Pi)$. They proved to be useful in the construction  of symplectic realizations \cite{realisations} and normal forms \cite{poissontransversals}, for instance.

	\subsection{Lagrangian submanifolds of Poisson manifolds}
	\leavevmode
	\vspace{0.1cm}
	
	\subsubsection{\underline{Lagrangian submanifolds}}
	We now introduce Lagrangian submanifolds, which are the main objects of study in this paper. We will use the following definition \cite{vaisman}, \cite{grabowski}.
	\begin{defi}\label{lagrdef}
		A submanifold $L$ of a Poisson manifold $(M,\Pi)$ will be called \emph{Lagrangian} if the following equivalent conditions hold at all points $p\in L$:
		\begin{enumerate}[i)]
			\item $T_{p}L\cap T_{p}\mathcal{O}$ is a Lagrangian subspace of the symplectic vector space $\left(T_{p}\mathcal{O},\left(\omega_{\mathcal{O}}\right)_{p}\right)$.
			\item $\Pi^{\sharp}_{p}\left(T_{p}L^{0}\right)=T_{p}L\cap T_{p}\mathcal{O}$, where $T_{p}L^{0}\subset T_{p}^{*}M$ denotes the annihilator of $T_{p}L$.
		\end{enumerate}
		Here $\left(\mathcal{O},\omega_{\mathcal{O}}\right)$ denotes the symplectic leaf through the point $p$. 
	\end{defi}
	
	In case $(M,\Pi)$ is symplectic, this definition reduces to the usual notion of Lagrangian submanifold in symplectic geometry. Another special case of interest is when the manifold $L$ has clean intersection with the leaves of $(M,\Pi)$; then $L$ is Lagrangian in $M$ exactly when its intersection with each leaf is Lagrangian inside the leaf, in the sense of symplectic geometry. 
	
	Coisotropic submanifolds of a Poisson manifold $(M,\Pi)$ are defined similarly, replacing ``Lagrangian" by ``coisotropic" in i) and replacing equality by the inclusion $\subset$ in ii). While coisotropic submanifolds have received lots of attention, Lagrangian submanifolds only appear rarely in the context of Poisson geometry. In this regard, there seems to be no standard definition for Lagrangian submanifolds $L\subset (M,\Pi)$. Another definition that appears in the literature uses the condition $\Pi^{\sharp}(TL^{0})=TL$ (e.g. \cite{dediego}). Notice that the latter definition is more restrictive than our Definition \ref{lagrdef}, since it imposes that connected components of $L$ are contained in symplectic leaves and are Lagrangian therein.

	\begin{exs}
		\begin{enumerate}[a)]
			\item The symplectic foliation associated with the Lie-Poisson structure on $\mathfrak{so}_{3}^{*}\cong\mathbb{R}^{3}$ consists of concentric spheres of radius $r\geq 0$. So a plane in $\mathfrak{so}_{3}^{*}$ is Lagrangian exactly when it passes through the origin.
			\item Let $(M,\Pi)$ be a regular Poisson manifold of rank $2k$, and let $\Phi:(M,\Pi)\rightarrow (N,0)$ be a proper surjective Poisson submersion of maximal rank, i.e. $\dim N=\dim M-k$. Assuming that the fibers of $\Phi$ are connected, they are Lagrangian tori contained in the symplectic leaves of $(M,\Pi)$ \cite[Theorem 2.6]{liouville}. 
			
			\item {It is well-known that the graph of a Poisson map $\Phi:(M,\Pi_M)\rightarrow(N,\Pi_N)$ is coisotropic in the product $(M\times N,\Pi_M-\Pi_N)$. If $\Phi$ is additionally an immersion\footnote{More generally, if $\Phi$ restricts to an immersion on each leaf of $(M,\Pi_M)$, then its graph is Lagrangian.}, then   its graph is in fact a Lagrangian submanifold of $(M\times N,\Pi_M-\Pi_N)$.}
			
			\item Let $G$ be a Lie group acting on a Poisson manifold $(M,\Pi)$ with equivariant moment map $J:M\rightarrow\mathfrak{g}^{*}$. Assume the action is {locally} free on $J^{-1}(0)$. Then $J^{-1}(0)\subset (M,\Pi)$ is coisotropic and transverse to the symplectic leaves \cite[Lemma 3.8]{GZ}. If the leaves it meets have dimension equal to $2\dim\mathfrak{g}$, then $J^{-1}(0)$ is Lagrangian.
		\end{enumerate}
	\end{exs}

	\subsubsection{\underline{Normal forms}}
	We will  prove a normal form theorem around Lagrangian submanifolds $L\subset (M,\Pi)$ that are transverse to the symplectic leaves, extending Weinstein's Lagrangian neighborhood theorem \cite{Weinstein} from symplectic geometry. This is done in Proposition \ref{normalform} below. The following lemma reduces the problem to Lagrangian submanifolds of regular Poisson manifolds.
	
	\begin{lemma}\label{lem:regPoisnearby}
		Let $(M,\Pi)$ be a Poisson manifold, and $L\subset (M,\Pi)$ a Lagrangian submanifold transverse to the symplectic leaves. Then there exists a neighborhood $U$ of $L$ such that $\Pi|_{U}$ is regular.
	\end{lemma}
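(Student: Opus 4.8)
The plan is to reduce the statement to a local one at each point of $L$ by invoking Weinstein's splitting theorem, and then to use the transversality of $L$ to force the transverse Poisson structure to vanish identically in a neighborhood. The guiding principle is that lower semicontinuity of the rank gives the inequality $\mathrm{rank}(\Pi^{\sharp})\geq 2(\dim M-\dim L)$ near $L$ for free, so the whole content is to rule out a jump \emph{upward}.

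First I would record that the rank is constant along $L$. Fix $p\in L$ and write $2k_{p}:=\mathrm{rank}(\Pi^{\sharp}_{p})=\dim\mathcal{O}_{p}$, which is even since the leaf $\mathcal{O}_{p}$ is symplectic. By condition i) of Definition \ref{lagrdef}, $T_{p}L\cap T_{p}\mathcal{O}_{p}$ is Lagrangian in $T_{p}\mathcal{O}_{p}$, hence has dimension $k_{p}$; by transversality $T_{p}L+T_{p}\mathcal{O}_{p}=T_{p}M$, so
\[
\dim M=\dim L+2k_{p}-k_{p}=\dim L+k_{p}.
\]
Thus $k_{p}=\dim M-\dim L$ is independent of $p$, and the rank equals $2k:=2(\dim M-\dim L)$ at every point of $L$.

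Next I would fix $q\in L$ and apply Weinstein's splitting theorem \cite{Normalforms}: a neighborhood of $q$ in $(M,\Pi)$ is Poisson-isomorphic to $\big(\mathbb{R}^{2k}\times N,\ \Pi_{can}\oplus\Pi_{N}\big)$, where $\Pi_{can}$ is the standard symplectic structure, $\Pi_{N}$ is the transverse Poisson structure vanishing at the base point $0\in N$, and $\dim N=\dim M-2k$. Since $\Pi=\Pi_{can}\oplus\Pi_{N}$, ranks add: at a point with $N$-component $b$ the rank of $\Pi$ equals $2k+\mathrm{rank}\big((\Pi_{N})_{b}\big)$. Identifying $q$ with $(0,0)$, the leaf $\mathcal{O}_{q}$ corresponds to $\mathbb{R}^{2k}\times\{0\}$, so $T_{q}\mathcal{O}_{q}=\mathbb{R}^{2k}\oplus 0$ inside $T_{q}M=\mathbb{R}^{2k}\oplus T_{0}N$. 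Transversality $T_{q}L+T_{q}\mathcal{O}_{q}=T_{q}M$ then says exactly that $d_{q}(\pi_{N}|_{L})\colon T_{q}L\to T_{0}N$ is surjective, where $\pi_{N}$ is the projection to $N$. Hence $\pi_{N}|_{L}$ is a submersion near $q$, and its image contains an open neighborhood of $0$ in $N$.

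Finally I would combine these observations. For $p\in L$ near $q$ the rank of $\Pi_{p}$ is $2k$ by the first step, so the splitting forces $\mathrm{rank}\big((\Pi_{N})_{\pi_{N}(p)}\big)=0$. As $p$ ranges over a neighborhood of $q$ in $L$, the points $\pi_{N}(p)$ fill an open neighborhood of $0$ in $N$, whence $\Pi_{N}$ has rank $0$, i.e. $\Pi_{N}\equiv 0$, on that neighborhood. Therefore $\Pi$ is regular of rank $2k$ on an open neighborhood of $q$ in $M$; letting $q$ vary over $L$ and taking the union of these neighborhoods produces the desired $U$. The main obstacle is precisely this last upper bound on the rank, and the splitting theorem is what turns the pointwise Lagrangian/transversality data into the open vanishing condition $\Pi_{N}\equiv 0$ that controls it.
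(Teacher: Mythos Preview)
Your proof is correct and takes a genuinely different route from the paper. After the common first step (the dimension count showing the rank is $2k=2(\dim M-\dim L)$ along $L$), the paper proceeds globally: it fixes a Poisson spray $\chi$, sets $E:=\Pi^{\sharp}(TL^{0})$, and shows that the contravariant exponential $\exp_{\chi}\colon E^{*}\to M$ is a local diffeomorphism along the zero section. Since $\exp_{\chi}$ maps fibers into symplectic leaves, this produces a tubular neighborhood of $L$ contained in the saturation of $L$, where all leaves have dimension $2k$.

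Your argument is more local and arguably more elementary: Weinstein's splitting theorem plus the observation that transversality makes $\pi_{N}|_{L}$ a submersion forces the transverse Poisson structure $\Pi_{N}$ to vanish on an open set. This avoids the Poisson spray machinery entirely and uses only the standard splitting theorem. The paper's approach, on the other hand, yields a bit more structure for free (a tubular neighborhood identified with $E^{*}$ inside the saturation), though this is not actually reused in the subsequent normal form Proposition~\ref{normalform}, which relies instead on a foliated exponential map. Either approach establishes the lemma cleanly.
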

	\begin{proof}
		The conditions that $L$ be Lagrangian and transverse to the leaves of $(M,\Pi)$ determine the dimension of the leaves that $L$ meets. Indeed, if $p\in L$ and $\mathcal{O}$ is the leaf through $p$, then
		\begin{align*}
		\dim(T_{p}L)+\dim(T_{p}\mathcal{O})&=\dim(T_{p}L+T_{p}\mathcal{O})+\dim(T_{p}L\cap T_{p}\mathcal{O})\\
		&=\dim(T_{p}M)+\frac{1}{2}\dim(T_{p}\mathcal{O}),
		\end{align*}
		so that $\dim(\mathcal{O})=2(\dim(M)-\dim(L))$. It now suffices to show that there is an open neighborhood $U$ of $L$ that is contained in the saturation of $L$ (i.e. the union of the leaves that intersect $L$).
		
		To construct such a neighborhood, fix a Poisson spray $\chi\in\mathfrak{X}(T^{*}M)$. Let $E:=\Pi^{\sharp}(TL^{0})$, which is a vector bundle of rank $\dim(M)-\dim(L)$ because of the transversality requirement. Choosing a complement to $E$ in $TM|_{L}$, we get a direct sum decomposition
		\begin{equation}\label{directsum}
		T^{*}M|_{L}=E^{*}\oplus E^{0}.
		\end{equation}
		We claim that the contravariant exponential map
		\begin{equation*}
		\exp_{\chi}:E^{*}\rightarrow M
		\end{equation*}
		maps a neighborhood $V\subset E^{*}$ of $L$ diffeomorphically onto a neighborhood $U\subset M$ of $L$. By property $i)$ in Definition \ref{spray}, this neighborhood $U$ is then automatically contained in the saturation of $L$. To prove the claim, it suffices to show injectivity of the derivative of $\exp_{\chi}$ along the zero section
		\begin{equation}\label{dexp}
		d_{x}\exp_{\chi}:T_{x}L\oplus E_{x}^{*}\rightarrow T_{x}M:(v,\xi)\mapsto v+\Pi_{x}^{\sharp}(\xi).
		\end{equation}
		To do so, note that if $\Pi_{x}^{\sharp}(\xi)=-v\in T_{x}L$, then $\xi\in\left(\Pi_{x}^{\sharp}\right)^{-1}(T_{x}L)=E_{x}^{0}$. But also $\xi\in E_{x}^{*}$, so that $\xi=0$ because of the direct sum \eqref{directsum}. This then implies that also $v=0$, which proves injectivity of the map \eqref{dexp}. This finishes the proof.
	\end{proof}
	
	So in the following, we may assume that $L$ is Lagrangian in a regular Poisson manifold $(M,\Pi)$. In the next lemma, we put the foliation of $(M,\Pi)$ in normal form around $L$, and we construct the local model for the Poisson structure $\Pi$.

	\begin{lemma}\label{diff}
		Let $(M,\Pi)$ be a regular Poisson manifold with associated symplectic foliation $(\mathcal{F},\omega)$. Let $L\subset (M,\Pi)$ be a Lagrangian submanifold transverse to the leaves of $\mathcal{F}$, and denote by $\mathcal{F}_{L}$ the induced foliation on $L$. We then have the following:
		\begin{enumerate}[a)]
			\item There is a foliated diffeomorphism $\phi$ between a neighborhood of $L$ in $(M,\mathcal{F})$ and a neighborhood of $L$ in $(T^{*}\mathcal{F}_{L},p^{*}\mathcal{F}_{L})$, with $\phi|_{L}=\text{Id}$. Here $T^{*}\mathcal{F}_{L}$ denotes the union of the cotangent bundles of the leaves of $\mathcal{F}_{L}$, and
			$p^{*}\mathcal{F}_{L}$ is the pullback foliation of $\mathcal{F}_{L}$ by the bundle projection $p:T^{*}\mathcal{F}_{L}\rightarrow L$. 
			\item There is a canonical Poisson structure $\Pi_{can}$ on the total space $T^{*}\mathcal{F}_{L}$ which gives rise to the foliation $p^{*}\mathcal{F}_{L}$.
		\end{enumerate}
	\end{lemma}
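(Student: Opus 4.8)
My plan is to handle the two parts in sequence, obtaining the foliated diffeomorphism in a) from the contravariant exponential map already constructed in Lemma \ref{lem:regPoisnearby}, and building $\Pi_{can}$ in b) as the canonical fiberwise-linear Poisson structure on the dual of the foliation Lie algebroid $T\mathcal{F}_{L}$. The deformation-theoretic link between $\phi$ and the actual structure $\Pi$ is deferred to Proposition \ref{normalform}; here I only need existence of the foliated chart and of the model $\Pi_{can}$.

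For part a), the crucial observation is that the bundle $E=\Pi^{\sharp}(TL^{0})$ appearing in Lemma \ref{lem:regPoisnearby} is exactly $T\mathcal{F}_{L}$: by Definition \ref{lagrdef} ii) one has $\Pi^{\sharp}_{p}(T_{p}L^{0})=T_{p}L\cap T_{p}\mathcal{O}$, and by transversality of $L$ to the leaves this intersection is the tangent space to the induced foliation $\mathcal{F}_{L}$. Consequently the complement $E^{*}$ in the splitting $T^{*}M|_{L}=E^{*}\oplus E^{0}$ is canonically identified with $(T\mathcal{F}_{L})^{*}=T^{*}\mathcal{F}_{L}$. Fixing a Poisson spray $\chi$, Lemma \ref{lem:regPoisnearby} already provides a neighborhood $V\subset E^{*}=T^{*}\mathcal{F}_{L}$ on which $\exp_{\chi}\colon V\to U\subset M$ is a diffeomorphism fixing $L$; it remains only to check that it is foliated. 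By property i) in Definition \ref{spray}, $\exp_{\chi}$ maps each cotangent fiber $T_{x}^{*}\mathcal{F}_{L}$ into the symplectic leaf $\mathcal{O}$ through $x$. Since all points of a leaf $B$ of $\mathcal{F}_{L}$ lie in one and the same leaf $\mathcal{O}$ of $\mathcal{F}$, the map sends the leaf $T^{*}\mathcal{F}_{L}|_{B}=T^{*}B$ of $p^{*}\mathcal{F}_{L}$ into $\mathcal{O}$; as $\exp_{\chi}$ is a diffeomorphism and the two leaves have equal dimension $2\dim\mathcal{F}_{L}$, its differential carries $T(p^{*}\mathcal{F}_{L})$ isomorphically onto $T\mathcal{F}$, so plaques go to plaques. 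Setting $\phi:=\exp_{\chi}^{-1}$ gives the desired foliated diffeomorphism with $\phi|_{L}=\mathrm{Id}$.

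For part b), I would view $T^{*}\mathcal{F}_{L}=(T\mathcal{F}_{L})^{*}$ as the dual of the foliation Lie algebroid $A=T\mathcal{F}_{L}$, with anchor the inclusion $T\mathcal{F}_{L}\hookrightarrow TL$ and bracket the Lie bracket of leafwise-tangent vector fields, and take $\Pi_{can}$ to be the canonical fiberwise-linear Poisson structure that the dual of any Lie algebroid carries. Concretely, in foliated coordinates $(x^{i},y^{j})$ on $L$ (the $x^{i}$ along the leaves, the $y^{j}$ transverse) with induced fiber coordinates $p_{i}$ on $T^{*}\mathcal{F}_{L}$, it reads $\Pi_{can}=\sum_{i}\partial_{x^{i}}\wedge\partial_{p_{i}}$. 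I would check that this expression is invariant under changes of foliated charts (whose transitions are of cotangent-lift type and hence preserve the canonical bracket), so that the leafwise structures glue to a globally defined smooth bivector. Its symplectic foliation is then read off directly: the $y^{j}$ are Casimirs and $\Pi_{can}$ has constant rank $2\dim\mathcal{F}_{L}$, so the leaves are the level sets $\{y=\mathrm{const}\}=p^{-1}(B)=T^{*}B$ equipped with the canonical symplectic form $\sum_{i}dx^{i}\wedge dp_{i}$, which are exactly the leaves of $p^{*}\mathcal{F}_{L}$.

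The step requiring the most care is the foliated property in a): one must ensure that $\exp_{\chi}$ respects the foliations \emph{globally} — mapping entire plaques to plaques and varying smoothly in the transverse directions — rather than merely sending individual cotangent fibers into leaves; this is what the differential count above secures. The coordinate-independence verification in b) is routine but should be spelled out, since it is precisely what guarantees that the leafwise canonical symplectic forms assemble into a genuine smooth Poisson bivector on the total space.
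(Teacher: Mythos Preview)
Your argument is correct for the lemma as stated, but it follows a genuinely different path from the paper on both parts. For a), the paper does not reuse the Poisson spray: instead it picks a \emph{Lagrangian} complement $V$ to $T\mathcal{F}_{L}$ inside the symplectic bundle $(T\mathcal{F}|_{L},\omega|_{L})$, identifies $V\cong T^{*}\mathcal{F}_{L}$ via $-\omega^{\flat}$, and then applies a foliated \emph{Riemannian} exponential $\exp_{\mathcal{F}}\colon T\mathcal{F}\to M$ coming from a leafwise metric. For b), the paper obtains $\Pi_{can}$ as the pushforward of $\Pi_{T^{*}L}$ along the restriction $r\colon T^{*}L\to T^{*}\mathcal{F}_{L}$, rather than as the linear Poisson structure on the dual of the Lie algebroid $T\mathcal{F}_{L}$; the two descriptions of course agree in coordinates. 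Your route is arguably more economical, since it recycles the map from Lemma~\ref{lem:regPoisnearby} and the foliatedness drops out of property~i) of the spray. The trade-off is that the paper's construction is tailored so that the resulting $\phi$ automatically satisfies $(\phi_{*}\Pi)|_{L}=\Pi_{can}|_{L}$ (equation~\eqref{agree}), because it is Weinstein's Lagrangian tubular neighborhood run leaf by leaf; this is the starting point for the Moser argument in Proposition~\ref{normalform}. With your $\exp_{\chi}$ and an arbitrary complement $C$ to $E$ in $TM|_{L}$, the cross-term $\omega_{\mathcal{O}}(\Pi^{\sharp}\alpha,\Pi^{\sharp}\beta)=-\Pi(\alpha,\beta)$ for $\alpha,\beta\in E^{*}=C^{0}$ need not vanish, so \eqref{agree} may fail. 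You can recover it by choosing $C$ to contain a Lagrangian complement $V\subset T\mathcal{F}|_{L}$ (then $\Pi^{\sharp}(C^{0})\subset V$ and the term vanishes), but this choice should be made explicit before you reach Proposition~\ref{normalform}. One small wording issue: the identification $E^{*}\cong T^{*}\mathcal{F}_{L}$ is natural \emph{given} the splitting, but not canonical in the absolute sense, since it depends on the choice of $C$.
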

	
	\begin{proof}
		\begin{enumerate}[a)]
			\item By definition, $T\mathcal{F}_{L}$ is a Lagrangian subbundle of the symplectic vector bundle $(T\mathcal{F}|_{L},\omega|_{L})$. Let $V$ be a Lagrangian complement, i.e. $T\mathcal{F}|_{L}=T\mathcal{F}_{L}\oplus V$. The leafwise symplectic form $\omega$ gives an isomorphism of vector bundles
			\begin{equation}\label{1}
			-\omega^{\flat}:V\rightarrow T^{*}\mathcal{F}_{L}.
			\end{equation}
			Next, by choosing a fiber metric $g$ on the vector bundle $T\mathcal{F}$, we obtain a foliated exponential map $\exp_{\mathcal{F}}:U\subset T\mathcal{F}\rightarrow M$ \cite[Example 3.3.9]{CandelConlonI}. For each leaf $\mathcal{O}$ of $\mathcal{F}$, we have that $\exp_{\mathcal{F}}:U\cap T\mathcal{O}\rightarrow\mathcal{O}$ is the usual exponential map of $(\mathcal{O},g|_{T\mathcal{O}})$. Since $V\subset T\mathcal{F}|_{L}$ is a complement to $TL$ in $TM|_{L}$, the map $\exp_{\mathcal{F}}$ gives a local diffeomorphism between neighborhoods of $L$
			\begin{equation}\label{2}
			\exp_{\mathcal{F}}:V\rightarrow M.
			\end{equation}
			Composing \eqref{1} and \eqref{2} now gives a local diffeomorphism that matches the leaves of $\mathcal{F}$ with those of $p^{*}\mathcal{F}_{L}$. Clearly, this map restricts to the identity on $L$.
			\item We claim that the canonical Poisson structure $\Pi_{T^{*}L}$ on $T^{*}L$ pushes forward under the restriction map $r:T^{*}L\rightarrow T^{*}\mathcal{F}_{L}$, and that $\Pi_{can}:=r_{*}\left(\Pi_{T^{*}L}\right)$ satisfies the requirement. This is readily checked in coordinates. Take a foliated chart $(x_{1},\ldots,x_{k},x_{k+1}\ldots,x_{n})$ on $L$ such that plaques of $\mathcal{F}_{L}$ are level sets of $(x_{k+1},\ldots,x_{n})$, and let $(y_{1},\ldots,y_{n})$ be the associated fiber coordinates on $T^{*}L$. Then the restriction map $r:T^{*}L\rightarrow T^{*}\mathcal{F}_{L}$ is just the projection onto the first $n+k$ coordinates, which implies that $\Pi_{T^{*}L}=\sum_{i=1}^{n}\partial_{x_{i}}\wedge\partial_{y_{i}}$ pushes forward to a Poisson structure 
			\[
			r_{*}\left(\Pi_{T^{*}L}\right)=\sum_{i=1}^{k}\partial_{x_{i}}\wedge\partial_{y_{i}}.
			\]
			Clearly, the Poisson manifold $\left(T^{*}\mathcal{F}_{L},\Pi_{can}\right)$ decomposes into symplectic leaves as follows:
			\begin{equation}\label{decomposition}
			(T^{*}\mathcal{F}_{L},\Pi_{can})=\coprod_{\mathcal{O}\in\mathcal{F}_{L}}\left(T^{*}\mathcal{O},\omega_{T^{*}\mathcal{O}}\right),
			\end{equation}
			where $\omega_{T^{*}\mathcal{O}}$ denotes the canonical symplectic form on $T^{*}\mathcal{O}$. This finishes the proof.
		\end{enumerate}
	\end{proof}
	
	We can now show that $(M,\Pi)$ and $\left(T^{*}\mathcal{F}_{L},\Pi_{can}\right)$ are Poisson diffeomorphic near $L$. If $\phi:(M,\mathcal{F})\rightarrow(T^{*}\mathcal{F}_{L},p^{*}\mathcal{F}_{L})$ denotes the diffeomorphism constructed in Lemma \ref{diff} (defined on a neighborhood of $L$), then we have that
	\begin{equation}\label{agree}
	\left(\phi_{*}\Pi\right)|_{L}=\Pi_{can}|_{L}.
	\end{equation}
	This can be checked by direct computation, but instead we refer to the proof of Weinstein's Lagrangian neighborhood theorem in \cite{Weinstein}, as we are just applying Weinstein's construction leaf by leaf. In some detail, we consider the restriction $\phi:\left(\mathcal{S},\omega_{\mathcal{S}}\right)\rightarrow\left(T^{*}(L\cap\mathcal{S}),\omega_{T^{*}(L\cap\mathcal{S})}\right)$ for each leaf $\mathcal{S}\in\mathcal{F}$, and the usual argument of the Lagrangian neighborhood theorem shows that $\phi^{*}\omega_{T^{*}(L\cap\mathcal{S})}$ and $\omega_{\mathcal{S}}$ agree along $L\cap\mathcal{S}$. This immediately implies the equality \eqref{agree}.
	
	Having established \eqref{agree}, we need an appropriate version of Moser's theorem in order to construct a Poisson diffeomorphism between neighborhoods of $L$ in $(M,\Pi)$ and $\left(T^{*}\mathcal{F}_{L},\Pi_{can}\right)$. This in turn requires a foliated version of the relative Poincar\'{e} lemma. Both statements already appeared in the literature; we state them here for the reader's convenience.
	
	\begin{lemma}\cite[Proposition 3.3]{poincare}\label{poin}
		Let $(N,\mathcal{F})$ be a foliated manifold, and let $p:M\rightarrow N$ be a vector bundle over $N$. Denote by $\mathcal{F}':=p^{*}(\mathcal{F})$ the pullback foliation of $\mathcal{F}$. Suppose that $\alpha\in\Gamma\left(\wedge^{k}T^{*}\mathcal{F}'\right)$ is a closed foliated $k$-form whose pullback $i^{*}\alpha$ to $(N,\mathcal{F})$ vanishes. Then there exists a foliated $(k-1)$-form $\beta\in\Gamma\left(\wedge^{k-1}T^{*}\mathcal{F}'\right)$ such that $d_{\mathcal{F}'}\beta=\alpha$ and $\beta|_{N}=0$.
	\end{lemma}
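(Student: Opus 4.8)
The plan is to adapt the classical homotopy-operator proof of the relative Poincaré lemma to the foliated setting, using the fiberwise scaling of the vector bundle $p:M\to N$. For $t>0$ let $m_{t}:M\to M$ denote scalar multiplication by $t$ in the fibers, and let $E\in\mathfrak{X}(M)$ be the associated Euler (radial) vector field, whose flow is $\{m_{e^{s}}\}_{s\in\RR}$. The crucial structural observation is that $E$ is tangent to the fibers of $p$, which are contained in the leaves of $\mathcal{F}'=p^{*}\mathcal{F}$; hence $E\in\Gamma(T\mathcal{F}')$ and contraction $\iota_{E}$ sends foliated forms to foliated forms. Likewise each $m_{t}$ maps fibers linearly to fibers and therefore preserves $\mathcal{F}'$, so that $m_{t}^{*}$ acts on $\Gamma(\wedge^{\bullet}T^{*}\mathcal{F}')$ and commutes with $d_{\mathcal{F}'}$. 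Consequently the foliated Cartan formula $\pounds_{E}=d_{\mathcal{F}'}\iota_{E}+\iota_{E}d_{\mathcal{F}'}$ holds on the foliated de Rham complex.

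First I would define the candidate primitive by the homotopy operator
\[
h\colon \Gamma(\wedge^{k}T^{*}\mathcal{F}')\to \Gamma(\wedge^{k-1}T^{*}\mathcal{F}'),\qquad h\alpha := \int_{0}^{1}\frac{1}{t}\,m_{t}^{*}(\iota_{E}\alpha)\,dt.
\]
Differentiating $m_{t}^{*}\alpha$ in $t$ (after the substitution $t=e^{s}$) and using the foliated Cartan formula gives $\frac{d}{dt}m_{t}^{*}\alpha=\frac{1}{t}m_{t}^{*}(d_{\mathcal{F}'}\iota_{E}+\iota_{E}d_{\mathcal{F}'})\alpha$. Integrating from $0$ to $1$, and using that $m_{1}=\mathrm{Id}$ while $m_{t}\to i\circ p$ as $t\to 0$ (where $i:N\to M$ is the zero section), then yields the foliated homotopy identity
\[
\alpha-(i\circ p)^{*}\alpha = d_{\mathcal{F}'}(h\alpha)+h(d_{\mathcal{F}'}\alpha),
\]
where $d_{\mathcal{F}'}$ is pulled out of the integral using that it commutes with $m_{t}^{*}$. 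For a closed $\alpha$ with $i^{*}\alpha=0$ we have $(i\circ p)^{*}\alpha=p^{*}(i^{*}\alpha)=0$ and $h(d_{\mathcal{F}'}\alpha)=0$, so $\beta:=h\alpha$ satisfies $d_{\mathcal{F}'}\beta=\alpha$.

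It remains to verify $\beta|_{N}=0$, i.e. $i^{*}\beta=0$. Since $m_{t}\circ i=i$ and $i^{*}(\iota_{E}\alpha)=0$ (because $E$ vanishes identically along the zero section), interchanging $i^{*}$ with the integral gives
\[
i^{*}\beta=\int_{0}^{1}\frac{1}{t}(m_{t}\circ i)^{*}(\iota_{E}\alpha)\,dt=\int_{0}^{1}\frac{1}{t}\,i^{*}(\iota_{E}\alpha)\,dt=0,
\]
as desired. The main technical point to handle with care is the convergence of the integral defining $h$ at the lower endpoint $t=0$: the factor $1/t$ is compensated by the fact that $\iota_{E}\alpha$ vanishes along the zero section to first order (as $E$ does), so that $\frac{1}{t}m_{t}^{*}(\iota_{E}\alpha)$ extends smoothly across $t=0$ and $h\alpha$ is a genuinely smooth foliated form. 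This is precisely the standard estimate from the non-foliated relative Poincaré lemma, and it is unaffected by passing to leafwise forms since the whole construction is carried out fiberwise and every operation involved ($\iota_{E}$, $m_{t}^{*}$, $d_{\mathcal{F}'}$) preserves the foliated complex. Working in a foliated chart on $N$ with fiber coordinates $(y_{j})$ and writing $E=\sum_{j}y_{j}\partial_{y_{j}}$, one confirms the smoothness and all the identities above by a direct computation, which also shows that $h$ is globally well defined.
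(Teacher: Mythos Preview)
Your proof is correct; it is the standard homotopy-operator argument for the relative Poincar\'e lemma, carried over verbatim to the foliated complex once one observes that the Euler vector field $E$ lies in $\Gamma(T\mathcal{F}')$ and the scalings $m_t$ preserve $\mathcal{F}'$. The paper itself does not prove this lemma: it is simply quoted from \cite[Proposition 3.3]{poincare}, so there is no ``paper's own proof'' to compare against. Your write-up is essentially the proof one would find in that reference (or recover by adapting the classical argument), including the correct handling of the $t\to 0$ endpoint and the vanishing $\beta|_N=0$ via $E|_N=0$.
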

	
	\begin{lemma}\cite[Lemma 5]{cm}\label{moser}
		Let $(M,\mathcal{F},\omega)$ be a symplectic foliation. Consider a foliated $1$-form $\alpha\in\Omega^{1}(\mathcal{F})$ satisfying $\alpha|_{N}=\left(d_{\mathcal{F}}\alpha\right)|_{N}=0$ for some submanifold $N\subset M$. Then $\omega+d_{\mathcal{F}}\alpha$ is non-degenerate in a neighborhood $U$ of $N$, and the resulting symplectic foliation $\left(U,\mathcal{F}|_{U},\omega|_{U}+\left(d_{\mathcal{F}}\alpha\right)|_{U}\right)$ is isomorphic around $N$ to $(M,\mathcal{F},\omega)$ by a foliated diffeomorphism that is the identity on $N$.
	\end{lemma}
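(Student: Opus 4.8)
The plan is to run Moser's path method in the leafwise (foliated) setting. I would consider the family of foliated $2$-forms $\omega_t := \omega + t\, d_{\mathcal{F}}\alpha$ for $t\in[0,1]$, interpolating between $\omega_0=\omega$ and $\omega_1 = \omega + d_{\mathcal{F}}\alpha$. Each $\omega_t$ is leafwise closed, since $d_{\mathcal{F}}\omega = 0$ (as $(M,\mathcal{F},\omega)$ is a symplectic foliation) and $d_{\mathcal{F}}(d_{\mathcal{F}}\alpha)=0$.

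First I would establish non-degeneracy of $\omega_t$ near $N$, uniformly in $t$. Because $(d_{\mathcal{F}}\alpha)|_N = 0$, we have $\omega_t|_N = \omega|_N$ for every $t$, which is non-degenerate. Non-degeneracy is an open condition, so the set of points where $\omega_t^\flat\colon T\mathcal{F}\to T^*\mathcal{F}$ is an isomorphism is open and contains $N$ for each fixed $t$; by compactness of $[0,1]$ (tube lemma) there is a single neighborhood $U$ of $N$ on which $\omega_t$ is non-degenerate for all $t\in[0,1]$. This already yields the non-degeneracy claim of the statement.

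Next I would solve Moser's equation. For each $t$, non-degeneracy lets me define a foliated vector field $X_t \in \Gamma(T\mathcal{F}|_U)$ by $\iota_{X_t}\omega_t = -\alpha$; explicitly $X_t = -(\omega_t^\flat)^{-1}(\alpha)$, a smooth section of $T\mathcal{F}$ since $\omega_t^\flat$ is a smooth bundle isomorphism on $U$. As $\alpha|_N = 0$, we have $X_t|_N = 0$, so $N$ consists of zeros of $X_t$; shrinking $U$ if necessary, the foliation-preserving flow $\psi_t$ of $X_t$ is defined up to time $1$ on $U$ and fixes $N$ pointwise. Using the foliated Cartan formula and leafwise closedness of $\omega_t$,
\begin{align*}
\frac{d}{dt}\big(\psi_t^*\omega_t\big)
&= \psi_t^*\big(\pounds_{X_t}\omega_t + d_{\mathcal{F}}\alpha\big)
= \psi_t^*\big(d_{\mathcal{F}}(\iota_{X_t}\omega_t) + d_{\mathcal{F}}\alpha\big) \\
&= \psi_t^*\big(d_{\mathcal{F}}(-\alpha) + d_{\mathcal{F}}\alpha\big) = 0.
\end{align*}
Hence $\psi_1^*\omega_1 = \omega_0$, and $\psi_1$ is the desired foliated diffeomorphism restricting to the identity on $N$.

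The main obstacle to watch is that the entire construction stays in the foliated category: one must check that $X_t$ is a genuine smooth section of $T\mathcal{F}$ (not merely a leafwise-defined object), so that its flow is a smooth diffeomorphism of $U$ preserving $\mathcal{F}$, and that the foliated Cartan calculus $\pounds_{X_t}=d_{\mathcal{F}}\iota_{X_t}+\iota_{X_t}d_{\mathcal{F}}$ on $\Gamma(\wedge^\bullet T^*\mathcal{F})$ applies verbatim for $X_t$ tangent to $\mathcal{F}$. Both hold because $\omega_t^\flat$ is a smooth bundle map $T\mathcal{F}\to T^*\mathcal{F}$ and $X_t$ is tangent to the leaves; the remaining uniform-in-$t$ existence of the flow up to time $1$ follows from $X_t|_N=0$ together with the compactness argument above.
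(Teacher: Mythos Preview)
Your argument is correct and is precisely the standard foliated Moser trick. Note that the paper does not actually prove this lemma: it is quoted verbatim from \cite[Lemma~5]{cm} and stated ``for the reader's convenience'' without proof, so there is no in-paper argument to compare against. Your proof is the expected one and matches what appears in the cited reference.
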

	
	Altogether, we obtain the following normal form around Lagrangian submanifolds transverse to the symplectic leaves of a Poisson manifold.
	
	\begin{prop}[Local model around Lagrangians transverse to symplectic leaves]\label{normalform}
		Given a Poisson manifold $(M,\Pi)$, let $L\subset (M,\Pi)$ be a Lagrangian submanifold transverse to the symplectic leaves. Denote by $\mathcal{F}_{L}$ the induced foliation on $L$. Then a neighborhood of $L$ in $(M,\Pi)$ is Poisson diffeomorphic with a neighborhood of $L$ in $\left(T^{*}\mathcal{F}_{L},\Pi_{can}\right)$, through a diffeomorphism that restricts to the identity on $L$. 
	\end{prop}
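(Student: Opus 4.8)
The plan is to reduce to the regular case and then match the two Poisson structures by a foliated Moser argument. First I would invoke Lemma \ref{lem:regPoisnearby} to replace $M$ by a neighborhood of $L$ on which $\Pi$ is regular; thus we may assume $(M,\Pi)$ is a regular Poisson manifold with symplectic foliation $(\mathcal{F},\omega)$ and $L$ transverse to $\mathcal{F}$. Next I would apply the foliated diffeomorphism $\phi$ of Lemma \ref{diff}(a), which fixes $L$ and carries $\mathcal{F}$ to $p^{*}\mathcal{F}_{L}$. Pushing $\Pi$ forward along $\phi$ produces a regular Poisson structure $\phi_{*}\Pi$ on a neighborhood of $L$ in $T^{*}\mathcal{F}_{L}$ whose symplectic foliation is exactly $p^{*}\mathcal{F}_{L}$, the same underlying foliation as that of $\Pi_{can}$. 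It therefore remains to produce a Poisson diffeomorphism, fixing $L$, between $\phi_{*}\Pi$ and $\Pi_{can}$.

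Since both $\phi_{*}\Pi$ and $\Pi_{can}$ are regular with symplectic foliation $p^{*}\mathcal{F}_{L}$, they are equivalent data to leafwise symplectic forms $\omega_{0},\omega_{1}\in\Omega^{2}(p^{*}\mathcal{F}_{L})$ (obtained leaf by leaf via $\omega_{\mathcal{O}}=-(\Pi|_{\mathcal{O}})^{-1}$), and a foliated diffeomorphism intertwining $\omega_{0}$ and $\omega_{1}$ is precisely a Poisson diffeomorphism between the two structures. By \eqref{agree} we have $\omega_{0}|_{L}=\omega_{1}|_{L}$, so the foliated $2$-form $\omega_{1}-\omega_{0}$ is $d_{\mathcal{F}}$-closed (each $\omega_{i}$ is leafwise closed) and pulls back to zero under the inclusion $i\colon L\hookrightarrow T^{*}\mathcal{F}_{L}$. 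The foliated relative Poincar\'e lemma (Lemma \ref{poin}), applied to the vector bundle $p\colon T^{*}\mathcal{F}_{L}\to L$, then yields a foliated $1$-form $\alpha\in\Omega^{1}(p^{*}\mathcal{F}_{L})$ with $d_{\mathcal{F}}\alpha=\omega_{1}-\omega_{0}$ and $\alpha|_{L}=0$.

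Now I would run the foliated Moser argument of Lemma \ref{moser} with $N=L$. The $1$-form $\alpha$ satisfies $\alpha|_{L}=0$ and $(d_{\mathcal{F}}\alpha)|_{L}=(\omega_{1}-\omega_{0})|_{L}=0$, so the lemma provides a foliated diffeomorphism $\psi$, equal to the identity on $L$ and defined near $L$, intertwining $\omega_{0}$ and $\omega_{0}+d_{\mathcal{F}}\alpha=\omega_{1}$. Interpreted back in terms of bivectors, $\psi$ is a Poisson diffeomorphism relating $\phi_{*}\Pi$ and $\Pi_{can}$ and fixing $L$. Composing $\phi$ with $\psi$ (after possibly replacing $\psi$ by $\psi^{-1}$ to get the directions of the pullbacks to agree) yields the desired Poisson diffeomorphism from a neighborhood of $L$ in $(M,\Pi)$ to a neighborhood of $L$ in $\left(T^{*}\mathcal{F}_{L},\Pi_{can}\right)$ restricting to the identity on $L$.

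The only place requiring genuine care is the dictionary between regular Poisson structures and leafwise symplectic forms, together with checking that the two imported lemmas apply verbatim: one must confirm that $\phi_{*}\Pi$ is genuinely regular with foliation $p^{*}\mathcal{F}_{L}$ (so that $\omega_{0}$ is a bona fide leafwise symplectic form), and that the Moser isotopy stays within a neighborhood of $L$ on which all interpolating forms $\omega_{0}+t\,d_{\mathcal{F}}\alpha$ remain nondegenerate. Both points are handled by shrinking neighborhoods and are already built into Lemmas \ref{poin} and \ref{moser}, so no difficulty arises beyond the leaf-by-leaf Weinstein--Moser bookkeeping, which is exactly the content that \eqref{agree} was established to supply.
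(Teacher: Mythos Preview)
Your proposal is correct and follows essentially the same approach as the paper's proof: reduce to the regular case via Lemma~\ref{lem:regPoisnearby}, use the foliated diffeomorphism of Lemma~\ref{diff} together with \eqref{agree} to match the leafwise symplectic forms along $L$, then apply the foliated relative Poincar\'e lemma (Lemma~\ref{poin}) and the foliated Moser lemma (Lemma~\ref{moser}) to produce the Poisson diffeomorphism fixing $L$. The only differences are cosmetic (your $\omega_0,\omega_1$ are the paper's $\tilde\omega,\omega$, and you are a bit more explicit about the regular-Poisson/leafwise-symplectic dictionary and the direction of $\psi$).
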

	\begin{proof}
		By Lemma \ref{lem:regPoisnearby}, we can assume that $(M,\Pi)$ is regular, with underlying foliation $\mathcal{F}$. By Lemma \ref{diff} and \eqref{agree}, there exists a foliated diffeomorphism between neighborhoods of $L$, $\phi:U\subset(M,\mathcal{F})\rightarrow V\subset(T^{*}\mathcal{F}_{L},p^{*}\mathcal{F}_{L})$, satisfying
		\[
		\left(\phi_{*}\Pi\right)|_{L}=\Pi_{can}|_{L}\hspace{0.5cm}\text{and}\hspace{0.5cm}\phi|_{L}=\text{Id}.
		\]
		Denote by $\omega,\tilde{\omega}\in\Omega^{2}(p^{*}\mathcal{F}_{L}|_{V})$ the leafwise symplectic forms on $V\subset T^{*}\mathcal{F}_{L}$ corresponding with the Poisson structures $\Pi_{can}$ and $\phi_{*}\Pi$, respectively. Since $\tilde{\omega}-\omega$ is closed and the restriction $(\tilde{\omega}-\omega)|_{L}$ vanishes, we can apply Lemma \ref{poin}: shrinking $V$ if necessary, we obtain that $\tilde{\omega}-\omega=d_{p^{*}\mathcal{F}_{L}}\beta$ for some $\beta\in\Omega^{1}(p^{*}\mathcal{F}_{L}|_{V})$ satisfying $\beta|_{L}=0$.  Lemma \ref{moser} gives an isomorphism of symplectic foliations $\psi:\left(V,p^{*}\mathcal{F}_{L}|_{V},\tilde{\omega}|_{V}\right)\rightarrow\left(\psi(V),p^{*}\mathcal{F}_{L}|_{\psi(V)},\omega|_{\psi(V)}\right)$ such that $\psi|_{L}=\text{Id}$, again shrinking $V$ if necessary. The map $\psi\circ\phi:\left(U,\Pi|_{U}\right)\rightarrow\left({\psi(V)},\Pi_{can}|_{{\psi(V)}}\right)$ now satisfies the criteria.
	\end{proof}

	\begin{remark}
		One can also obtain Proposition \ref{normalform} by applying some more general results that appeared in \cite{CZ}. There one shows the following:
		\begin{itemize}
			\item \cite[Theorem 8.1]{CZ} Let $(M,D)$ be a smooth Dirac manifold. If $D\cap TM$ has constant rank, then $(M,D)$ can be embedded coisotropically into a Poisson manifold $(P,\Pi)$. Explicitly, denote $E:=D\cap TM$ and define $P$ to be the total space of the vector bundle $\pi:E^{*}\rightarrow M$. Choosing a complement to $E$ inside $TM$ gives an embedding $i:E^{*}\hookrightarrow T^{*}M$. Then the Dirac structure $e^{i^{*}\omega_{T^{*}M}}\left(\pi^{*}D\right)$, obtained by pulling back $D$ along $\pi$ and applying the gauge transformation by $i^{*}\omega_{T^{*}M}$, defines a Poisson structure $\Pi$ on a neighborhood of $M$ in $E^{*}$. It has the desired properties: $M\subset(P,\Pi)$ is coisotropic and the Dirac structure $D_{\Pi}$ pulls back to $D$ on $M$.
			\item \cite[Proposition 9.4]{CZ} Suppose we are given a Dirac manifold $(M,D)$ for which $D\cap TM$ has constant rank $k$, and let $(P_{1},\Pi_{1})$ and $(P_{2},\Pi_{2})$ be Poisson manifolds of dimension $\dim(M)+k$ in which $(M,D)$ embeds coisotropically. Assume moreover that the presymplectic leaves of $(M,D)$ have constant dimension. Then $\left(P_{1},\Pi_{1}\right)$ and ${\left(P_{2},\Pi_{2}\right)}$ are Poisson diffeomorphic around $M$.
		\end{itemize}
		In our situation, we have a Lagrangian submanifold $i:L\hookrightarrow(M,\Pi)$ transverse to the symplectic leaves of $(M,\Pi)$, so the pullback $i^{*}{D_{\Pi}}$ is a smooth Dirac structure on $L$. Moreover, $i^{*}{D_{\Pi}}\cap TL$ has constant rank since it is given by $\Pi^{\sharp}(TL^{0})=T\mathcal{F}_{L}$. The procedure in described in the first bullet point above then yields exactly the local model $\left(T^{*}\mathcal{F}_{L},\Pi_{can}\right)$. 
		
		Now $(L,i^{*}{D_{\Pi}})$ is embedded coisotropically in $(M,\Pi)$ and in $\left(T^{*}\mathcal{F}_{L},\Pi_{can}\right)$, both of which have dimension equal to $\dim(L)+rk(T\mathcal{F}_{L})$. The presymplectic leaves of $(L,i^{*}{D_{\Pi}})$ have constant dimension, since they are just the leaves of $\mathcal{F}_{L}$. Applying the second bullet point above then shows that $(M,\Pi)$ and $\left(T^{*}\mathcal{F}_{L},\Pi_{can}\right)$ are Poisson diffeomorphic around $L$.
	\end{remark}
	
	%%% FIGURE %%%
	\begin{figure}[h]
		\begin{center}
			\begin{tikzpicture}[scale=1]
			\draw[thick][fill=gray!15!white] (-5.5,-1) -- (-4.5,1) -- (2.5,1) --   (1.5,-1) -- cycle; %L
			\node[below,right] at (2,0) {$L$};
			
			\draw[purple,thick] (0,-1) -- (1,1);%first fiber
			\node[below,left] at (0.75,0.5) {\textcolor{purple}{$\cF_L$}};
			\draw (0,-2.5) -- (0,1.5)--(1,3.5) -- (1,-0.5) --cycle;
			\node[below,right] at (1.1,3) {$T^*\cF_L$};
			
			\draw[purple,thick] (-2,-1) -- (-1,1);
			\node[below,left] at (-2+0.75,0.5) {\textcolor{purple}{$\cF_L$}};
			\draw[-] (-2,-2.5) -- (-2,1.5)--(-1,3.5) -- (-1,-0.5) --cycle;
			
			\draw[purple,thick] (-4,-1) -- (-3,1);
			\node[below,left] at (-4+0.75,0.5) {\textcolor{purple}{$\cF_L$}};
			\draw[-] (-4,-2.5) -- (-4,1.5)--(-3,3.5) -- (-3,-0.5) --cycle;   
			\end{tikzpicture}
		\end{center}
		\caption{The foliation $\cF_L$ and vector bundle $T^*\cF_L$.}
	\end{figure}
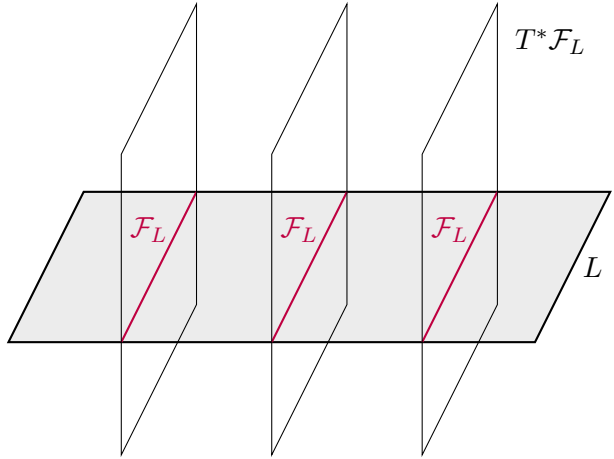
	%%%% END FIGURE %%%
	
	Proposition \ref{normalform} implies that $\mathcal{C}^{1}$-small deformations of a Lagrangian $L\subset (M,\Pi)$ transverse to the leaves correspond with Lagrangian sections of $\left(T^{*}\mathcal{F}_{L},\Pi_{can}\right)$. Thanks to the decomposition \eqref{decomposition}, these can be studied using well-known results from symplectic geometry about Lagrangian sections in cotangent bundles. We obtain that deformations of $L\subset (M,\Pi)$ are classified by the first foliated cohomology group $H^{1}(\mathcal{F}_{L})$.
	
	\begin{cor}\label{moduliZ}
		Given a Poisson manifold $(M,\Pi)$, let $L\subset (M,\Pi)$ be a Lagrangian submanifold transverse to the symplectic leaves. Denote by $\mathcal{F}_{L}$ the induced foliation on $L$. 
		\begin{itemize}
			\item The graph of $\alpha\in\Gamma\left(T^{*}\mathcal{F}_{L}\right)$ is Lagrangian in $\left(T^{*}\mathcal{F}_{L},\Pi_{can}\right)$ exactly when $d_{\mathcal{F}_{L}}\alpha=0$.
			\item The graphs of closed foliated one-forms $\alpha,\beta\in\Gamma\left(T^{*}\mathcal{F}_{L}\right)$ are related by a Hamiltonian diffeomorphism exactly when $[\alpha]=[\beta]$ in $H^{1}(\mathcal{F}_{L})$.
		\end{itemize} 
	\end{cor}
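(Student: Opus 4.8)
The plan is to prove both bullets entirely inside the model $\left(T^{*}\mathcal{F}_{L},\Pi_{can}\right)$, reducing each claim to a classical fact about Lagrangian sections of cotangent bundles applied \emph{leafwise} through the decomposition \eqref{decomposition}, and then to upgrade the resulting leafwise assertions to statements in the foliated de Rham complex $\left(\Omega^{\bullet}(\mathcal{F}_{L}),d_{\mathcal{F}_{L}}\right)$. The step I expect to require the most care is precisely this upgrade: leafwise closedness or exactness must be promoted to honest statements in $H^{1}(\mathcal{F}_{L})$, which demands primitives that are smooth in the transverse directions, not merely defined leaf by leaf.

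For the first bullet, I would first observe that the graph of a section $\alpha\in\Gamma(T^{*}\mathcal{F}_{L})$ has clean intersection with each symplectic leaf $T^{*}\mathcal{O}$ of $\left(T^{*}\mathcal{F}_{L},\Pi_{can}\right)$, the intersection being the graph of the restriction $\alpha|_{\mathcal{O}}\in\Omega^{1}(\mathcal{O})$; cleanness is checked pointwise using that a tangent vector to $\mathrm{graph}(\alpha)$ is determined by its projection to $L$, so that $T(\mathrm{graph}\,\alpha)\cap T(T^{*}\mathcal{O})$ is exactly the tangent space of $\mathrm{graph}(\alpha|_{\mathcal{O}})$. By the remark following Definition \ref{lagrdef}, $\mathrm{graph}(\alpha)$ is then Lagrangian in $\left(T^{*}\mathcal{F}_{L},\Pi_{can}\right)$ if and only if $\mathrm{graph}(\alpha|_{\mathcal{O}})$ is Lagrangian in $(T^{*}\mathcal{O},\omega_{T^{*}\mathcal{O}})$ for every leaf $\mathcal{O}$. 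The classical identity $\alpha|_{\mathcal{O}}^{*}\,\omega_{T^{*}\mathcal{O}}=-d(\alpha|_{\mathcal{O}})$ shows this holds precisely when each $\alpha|_{\mathcal{O}}$ is closed, which is the leafwise reformulation of $d_{\mathcal{F}_{L}}\alpha=0$.

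For the second bullet, the easy implication is the converse one. If $[\alpha]=[\beta]$ in $H^{1}(\mathcal{F}_{L})$, I would write $\beta-\alpha=d_{\mathcal{F}_{L}}h$ with $h\in C^{\infty}(L)$ and consider the Hamiltonian function $p^{*}h$ on $T^{*}\mathcal{F}_{L}$. A coordinate computation with $\Pi_{can}=\sum_{i=1}^{k}\partial_{x_{i}}\wedge\partial_{y_{i}}$ gives $X_{p^{*}h}=\pm\sum_{i=1}^{k}(\partial_{x_{i}}h)\,\partial_{y_{i}}$, a fiberwise constant vertical vector field whose time-one flow is the fiber translation by $d_{\mathcal{F}_{L}}h$. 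Being complete, this flow is a genuine Hamiltonian diffeomorphism of $\left(T^{*}\mathcal{F}_{L},\Pi_{can}\right)$, and (after absorbing the sign into the choice of $h$) it carries $\mathrm{graph}(\alpha)$ onto $\mathrm{graph}(\alpha+d_{\mathcal{F}_{L}}h)=\mathrm{graph}(\beta)$.

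For the forward implication I would suppose $\Phi=\phi_{1}$, the time-one map of the flow of $X_{H_{t}}=\Pi_{can}^{\sharp}(dH_{t})$, sends $\mathrm{graph}(\alpha)$ to $\mathrm{graph}(\beta)$. Since Hamiltonian vector fields are tangent to the symplectic leaves, each $\phi_{t}$ preserves $p^{*}\mathcal{F}_{L}$ and commutes with $d_{\mathcal{F}_{L}}$. Let $\vartheta\in\Omega^{1}(p^{*}\mathcal{F}_{L})$ be the tautological foliated one-form, characterized by $\alpha^{*}\vartheta=\alpha$ for every section $\alpha$ and satisfying $d_{\mathcal{F}_{L}}\vartheta=-\omega_{can}$ leafwise. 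The Cartan calculation $\tfrac{d}{dt}\phi_{t}^{*}\vartheta=\phi_{t}^{*}\,d_{\mathcal{F}_{L}}\big(\vartheta(X_{H_{t}})\pm H_{t}\big)$ integrates to $\phi_{1}^{*}\vartheta-\vartheta=d_{\mathcal{F}_{L}}G$ for a \emph{smooth} $G\in C^{\infty}(T^{*}\mathcal{F}_{L})$. Pulling back along the section $\alpha$ and using $\phi_{1}\circ\alpha=\beta\circ\psi$, where $\psi:=p\circ\phi_{1}\circ\alpha\colon L\to L$, yields $\psi^{*}\beta-\alpha=d_{\mathcal{F}_{L}}(G\circ\alpha)$, so $[\psi^{*}\beta]=[\alpha]$ in $H^{1}(\mathcal{F}_{L})$. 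Finally, $\psi_{t}:=p\circ\phi_{t}\circ\alpha$ is a smooth family of leaf-preserving maps with $\psi_{0}=\mathrm{id}_{L}$ and $\psi_{1}=\psi$, and each track $t\mapsto\psi_{t}(x)$ remains in the leaf through $x$; this is exactly the leaf-preserving homotopy condition needed for homotopy invariance of foliated cohomology, giving $\psi^{*}=\mathrm{id}$ on $H^{1}(\mathcal{F}_{L})$ and hence $[\alpha]=[\beta]$. The two delicate points to get right here are the genuine smoothness of the primitive $G$ (rather than a merely leafwise primitive) and the application of homotopy invariance in its foliated, leaf-preserving form.
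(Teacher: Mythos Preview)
Your approach is the one the paper has in mind: the paper does not give a written proof of this corollary, but simply notes that the decomposition \eqref{decomposition} reduces everything to ``well-known results from symplectic geometry about Lagrangian sections in cotangent bundles'' applied leafwise. Your first bullet and the converse implication of the second bullet are exactly this leafwise reduction, carried out explicitly.

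Where you go beyond the paper is in the forward implication of the second bullet. The paper says nothing here, and a naive leafwise argument would only produce, for each leaf $\mathcal{O}$, a primitive $g_{\mathcal{O}}\in C^{\infty}(\mathcal{O})$ of $(\beta-\alpha)|_{\mathcal{O}}$, with no control on how the $g_{\mathcal{O}}$ vary transversally; that is not enough to conclude $[\alpha]=[\beta]$ in $H^{1}(\mathcal{F}_{L})$. Your use of the tautological foliated one-form and the integrated Cartan formula correctly manufactures a \emph{globally smooth} primitive $G\circ\alpha$ of $\psi^{*}\beta-\alpha$, and your foliated homotopy argument (tracks of $\psi_{t}$ stay inside leaves, hence the velocity is tangent to $\mathcal{F}_{L}$, hence $\psi^{*}=\mathrm{id}$ on $H^{1}(\mathcal{F}_{L})$) closes the remaining gap. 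One small point worth making explicit in a final write-up: the intermediate maps $\psi_{t}=p\circ\phi_{t}\circ\alpha$ need not be diffeomorphisms, but the chain-homotopy operator built from the leafwise velocity $\tfrac{d}{dt}\psi_{t}$ is still well-defined on $\Omega^{\bullet}(\mathcal{F}_{L})$ because each $\psi_{t}$ maps every leaf into itself, so $(d\psi_{t})(T\mathcal{F}_{L})\subset T\mathcal{F}_{L}$ and pullback of foliated forms makes sense.
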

	
	\subsection{Log-symplectic structures}\label{subsec:logsym}
	\leavevmode
	\vspace{0.1cm}
	
	The rest of this paper is devoted to a specific class of Poisson structures, called log-symplectic structures. These are generically symplectic, except at some singularities where the bivector drops rank in a controlled way.
	
	\begin{defi}
		A Poisson structure $\Pi$ on a manifold $M^{2n}$ is called \emph{log-symplectic} if $\wedge^{n}\Pi$ is transverse to the zero section of the line bundle $\wedge^{2n}TM$.
	\end{defi}
	
	A log-symplectic structure $\Pi$ is symplectic everywhere, except at points lying in the set $Z:=\left(\wedge^{n}\Pi\right)^{-1}(0)$, called the singular locus of $(M,\Pi)$. If $Z$ is nonempty, then it is a smooth hypersurface by the transversality condition. In that case, $Z$ is a Poisson submanifold of $(M,\Pi)$ with an induced Poisson structure that is regular of corank-one.
	
	The geometry of the singular locus $(Z,\Pi|_{Z})$ has some nice features. The foliation of $\Pi|_{Z}$ is unimodular, i.e. defined by a closed one-form $\theta\in\Omega^{1}(Z)$, and the leafwise symplectic form extends to a closed two-form $\omega\in\Omega^{2}(Z)$. The pair $(\theta,\omega)$ defines a cosymplectic structure on $Z$. The existence of such a pair is equivalent with the existence of a Poisson vector field on $Z$ that is transverse to the leaves of $\Pi|_{Z}$ \cite{miranda1}. One can obtain such a vector field by restricting a modular vector field on $(M,\Pi)$ to $Z$ \cite{miranda2}.
	
	\begin{ex}\label{ex:R2nlog}
		The standard example of a log-symplectic manifold is $\mathbb{R}^{2n}$ with coordinates $(x_{1},y_{1},\ldots,x_{n},y_{n})$ and Poisson structure $\Pi=\partial_{x_{1}}\wedge y_{1}\partial_{y_{1}}+\sum_{i=2}^{n}\partial_{x_{i}}\wedge\partial_{y_{i}}$. It follows from Weinstein's splitting theorem that any log-symplectic structure looks like this near a point in its singular locus. In this example, the vector field $\partial_{x_{1}}$ is the modular vector field corresponding with the volume form $dx_{1}\wedge dy_{1}\wedge\cdots\wedge dx_n\wedge dy_n$. It is indeed transverse to the symplectic leaves of $Z=\{y_{1}=0\}$, which are the level sets of $x_{1}$.
	\end{ex}
	
	The importance of modular vector fields is apparent in the following normal form result, which describes the log-symplectic structure in a neighborhood of its singular locus \cite{miranda2}, \cite[Prop. 4.1.2]{Osorno}.
	
	\begin{prop}[Local form around singular locus]\label{normZ}
		Let $\Pi$ be a log-symplectic structure on an orientable manifold $M$, with singular locus $(Z,\Pi|_{Z})$. Let $V_{mod}\in\mathfrak{X}(M)$ be a modular vector field on $M$. Then there is a tubular neighborhood $U\subset Z\times\mathbb{R}$ of $Z$, in which $Z$ corresponds to $t=0$, such that
		\[
		\Pi|_{U}=V_{mod}|_{Z}\wedge t\partial_{t}+\Pi|_{Z}.
		\]
	\end{prop}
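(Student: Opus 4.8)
The plan is to treat this as a Moser-type normal form theorem, working on the dual $b$-symplectic side rather than directly with the bivector. Since $\Pi$ is log-symplectic with singular locus $Z$, dualizing turns $\Pi$ near $Z$ into a $b$-symplectic form, i.e. a closed, nondegenerate two-form $\omega_{\Pi}$ on the $b$-cotangent bundle ${}^{b}T^{*}M$ attached to the hypersurface $Z$ (via $\omega_{\Pi}=-\Pi^{-1}$ on ${}^{b}TM$). The target model $W\wedge t\partial_{t}+\Pi|_{Z}$, with $W:=V_{mod}|_{Z}$, likewise dualizes to an explicit model $b$-form $\omega_{0}$. It therefore suffices to produce a diffeomorphism of a neighborhood of $Z$ inside a collar $Z\times\RR$ carrying $\omega_{\Pi}$ to $\omega_{0}$ and fixing $Z$; dualizing back then yields the Poisson statement. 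Because we want agreement on a whole tubular neighborhood of $Z$ rather than a pointwise $b$-Darboux chart, Moser's path method is the natural tool.

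First I would write down the model and fix the collar. As recalled before the statement, $\Pi|_{Z}$ carries a cosymplectic pair $(\theta,\omega_{Z})$, with $\theta$ the closed defining one-form of the foliation and $\omega_{Z}$ a closed extension of the leafwise symplectic form, and $W=V_{mod}|_{Z}$ is the associated transverse (Reeb) vector field, normalized by $\iota_{W}\theta=1$, $\iota_{W}\omega_{Z}=0$. The model Poisson structure $W\wedge t\partial_{t}+\Pi|_{Z}$ is then dual to the closed $b$-form
\[
\omega_{0}=\frac{dt}{t}\wedge\theta+\omega_{Z},
\]
whose top $b$-power is a nonzero multiple of $\frac{dt}{t}\wedge\theta\wedge\omega_{Z}^{\,n-1}$, so $\omega_{0}$ is $b$-nondegenerate; a direct Schouten-bracket computation confirms $W\wedge t\partial_{t}+\Pi|_{Z}$ is Poisson, using that $W$ is a Poisson vector field for $\Pi|_{Z}$ and that $t\partial_{t}$ and $W$ act trivially on $\Pi|_{Z}$. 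I would then pick a tubular neighborhood embedding $Z\times\RR\hookrightarrow M$ together with a defining function $t$ adapted to $V_{mod}$, chosen so that the residue (transverse) data of $\omega_{\Pi}$ along $Z$ matches that of $\omega_{0}$; concretely, after an initial diffeomorphism this arranges that $\omega_{\Pi}$ and $\omega_{0}$ agree along $Z$.

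With the collar fixed, $\omega_{\Pi}-\omega_{0}$ is a closed $b$-two-form whose pullback to $Z$ vanishes, hence is $b$-exact relative to $Z$ by the relative $b$-Poincar\'{e} lemma: $\omega_{\Pi}-\omega_{0}=d_{b}\sigma$ with $\sigma|_{Z}=0$. I would then run the Moser path $\omega_{s}=\omega_{0}+s\,d_{b}\sigma$, which remains $b$-nondegenerate near $Z$ after shrinking, and solve $\iota_{X_{s}}\omega_{s}=-\sigma$ for a time-dependent $b$-vector field $X_{s}$. Since every $b$-vector field is tangent to $Z$ and $\sigma|_{Z}=0$ forces $X_{s}$ to vanish along $Z$, its flow is defined up to time $1$ on a neighborhood of $Z$ and fixes $Z$ pointwise; the resulting diffeomorphism $\phi_{1}$ satisfies $\phi_{1}^{*}\omega_{\Pi}=\omega_{0}$. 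Dualizing back produces the Poisson diffeomorphism onto the model $W\wedge t\partial_{t}+\Pi|_{Z}$ on a neighborhood of $Z$ in $Z\times\RR$, as claimed.

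The technical heart, and the step I expect to be most delicate, is the bookkeeping around the modular vector field: verifying that $V_{mod}|_{Z}$ is precisely the Reeb field of the induced cosymplectic structure, and that the defining function $t$ can be chosen so that the transverse jet of $\omega_{\Pi}$ along $Z$ is exactly the one prescribed by $\omega_{0}$ (equivalently, that the residue one-form of $\omega_{\Pi}$ can be straightened to $\theta$). This is where the hypothesis on $V_{mod}$ is used essentially; the $b$-Moser step and the relative $b$-Poincar\'{e} lemma are by now standard and contribute only routine analysis. Alternatively, the entire argument can be run directly on bivectors through a Poisson version of Moser's path, at the cost of heavier Schouten-bracket computations.
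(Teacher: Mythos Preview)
The paper does not supply its own proof of this proposition; it is quoted as a known result from \cite{miranda2} and \cite[Prop.~4.1.2]{Osorno}. Your proposal is essentially the argument given in those references: pass to the dual $b$-symplectic form, write down the model $\omega_{0}=\theta\wedge\frac{dt}{t}+\omega_{Z}$ (the paper itself records this formula later, in the proof of Lemma~\ref{poiscoh}), match the two forms along $Z$, and apply a relative $b$-Poincar\'e lemma followed by a $b$-Moser argument. So your outline is correct and aligned with the cited literature.

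One point worth tightening is the step you flag as ``delicate'': arranging that $\omega_{\Pi}$ and $\omega_{0}$ agree along $Z$. In the $b$-setting, agreement along $Z$ means more than equality of pullbacks to $Z$; one needs the residue one-forms to coincide. This is exactly where the choice of defining function $t$ enters, and it is tied to the volume form $\mu$ defining $V_{mod}$: changing $t$ to $gt$ shifts the residue by $d(\log|g|)|_{Z}$, which is the $b$-symplectic counterpart of the formula \eqref{changevgl} for the change in modular vector field. So the correct normalization of $t$ is dictated by $\mu$, and once that is fixed the residue of $\omega_{\Pi}$ is precisely $\theta$ (the one-form dual to $W=V_{mod}|_{Z}$). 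With that in place, your relative $b$-Poincar\'e and Moser steps go through as you describe.
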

	
	Log-symplectic structures can alternatively be viewed as symplectic forms on a suitable Lie algebroid. To any $b$-manifold $(M,Z)$ consisting of a manifold $M$ and a hypersurface $Z\subset M$, one can associate a Lie algebroid ${}^{b}TM$ whose sections are the vector fields on $M$ that are tangent to $Z$. Lie algebroid $2$-forms $\omega\in\Gamma\left(\wedge^{2}\left(^{b}T^{*}M\right)\right)$ that are closed and non-degenerate are called $b$-symplectic forms. Having a log-symplectic structure $\Pi$ on $M$ with singular locus $Z$ is equivalent to having a $b$-symplectic form on $(M,Z)$ \cite{miranda2}. This point of view allows one to study log-symplectic structures using symplectic techniques.
	
	\subsection{Lagrangian submanifolds of log-symplectic manifolds}\label{subsec:lagrlogsym}
	\leavevmode
	\vspace{0.1cm}
	
	We now focus on Lagrangian submanifolds $L$ of log-symplectic manifolds $(M,Z,\Pi)$. Lagrangians transverse to the degeneracy locus $Z$ can be treated using the $b$-geometry point of view, which reduces their study to symplectic geometry. Indeed, the submanifold $L$ is naturally a $b$-manifold $(L,L\cap Z)$ and the condition that $L$ be Lagrangian (in the sense of Def. \ref{lagrdef}) is equivalent with the requirements 
	\[
	\begin{cases}
	{}^{b}i^{*}\omega=0\\
	\dim(L)=\frac{1}{2}\dim(M)
	\end{cases},
	\]
	where $\omega$ is the $b$-symplectic form defined by $\Pi$ and $i:(L,L\cap Z)\hookrightarrow (M,Z)$ is the inclusion. In \cite{CharlotteThesis}, one shows that a neighborhood of $L$ in $(M,\omega)$ is $b$-symplectomorphic with a neighborhood of $L$ in its $b$-cotangent bundle ${}^{b}T^{*}L$, endowed with the canonical $b$-symplectic form. As a consequence, the moduli space of Lagrangian deformations of $L$ under Hamiltonian equivalence can be identified with the first $b$-cohomology group ${}^{b}H^{1}(L)$. All of this is in complete analogy with what happens in symplectic geometry.

	\vspace{0.2cm}
	We will consider Lagrangians at the other extreme, i.e. those that are contained in the singular locus of a log-symplectic manifold $(M^{2n},Z,\Pi)$. If $L$ is such a Lagrangian and $\mathcal{O}^{2n-2}$ is the leaf through $p\in L$, then we have
	\[
	\dim(T_{p}L)=\dim(T_{p}L+T_{p}\mathcal{O})-n+1,
	\]
	where $2n-2\leq\dim(T_{p}L+T_{p}\mathcal{O})\leq 2n-1$. So either $\dim(L)=n-1$ and connected components of $L$ lie inside symplectic leaves, or $\dim(L)=n$ and $L$ is transverse to the leaves in $Z$. In the rest of this note, we will deal with Lagrangians of the second kind: 
	\begin{center}\emph{middle dimensional Lagrangian submanifolds contained in the singular locus.}\end{center}
	
	\begin{remark}
		More generally, instead of middle dimensional Lagrangian submanifolds, one could consider middle dimensional coisotropic submanifolds $C\subset(M,Z,\Pi)$.
		Although these two notions coincide for submanifolds transverse to the degeneracy locus $Z$, they are not equivalent in general -- in particular, they are not equivalent in the setup we consider.

		An example of middle dimensional coisotropic $C$ contained in $Z$ which is not  Lagrangian, is the following.
		Take $M=\RR^4$ and $\Pi=\partial_{x_1}\wedge y_1\partial_{y_1}+\partial_{x_2}\wedge\partial_{y_2}$, take $C$ given by the constraints $x_1-y_2^3=0$ and  $y_1=0$. It is coisotropic because the Poisson bracket of these constraints is $y_1$, thus again a constraint. It is not Lagrangian because $T_pC=T_p\mathcal{O}$
		at points $p$ of $C$ where $y_2$ vanishes, where $\mathcal{O}$ denotes the (2-dimensional) symplectic leaf through $p$.
	\end{remark}

	\begin{ex}\label{ex}
		In the local model $\left(\mathbb{R}^{2n},x_{1},y_{1},\ldots,x_{n},y_{n}\right)$ with its standard log-symplectic structure $\Pi=\partial_{x_{1}}\wedge y_{1}\partial_{y_{1}}+\sum_{i=2}^{n}\partial_{x_{i}}\wedge\partial_{y_{i}}$, the submanifold $L=\{y_{1}=\cdots=y_{n}=0\}$ is Lagrangian of middle dimension, contained in the singular locus.
	\end{ex}
	
	Example \ref{ex} is in fact the local model for any Lagrangian $L^{n}\subset Z\subset (M^{2n},\Pi)$.

	\begin{prop}[Local form around a point]\label{coordinates}
		Let $(M^{2n},Z,\Pi)$ be a log-symplectic manifold and let $L^{n}\subset Z$ be a Lagrangian submanifold. Around any point $p\in L$, there exist coordinates $(x_{1},y_{1},\ldots,x_{n},y_{n})$ such that
		\[
		\begin{cases}
		Z=\{y_{1}=0\}\\
		\Pi=\partial_{x_{1}}\wedge y_{1}\partial_{y_{1}}+\sum_{i=2}^{n}\partial_{x_{i}}\wedge\partial_{y_{i}}\\
		L=\{y_{1}=\cdots=y_{n}=0\}
		\end{cases}.
		\]
	\end{prop}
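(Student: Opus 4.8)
The plan is to combine the two normal forms already available: Proposition \ref{normZ}, which controls $\Pi$ in the direction transverse to $Z$, and Proposition \ref{normalform}, which controls $\Pi|_{Z}$ along the leaf foliation through $L$. The only genuine work lies in reconciling them, i.e.\ in arranging that the transverse (modular) vector field appearing in Proposition \ref{normZ} becomes exactly $\partial_{x_{1}}$ in the coordinates produced by Proposition \ref{normalform}.

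Concretely, I would first fix a modular vector field $V_{mod}$ of $(M,\Pi)$ and apply Proposition \ref{normZ}, centered at $p$, to identify a neighborhood of $Z$ with an open set in $Z\times\mathbb{R}$, with $Z=\{t=0\}$ and
\[
\Pi = V_{mod}|_{Z}\wedge t\partial_{t} + \Pi|_{Z}.
\]
Since $L$ has middle dimension and sits inside $Z$, the dimension count recorded in \S\ref{subsec:lagrlogsym} shows that $L$ is transverse to the symplectic leaves of the regular Poisson manifold $(Z,\Pi|_{Z})$, with induced foliation $\mathcal{F}_{L}$. Hence Proposition \ref{normalform} applies to $L\subset(Z,\Pi|_{Z})$: picking foliated coordinates on $L$ near $p$ that straighten $\mathcal{F}_{L}$ to the level sets of $x_{1}$, together with the associated fibre coordinates on $T^{*}\mathcal{F}_{L}$ as in Lemma \ref{diff}, I obtain coordinates $(x_{1},\dots,x_{n},y_{2},\dots,y_{n})$ on $Z$ near $p$ in which $\Pi|_{Z}=\sum_{i=2}^{n}\partial_{x_{i}}\wedge\partial_{y_{i}}$, the leaves are $\{x_{1}=\mathrm{const}\}$, and $L=\{y_{2}=\cdots=y_{n}=0\}$. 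Extending this coordinate change by the identity in the $t$-direction gives a diffeomorphism of $Z\times\mathbb{R}$ that preserves $Z$ and $L$ and transforms $\Pi$ into $W\wedge t\partial_{t}+\sum_{i=2}^{n}\partial_{x_{i}}\wedge\partial_{y_{i}}$, where $W$ is the (still leaf-transverse) Poisson vector field into which $V_{mod}|_{Z}$ is carried.

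The heart of the argument is then to normalize $W$ to $\partial_{x_{1}}$ without disturbing $\Pi|_{Z}$ or $L$. Because $W$ is Poisson for $\Pi|_{Z}$ it preserves the foliation, which forces the coefficient of $\partial_{x_{1}}$ to depend only on $x_{1}$; thus $W=f(x_{1})\partial_{x_{1}}+W_{S}$ with $W_{S}$ tangent to the leaves and $f(p)\neq 0$ (leaf-transversality of restricted modular vector fields, as recalled in \S\ref{subsec:logsym}). Reparametrizing $x_{1}$ by a function of $x_{1}$ alone — which fixes both $\Pi|_{Z}$ and $L$ — I may assume $f\equiv 1$. Now $W_{S}=W-\partial_{x_{1}}$ is a leaf-tangent Poisson vector field (note $\partial_{x_{1}}$ is Poisson since $\Pi|_{Z}$ has constant coefficients), hence leafwise symplectic and therefore, locally near $p$, leafwise Hamiltonian: $W_{S}=(\Pi|_{Z})^{\sharp}(dg)$ for some $g\in C^{\infty}$. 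The final move is to rescale the transverse coordinate by $t\mapsto t/h$ with $h=e^{-g}>0$. A direct check shows that $t\partial_{t}$ is unchanged while this rescaling modifies the transverse vector field by the Hamiltonian field $(\Pi|_{Z})^{\sharp}(d\ln h)=-W_{S}$, leaving the leaf part $\sum_{i=2}^{n}\partial_{x_{i}}\wedge\partial_{y_{i}}$ intact; moreover it is the identity on $Z=\{t=0\}$, so $L$ is untouched. This turns $W$ into $\partial_{x_{1}}$, so that $\Pi=\partial_{x_{1}}\wedge t\partial_{t}+\sum_{i=2}^{n}\partial_{x_{i}}\wedge\partial_{y_{i}}$; relabelling $y_{1}:=t$ yields exactly the asserted normal form.

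I expect the main obstacle to be precisely this last normalization of the modular vector field: one cannot keep $V_{mod}|_{Z}$ equal to $\partial_{x_{1}}$ and simultaneously straighten $L$ (that would require $L$ to be invariant under the modular flow), so the two constraints must be decoupled and then reconciled through the transverse rescaling. It is essential here that the statement is local: the reduction of $W_{S}$ to a Hamiltonian field uses that a leafwise symplectic vector field on a ball is leafwise Hamiltonian, which is exactly the cohomological input that may fail semilocally — and whose failure is what makes the global deformation theory of the later sections nontrivial.
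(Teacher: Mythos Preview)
Your proof is correct and follows essentially the same route as the paper's: both combine Propositions~\ref{normZ} and~\ref{normalform}, observe that the $\partial_{x_1}$-coefficient of the modular vector field depends only on $x_1$, strip off the leaf-tangent (locally Hamiltonian) part, and reparametrize $x_1$. The only cosmetic difference is that the paper removes the leaf-tangent part by invoking the freedom to change the modular vector field (via \eqref{changevgl}, cf.\ Remark~\ref{rem:freedom}), whereas you carry out the equivalent explicit transverse rescaling $t\mapsto te^{g}$.
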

	\begin{proof}
		Applying Prop. \ref{normalform} and  Prop. \ref{normZ}  locally around $p$ shows that there exists a coordinate system $(U; x_{1},t, x_{2},y_{2}\ldots,x_{n},y_{n})$ such that 
		\begin{equation}\label{pi}
		\Pi|_{U}=V_{mod}|_{U\cap Z}\wedge t\partial_{t}+\sum_{i=2}^{n}\partial_{x_{i}}\wedge\partial_{y_{i}}.
		\end{equation}
		Here $V_{mod}$ is a locally defined modular vector field,  $L=\{t=y_2=\cdots=y_{n}=0\}$ and $Z=\{t=0\}$. If we write $V_{mod}|_{U\cap Z}$ in coordinates as
		\[
		V_{mod}|_{U\cap Z}=z(x,y)\partial_{x_{1}}+ \sum_{i=2}^{n}g_{i}(x,y)\partial_{x_{i}}+\sum_{i=2}^{n}h_{i}(x,y)\partial_{y_{i}},
		\]
		then requiring that $V_{mod}|_{U\cap Z}$ is Poisson yields that $z(x,y)$ only depends on $x_{1}$. Now $V_{mod}|_{U\cap Z}-z(x_{1})\partial_{x_{1}}$ is a Poisson vector field tangent to the leaves, so it is locally Hamiltonian. This implies that, changing to a different modular vector field, we may assume 
		\[
		V_{mod}|_{U\cap Z}=z(x_{1})\partial_{x_{1}}.
		\]
		Note here that $z(x_{1})$ is nowhere zero since $V_{mod}|_{U\cap Z}$ is transverse to the leaves. This allows us to define a new coordinate $\xi$ by
		\[
		\xi:=\int\frac{1}{z(x_{1})}dx_{1}.
		\]
		In the new coordinate system $(\xi,t,x_{2},y_{2},\ldots,x_{n},y_{n})$, the expression \eqref{pi} becomes
		\[
		\Pi=\partial_{\xi}\wedge t\partial_{t}+\sum_{i=2}^{n}\partial_{x_{i}}\wedge\partial_{y_{i}},
		\]
		so these coordinates satisfy the criteria.
	\end{proof}

{
We display two classes of middle dimensional  Lagrangian submanifolds contained in the singular locus.
\begin{ex}[Mapping tori]
Assume the singular locus $Z^{2n-1}$ is compact and has a compact leaf $S$. Then $Z$
is the mapping torus of a symplectomorphism $\phi\colon S\to S$, i.e.
$$Z=( [0,1]\times S)/\sim ,$$
where the equivalence relation is given by $(0,x)\sim (1,\phi(x))$ for all $x\in S$  \cite[Theorem 19]{miranda1}.
Let $\ell\subset S$ be a Lagrangian submanifold such that $\phi(\ell)=\ell$. Then $$L:=( [0,1]\times \ell)/\sim, $$
the mapping torus of $\ell$,
is an $n$-dimensional Lagrangian submanifold in $Z$. 
%If $\ell$ is compact, then $L$ is also compact.
%A variation is the following. Suppose the symplectomorphism $\phi=\phi_1$ is isotopic to the identity by symplectomorphisms $\{\phi_t\}_{t\in [0,1]}$ (for example, $\phi$ is a hamiltonian symplectomorphism). Let $\ell\subset S$ be any Lagrangian. Then $$L:=\cup_{t\in [0,1]} (\{t\}\times \phi_t(\ell))/\sim $$
%is Lagrangian in $Z$. (It is a smooth submanifold because the equivalence relation identifies $\{0\}\times \ell$ with $\{1\}\times \phi(\ell)$.)
\end{ex}	
}

Now let $(X,Z)$ be a $b$-manifold (i.e. $Z$ is a codimension-one submanifold of $X$), 
and let $N\subset X$ be a $b$-submanifold (i.e. $N$ is transverse to $Z$). Then we have ${}^bTN\subset  {}^bTX$, and the annihilator $({}^bTN)^{\circ}$ is a Lagrangian submanifold of the log-symplectic manifold ${}^bT^*X$ transverse to the singular locus. We now adapt this construction to obtain Lagrangian submanifolds of ${}^bT^*X$ \emph{contained} in the singular locus. The construction depends on additional choices,
and is somewhat reminiscent of the following in symplectic geometry: for a submanifold $N$ of an arbitrary manifold and a closed  $\alpha\in \Omega^1(N)$, the affine subbundle  $\{\xi: \xi|_{TN} =\alpha\}$ over $N$ is a Lagrangian submanifold of the cotangent bundle. 
 In Proposition \ref{lem:splitting} we provide a construction depending on Lie algebroid splittings, and in Corollary \ref{prop:Lagrexplicit} we make the construction more explicit. We first need to recall some well-known facts in the following remark.

\begin{remark}\label{rem:wellknown}
\begin{itemize}
\item[i)] Let $A\to Z$ be a Lie algebroid, and $B\to N$ a Lie subalgebroid supported on a submanifold $N$ of $Z$. Then the annihilator $B^{\circ}$ is a coisotropic submanifold of the Poisson manifold $A^*$, where the latter is endowed with its natural linear Poisson structure \cite[Prop. 4.6]{Xu}. 
%This can be shown choosing coordinates on $Z$ adapted to $N$, and a local frame of $A$ which on $N$ extends a frame of $B$. \mcomment{I couldn't find a reference for this statement, not even in Cattaneo's papers...}
\item[ii)] Let  $(X,Z)$ be a $b$-manifold. The $b$-cotangent bundle ${}^bT^*X$ has a canonical 
log-symplectic structure $\Pi$ \cite[Ex. 9]{miranda2}, which agrees with the linear Poisson structure associated to the Lie algebroid structure  $({}^bTX,[\cdot,\cdot],\rho)$. 
%The singular locus is $^bT^*X|_{Z}$. 
(This construction generalizes Example \ref{ex:R2nlog}.)

There is a short exact sequence of Lie algebroids over $Z$:
\begin{equation}\label{eq:sesL}
0\rightarrow\mathbb{L}\hookrightarrow{}^{b}TX|_Z\overset{\rho}{\rightarrow}TZ\rightarrow 0.
\end{equation} 
The kernel $\mathbb{L}$ has a canonical non-vanishing section $w$ (given by $x_1\pd{x_1}$ for any local coordinate system on $X$ for which $x_1|_Z=0$), thus we have a canonical isomorphism of line bundles
$\mathbb{L}\cong \RR\times Z$ \cite[\S3.1]{miranda2}.
The dual short exact sequence reads
\begin{equation}\label{eq:sesL*}
0\rightarrow
T^*Z
\overset{\rho^*}{\hookrightarrow}
{}^{b}T^*X|_Z
\rightarrow
\mathbb{L}^*
\rightarrow 0.
\end{equation} 
It is known that ${}^{b}T^*X|_Z$ is the singular locus of the log-symplectic structure $\Pi$, hence it has a corank-1  Poisson structure. It turns out that its symplectic leaves are exactly the preimages of the constant sections of $\mathbb{L}^*$ under the map in \eqref{eq:sesL*}, i.e. the level sets of the function $\langle w,\cdot \rangle$ on ${}^{b}T^*X|_Z$, as can be seen easily in coordinates.
\end{itemize}
\end{remark}

\begin{prop}\label{lem:splitting}
Let  $(X^n,Z^{n-1})$ be a $b$-manifold, and $N\subset Z$ a submanifold. Let  $$\sigma\colon TN\to {}^bTX|_Z$$ be a \emph{Lie algebroid morphism} such that\footnote{When $N=Z$, this means that $\sigma$ is a Lie algebroid splitting of the sequence \eqref{eq:sesL}.} $\rho\circ \sigma=Id_{TN}$.

Then $L$, defined as the annihilator of $\sigma(TN)$ in the vector bundle ${}^bT^*X|_N$, is an $n$-dimensional Lagrangian submanifold of ${}^bT^*X$ contained in the singular locus.
% ${}^bT^*X|_Z$.
\end{prop}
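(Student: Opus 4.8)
The plan is to verify the two asserted properties of $L$ separately: first that $L$ is Lagrangian, and second that it is contained in the singular locus. I would begin by computing the dimension of $L$. Since $\sigma(TN)$ is a subbundle of ${}^bT^*X|_N$ of rank $\dim N = \dim\rho(\sigma(TN))$ (using $\rho\circ\sigma = \mathrm{Id}_{TN}$), its annihilator $L$ inside the vector bundle ${}^bT^*X|_N \to N$ has fibers of dimension $\mathrm{rk}({}^bTX) - \dim N = n - \dim N$. Adding the base dimension $\dim N$, the total space $L$ has dimension $n - \dim N + \dim N = n$, which is exactly half of $\dim({}^bT^*X) = 2n$. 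So the middle-dimensionality is immediate.

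Next I would establish that $L$ is Lagrangian. The natural route is to invoke Remark \ref{rem:wellknown}(i): since $\sigma\colon TN \to {}^bTX|_Z$ is a Lie algebroid \emph{morphism} (not merely a bundle map), $\sigma(TN)$ is a Lie subalgebroid $B\to N$ of the Lie algebroid $A = {}^bTX$, and its annihilator $B^\circ = L$ is therefore a coisotropic submanifold of the linear Poisson manifold $A^* = {}^bT^*X$. By Remark \ref{rem:wellknown}(ii) this linear Poisson structure is exactly the canonical log-symplectic structure $\Pi$, so $L$ is coisotropic in $({}^bT^*X, \Pi)$. To upgrade coisotropic to Lagrangian, I would use the dimension count above together with the characterization in Definition \ref{lagrdef}: once $L$ is coisotropic and lands in the singular locus (shown below), its intersection with each symplectic leaf is coisotropic in that leaf of dimension $2n-2$, and the dimension count forces this intersection to be middle-dimensional, hence Lagrangian.

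For containment in the singular locus, I would show $L \subset {}^bT^*X|_Z$, which is precisely the singular locus by Remark \ref{rem:wellknown}(ii). This is essentially by construction: $L$ is defined as the annihilator of $\sigma(TN)$ inside the \emph{restricted} bundle ${}^bT^*X|_N$, and since $N\subset Z$, every covector in $L$ sits over a point of $Z$, i.e. $L\subset {}^bT^*X|_Z$. I would make this explicit, and I would additionally check that $L$ is not merely contained in, but is a genuine submanifold of, the singular locus by confirming $L$ is a smooth subbundle (the annihilator of a subbundle is always a smooth subbundle of the dual).

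The main obstacle I anticipate is the interplay between the two Remark \ref{rem:wellknown} ingredients: part (i) is stated for a Lie subalgebroid $B\to N$ supported on a submanifold $N$ of $Z$, so I must carefully justify that $\sigma(TN)$, with the Lie algebroid structure transported from $TN$ via the morphism $\sigma$, is indeed such a subalgebroid of ${}^bTX$ over $N\subset Z$. Here the hypothesis that $\sigma$ is a Lie algebroid morphism (rather than an arbitrary splitting of the underlying vector bundle sequence) is exactly what is needed, and the footnote clarifies the special case $N=Z$. The cleanest presentation is to reduce everything to the coisotropic-annihilator statement of Xu cited in Remark \ref{rem:wellknown}(i), then promote coisotropic to Lagrangian purely by the dimension count, avoiding any coordinate computation. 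If one prefers a self-contained verification, the alternative is to work in the local coordinates of Proposition \ref{coordinates} and check the Lagrangian condition of Definition \ref{lagrdef}(ii) directly leaf by leaf, but the algebroid-theoretic argument is shorter and more transparent.
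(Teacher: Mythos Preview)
Your overall strategy matches the paper's: use Remark \ref{rem:wellknown}(i) to get coisotropicity of the annihilator, observe $L\subset {}^bT^*X|_Z$ by construction, compute $\dim L=n$, and then upgrade coisotropic to Lagrangian by analyzing the intersection with the $(2n-2)$-dimensional symplectic leaves. However, there is a genuine gap in your upgrade step.

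You write that ``the dimension count forces this intersection to be middle-dimensional.'' It does not. Knowing only that $L$ is $n$-dimensional, coisotropic, and contained in the singular locus, the intersection $T_pL\cap T_p\mathcal{O}$ could a priori have dimension $n$ rather than $n-1$: nothing you have said rules out $T_pL\subset T_p\mathcal{O}$. In that case $T_pL\cap T_p\mathcal{O}$ would be an $n$-dimensional coisotropic subspace of a $(2n-2)$-dimensional symplectic space, which is perfectly possible but not Lagrangian. The paper itself warns against exactly this pitfall in the Remark following Lemma \ref{lagcois}, exhibiting a middle-dimensional coisotropic $C\subset Z$ that is not Lagrangian. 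What is missing is the transversality of $L$ to the symplectic leaves of ${}^bT^*X|_Z$. The paper supplies this by observing that $\sigma(TN)\cap\mathbb{L}|_N=0$ (from $\rho\circ\sigma=\mathrm{Id}_{TN}$), so the annihilator $L$ surjects onto $\mathbb{L}^*|_N$ under the map in the dual sequence \eqref{eq:sesL*}; since the symplectic leaves are exactly the level sets of this map (Remark \ref{rem:wellknown}(ii)), $L$ meets them transversely. Once transversality is established, the intersection with each leaf is $(n-1)$-dimensional and coisotropic, hence Lagrangian, and you are done. Insert this transversality argument and your proof is complete and essentially identical to the paper's.
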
	

\begin{proof}
By assumption,  $\sigma(TN)$ is a Lie subalgebroid of ${}^bTX|_Z$ supported on $N$. Its annihilator
$L$ maps onto $\mathbb{L}^*|_N$ under the map in the short exact sequence \eqref{eq:sesL*}. Hence $L$ intersects transversely the $2n-2$-dimensional symplectic leaves of ${}^bT^*X|_Z$, by Remark \ref{rem:wellknown} ii).
Notice that $L$ is an $n$-dimensional coisotropic submanifold of ${}^bT^*X|_Z$ by Remark \ref{rem:wellknown} i). Hence the intersection of $L$ with any symplectic leaf is a   $(n-1)$-dimensional
coisotropic submanifold of the symplectic leaf, i.e. it is Lagrangian there. 
Hence $L$ is Lagrangian.
\end{proof}	

\begin{cor}\label{prop:Lagrexplicit}
Let  $(X^n,Z^{n-1})$ be a $b$-manifold, and assume that the normal bundle $TX|_Z/TZ$ is trivial. Let $N\subset Z$ be a submanifold. Let 
\begin{itemize}
\item $f\in C^{\infty}(U)$ be a defining function\footnote{Existence of such $f$ is equivalent with $TX|_Z/TZ$ being trivial. Given $f$, note that $df|_{Z}$ trivializes $(TX|_Z/TZ)^{*}$, hence also $TX|_Z/TZ$ is trivial. Conversely, take an exponential map $\exp:TX|_Z/TZ\cong Z\times\mathbb{R}\rightarrow M$ on a neighborhood of $Z$, set $pr:Z\times\mathbb{R}\rightarrow\mathbb{R}$ to be the projection and define $f:=pr\circ \exp^{-1}$.} for $Z$ (here $U$ is a tubular neighborhood of $Z$),
\item $\alpha\in \Omega^1(N)$ be a \emph{closed} 1-form on $N$.
\end{itemize}
Then 
$$TN^{\circ}\oplus \RR\left(\frac{df}{f}|_N- \alpha'\right)$$
is an $n$-dimensional Lagrangian submanifold of ${}^bT^*X$ contained in the singular locus.
% ${}^bT^*X|_Z$. 
 Here 
 \begin{itemize}
\item $TN^{\circ}$ is the annihilator of $TN$ in $T^*Z|_N$,
\item  $\alpha'\in \Gamma(T^*Z|_N)$ is any extension\footnote{The construction clearly does not depend on the extension.} of $\alpha$,
\end{itemize}
and we view both as lying in ${}^{b}T^*X|_Z$ by using the map $\rho^*$ in \eqref{eq:sesL*}.
\end{cor}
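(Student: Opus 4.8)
The plan is to exhibit the proposed submanifold as a special case of the construction in Proposition \ref{lem:splitting}, by producing the Lie algebroid splitting $\sigma$ directly from the data $(f,\alpha)$. The starting observation is that the $b$-form $\frac{df}{f}=d\log|f|$ is closed and that its restriction $\frac{df}{f}|_Z$ pairs with the canonical generator $w=x_1\partial_{x_1}$ of $\mathbb{L}$ to give $1$: in any local coordinate system with $f=x_1$ one has $\frac{df}{f}=\frac{dx_1}{x_1}$ and $\langle \frac{dx_1}{x_1},x_1\partial_{x_1}\rangle=1$. Consequently the section $\frac{df}{f}|_N-\alpha'\in\Gamma({}^bT^*X|_N)$ also projects to the constant section $1\in\mathbb{L}^*$ under \eqref{eq:sesL*}, since $\alpha'$ lies in the image of $\rho^*$ and hence annihilates $w$.

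Using this, I would define a bundle map $\sigma\colon TN\to {}^bTX|_Z$ over the inclusion $N\hookrightarrow Z$ by declaring $\sigma(v)$ to be the unique lift of $v$, i.e.\ $\rho(\sigma(v))=v$, satisfying $\langle \frac{df}{f}|_N-\alpha',\sigma(v)\rangle=0$. This is well-defined precisely because the lifts of a given $v\in TN$ form an affine line modeled on $\mathbb{L}=\RR w$, on which $\frac{df}{f}|_N-\alpha'$ pairs nontrivially. By construction $\rho\circ\sigma=Id_{TN}$, and both defining properties of $\sigma$ show that $L$ is contained in the annihilator $\sigma(TN)^\circ\subset {}^bT^*X|_N$: for $\beta\in TN^\circ$ one has $\langle \rho^*\beta,\sigma(v)\rangle=\langle\beta,\rho(\sigma(v))\rangle=\langle\beta,v\rangle=0$ since $v\in TN$, while $\frac{df}{f}|_N-\alpha'$ annihilates $\sigma(TN)$ by the very definition of $\sigma$. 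As $L$ and $\sigma(TN)^\circ$ both have fiber rank $n-k$ (where $k=\dim N$), they coincide; in particular $L=\sigma(TN)^\circ$ has dimension $k+(n-k)=n$.

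The main step is to verify that $\sigma$ is a Lie algebroid morphism, equivalently that $\sigma(TN)$ is a subalgebroid of ${}^bTX|_Z$ supported on $N$. I would check this in adapted coordinates $(x_1,\dots,x_n)$ with $f=x_1$, $Z=\{x_1=0\}$ and $N=\{x_1=x_{k+2}=\cdots=x_n=0\}$, where a short computation gives $\sigma(\partial_{x_i})=\partial_{x_i}+a_i\,x_1\partial_{x_1}$ for $i=2,\dots,k+1$, with $a_i$ the components of $\alpha$. A direct bracket computation in ${}^bTX$ then yields
\[
[\sigma(\partial_{x_i}),\sigma(\partial_{x_j})]=(\partial_{x_i}a_j-\partial_{x_j}a_i)\,x_1\partial_{x_1}.
\]
Because the indices $i,j$ range over directions tangent to $N$, the right-hand side involves only tangential derivatives of the $a_l$, so it depends only on $\alpha$ and not on the chosen extension $\alpha'$, and it equals the components of $d\alpha$ times $w$. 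Since $w$ has nonzero anchor and $\sigma(TN)\cap\mathbb{L}=0$, a vertical multiple of $w$ lies in $\sigma(TN)$ only if it vanishes; hence $\sigma(TN)$ is bracket-closed exactly when $d\alpha=0$. This is where the hypothesis that $\alpha$ be closed is used, and it is the only genuinely nontrivial point of the argument — everything else is the affine/annihilator bookkeeping above.

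Finally, with $\sigma$ a Lie algebroid morphism satisfying $\rho\circ\sigma=Id_{TN}$, Proposition \ref{lem:splitting} applies verbatim and produces an $n$-dimensional Lagrangian submanifold of ${}^bT^*X$ contained in the singular locus, namely $\sigma(TN)^\circ=L$. I expect no subtlety in this last invocation; the conceptual content is entirely concentrated in the bracket identity, which transparently converts the geometric closedness of $\alpha$ into the algebraic subalgebroid condition.
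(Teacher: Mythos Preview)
Your proposal is correct and follows essentially the same approach as the paper: define the splitting $\sigma$ whose image is annihilated by $\frac{df}{f}|_N-\alpha'$, verify that $\sigma$ is a Lie algebroid morphism precisely when $\alpha$ is closed, identify $L$ with $\sigma(TN)^\circ$, and invoke Proposition~\ref{lem:splitting}. The only cosmetic difference is that the paper introduces the auxiliary splitting $\tau\colon TZ\to{}^bTX|_Z$ determined by $\frac{df}{f}|_Z$ and writes $\sigma=\tau|_{TN}+\alpha\,w$, carrying out the bracket computation coordinate-free via the fact that $\tau$ is bracket-preserving, whereas you perform the equivalent computation directly in adapted coordinates.
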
	
\begin{proof}
The line subbundle $\RR\frac{df}{f}|_Z$ of ${}^bT^*X|_Z$  maps surjectively onto $\mathbb{L}^*$ in the sequence \eqref{eq:sesL*}, hence the kernel of $\frac{df}{f}|_Z$ is the image of a 
vector bundle splitting
$\tau\colon TZ\to {}^{b}TX|_Z$ of the anchor map $\rho$ in the sequence \eqref{eq:sesL}. Further $\tau$ is a Lie algebroid morphism (this  can be  be seen choosing coordinates on $X$ adapted to $Z$; it also  follows from the fact that 
$\frac{df}{f}|_Z$ is closed and thus its kernel is involutive).

  Now let $\sigma\colon TN\to {}^bTX|_Z$ be a vector bundle map such that $\rho\circ \sigma=Id_{TN}$. Then, in view of the short exact sequence \eqref{eq:sesL},  the difference $\sigma-\tau|_{TN}$ is a vector bundle map $TN\to \mathbb{L}|_N\cong \RR\times N$. Hence $$\sigma=\tau|_{TN}+\alpha  w$$ for some $\alpha\in \Omega^1(N)$, where $w$ denotes the canonical section of $\mathbb{L}$ as in Remark \ref{rem:wellknown} ii). Conversely, given any  $\alpha\in \Omega^1(N)$, the above formula defines a    vector bundle map $\sigma$ satisfying $\rho\circ \sigma=Id_{TN}$.
  
It turns out that $\sigma$ is a Lie algebroid morphism (i.e. $\sigma(TN)$ is a Lie subalgebroid)
if{f} $\alpha$ is closed. Indeed, for all vector fields $X,Y
\in \mathfrak{X}(N)$, after choosing extensions $\tilde{X},  \tilde{Y}\in \mathfrak{X}(Z)$ and an extension $\tilde{\alpha}\in \Omega^1(Z)$ of $\alpha$, we have:
$$[\tau(\tilde{X})+\tilde{\alpha}(\tilde{X}) w,\tau(\tilde{Y})+\tilde{\alpha}(\tilde{Y}) w]|_N=\tau([X,Y])+ \alpha([X,Y]) w$$
if{f} $(d\alpha)(X,Y)=0$, as one computes
using the fact that $\tau$ is bracket-preserving.

As in Proposition \ref{lem:splitting}, denote by
 $L$ the annihilator of $\sigma(TN)$ in ${}^bT^*X|_N$. Then
\begin{equation}\label{eq:cL}
  L=TN^{\circ}\oplus \RR\left(\frac{df}{f}|_N- \alpha'\right).
\end{equation}
The inclusion ``$\supset$'' holds for the following reasons: $TN^{\circ}$ annihilates 
$\sigma(TN)$,
% since $\sigma$ is a section of the anchor $\rho$, 
and  for all vectors $X\in TN$ we have 
\[
\left\langle \frac{df}{f}|_N- \alpha'\,,\, \sigma(X) \right\rangle=\alpha(X)-\alpha(X)=0.
\]
Since both vector subbundles have the same rank, we obtain the equality \eqref{eq:cL}, and Proposition \ref{lem:splitting} concludes the proof.
\end{proof}

\begin{ex}
i) The case $N=\{p\}$, for $p$ a  point of $Z$: in this case the construction of Corollary \ref{prop:Lagrexplicit} does not depend on any additional choice, and yields as Lagrangian submanifold the fiber  ${}^bT_p^*X$.

ii) The case $N=Z$: for any defining function $f$ for $Z$ and  closed  1-form $\alpha$ on $Z$, the line bundle 
\begin{equation}\label{eq:specialLagr}
\RR \left(\frac{df}{f}|_Z-\alpha\right)\to Z
\end{equation}
is a Lagrangian submanifold. We remark that such $\frac{df}{f}|_Z-\alpha$ are exactly the closed (w.r.t. the Lie algebroid differential) sections of ${}^bT^*X|_Z$ which on $w$ evaluate to $1$.

 Notice that any other defining function for $Z$ is of the form $fg$ for some function $g$ without zeros, and that $\frac{d(fg)}{fg}|_Z$ is given by $\frac{df}{f}|_Z$ plus an exact term $d(log(|g|_Z|)$. Thus, when $H^1(Z)\neq 0$, not all Lagrangian submanifolds of the form \eqref{eq:specialLagr} can be realized as  $\RR (\frac{dh}{h}|_Z)$ for some defining function $h$.

  \end{ex}

	Given a Lagrangian $L^{n}$ contained in the singular locus $Z$ of a log-symplectic manifold $(M^{2n},\Pi)$, Prop. \ref{normalform} describes a neighborhood of $L$ in $(Z,\Pi|_{Z})$ and Prop. \ref{normZ} describes a neighborhood of $Z$ itn $(M^{2n},\Pi)$. Combining the two propositions, we get the following normal form around $L^{n}\subset (M^{2n},\Pi)$.

	\begin{cor}[Local form around a Lagrangian in the singular locus]\label{normal}
		Let $(M^{2n},Z,\Pi)$ be an orientable log-symplectic manifold, and make a choice of modular vector field  $V_{mod}$ on $M$.
		Let  $L^{n}\subset Z$ be a Lagrangian submanifold, and denote by $\mathcal{F}_{L}$ the induced\footnote{The leaves of the  codimension-one foliation $\mathcal{F}_{L}$ are the connected components of the intersections	of $L$ with the symplectic leaves of $Z$.}
		 foliation on $L$. Then a neighborhood of $L$ in $(M,\Pi)$ can be identified with a neighborhood of $L$ in the vector bundle $T^{*}\mathcal{F}_{L}\times\mathbb{R}\rightarrow L$, endowed with the log-symplectic structure
		\begin{equation}\label{freedom}
		\widetilde{\Pi}:=V\wedge t\partial_{t}+\Pi_{can}.
		\end{equation}
		Here $t$ is a coordinate on $\mathbb{R}$, and $V$ is the image of $V_{mod}|_{Z}$ under the Poisson diffeomorphism $(Z,\Pi|_{Z})\rightarrow(T^{*}\mathcal{F}_{L},\Pi_{can})$ between neighborhoods of $L$ constructed in Prop. \ref{normalform}.
	\end{cor}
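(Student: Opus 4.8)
The plan is to assemble Corollary \ref{normal} by gluing together the two normal form results already established: Proposition \ref{normalform} describing the Poisson structure on $Z$ around $L$, and Proposition \ref{normZ} describing $(M,\Pi)$ around the whole singular locus $Z$. The essential point is that these two results describe \emph{different directions}: the former is ``along $Z$'' and produces the model $(T^{*}\mathcal{F}_{L},\Pi_{can})$ for a neighborhood of $L$ inside $Z$, while the latter is ``transverse to $Z$'' and produces the model $Z\times\mathbb{R}$ with $\Pi|_U=V_{mod}|_Z\wedge t\partial_t+\Pi|_Z$. I first want to verify that $L\subset Z$ satisfies the hypotheses of Proposition \ref{normalform}: the induced Poisson structure $\Pi|_Z$ on $Z$ is regular of corank one, and $L$ is a Lagrangian submanifold of $(Z,\Pi|_Z)$ transverse to its symplectic leaves (this is exactly the ``second kind'' case analyzed in \S\ref{subsec:lagrlogsym}, where $\dim L=n$ and $L$ meets the $(2n-2)$-dimensional leaves transversely in $(n-1)$-dimensional Lagrangians). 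The induced foliation on $L$ is precisely $\mathcal{F}_L$.

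Next I would apply Proposition \ref{normZ} to identify a neighborhood of $Z$ in $(M,\Pi)$ with $U\subset Z\times\mathbb{R}$, carrying the structure $V_{mod}|_Z\wedge t\partial_t+\Pi|_Z$. Then I apply Proposition \ref{normalform} to the Lagrangian $L\subset(Z,\Pi|_Z)$, obtaining a Poisson diffeomorphism $\Psi\colon(Z,\Pi|_Z)\to(T^{*}\mathcal{F}_{L},\Pi_{can})$ between neighborhoods of $L$ that restricts to the identity on $L$. The idea is then to take the product map $\Psi\times\mathrm{Id}_{\mathbb{R}}$ to identify the neighborhood $U\subset Z\times\mathbb{R}$ with a neighborhood of $L$ in $T^{*}\mathcal{F}_{L}\times\mathbb{R}$. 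Under this identification, $\Pi|_Z$ is carried to $\Pi_{can}$ by construction, the coordinate $t$ and the vector field $\partial_t$ are untouched, and the modular vector field $V_{mod}|_Z$ is pushed forward to $V:=\Psi_*(V_{mod}|_Z)$. Since the Schouten bracket (and hence the wedge $V_{mod}|_Z\wedge t\partial_t$) is natural under diffeomorphisms, the transported Poisson bivector is exactly $V\wedge t\partial_t+\Pi_{can}$, which is the claimed $\widetilde{\Pi}$.

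The one point that deserves care, and which I expect to be the main subtlety rather than a genuine obstacle, is checking that the two normal forms are \emph{compatible}: the product structure $Z\times\mathbb{R}$ from Proposition \ref{normZ} and the diffeomorphism $\Psi$ from Proposition \ref{normalform} must be combined so that pushing forward the full bivector $V_{mod}|_Z\wedge t\partial_t+\Pi|_Z$ by $\Psi\times\mathrm{Id}$ indeed yields \eqref{freedom}. The key observation making this clean is that $\Psi\times\mathrm{Id}$ acts as the identity in the $t$-direction, so it commutes with $t\partial_t$ and preserves the $\partial_t$-factor, while transforming $V_{mod}|_Z$ and $\Pi|_Z$ separately as desired; because pushforward is an algebra homomorphism for the wedge and is $\mathbb{R}$-linear, the two summands transform independently. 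One should also note that $V_{mod}$ is only defined after the choice stated in the hypothesis, which accounts for the phrasing ``make a choice of modular vector field,'' and that $V$ is nowhere tangent to the leaves of $p^{*}\mathcal{F}_{L}$ since $V_{mod}|_Z$ is transverse to the leaves of $\Pi|_Z$. After shrinking neighborhoods so that all the maps are defined and diffeomorphic, composing these identifications yields the desired statement.
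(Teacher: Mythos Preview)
Your proposal is correct and follows precisely the approach the paper takes: the paper presents this corollary as an immediate combination of Proposition~\ref{normZ} (normal form around $Z$) and Proposition~\ref{normalform} (normal form around $L$ inside $Z$), and you have spelled out exactly how the two are glued via $\Psi\times\mathrm{Id}_{\mathbb{R}}$. The paper in fact gives no proof beyond the one-line remark preceding the statement, so your write-up is a faithful (and more detailed) elaboration of the same argument.
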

	
	\begin{remark}\label{rem:freedom}
		The vector field $V$ in \eqref{freedom} is only defined on a neighborhood {$W$} of $L$ in $T^{*}\mathcal{F}_{L}$. Note that there is some freedom in the formula \eqref{freedom}, in the sense that there we can replace $V$ by any Poisson vector field representing the Poisson cohomology class $[V]$. 
		
		To see this, take any representative $V-X_f$ of $[V]$, for some function $f$ defined on {$W$}. Note that $V$ is a modular vector field of $\widetilde{\Pi}$, with respect to the volume form $\Omega$ on {$W\times\RR$} that is uniquely determined by requiring that $\langle\Omega,\wedge^{n}\widetilde{\Pi} \rangle=t$.  If $\tilde{f}$ is an extension of $f$ to {$W\times\RR$}, then also $V-X_{\tilde{f}}$ is a modular vector field of $\widetilde{\Pi}$, with respect to the volume form $e^{\tilde{f}}\Omega$ on {$W\times\RR$}. Proposition \ref{normZ} now implies that replacing $V$ by $V-X_{f}$ in \eqref{freedom} gives a log-symplectic structure that is Poisson diffeomorphic to $\widetilde{\Pi}$ in a neighborhood of {$W\subset W\times\RR$}.
	\end{remark}

	An arbitrary representative of the modular class has little to do with the Lagrangian $L$; we will remedy this in the next section.
	One could hope to find a representative of $[V]$ that is tangent to $L$. This amounts to finding a modular vector field, defined on a neighborhood of $L$ in $(M,\Pi)$, that is tangent to $L$. This can always be done locally near a point, as a consequence of Prop. \ref{coordinates} (namely, the vector field $\partial_{x_{1}}$ in the statement of the proposition is modular and tangent to $L$). Globally however, this may fail, as we now show.
	
	\begin{ex}[No modular vector field is tangent]
		Consider the manifold $\mathbb{R}\times S^{1}\times\mathbb{T}^{2}$ with coordinates $(t,\tau,\theta_{1},\theta_{2})$ and log-symplectic structure
		\[
		\Pi=\left(\partial_{\tau}+\partial_{\theta_{2}}\right)\wedge t\partial_{t}+\partial_{\theta_{1}}\wedge\partial_{\theta_{2}}.
		\]
		The submanifold $L:=\{t=\theta_{2}=0\}\cong S^{1}\times S^{1}$ is Lagrangian inside the singular locus $Z=S^{1}\times\mathbb{T}^{2}$. Note that $\partial_{\tau}+\partial_{\theta_{2}}$ is a modular vector field for $\Pi$ (associated with the volume form $d\theta_{1}\wedge d\theta_{2}\wedge dt\wedge d\tau$). If there  existed a modular vector field tangent to $L$ (defined near $L$), then its restriction to $Z$ would look like
		\[
		\partial_{\tau}+\partial_{\theta_{2}}+\left(\partial_{\theta_{1}}\wedge\partial_{\theta_{2}}\right)^{\sharp}(df)=\partial_{\tau}+\left(1+\frac{\partial f}{\partial\theta_{1}}\right)\partial_{\theta_{2}}-\frac{\partial f}{\partial\theta_{2}}\partial_{\theta_{1}}
		\]
		for some $f\in C^{\infty}(Z)$,
		where
		\begin{equation}\label{res}
		\left.\left(1+\frac{\partial f}{\partial\theta_{1}}\right)\right|_{\theta_{2}=0}=0.
		\end{equation}
		But then, fixing any value of $\tau$ and denoting by $i:\{\tau\}\times S^{1}\times\{0\}\hookrightarrow Z$ the inclusion, we get
		\[
		0=\int_{\{\tau\}\times S^{1}\times\{0\}}i^{*}df=\int_{\{\tau\}\times S^{1}\times\{0\}}\frac{\partial f}{\partial\theta_{1}}d\theta_{1}=-\int_{\{\tau\}\times S^{1}\times\{0\}} d\theta_{1}=-2\pi,
		\]
		using Stokes' theorem, and \eqref{res} 
		in the third equality. So there is no modular vector field tangent to $L$.
	\end{ex}
	
	\section{Poisson vector fields on the cotangent bundle of a foliation}\label{sec:Poisvf}
	Let $L$ be a manifold and $\mathcal{F}_{L}$ a foliation on $L$. Denote by $\Pi_{can}$ the canonical Poisson structure on $T^{*}\mathcal{F}_{L}$ (as in $b)$ of Lemma \ref{diff}). This section treats Poisson vector fields on $\left(T^{*}\mathcal{F}_{L},\Pi_{can}\right)$. We show that every class in the first Poisson cohomology group of $\left(T^{*}\mathcal{F}_{L},\Pi_{can}\right)$
%$H^1_{\Pi_{can}}(T^*\mathcal{F}_{L})$
admits a convenient representative (Thm. \ref{thm:modvfL}),
and use this to compute explicitly the 
 first Poisson cohomology group (Cor. \ref{cor:isocoho}).
At the beginning of \S\ref{sec:alg}, we apply these results to
% the restriction 
%to the singular locus of
the modular vector field of a log-symplectic manifold, and  
find a convenient representative of the class $[V]$ in \eqref{freedom}.
	
	\subsection{Convenient representatives}
	\leavevmode
	\vspace{0.1cm}
	
	We denote by
	$$\mathfrak{X}(L)^{\mathcal{F}_{L}}:=\big\{W\in \mathfrak{X}(L): [W,\Gamma(T\mathcal{F}_{L})]\subset \Gamma(T\mathcal{F}_{L})\big\}$$
	the Lie subalgebra of vector fields on $L$ whose flow preserves the foliation $\mathcal{F}_{L}$.
	
	\begin{lemma}\label{lem:lift}
		Let $W\in \mathfrak{X}(L)^{\mathcal{F}_{L}}$ and let $r:\left(T^{*}L,\Pi_{T^{*}L}\right)\rightarrow(T^{*}\mathcal{F}_{L},\Pi_{can})$ denote the restriction. We then have the following:
		\begin{enumerate}[(i)]
			\item The cotangent lift of $W$ pushes forward via $r\colon T^{*}L\rightarrow T^{*}\mathcal{F}_{L}$ to a Poisson vector field on $T^{*}\mathcal{F}_{L}$, which we denote by $\widetilde{W}$.
			
			\item When $W$ lies in $\Gamma(T\mathcal{F}_{L})$, the vector field $\widetilde{W}$ is Hamiltonian.
		\end{enumerate}
	\end{lemma}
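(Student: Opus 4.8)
The plan is to leverage the fact, established in Lemma \ref{diff}, that the restriction $r\colon(T^{*}L,\Pi_{T^{*}L})\to(T^{*}\mathcal{F}_{L},\Pi_{can})$ is a Poisson map, i.e. $\Pi_{T^{*}L}$ and $\Pi_{can}$ are $r$-related, and that in the foliated coordinates $(x_{1},\dots,x_{n},y_{1},\dots,y_{n})$ of that lemma (with $T\mathcal{F}_{L}=\mathrm{span}(\partial_{x_{1}},\dots,\partial_{x_{k}})$) the map $r$ is the projection forgetting the fiber coordinates $y_{k+1},\dots,y_{n}$. Throughout, $\widehat{W}\in\mathfrak{X}(T^{*}L)$ denotes the cotangent lift of $W$: it is, up to sign, the Hamiltonian vector field of the fiberwise-linear function $\ell_{W}(\xi)=\langle\xi,W\rangle$, and its flow is the cotangent lift of the flow $\phi_{t}$ of $W$. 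In particular $\widehat{W}$ is a Poisson vector field on $(T^{*}L,\Pi_{T^{*}L})$, and its flow consists of symplectomorphisms.

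For (i), the essential point — and the only place where the hypothesis $W\in\mathfrak{X}(L)^{\mathcal{F}_{L}}$ enters — is that $\widehat{W}$ is $r$-projectable. I would argue this conceptually: since $W$ preserves $\mathcal{F}_{L}$, its flow $\phi_{t}$ preserves the foliation, so each $d\phi_{t}$ preserves $T\mathcal{F}_{L}$; hence the cotangent lift $\widehat{\phi}_{t}$ descends to a well-defined flow $\widetilde{\phi}_{t}$ on $T^{*}\mathcal{F}_{L}=(T\mathcal{F}_{L})^{*}$, and a direct check gives $r\circ\widehat{\phi}_{t}=\widetilde{\phi}_{t}\circ r$. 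Differentiating at $t=0$ then shows that $\widehat{W}$ is $r$-related to the generator $\widetilde{W}$ of $\widetilde{\phi}_{t}$, which is therefore the well-defined pushforward $r_{*}\widehat{W}$. Equivalently, one can verify projectability directly in the coordinates above: writing $W=\sum_{i}a_{i}\partial_{x_{i}}$, the condition $W\in\mathfrak{X}(L)^{\mathcal{F}_{L}}$ reads $\partial a_{i}/\partial x_{j}=0$ for $i>k$ and $j\le k$, which is exactly what makes the components of $\widehat{W}$ along the surviving coordinates $x_{1},\dots,x_{n},y_{1},\dots,y_{k}$ independent of $y_{k+1},\dots,y_{n}$.

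It then remains to see that $\widetilde{W}$ is Poisson, and here everything is formal. The cleanest route is to note that the $\widetilde{\phi}_{t}$ are Poisson diffeomorphisms of $(T^{*}\mathcal{F}_{L},\Pi_{can})$: since $\widehat{\phi}_{t}$ preserves $\Pi_{T^{*}L}$ and $r$ intertwines the two flows, the bivector $\Pi_{T^{*}L}=\widehat{\phi}_{t}{}_{*}\Pi_{T^{*}L}$ is $r$-related both to $\Pi_{can}$ and to $\widetilde{\phi}_{t}{}_{*}\Pi_{can}$; as $r$ is a surjective submersion these coincide, i.e. $\widetilde{\phi}_{t}{}_{*}\Pi_{can}=\Pi_{can}$, and differentiating yields $\pounds_{\widetilde{W}}\Pi_{can}=0$. (Alternatively, by naturality of the Schouten bracket under $r$-relatedness, the vanishing $[\widehat{W},\Pi_{T^{*}L}]=0$ pushes forward to $[\widetilde{W},\Pi_{can}]=0$.)

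For (ii), suppose $W\in\Gamma(T\mathcal{F}_{L})$. Then the momentum function descends: since $W$ is tangent to the leaves, $\ell_{W}(\xi)=\langle\xi,W\rangle$ depends only on $\xi|_{T\mathcal{F}_{L}}=r(\xi)$, so $\ell_{W}=r^{*}h_{W}$ for the fiberwise-linear function $h_{W}=\langle\cdot,W\rangle$ on $T^{*}\mathcal{F}_{L}$. Because $r$ is Poisson, the Hamiltonian vector field $\widehat{W}=\pm X_{r^{*}h_{W}}$ is $r$-related to $\pm X_{h_{W}}$; comparing with (i) and using uniqueness of the $r$-related field gives $\widetilde{W}=\pm X_{h_{W}}$, which is Hamiltonian on $(T^{*}\mathcal{F}_{L},\Pi_{can})$. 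The main obstacle is really just the identification in (i) of the foliation-preserving hypothesis with projectability of the cotangent lift; once $\widetilde{W}=r_{*}\widehat{W}$ is in hand, the Poisson property in (i) and the Hamiltonian refinement in (ii) are immediate consequences of $r$ being a Poisson map.
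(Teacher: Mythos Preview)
Your proof is correct, and part (ii) matches the paper's argument essentially verbatim. The difference lies in how projectability is established in (i). You argue via flows: since $\phi_{t}$ preserves $\mathcal{F}_{L}$, each $d\phi_{t}$ preserves $T\mathcal{F}_{L}$, so the cotangent lift $\widehat{\phi}_{t}$ descends through $r$, and differentiating at $t=0$ gives $r$-relatedness. The paper instead checks projectability algebraically, by verifying that the action of the cotangent lift $W_{T^{*}L}$ on functions preserves the subalgebra $r^{*}(C^{\infty}(T^{*}\mathcal{F}_{L}))$; it is enough to test fiberwise constant and fiberwise linear functions, and for the latter one computes $W_{T^{*}L}(r^{*}h_{X})=h_{[W,i(X)]}$ and invokes the hypothesis $[W,\Gamma(T\mathcal{F}_{L})]\subset\Gamma(T\mathcal{F}_{L})$ to conclude. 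Your flow-based approach is more geometric and makes the foliation-preserving condition visible at the level of diffeomorphisms; the paper's function-based argument avoids any mention of flows and works directly with the Poisson bracket, which dovetails with the commutative diagram used again in (ii). Both routes deduce the Poisson property of $\widetilde{W}$ from $r$ being a Poisson map, exactly as you do.
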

	\begin{proof}
		We denote by  $p_{T^{*}\mathcal{F}_{L}}\colon  T^*\mathcal{F}_{L}\to L$ and $p_{T^{*}L}:T^{*}L\to L$ the vector bundle projections.
		\begin{enumerate}[(i)]
			\item Let $W_{T^*L}\in \mathfrak{X}(T^*L)$ denote the cotangent lift of $W$. To show that it pushes forward via $r$, we need to show that its action on functions preserves $r^*(C^{\infty}(T^*\mathcal{F}_{L}))$. It suffices to consider fiberwise constant and fiberwise linear functions on $T^*\mathcal{F}_{L}$. The fiberwise constant ones are of the form $p_{T^{*}\mathcal{F}_{L}}^*g$ for $g\in C^{\infty}(L)$.
			Since $p_{T^*L}=p_{T^{*}\mathcal{F}_{L}} \circ r$, we have 
			\[
			W_{T^*L}(r^*(p_{T^{*}\mathcal{F}_{L}}^*g))=W_{T^*L}\left(p_{T^*L}^{*}g\right)=p_{T^*L}^*(W(g))=r^*(p_{T^{*}\mathcal{F}_{L}}^*(W(g))).
			\]
			Next, fiberwise linear functions on $T^{*}\mathcal{F}_{L}$ look like $h_{X}:T^{*}\mathcal{F}_{L}\rightarrow\mathbb{R}:(p,\alpha)\mapsto \langle \alpha,X(p)\rangle$ for $X\in\Gamma(T\mathcal{F}_{L})$. Clearly, one has a commutative diagram
			\[
			\begin{tikzcd}[column sep=large, row sep=large]
			C^{\infty}_{lin}(T^{*}\mathcal{F}_{L}) \arrow[hook]{r}{r^{*}} & C^{\infty}_{lin}(T^{*}L)\\
			\Gamma(T\mathcal{F}_{L})\arrow[hook]{r}{i}\arrow{u}{h_{\bullet}}&\Gamma(TL)\arrow{u}{h_{\bullet}}
			\end{tikzcd}.
			\]
			Recall that for the standard symplectic structure on $T^*L$, the Poisson bracket satisfies $\{h_{X},h_{Y}\}=-h_{[X,Y]}$, for $X,Y\in\Gamma(TL)$. Moreover, the cotangent lift $W_{T^{*}L}$ is minus the Hamiltonian vector field of $h_{W}$ (see e.g. \cite[\S2]{crampin}). So for $X\in\Gamma(T\mathcal{F}_{L})$ we get
			\[
			W_{T^{*}L}(r^{*}h_{X})=W_{T^{*}L}\left(h_{i(X)}\right)=-X_{h_{W}}\left(h_{i(X)}\right)=-\{h_{W},h_{i(X)}\}=h_{[W,i(X)]}.
			\]
			The vector field $[W,i(X)]$ lies in $\Gamma(T\mathcal{F}_{L})$ by assumption, so that $W_{T^{*}L}(r^{*}h_{X})$ lies in $r^*(C_{lin}^{\infty}(T^*\mathcal{F}_{L}))$. This shows that $W_{T^*L}$ pushes forward under $r$.
			
			\noindent
			The vector field $\widetilde{W}$ is Poisson since the cotangent lift $W_{T^*L}$ is a symplectic vector field and $r$ is a Poisson map.
			
			\item If $W$ lies in $\Gamma(T\mathcal{F}_{L})$, then we have  
			\[
			\widetilde{W}=r_{*}\big(i(W)\big)_{T^{*}L}=r_{*}\left(-X_{h_{i(W)}}\right)=-r_{*}\left(X_{r^{*}h_{W}}\right)=-X_{h_{W}},
			\]
			where in the second equality we used the above comment about Hamiltonian vector fields, and in the third the commutativity of the  diagram.\qedhere
		\end{enumerate}
	\end{proof}
	
	The rest of this section is devoted to the following theorem, which provides convenient representatives for   first Poisson cohomology classes, and its consequences.
	
	\begin{thm}\label{thm:modvfL}
		Let $(L,\mathcal{F}_{L})$ be a foliated manifold. Consider the standard Poisson structure $\Pi_{can}$ on the total space of the vector bundle $p\colon  T^*\mathcal{F}_{L}\to L$. Fix a class in $H^1_{\Pi_{can}}(T^*\mathcal{F}_{L})$. Then there exists a representative $Y\in \mathfrak{X}(T^*\mathcal{F}_{L})$ such that
		\begin{enumerate}[(i)]
			\item $Y$ is $p$-projectable and $p_*Y\in \mathfrak{X}(L)^{\mathcal{F}_{L}}$,
			\item the vector field\footnote{The lift $\widetilde{p_*Y}$ was defined in Lemma \ref{lem:lift}.}  $Y-\widetilde{p_*Y}$ is vertical and constant on each fiber of $p$, and  $Y-\widetilde{p_*Y}$ is closed when viewed as\footnote{I.e., when viewed as a section of $p:T^*\mathcal{F}_{L}\to L$.} a foliated 1-form on $(L,\mathcal{F}_{L})$.
		\end{enumerate}
		%Further:  $p_*Y\in \vX(L)^{\cFL}$, and  $Y-\widetilde{p_*Y}$ is closed when viewed as a foliated 1-form on $(L,\cFL)$ (i.e. as a section of $p$).
	\end{thm}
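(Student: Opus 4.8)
The plan is to begin with an arbitrary Poisson vector field representing the given class in $H^{1}_{\Pi_{can}}(T^{*}\mathcal{F}_{L})$ and to correct it in two stages: first by a Hamiltonian vector field, then by the lift $\widetilde{(\cdot)}$ of Lemma \ref{lem:lift}. Neither correction changes the Poisson cohomology class, since $\widetilde{(\cdot)}$ produces Poisson vector fields by Lemma \ref{lem:lift}(i). I work in a foliated chart $(x_{1},\dots,x_{k},x_{k+1},\dots,x_{n})$ on $L$ with $T\mathcal{F}_{L}=\mathrm{span}(\partial_{x_{1}},\dots,\partial_{x_{k}})$ and induced fiber coordinates $(y_{1},\dots,y_{k})$ on $T^{*}\mathcal{F}_{L}$, so that $\Pi_{can}=\sum_{i=1}^{k}\partial_{x_{i}}\wedge\partial_{y_{i}}$ as in Lemma \ref{diff}; the properties (i)--(ii) are coordinate independent, so it is enough to verify them chart by chart for globally constructed vector fields.

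First I record the structural consequences of $\mathcal{L}_{Y}\Pi_{can}=0$ for a Poisson vector field $Y$. Writing this out componentwise yields two facts: (a) the transverse horizontal components $Y^{x_{a}}$ with $a>k$ depend only on the transverse coordinates $(x_{k+1},\dots,x_{n})$, which is the statement that $Y$ is foliate for the symplectic foliation $p^{*}\mathcal{F}_{L}$; and (b) the leafwise horizontal components satisfy $\partial_{y_{j}}Y^{x_{i}}=\partial_{y_{i}}Y^{x_{j}}$ for $i,j\le k$, i.e. the fiberwise $1$-form $\sum_{i\le k}Y^{x_{i}}\,dy_{i}$ is closed on each fiber of $p$.

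Stage one makes the representative $p$-projectable. Using (b) and the fiberwise Poincar\'e lemma, I construct a smooth function $h$ via the standard homotopy operator integrating along the rays of the fibers, after subtracting the value of the leafwise horizontal components on the zero section, so that $\partial_{y_{i}}h=Y^{x_{i}}-Y^{x_{i}}|_{L}$ for $i\le k$. Contractibility of the fibers of $p$ and smooth dependence of the homotopy operator on the base point ensure that $h$ is globally defined. Then $Y_{1}:=Y-X_{h}$ is Poisson, has leafwise horizontal components depending on $x$ only, and by (a) transverse horizontal components depending on $x$ only; hence $Y_{1}$ is $p$-projectable. Since $Y_{1}$ is foliate and projects to $W:=p_{*}Y_{1}$, the flow of $W$ preserves $\mathcal{F}_{L}$, so $W\in\mathfrak{X}(L)^{\mathcal{F}_{L}}$; this is property (i).

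Stage two analyzes the remainder $Z:=Y_{1}-\widetilde{W}$. As $\widetilde{W}$ is Poisson and $p$-projects to $W$ (the cotangent lift projects to $W$ and $r$ covers $\mathrm{id}_{L}$), $Z$ is a Poisson vector field with $p_{*}Z=0$, hence vertical: $Z=\sum_{l\le k}C_{l}\,\partial_{y_{l}}$. Substituting this ansatz into $\mathcal{L}_{Z}\Pi_{can}=0$ forces $\partial_{y_{i}}C_{l}=0$ from the mixed components, so each $C_{l}$ is constant on the fibers, and $\partial_{x_{i}}C_{l}=\partial_{x_{l}}C_{i}$ (for $i,l\le k$) from the purely vertical components. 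Interpreting $Z$ as the foliated $1$-form $\sum_{l\le k}C_{l}\,dx_{l}$, the latter relations are exactly $d_{\mathcal{F}_{L}}Z=0$. Thus $Z=Y_{1}-\widetilde{p_{*}Y_{1}}$ is vertical, fiberwise constant, and foliated-closed, giving property (ii), and $Y:=Y_{1}$ is the desired representative. I expect the main difficulty to lie in Stage one: upgrading the fiberwise integrability (b) to a single globally defined primitive $h$, using contractibility of the fibers together with smoothness in the base, so that $p_{*}Y_{1}$ is a genuine vector field on $L$; the rigidity extracted in Stage two is a short computation once the Poisson equation is written out.
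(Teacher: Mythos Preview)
Your proof is correct and follows essentially the same strategy as the paper's: correct an arbitrary Poisson representative by a Hamiltonian vector field and by the lift $\widetilde{(\cdot)}$, then identify the vertical remainder using the Poisson condition. Your Stage~2 is exactly the coordinate version of Lemma~\ref{lem:cotg}(ii).

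The one genuine difference is the order of the two corrections. The paper first restricts $U$ to the zero section to get $U_0\in\mathfrak{X}(L)^{\mathcal F_L}$, subtracts the lift $\widetilde{U_0}$ so that $U-\widetilde{U_0}$ is tangent to the symplectic leaves, and only then applies the fiberwise Poincar\'e lemma to the leafwise $1$-form $\iota_{U-\widetilde{U_0}}\omega$ (Lemma~\ref{lem:cotg}(i)). You instead produce $h$ directly from $Y$. Since your $W=p_*Y_1$ coincides with the paper's $U_0$ (because your normalization forces $(d_{\mathrm{fiber}}h)|_L=0$, so $X_h|_L$ is vertical), the two routes yield the same $h$ and the same final representative. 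What the paper's ordering buys is a cleaner justification of the point you flag as the main difficulty: once $U-\widetilde{U_0}$ is leaf-tangent, $\iota_{U-\widetilde{U_0}}\omega$ is an invariantly defined foliated $1$-form whose restriction to each $p$-fiber is closed, so the global primitive $h$ comes for free from the foliated relative Poincar\'e lemma. In your ordering the fiberwise $1$-form $\sum_{i\le k}(Y^{x_i}-Y^{x_i}|_L)\,dy_i$ is a priori only a chart-wise expression; contractibility of the fibers and smoothness of the homotopy operator do not by themselves make the local $h$'s agree on overlaps. What makes it work is your fact~(a): the image of $\bar Y:=dp\circ Y$ in $p^*(TL/T\mathcal F_L)$ is constant along fibers, so $\bar Y(\xi)-\bar Y(0_{p(\xi)})$ actually lands in $T_{p(\xi)}\mathcal F_L\cong V^*_\xi$, giving an invariantly defined fiberwise closed $1$-form. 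With that observation your argument is complete.
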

	Notice that given a class in $H^1_{\Pi_{can}}(T^*\mathcal{F}_{L})$, a representative $Y$ as in Theorem \ref{thm:modvfL} is by no means unique: adding to $Y$ a Hamiltonian vector field of the form $\widetilde{W}+X_{p^{*}g}$ for $W\in\Gamma(T\mathcal{F}_{L})$ and $g\in C^{\infty}(L)$ gives a representative of the same class that still satisfies the requirements of Theorem \ref{thm:modvfL} (see Corollary \ref{cor:isocoho} below).
	\begin{ex}
		Consider the plane $L=\RR^2$  with coordinates $x,y$, and the foliation $\mathcal{F}_{L}$ given by the lines $\{x=const\}$. Then $T^*\mathcal{F}_{L}$ is $\RR^3$ with coordinates $x,y,z$, with vector bundle projection $p=(x,y)\colon \RR^3\to \RR^2$ and Poisson structure $\Pi_{can}= \pd{y}\wedge\pd{z}$.
		An arbitrary Poisson vector field has the form $$U=f(x)\pd{x}+g\pd{y}+k\pd{z},$$ where $g,k\in C^{\infty}(\RR^3)$ satisfy $\pd{y}g=-\pd{z}k$.  
		This vector field is not $p$-projectable in general, because $g$ might depend on $z$. However, defining $h(x,y,z):=\int_0^z g(x,y,t)dt$, we obtain a function on $\RR^3$ such that 
		$$Y:=U+X_h=f(x)\pd{x}+(k+\pd{y}h)\pd{z}$$ is $p$-projectable. Notice that 
		$p_*Y=f(x)\pd{x}$ lies in $\mathfrak{X}(L)^{\mathcal{F}_{L}}$.
		Moreover, since the partial
		derivative
		$\partial_{z}(k+\pd{y}h)$ vanishes, the vertical vector field $V:=(k+\pd{y}h)\pd{z}$ is indeed   
		constant on each fiber of $p$. Regarding $V$ as a foliated 1-form on $(L,\mathcal{F}_{L})$ yields $(k+\pd{y}h)dy$, which is closed due to dimension reasons. 
	\end{ex}
	
	To prove Theorem \ref{thm:modvfL}, we need a few general statements about cotangent bundles.
	
	\begin{lemma}\label{lem:cotg}
		Let $N$ be a manifold. Consider its cotangent bundle $T^*N$ with the standard symplectic form $\omega$ and bundle projection $p_{T^*N}$.
		\begin{enumerate}[(i)]
			\item Let $Y\in \mathfrak{X}(T^*N)$ be a
			symplectic vector field\footnote{Part $(i)$ of the lemma holds more generally whenever the pullback of $\iota_{Y}\omega$ to each fiber of $p_{T^{*}N}$ is closed.}. Then there is $h\in C^{\infty}(T^*N)$ such that $Y+X_h$ is a vertical vector field.  
			
			\item Let $V\in \mathfrak{X}(T^*N)$ be a vertical symplectic vector field. Then $V$ must be constant on each fiber. It is closed when viewed as an element of $\Gamma(T^*N)=\Omega^1(N)$.
		\end{enumerate}
		\begin{proof}
			\begin{enumerate}[(i)]
				\item Consider the foliation $\cF_{fiber}$ of $T^*N$ by fibers of the projection $p_{T^*N}$. Denote by $i_{fiber}$ the inclusion of its tangent distribution into the tangent bundle of $T^*N$. Since the 1-form $\iota_Y\omega\in \Omega^1(T^*N)$ is closed, its pullback $i_{fiber}^{*}(\iota_Y\omega)$ is closed as a foliated $1$-form. It is foliated exact, as the leaves of $\cF_{fiber}$ are just fibers of a vector bundle  (choosing the primitives on each fiber to vanish on the zero section, they assemble to a smooth function on $T^{*}N$, c.f. Lemma \ref{poin}).
				So $i_{fiber}^*(\iota_Y\omega)$ equals $d_{\cF_{fiber}}h$ for some  $h\in C^{\infty}(T^*N)$, which implies that 
				$\iota_Y\omega-dh\in \Omega^1(T^*N)$ pulls back to zero under $i_{fiber}$. As the fibers are Lagrangian, this means that $\omega^{-1}(\iota_Y\omega-dh)=Y+X_h$ 			is a vertical vector field on $T^*N$.
				
				\item The 1-form $\iota_V\omega$ is closed because $V$ is a symplectic vector field. For any vertical vector field $W$ we have $\iota_W\iota_V\omega=0$ and $\cL_W(\iota_V\omega)=0$, so 
				$\iota_V\omega=-p_{T^*N}^*\alpha$ for a unique, closed $\alpha\in \Omega^1(N)$.
				Writing in local coordinates $\alpha=\sum_i f_i(q)dq_i$, in the corresponding canonical coordinates on $T^*N$ we have $$V=-(\omega^{-1})(p_{T^*N}^*\alpha)=\sum_i f_i(q)\pd{p_i},$$ 
				%\frac{\partial}{\partial{p_i}}
				showing that $V$ is constant along the fibers.  This formula also shows that $V$, regarded as an element of $\Gamma(T^*N)=\Omega^1(N)$, is precisely the closed 1-form $\alpha$.
			\end{enumerate}
		\end{proof}
	\end{lemma}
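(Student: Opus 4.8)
The plan is to prove the two parts separately, both times exploiting that the fibers of $p_{T^*N}\colon T^*N\to N$ are Lagrangian submanifolds of $(T^*N,\omega)$.

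For part $(i)$, I would first record that a symplectic vector field $Y$ has $\iota_Y\omega$ closed, since $d\,\iota_Y\omega=\pounds_Y\omega=0$. Let $\cF_{fiber}$ denote the foliation of $T^*N$ by the fibers of $p_{T^*N}$, with $i_{fiber}$ the inclusion of its tangent distribution. I would then restrict $\iota_Y\omega$ to the fibers: the foliated $1$-form $i_{fiber}^*(\iota_Y\omega)$ is $d_{\cF_{fiber}}$-closed because $\iota_Y\omega$ is closed. Since each leaf of $\cF_{fiber}$ is a vector-space fiber, hence contractible, this foliated $1$-form is foliated exact; choosing on each fiber the primitive that vanishes at the point of the zero section, these primitives assemble into a single smooth function $h\in C^\infty(T^*N)$ with $d_{\cF_{fiber}}h=i_{fiber}^*(\iota_Y\omega)$. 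The smoothness and the gluing of the fiberwise primitives is exactly the content of the foliated relative Poincar\'e lemma (Lemma \ref{poin}), applied to the vector bundle $T^*N\to N$. Consequently $\iota_Y\omega-dh$ pulls back to zero under $i_{fiber}$, i.e.\ it annihilates every fiber-tangent vector. Because the fibers are Lagrangian, a tangent vector to $T^*N$ is vertical precisely when its $\omega$-pairing with all vertical vectors vanishes, so $\omega^{-1}(\iota_Y\omega-dh)=Y+X_h$ is vertical, as desired.

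For part $(ii)$, I would show that $\iota_V\omega$ is a \emph{basic} $1$-form, i.e.\ a pullback from $N$. First, for any vertical vector field $W$ one has $\iota_W\iota_V\omega=\omega(V,W)=0$, since $V$ and $W$ are both vertical and the fibers are Lagrangian; thus $\iota_V\omega$ is semibasic. Second, $V$ being symplectic means $\iota_V\omega$ is closed, so for vertical $W$ the Cartan formula gives $\pounds_W(\iota_V\omega)=\iota_W\,d(\iota_V\omega)+d(\iota_W\iota_V\omega)=0$; hence $\iota_V\omega$ is also invariant along the fibers. Semibasic together with fiberwise invariance yields $\iota_V\omega=-p_{T^*N}^*\alpha$ for a unique $\alpha\in\Omega^1(N)$. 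Since $d(\iota_V\omega)=0$ and $p_{T^*N}^*$ is injective and commutes with $d$, the form $\alpha$ is closed. To extract the stated conclusion I would pass to canonical coordinates $(q_i,p_i)$ with $\omega=\sum_i dq_i\wedge dp_i$: writing $\alpha=\sum_i f_i(q)\,dq_i$ and inverting $\omega$ gives $V=-\omega^{-1}(p_{T^*N}^*\alpha)=\sum_i f_i(q)\,\partial_{p_i}$, whose coefficients depend only on the base coordinates, exhibiting $V$ as constant along each fiber. Under the canonical identification of a fiberwise-constant vertical vector field with a section of $T^*N\to N$, the field $V$ corresponds precisely to $\alpha$, which is closed.

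The main obstacle I anticipate is bookkeeping rather than conceptual. In part $(i)$ one must be careful that the fiberwise primitives genuinely glue to a \emph{smooth} function on the total space---this is why I invoke Lemma \ref{poin} rather than integrating by hand---and one must keep the sign conventions for $X_h$ and for $\omega^{-1}$ consistent so that the combination $Y+X_h$ (and not $Y-X_h$) comes out vertical. In part $(ii)$ the only delicate point is justifying the identification ``vertical fiberwise-constant field $\leftrightarrow$ $1$-form on $N$'', which is canonical for a vector bundle (the vertical bundle being canonically the pullback of the bundle itself) but should be stated explicitly, so that the closedness of $\alpha$ transfers unambiguously to the asserted closedness of $V$ as an element of $\Omega^1(N)$.
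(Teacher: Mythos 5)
Your proof is correct and follows essentially the same route as the paper's: part (i) via the foliated relative Poincar\'e lemma applied to the fiber foliation plus the Lagrangian condition on the fibers, and part (ii) via showing $\iota_V\omega$ is basic and computing in canonical coordinates. The extra detail you supply (the Cartan-formula computation of $\pounds_W(\iota_V\omega)$ and the remark that Lagrangian fibers make ``vertical'' equivalent to ``$\omega$-orthogonal to vertical'') merely makes explicit steps the paper leaves implicit.
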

	
	\begin{proof}[Proof of Thm. \ref{thm:modvfL}]
		Let $U\in \mathfrak{X}(T^*\mathcal{F}_{L})$ be any representative of the given class in $H^1_{\Pi_{can}}(T^*\mathcal{F}_{L})$. Let $U_0\in \mathfrak{X}(L)$ be given by $(U_0)(x):=(d_xp)(U(x))$ at each point $x\in L$. So $(U_{0})(x)$ is just the $T_xL$-component of $U(x)$ w.r.t. the canonical splitting $T_x(T^*\mathcal{F}_{L})=T_xL\oplus T_x^*\mathcal{F}_{L}$.
		
		We first show that $U_0\in \mathfrak{X}(L)^{\mathcal{F}_{L}}$. Since this is a local statement, it suffices to consider
		open subsets of $L$ whose quotient by the restriction of $\mathcal{F}_{L}$ is a smooth manifold and show that the restriction of $U_0$ projects to a vector field on the leaf space.
		%on which the foliation $\cFL$ is simple.
		By abuse of notation, we denote such an open subset by $L$. Since the  leaves of the symplectic foliation $\cF_{sympl}$ of  $T^*\mathcal{F}_{L}$ are the preimages under $p$ of the leaves of $\mathcal{F}_{L}$, there is a canonical
		diffeomorphism of leaf spaces
		$$T^*\mathcal{F}_{L}/ \cF_{sympl}\cong L/\mathcal{F}_{L},$$
		induced  by the vector bundle projection $p\colon  T^*\mathcal{F}_{L}\to L$ (or equivalently, by the inclusion of the zero section).
		Since $U$ is a Poisson vector field, it projects under $T^*\mathcal{F}_{L} \to T^*\mathcal{F}_{L}/ \cF_{sympl}$ to some vector field $U_{quot}$. Restricting to points of the zero section $L$, we see that $U_0$
		is projectable  under $L\to L/\mathcal{F}_{L}$ (to the same vector field $U_{quot}$). 
		
		By Lemma \ref{lem:lift}, $U_0$ lifts to a Poisson vector field  $\widetilde{U_0}$ on $T^*\mathcal{F}_{L}$.
		The Poisson vector field $U-\widetilde{U_0}$ is tangent to the symplectic foliation on $T^*\mathcal{F}_{L}$. Indeed, since the statement is a local one, we can again work on suitable open subsets of $L$ and use that both $U$ and $\widetilde{U_0}$ are projectable to the same vector field under $T^*\mathcal{F}_{L} \to T^*\mathcal{F}_{L}/ \cF_{sympl}$.
		
		We now apply Lemma \ref{lem:cotg} $i)$ smoothly to the leaves of $\mathcal{F}_{sympl}$ and the vector field $U-\widetilde{U_0}$. More precisely, by the proof of  Lemma \ref{lem:cotg} $i)$, if $\omega$ denotes the leafwise symplectic form on $T^{*}\mathcal{F}_{L}$, then we find a function $h\in C^{\infty}(T^{*}\mathcal{F}_{L})$ such that the pullback of
		$
		\iota_{U-\widetilde{U_0}}\omega-d_{\cF_{sympl}}h
		$
		to the fibers of $p$ is zero. It follows that
		\[
		-\Pi_{can}^{\sharp}\left(\iota_{U-\widetilde{U_0}}\omega-d_{\cF_{sympl}}h\right)=U-\widetilde{U_0}+X_{h}
		\]
		is vertical, i.e. tangent to the $p$-fibers.
		This has two consequences. First, we can apply Lemma \ref{lem:cotg} ii) to conclude that this vector field is constant on each fiber, and is  closed when viewed as a foliated 1-form on $(L,\mathcal{F}_{L})$. 
		Second, $U+X_h$ is $p$-projectable
		% (because  $\widetilde{U_0}$ is),
		and it projects to the same vector field as $\widetilde{U_0}$, namely $U_0\in \mathfrak{X}(L)$. Hence $Y:=U+X_h$ is a representative of the class  $H^1_{\Pi_{can}}(T^*\mathcal{F}_{L})$ with the required properties.
	\end{proof}
	
	\subsection{The first Poisson cohomology}
	\leavevmode
	\vspace{0.1cm}
	
	Using Theorem \ref{thm:modvfL}, we can compute the first Poisson cohomology group of $\left(T^{*}\mathcal{F}_{L},\Pi_{can}\right)$. In the following, $H^{\bullet}(\mathcal{F}_{L})$ denotes the cohomology of the foliated differential forms along the leaves of $\mathcal{F}_{L}$.
	
	\begin{cor}\label{cor:isocoho}
		Let $(L,\mathcal{F}_{L})$ be a foliated manifold and denote by $\Pi_{can}$ the standard Poisson structure on the total space of the vector bundle $p\colon  T^*\mathcal{F}_{L}\to L$.
		There is a linear isomorphism
		\begin{align}\label{iso}
		\Phi\colon H^1_{\Pi_{can}}(T^*\mathcal{F}_{L})&\to \mathfrak{X}(L)^{\mathcal{F}_{L}}/\Gamma(T\mathcal{F}_{L}) \times H^1(\mathcal{F}_{L})\nonumber\\
		[Y]&\mapsto \;\;\;\;\big([p_*Y],[Y-\widetilde{p_*Y}]\big),
		\end{align}
		where the representative $Y$ satisfies the properties in Thm. \ref{thm:modvfL}. 
	\end{cor}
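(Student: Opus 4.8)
The map $\Phi$ sends a class to the pair $([p_*Y],[Y-\widetilde{p_*Y}])$ computed from any representative $Y$ in the normal form of Theorem \ref{thm:modvfL}; since such a $Y$ always exists, the plan is to verify in turn that $\Phi$ is well defined, linear, injective and surjective. The two recurring tools will be Lemma \ref{lem:lift} (every $W\in\Gamma(T\mathcal{F}_{L})$ lifts to a Hamiltonian $\widetilde{W}=-X_{h_W}$, while every $W\in\mathfrak{X}(L)^{\mathcal{F}_{L}}$ lifts to a Poisson $\widetilde{W}$ that projects to $W$) together with the leafwise description of $\Pi_{can}$ in \eqref{decomposition} and the coordinate computation underlying Lemma \ref{lem:cotg}(ii), which identifies vertical fiberwise-constant Poisson vector fields on $T^{*}\mathcal{F}_{L}$ with \emph{closed} foliated $1$-forms in $\Omega^1(\mathcal{F}_{L})$.

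For well-definedness I would take two normal-form representatives $Y,Y'$ of the same class, so that $Y-Y'=X_F$ is Hamiltonian. Since $Y,Y'$ are $p$-projectable, so is $X_F$, and because $X_F$ is tangent to the symplectic leaves $T^{*}\mathcal{O}=p^{-1}(\mathcal{O})$, its projection $p_*Y-p_*Y'=p_*X_F$ lies in $\Gamma(T\mathcal{F}_{L})$; this settles the first component. For the second, writing $W:=p_*Y-p_*Y'\in\Gamma(T\mathcal{F}_{L})$ and invoking Lemma \ref{lem:lift}(ii), the difference of the two foliated $1$-forms is
\[
(Y-\widetilde{p_*Y})-(Y'-\widetilde{p_*Y'})=X_F-\widetilde{W}=X_{F+h_W},
\]
a Hamiltonian vector field that is at the same time vertical and fiberwise constant. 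The key observation will be that a vertical Hamiltonian $X_G$ forces $G$ to be fiberwise constant on every leaf $T^{*}\mathcal{O}$, hence $G=p^{*}g$ for a smooth $g\in C^{\infty}(L)$, and then $X_{p^{*}g}$ corresponds to the foliated-exact form $-d_{\mathcal{F}_{L}}g$. Thus the two foliated $1$-forms agree in $H^1(\mathcal{F}_{L})$. Linearity is then immediate, since the sum of two normal-form representatives is again in normal form and both $p_*$ and $W\mapsto\widetilde{W}$ are linear.

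Injectivity runs the same circle of ideas backwards: if $p_*Y=W\in\Gamma(T\mathcal{F}_{L})$ then $\widetilde{p_*Y}=\widetilde{W}$ is Hamiltonian by Lemma \ref{lem:lift}(ii), and if $Y-\widetilde{p_*Y}=d_{\mathcal{F}_{L}}g$ is foliated-exact then, reversing the identification above, it equals $-X_{p^{*}g}$; hence $Y$ is a sum of Hamiltonian vector fields and $[Y]=0$. For surjectivity I would build a representative by hand: given $W\in\mathfrak{X}(L)^{\mathcal{F}_{L}}$ and a closed $\eta\in\Omega^1(\mathcal{F}_{L})$, set $Y:=\widetilde{W}+\eta$, viewing $\eta$ as a vertical fiberwise-constant vector field. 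By Lemma \ref{lem:lift}(i) and the coordinate computation of Lemma \ref{lem:cotg}(ii), $Y$ is Poisson; it is manifestly in normal form with $p_*Y=W$ and $Y-\widetilde{p_*Y}=\eta$, so $\Phi([Y])=([W],[\eta])$.

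The hard part will be the second component of well-definedness: one must pass from the abstract equality $X_F-\widetilde{W}=X_{F+h_W}$ to the concrete conclusion that this vertical vector field is foliated-exact. This hinges on two facts that deserve care: (a) verticality of $X_G$ globalizes ``fiberwise constant on each leaf'' to a genuine $G=p^{*}g$ on all of $T^{*}\mathcal{F}_{L}$; and (b) under the identification of vertical fiberwise-constant fields with foliated $1$-forms, the Hamiltonians $X_{p^{*}g}$ correspond exactly to the foliated coboundaries $\pm d_{\mathcal{F}_{L}}g$ — precisely what makes the second factor land in $H^1(\mathcal{F}_{L})$ rather than merely among the closed foliated forms.
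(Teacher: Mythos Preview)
Your proposal is correct and follows essentially the same approach as the paper. The only cosmetic difference is that the paper reduces well-definedness, via linearity, to showing that a single Hamiltonian normal-form representative maps to zero, whereas you compare two representatives directly; the underlying computation (verticality of a Hamiltonian forces the Hamiltonian function to be basic, hence the associated foliated $1$-form is exact) is the same in both arguments.
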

	
	Notice that  $\mathfrak{X}(L)^{\mathcal{F}_{L}}/\Gamma(T\mathcal{F}_{L})$ agrees with the space of vector fields on $L/\mathcal{F}_{L}$, whenever the latter quotient  is smooth. {An alternative description is the following: $\mathfrak{X}(L)^{\mathcal{F}_{L}}/\Gamma(T\mathcal{F}_{L})$ is naturally isomorphic with $H^{0}(\mathcal{F}_{L},\nu)$, the zero-th foliated cohomology of $\mathcal{F}_{L}$ with coefficients in the normal bundle $\nu=TL/T\mathcal{F}_{L}$ equipped with the Bott connection.}
	
	\begin{proof}
		We first show that the map $\Phi$ is well-defined. For this, due to Thm. \ref{thm:modvfL}, we only need to show that the above assignment is independent of the choice of representative. Equivalently, since the expression in \eqref{iso} depends linearly on $Y$, we have to show that if $Y$ is a Hamiltonian vector field on $T^*\mathcal{F}_{L}$ satisfying the properties in Thm. \ref{thm:modvfL}, then 
		$p_*Y$ lies in $\Gamma(T\mathcal{F}_{L})$ and $Y-\widetilde{p_*Y}$ is exact when viewed as a foliated 1-form on $(L,\mathcal{F}_{L})$.
		
		Being Hamiltonian, $Y$ is tangent to the symplectic foliation of $T^*\mathcal{F}_{L}$, so $p_*Y$ is tangent to the foliation $\mathcal{F}_{L}$. Hence $\widetilde{p_*Y}$ is a Hamiltonian vector field, by Lemma \ref{lem:lift}. 
		Being the difference of two Hamiltonian vector fields, the vertical and fiberwise constant vector field $V:=Y-\widetilde{p_*Y}$ is Hamiltonian. Denote by $F\in C^{\infty}( T^*\mathcal{F}_{L})$ a Hamiltonian function for $V$, so that for  each leaf $\cO$ of $\mathcal{F}_{L}$ we have
		$\iota_V\omega_{T^*\cO}=-d(F|_{T^*\cO})$.
		% on $p^{-1}(\cO)=T^*\cO$.
		Regarding the vertical constant vector field $V$ as a foliated 1-form yields $\alpha\in \Omega^1(\mathcal{F}_{L})$, determined by 
		\begin{equation}\label{eq:Valpha}
		\iota_V\omega_{T^*\cO}=-p_{T^*\cO}^*(\alpha|_{\cO}),
		\end{equation}
		see the proof of Lemma \ref{lem:cotg} $(ii)$.
		In particular, $F$ is constant along the fibers of $p\colon  T^*\mathcal{F}_{L}\to L$, i.e. $F=p^*(F|_L)$.
		Thus $\alpha=d_{\mathcal{F}_{L}}F$, showing that it is foliated exact.
		
		We show that $\Phi$ is surjective. Let $W\in \mathfrak{X}(L)^{\mathcal{F}_{L}}$.
		Then its lift $\widetilde{W}$ is a Poisson vector field on $T^*\mathcal{F}_{L}$, by Lemma \ref{lem:lift} $i)$.
		Let $\alpha\in \Omega^1(\mathcal{F}_{L})$ be a closed foliated 1-form. Denote by $V$  the corresponding vertical fiberwise constant vector field on  $T^*\mathcal{F}_{L}$. 
		Then $V$ is a Poisson vector field, because it is tangent to the symplectic leaves of $T^*\mathcal{F}_{L}$ and its restriction to each symplectic leaf is a symplectic vector field, by eq. \eqref{eq:Valpha}.
		Hence $\widetilde{W}+V$ is a Poisson vector field on $T^*\mathcal{F}_{L}$. By construction it satisfies the properties of Thm. \ref{thm:modvfL}, and its Poisson cohomology class maps under $\Phi$ to $([W],[\alpha])$.
		
		We show that $\Phi$ is injective. Let $Y$ be a Poisson vector field on $T^*\mathcal{F}_{L}$ satisfying the properties in Thm. \ref{thm:modvfL},   so that $p_*Y$ lies in $\Gamma(T\mathcal{F}_{L})$ and $V:=Y-\widetilde{p_*Y}$ is exact when viewed as a foliated 1-form on $(L,\mathcal{F}_{L})$.
		By Lemma \ref{lem:lift} $ii)$, $\widetilde{p_*Y}$ is a Hamiltonian vector field.
		Let $\alpha=d_{\mathcal{F}_{L}}f\in \Omega^1(\mathcal{F}_{L})$ be the exact foliated 1-form corresponding to $V$, where $f\in C^{\infty}(L)$. Then eq. \eqref{eq:Valpha} implies that $V=\Pi_{can}^{\sharp}(p^*(df))$, showing that $V$ is  a Hamiltonian vector field. Hence $Y=\widetilde{p_*Y}+V$ is Hamiltonian, so $[Y]=0$.
	\end{proof}

	We discuss the isomorphism \eqref{iso} in two particular cases.
	
	\begin{ex}
		\begin{enumerate}[i)]
			\item 	Suppose $\mathcal{F}_{L}$ is the foliation of $L$ by points. Then $T^*\mathcal{F}_{L}$ is just $L$ with the zero Poisson structure, and the map $\Phi$ is just the identity on $\mathfrak{X}(L)$.  
			\item 	On the other extreme, suppose $\mathcal{F}_{L}$ is the one-leaf foliation of $L$. Then $T^*\mathcal{F}_{L}$ is the cotangent bundle $T^*L$ with its standard symplectic form, and  	
			$$\Phi\colon  H^1_{\Pi_{can}}(T^*L)\to  H^1(L).$$ Since $\Phi$ is an   isomorphism, every class in $H^1_{\Pi_{can}}(T^*L)$ admits a representative $V$ which is a vertical fiberwise constant vector field (c.f. Lemma \ref{lem:cotg}). The image of this class under $\Phi$ is $[\alpha]\in H^1(L)$, where $\alpha$ is just $V$ regarded as a 1-form. The inverse map $\Phi^{-1}$ reads $[\alpha]\mapsto -[\omega^{-1} (p^*\alpha)]$, by eq. \eqref{eq:Valpha}, i.e. it is the composition of the natural isomorphism $p^*\colon H^1(L)\to H^1(T^*L)$ and the isomorphism $H^1(T^*L)\cong  H^1_{\Pi_{can}}(T^*L)$ from de Rham to Poisson cohomology carried on every symplectic manifold.
		\end{enumerate}
	\end{ex}

	\begin{remark}\label{rem:codim1PoisCoho}
		In case the foliation $\mathcal{F}_{L}$ on $L$ is of codimension-one,  we can compare our Corollary \ref{cor:isocoho} with some results that appeared in \cite{Osorno}.
		\begin{enumerate}[i)]
			\item In \cite[Prop. 1.4.7]{Osorno}, one proves the following: if $(M,\Pi)$ is a corank-one Poisson manifold and $(\mathcal{F},\omega)$ denotes its symplectic foliation, then there is a long exact sequence
			\begin{equation}\label{sequence}
			\cdots\rightarrow H^{k-2}(\mathcal{F},\nu)\overset{\mathfrak{d}}{\rightarrow} H^{k}(\mathcal{F})\overset{\Pi}{\longrightarrow} H^{k}_{\Pi}(M)\rightarrow H^{k-1}(\mathcal{F},\nu)\overset{\mathfrak{d}}{\rightarrow} H^{k+1}(\mathcal{F})\rightarrow\cdots
			\end{equation}
			Here $\nu:=TM/T\mathcal{F}$ denotes the normal bundle of the foliation, and $H^{\bullet}(\mathcal{F},\nu)$ is the cohomology of the complex $\big(\Gamma(\wedge^{\bullet}T^{*}\mathcal{F}\otimes\nu),d_{\nabla}\big),$ where the differential $d_{\nabla}$ is induced by the Bott connection
			\[
			\nabla:\Gamma(T\mathcal{F})\times\Gamma(\nu)\rightarrow\Gamma(\nu):\nabla_{X}\overline{N}=\overline{[X,N]}.
			\]
			The connecting map $\mathfrak{d}$ is, up to sign, given by the cup product with the leafwise variation $var_{\omega}\in H^{2}(\mathcal{F},\nu^{*})$ of $\omega$ \cite[Def. 1.2.14]{Osorno}, which vanishes when $\omega$ extends to a globally defined closed $2$-form on $M$.
			
			Specializing to our situation, assume $(L,\mathcal{F}_{L})$ is a codimension-one foliation. Then $(T^{*}\mathcal{F}_{L},\Pi_{can})$ is a corank-one Poisson manifold with symplectic foliation $(\mathcal{F}_{sympl},\omega)$. The leafwise symplectic form $\omega\in\Gamma(\wedge^{2}T^{*}\mathcal{F}_{sympl})$ extends to a closed $2$-form on $T^{*}\mathcal{F}_{L}$. Indeed, a closed extension of $\omega$ is given by $q^{*}\omega_{T^{*}L}$, where $q:T^{*}\mathcal{F}_{L}\rightarrow T^{*}L$ is any splitting of the restriction map $r:T^{*}L\rightarrow T^{*}\mathcal{F}_{L}$ and $\omega_{T^{*}L}$ is the canonical symplectic form on $T^{*}L$. So the connecting map $\mathfrak{d}$ in \eqref{sequence} is zero, which implies in particular that
			\begin{equation}\label{picoh}
			H_{\Pi_{can}}^{1}\left(T^{*}\mathcal{F}_{L}\right)\cong H^{1}(\mathcal{F}_{sympl})\oplus H^{0}(\mathcal{F}_{sympl},\nu).
			\end{equation}
			This is equivalent with our isomorphism in Corollary \ref{cor:isocoho}. Firstly, $H^{1}(\mathcal{F}_{sympl})\cong H^{1}(\mathcal{F}_{L})$ by homotopy invariance. Secondly, 
			as $H^{0}(\mathcal{F}_{sympl},\nu)=\mathfrak{X}(T^{*}\mathcal{F}_{L})^{\mathcal{F}_{sympl}}/\Gamma(T\mathcal{F}_{sympl})$,
			%consists of flat sections of the normal bundle to $\mathcal{F}_{sympl}$}
			we have an isomorphism
			\[
			\mathfrak{X}(L)^{\mathcal{F}_{L}}/\Gamma(T\mathcal{F}_{L})\rightarrow H^{0}(\mathcal{F}_{sympl},\nu):[X]\mapsto \overline{\widetilde{X}},
			\]
			where $\widetilde{X}$ is the lift of $X$ as defined in Lemma \ref{lem:lift}. To see that this map is well-defined, just note that $\widetilde{X}\in\mathfrak{X}(T^{*}\mathcal{F}_{L})^{\mathcal{F}_{sympl}}$, being a Poisson vector field. Injectivity is clear, for if $\widetilde{X}$ is tangent to $\mathcal{F}_{sympl}$, then its projection $p_{*}\widetilde{X}=X$ is tangent to $\mathcal{F}_{L}$. As for surjectivity, if $\overline{U}\in H^{0}(\mathcal{F}_{sympl},\nu)$ then $\overline{U}=\overline{\widetilde{U_0}}$ as in the proof of Theorem \ref{thm:modvfL}, where $U_{0}\in\mathfrak{X}(L)^{\mathcal{F}_{L}}$. So the isomorphism \eqref{picoh} is equivalent with the one from Corollary \ref{cor:isocoho}:
			\begin{equation}\label{is}
			H_{\Pi_{can}}^{1}\left(T^{*}\mathcal{F}_{L}\right)\cong H^{1}(\mathcal{F}_{L})\oplus \mathfrak{X}(L)^{\mathcal{F}_{L}}/\Gamma(T\mathcal{F}_{L}).
			\end{equation}
			
			\item In case $(L,\mathcal{F}_{L})$ is a unimodular codimension-one foliation, then we can further simplify the isomorphism \eqref{is}.	Indeed, if $\theta\in\Omega^{1}(L)$ is a closed defining one-form for $\mathcal{F}_{L}$, then we get an isomorphism
			\[
			\mathfrak{X}(L)^{\mathcal{F}_{L}}/\Gamma(T\mathcal{F}_{L})\rightarrow H^{0}(\mathcal{F}_{L}):[V]\mapsto\theta(V).
			\]
			An alternative argument, building on i) above, is the following. Since also $\mathcal{F}_{sympl}$ is unimodular, the representation of $T\mathcal{F}_{sympl}$ on $\nu$ given by the Bott connection is isomorphic with the trivial representation of $T\mathcal{F}_{sympl}$ on the trivial $\RR$-bundle  $T^{*}\mathcal{F}_{L}\times\mathbb{R}$ (see \cite[Lemma 1.5.15]{Osorno}). So in \eqref{picoh}, we get $H^{0}(\mathcal{F}_{sympl},\nu)\cong H^{0}(\mathcal{F}_{sympl})\cong H^{0}(\mathcal{F}_{L})$.
		\end{enumerate}
	\end{remark}
	
	We will now upgrade Corollary \ref{cor:isocoho} to an isomorphism of Lie algebras. Note that the Lie bracket on $\mathfrak{X}(L)$ restricts to $\mathfrak{X}(L)^{\mathcal{F}_{L}}$ thanks to the Jacobi identity. Since $\Gamma(T\mathcal{F}_{L})$ is a Lie algebra ideal of $\left(\mathfrak{X}(L)^{\mathcal{F}_{L}},[\cdot,\cdot]\right)$, the Lie bracket passes to the quotient $\mathfrak{X}(L)^{\mathcal{F}_{L}}/\Gamma(T\mathcal{F}_{L})$. 
	We get a representation of this Lie algebra on the vector space $H^{1}(\mathcal{F}_{L})$, namely
	\begin{equation}\label{action}
	\rho:\frac{\mathfrak{X}(L)^{\mathcal{F}_{L}}}{\Gamma(T\mathcal{F}_{L})}\rightarrow \text{End}\left(H^{1}(\mathcal{F}_{L})\right):[X]\mapsto \pounds_{X}\boldsymbol{\cdot}.
	\end{equation}
	Here the Lie derivative
	\begin{equation}\label{lieder}
	\pounds_{X}\alpha:=\left.\frac{d}{dt}\right|_{t=0}\phi_{t}^{*}\alpha
	\end{equation}
	of $\alpha\in\Omega^{1}(\mathcal{F}_{L})$ along $X$ makes sense since the flow $\phi_{t}$ of $X$ preserves the foliation $\mathcal{F}_{L}$. Clearly, the map \eqref{action} is well-defined: for any $X\in\mathfrak{X}(L)^{\mathcal{F}_{L}}$, the Lie derivative $\pounds_{X}$ acts on $H^{1}(\mathcal{F}_{L})$ since it commutes with the foliated differential $d_{\mathcal{F}_{L}}$. Moreover, if $X\in\Gamma(T\mathcal{F}_{L})$, then $\pounds_{X}$ acts trivially in cohomology thanks to Cartan's magic formula. The fact that $\rho$ is a Lie algebra morphism is simply the identity $\pounds_{[X,Y]}=\pounds_{X}\circ\pounds_{Y}-\pounds_{Y}\circ\pounds_{X}$ for $X,Y\in\mathfrak{X}(L)^{\mathcal{F}_{L}}$.
	
	\begin{prop}\label{isolie}
		Let $L$ be a manifold and $\mathcal{F}_{L}$ a foliation on $L$. Let $\Pi_{can}$ denote the standard Poisson structure on the total space of the vector bundle $p:T^{*}\mathcal{F}_{L}\rightarrow L$. The map $\Phi$ constructed in Corollary \ref{cor:isocoho} becomes an isomorphism of Lie algebras
		\[
		\Phi:\left(H_{\Pi_{can}}^{1}(T^{*}\mathcal{F}_{L}),[\cdot,\cdot]\right)\rightarrow \left(\mathfrak{X}(L)^{\mathcal{F}_{L}}/\Gamma(T\mathcal{F}_{L})\ltimes_{\rho}H^{1}(\mathcal{F}_{L}),[\cdot,\cdot]_{\rho}\right),
		\]	
		where $[\cdot,\cdot]$ is the usual the Lie bracket of vector fields and $[\cdot,\cdot]_{\rho}$ is the semidirect product bracket induced by the Lie algebra representation $\rho$ defined in \eqref{action}.
	\end{prop}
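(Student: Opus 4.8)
The plan is to check directly that $\Phi$ intertwines the two brackets, exploiting the explicit representatives furnished by Theorem \ref{thm:modvfL}. First recall that the Lie bracket of vector fields descends to $H^1_{\Pi_{can}}(T^*\mathcal{F}_{L})$: the bracket of two Poisson vector fields is Poisson, and the bracket of a Poisson vector field with a Hamiltonian one is Hamiltonian. Since $\Phi$ is already a linear isomorphism by Corollary \ref{cor:isocoho}, it suffices to fix two classes, choose representatives $Y=\widetilde{W}+V$ and $Y'=\widetilde{W'}+V'$ as in Theorem \ref{thm:modvfL} (so $W=p_*Y,\,W'=p_*Y'\in\mathfrak{X}(L)^{\mathcal{F}_{L}}$, and $V,V'$ are the vertical fiberwise-constant vector fields associated with closed foliated one-forms $\alpha,\alpha'\in\Omega^1(\mathcal{F}_{L})$), compute $\Phi([Y,Y'])$, and match it with the semidirect product bracket of $([W],[\alpha])$ and $([W'],[\alpha'])$.

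Expanding bilinearly,
\[
[Y,Y']=[\widetilde{W},\widetilde{W'}]+[\widetilde{W},V']+[V,\widetilde{W'}]+[V,V'],
\]
I would treat the four terms separately. The lift is a Lie algebra homomorphism, $[\widetilde{W},\widetilde{W'}]=\widetilde{[W,W']}$: this follows because $r\colon T^*L\to T^*\mathcal{F}_{L}$ is a Poisson map and the cotangent lift $\mathfrak{X}(L)\to\mathfrak{X}(T^*L)$ is a Lie algebra homomorphism, as one reads off from $\{h_X,h_Y\}=-h_{[X,Y]}$ and $W_{T^*L}=-X_{h_W}$ recalled in Lemma \ref{lem:lift}. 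Next $[V,V']=0$, since in foliated coordinates both vector fields are combinations of the $\partial_{y_i}$ with coefficients depending only on the base variables, so their bracket vanishes.

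The heart of the proof is the two mixed terms. I claim $[\widetilde{W},V']$ is again vertical and fiberwise-constant, and as a foliated one-form it equals $\pounds_W\alpha'$. Verticality is immediate from $p_*[\widetilde{W},V']=[W,0]=0$. For the identification I would argue via flows: the flow $\Psi_t$ of $\widetilde{W}$ is the cotangent lift of the flow $\phi_t$ of $W$, which preserves $\mathcal{F}_{L}$ because $W\in\mathfrak{X}(L)^{\mathcal{F}_{L}}$; the cotangent lift acts on foliated one-forms, viewed as vertical fiberwise-constant vector fields, by pullback $\phi_{-t}^*$, so differentiating yields $[\widetilde{W},V']=\pounds_W\alpha'$ at the level of foliated one-forms. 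This form is closed since $\pounds_W$ commutes with $d_{\mathcal{F}_{L}}$, whence the associated vector field is indeed fiberwise-constant (cf. eq. \eqref{eq:Valpha}). By antisymmetry, $[V,\widetilde{W'}]$ corresponds to $-\pounds_{W'}\alpha$. The signs here must be tracked carefully against the cotangent-lift conventions fixed in Lemma \ref{lem:lift}.

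Collecting the terms, $[Y,Y']$ itself satisfies the requirements of Theorem \ref{thm:modvfL}: it is $p$-projectable with $p_*[Y,Y']=[W,W']\in\mathfrak{X}(L)^{\mathcal{F}_{L}}$, and $[Y,Y']-\widetilde{[W,W']}=[\widetilde{W},V']+[V,\widetilde{W'}]$ is vertical, fiberwise-constant, and closed. Reading off $\Phi$ from this representative gives
\[
\Phi([Y,Y'])=\Big(\big[[W,W']\big],\,\big[\pounds_W\alpha'-\pounds_{W'}\alpha\big]\Big),
\]
where the outer brackets denote the classes in $\mathfrak{X}(L)^{\mathcal{F}_{L}}/\Gamma(T\mathcal{F}_{L})$ and in $H^1(\mathcal{F}_{L})$ respectively. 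By the definition of $\rho$ in \eqref{action}, the right-hand side is precisely the semidirect product bracket $[\cdot,\cdot]_\rho$ of $\Phi([Y])=([W],[\alpha])$ and $\Phi([Y'])=([W'],[\alpha'])$, which proves the claim. I expect the main obstacle to be exactly the middle step, namely establishing $[\widetilde{W},V']=\pounds_W\alpha'$ with the correct sign; once this is secured, the remaining identities are bilinearity together with facts already recorded in Lemma \ref{lem:lift} and Corollary \ref{cor:isocoho}.
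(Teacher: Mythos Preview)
Your proof is correct and follows essentially the same strategy as the paper: decompose $[Y,Y']$ using the splitting $Y=\widetilde{W}+V$, identify the mixed terms $[\widetilde{W},V']$ with $\pounds_W\alpha'$ via a flow argument, and read off the semidirect product bracket. The paper isolates precisely your ``heart of the proof'' as a separate Lemma (showing that the correspondence \eqref{cor} intertwines $\pounds_X$ with $[\widetilde{X},\cdot]$, proved via $p\circ\Psi_t=\phi_t\circ p$ and the fact that $\Psi_t$ is Poisson), and then runs the same four-term expansion you do; the only cosmetic difference is that the paper packages the homomorphism property of the lift as $\widetilde{p_*[Y,Z]}=[\widetilde{p_*Y},\widetilde{p_*Z}]$ rather than stating it up front. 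One small imprecision: the flow of $\widetilde{W}$ is not literally ``the cotangent lift of $\phi_t$'' (you are on $T^*\mathcal{F}_L$, not $T^*L$), but rather the map induced from the cotangent lift via $r$; this does not affect the argument, since all you actually use is that $\Psi_t$ is Poisson and covers $\phi_t$.
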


	To prove Prop. \ref{isolie}, it is convenient to rewrite the action \eqref{action} in terms of vertical fiberwise constant vector fields on $T^{*}\mathcal{F}_{L}$ instead of foliated one-forms on $(L,\mathcal{F}_{L})$.
	To do so, we use the correspondence
	\begin{equation}\label{cor}
	\big(\Omega^{\bullet}(\mathcal{F}_{L}),d_{\mathcal{F}_{L}}\big)\rightarrow\big(\mathfrak{X}^{\bullet}_{vert.const.}(T^{*}\mathcal{F}_{L}),[\Pi_{can},\boldsymbol{\cdot}]\big):\alpha\mapsto\left(\wedge^{\bullet}\Pi_{can}^{\sharp}\right)(p^{*}\alpha),
	\end{equation}
	which is an isomorphism of cochain complexes up to a global sign, i.e. it matches $d_{\mathcal{F}_{L}}$ with $-[\Pi_{can},\boldsymbol{\cdot}]$ (see for instance \cite[Lemma 2.1.3]{Normalforms}). 
	\begin{lemma} \label{rewrite}
		For every $X\in \mathfrak{X}(L)^{\mathcal{F}_{L}}$, the correspondence \eqref{cor}
		matches $\pounds_{X}$ and $\big[\widetilde{X},\boldsymbol{\cdot}\big]$, where $\widetilde{X}$ is the lift  as described in Lemma \ref{lem:lift}.
	\end{lemma}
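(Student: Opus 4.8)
\emph{Plan.} The idea is to deduce the infinitesimal statement from a finite naturality property of the correspondence \eqref{cor} under the flow of $\widetilde{X}$, which is cleaner to verify. Let $\phi_t$ denote the local flow of $X$ on $L$; since $X\in\mathfrak{X}(L)^{\mathcal{F}_{L}}$, each $\phi_t$ preserves $\mathcal{F}_{L}$, so $\pounds_X=\frac{d}{dt}\big|_{t=0}\phi_t^*$ acts on $\Omega^\bullet(\mathcal{F}_{L})$. On the other side, recall from Lemma \ref{lem:lift} that $\widetilde{X}$ is the pushforward under $r\colon T^*L\to T^*\mathcal{F}_{L}$ of the cotangent lift $X_{T^*L}$. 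First I would check that the flow $\widetilde{\phi}_t$ of $\widetilde{X}$ is exactly the descent to $T^*\mathcal{F}_{L}$ of the cotangent lift $\widehat{\phi}_t$ of $\phi_t$, i.e. $\widetilde{\phi}_t\circ r=r\circ\widehat{\phi}_t$. From this and the standard properties of the cotangent lift I record two facts: $\widetilde{\phi}_t$ covers $\phi_t$, meaning $p\circ\widetilde{\phi}_t=\phi_t\circ p$; and $\widetilde{\phi}_t$ is a Poisson diffeomorphism of $(T^*\mathcal{F}_{L},\Pi_{can})$, since $(\widetilde{\phi}_t)_*\Pi_{can}=(\widetilde{\phi}_t)_*r_*\Pi_{T^*L}=r_*(\widehat{\phi}_t)_*\Pi_{T^*L}=r_*\Pi_{T^*L}=\Pi_{can}$, using that $\widehat{\phi}_t$ is a symplectomorphism.

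Writing $\widehat{\alpha}:=(\wedge^\bullet\Pi_{can}^\sharp)(p^*\alpha)$ for the image of $\alpha$ under \eqref{cor}, the key claim is the finite naturality identity $(\widetilde{\phi}_t)^*\widehat{\alpha}=\widehat{\phi_t^*\alpha}$ for all $t$. To prove it, it is convenient to describe \eqref{cor} invariantly as $\widehat{\alpha}=(\wedge^\bullet\omega^{-1})(p_{\mathcal{F}}^*\alpha)$, where $p_{\mathcal{F}}^*\colon\Omega^\bullet(\mathcal{F}_{L})\to\Omega^\bullet(\mathcal{F}_{sympl})$ is the pullback along the foliated submersion $p\colon(T^*\mathcal{F}_{L},\mathcal{F}_{sympl})\to(L,\mathcal{F}_{L})$ and $\omega$ is the leafwise symplectic form; this makes sense because $\Pi_{can}^\sharp$ annihilates the conormal bundle of $\mathcal{F}_{sympl}$ and factors through the leafwise symplectic isomorphism. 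Each factor is then manifestly natural: $p\circ\widetilde{\phi}_t=\phi_t\circ p$ gives $(\widetilde{\phi}_t)^*p_{\mathcal{F}}^*\alpha=p_{\mathcal{F}}^*\phi_t^*\alpha$, and $\widetilde{\phi}_t$ being leafwise symplectic (as a Poisson diffeomorphism preserving $\mathcal{F}_{sympl}$) gives $(\widetilde{\phi}_t)^*\circ(\wedge^\bullet\omega^{-1})=(\wedge^\bullet\omega^{-1})\circ(\widetilde{\phi}_t)^*$. Composing the two yields the claim.

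Finally I would differentiate the naturality identity at $t=0$. Since the correspondence \eqref{cor} is linear and independent of $t$, it commutes with $\frac{d}{dt}\big|_{t=0}$, so the right-hand side differentiates to $\widehat{\pounds_X\alpha}$; the left-hand side differentiates to $\pounds_{\widetilde{X}}\widehat{\alpha}=[\widetilde{X},\widehat{\alpha}]$, using that the Schouten bracket of a vector field with a multivector field is its Lie derivative. This gives $[\widetilde{X},\widehat{\alpha}]=\widehat{\pounds_X\alpha}$, which is exactly the assertion.

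I expect the main obstacle to be the careful justification of the invariant description of \eqref{cor} and its naturality under $\widetilde{\phi}_t$ — in particular pinning down that $\widetilde{\phi}_t$ is leafwise symplectic and covers $\phi_t$, and that $p^*\alpha$ interacts with $\Pi_{can}^\sharp$ only through its restriction to $\mathcal{F}_{sympl}$. An alternative, purely algebraic route avoids the flow altogether: both $\alpha\mapsto\widehat{\pounds_X\alpha}$ and $\alpha\mapsto[\widetilde{X},\widehat{\alpha}]$ are derivations of the wedge product on the algebra of vertical fiberwise-constant multivector fields (the correspondence \eqref{cor} being an algebra homomorphism for $\wedge$), and this algebra is generated in degrees $0$ and $1$. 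It then suffices to match the two maps on $p^*g$ (immediate from $p_*\widetilde{X}=X$, giving $[\widetilde{X},p^*g]=p^*(Xg)$) and on vertical fiberwise-constant vector fields; the latter degree-one case is the only genuine computation and coincides with the naturality statement above specialized to $1$-forms.
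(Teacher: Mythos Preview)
Your proposal is correct and takes essentially the same approach as the paper: both use that the flow $\psi_t$ of $\widetilde{X}$ covers the flow $\phi_t$ of $X$ (from $p_*\widetilde{X}=X$) and is a Poisson diffeomorphism (from Lemma~\ref{lem:lift}), then differentiate the resulting naturality identity at $t=0$. The paper's presentation is simply more compressed: it writes the left-hand side directly as $\frac{d}{dt}\big|_{t=0}(\wedge^k\Pi_{can}^\sharp)((\phi_t\circ p)^*\alpha)$, uses $\phi_t\circ p=p\circ\psi_t$ and the Poisson property of $\psi_t$ to rewrite this as $\frac{d}{dt}\big|_{t=0}(\psi_{-t})_*\widehat{\alpha}$, and identifies this with $[\widetilde{X},\widehat{\alpha}]$ --- the same three ingredients you use, without isolating the finite-time identity as a separate claim or invoking the cotangent-lift description of $\psi_t$.
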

	\begin{proof}
		For every foliated differential form $\alpha\in\Omega^{k}(\mathcal{F}_{L})$ we have to show that
		\begin{equation}\label{toprove}
		(\wedge^{k}\Pi_{can}^{\sharp})\left(p^{*}\left(\pounds_{X}\alpha\right)\right)=\big[\widetilde{X},(\wedge^{k}\Pi_{can}^{\sharp})(p^{*}\alpha)\big].
		\end{equation}
		The left hand side of this equality, using \eqref{lieder}, reads
		$$\left.\frac{d}{dt}\right|_{t=0}(\wedge^{k}\Pi_{can}^{\sharp})\left((\phi_{t}\circ p)^{*}\alpha\right).$$
		Since $p_{*}\widetilde{X}=X$, we have that $\phi_{t}\circ p= p\circ\psi_{t}$, where $\psi_{t}$ denotes the flow of $\widetilde{X}$. So 
		%the equality \eqref{comp} becomes
		\begin{align*}
		(\wedge^{k}\Pi_{can}^{\sharp})\left(p^{*}\left(\pounds_{X}\alpha\right)\right)&=\left.\frac{d}{dt}\right|_{t=0}(\wedge^{k}\Pi_{can}^{\sharp})\left(\psi_{t}^{*}(p^{*}\alpha)\right)\\
		&=\left.\frac{d}{dt}\right|_{t=0}(\psi_{-t})_{*}\left((\wedge^{k}\Pi_{can}^{\sharp})\left(p^{*}\alpha\right)\right)\\
		&=\left[\widetilde{X},(\wedge^{k}\Pi_{can}^{\sharp})(p^{*}\alpha)\right],
		\end{align*}
		using in the second equality that $\psi_{t}$ is a Poisson diffeomorphism of $\left(T^{*}\mathcal{F}_{L},\Pi_{can}\right)$. So the equality \eqref{toprove} holds, and this proves the lemma.
	\end{proof}

	\begin{proof}[Proof of Prop. \ref{isolie}]
		To avoid confusion with too many brackets, we will denote equivalence classes by underlining the representatives. Fix classes $\underline{Y},\underline{Z}\in H_{\Pi_{can}}^{1}(T^{*}\mathcal{F}_{L})$ and assume that the representatives $Y,Z$ satisfy the properties in Theorem \ref{thm:modvfL}. Then also their Lie bracket $[Y,Z]$ satisfies these properties: it is $p$-projectable, $p_{*}[Y,Z]\in\mathfrak{X}(L)^{\mathcal{F}_{L}}$ and
		\begin{align}\label{br}
		[Y,Z]-\widetilde{p_{*}[Y,Z]}&=[Y,Z]-[\widetilde{p_{*}Y},\widetilde{p_{*}Z}]\nonumber\\
		&=\left[\widetilde{p_{*}Y}+\left(Y-\widetilde{p_{*}Y}\right),\widetilde{p_{*}Z}+\left(Z-\widetilde{p_{*}Z}\right)\right]-[\widetilde{p_{*}Y},\widetilde{p_{*}Z}]\nonumber\\
		&=\left[\widetilde{p_{*}Y},Z-\widetilde{p_{*}Z}\right]+\left[Y-\widetilde{p_{*}Y},\widetilde{p_{*}Z}\right],
		\end{align}
		using in the last equation that $Y-\widetilde{p_{*}Y}$ and $Z-\widetilde{p_{*}Z}$ are vertical and fiberwise constant. Lemma \ref{rewrite} shows in particular that both terms in \eqref{br} are vertical fiberwise constant Poisson vector fields, hence the same holds for $[Y,Z]-\widetilde{p_{*}[Y,Z]}$. So $[Y,Z]$ meets the criteria of Theorem \ref{thm:modvfL}. We can therefore proceed as follows:
		\begin{align*}
		\Phi\left([\underline{Y},\underline{Z}]\right)&=\Phi\left(\underline{[Y,Z]}\right)\\
		&=\left(\underline{p_{*}[Y,Z]},\underline{[Y,Z]-\widetilde{p_{*}[Y,Z]}}\right)\\
		&=\left(\underline{[p_{*}Y,p_{*}Z]},\underline{\left[\widetilde{p_{*}Y},Z-\widetilde{p_{*}Z}\right]}+\underline{\left[Y-\widetilde{p_{*}Y},\widetilde{p_{*}Z}\right]}\right)\\
		&=\left[\left(\underline{p_{*}Y},\underline{Y-\widetilde{p_{*}Y}}\right),\left(\underline{p_{*}Z},\underline{Z-\widetilde{p_{*}Z}}\right)\right]_{\rho}\\
		&=\left[\Phi(\underline{Y}),\Phi(\underline{Z})\right]_{\rho},
		\end{align*}
		using the equation \eqref{br} in the third equality and Lemma \ref{rewrite} in the fourth equality.
	\end{proof}

	\section{Deformations of Lagrangian submanifolds in log-symplectic manifolds: algebraic aspects}\label{sec:alg}
	
	In this section, we address the algebra behind the deformation problem of a Lagrangian submanifold $L^{n}$ contained in the singular locus of a log-symplectic manifold $(M^{2n},Z,\Pi)$. In \S\ref{subsec:MC}-\S\ref{corres} we show that the deformation problem is governed by a DGLA, and we discuss the corresponding Maurer-Cartan equation (Thm. \ref{equations} and Cor. \ref{DGLA}).
	We also compute the cohomology of the DGLA in degree one,
by calculating the	
	zeroth foliated Morse-Novikov cohomology in \S\ref{subsec:Nov} (Thm. \ref{H}).  
This result will be used in the next section to extract geometric information about the deformations.

\bigskip	
	To set up the stage, we revisit Corollary \ref{normal}, which states that a neighborhood of a Lagrangian submanifold $L^{n}$ contained in the singular locus of an orientable log-symplectic manifold $(M^{2n},Z,\Pi)$ can be identified with a neighborhood of the zero section in $T^{*}\mathcal{F}_{L}\times\mathbb{R}$, endowed with the log-symplectic structure
	\begin{equation}\label{pitilde}
	\widetilde{\Pi}:=V\wedge t\partial_{t}+\Pi_{can}.
	\end{equation}
	Here $V$ is defined on a neighborhood   of $L$ in $T^{*}\mathcal{F}_{L}$, and only its Poisson cohomology class $[V]$ is fixed, see Remark \ref{rem:freedom}.
	We can use Theorem \ref{thm:modvfL} to choose a convenient representative $V$ that satisfies
	\[
	V=V_{vert}+V_{lift},
	\]
	where $V_{lift}:=\widetilde{p_{*}V}$ is the cotangent lift of $p_{*}V\in\mathfrak{X}(L)^{\mathcal{F}_{L}}$ and $V_{vert}:=V-\widetilde{p_{*}V}$ is vertical, fiberwise constant and closed as a section of $p:T^{*}\mathcal{F}_{L}\rightarrow L$. Indeed, although Theorem \ref{thm:modvfL} is phrased for Poisson vector fields defined on all of $T^{*}\mathcal{F}_{L}$, it is clear that the proof still works if those vector fields are only defined on a neighborhood of $L$ in $T^{*}\mathcal{F}_{L}$ whose intersection with each fiber is {convex}. We summarize the setup for the rest of this paper:
	
	\vspace{0.2cm}
	\noindent
	%\fbox{
	%\begin{minipage}{38.5em}
	\begin{mdframed}
		Given a Lagrangian submanifold $L^{n}$ contained in the singular locus $Z$ of an orientable log-symplectic manifold $(M^{2n},Z,\Pi)$, denote by $\mathcal{F}_{L}$ the induced foliation on $L$. Fix a tubular neighborhood embedding $\psi:(Z,\Pi|_{Z})\rightarrow(T^{*}\mathcal{F}_{L},\Pi_{can})$ of neighborhoods of $L$, as in Prop. \ref{normalform}. Denote by $[V]$ the image of $[V_{mod}|_{Z}]$ under this map, and assume that $V$ is a representative that satisfies the assumptions of Thm. \ref{thm:modvfL}. The local model around $L$ is then 
		\[
		\big(U\subset T^{*}\mathcal{F}_{L}\times\mathbb{R},\widetilde{\Pi}:=\left(V_{vert}+V_{lift}\right)\wedge t\partial_{t}+\Pi_{can}\big),
		\]
		{where $U$ is a neighborhood of the zero section $L$}.
		We denote by $$\gamma\in\Omega_{cl}^{1}(\mathcal{F}_{L})$$ the closed one-form defined by considering $V_{vert}$ as a section of $p:T^{*}\mathcal{F}_{L}\rightarrow L$, and we also write $$X:=p_{*}V\in\mathfrak{X}(L)^{\mathcal{F}_{L}}.$$
	\end{mdframed}
	%\end{minipage}
	%}

{
\begin{remark}
The Poisson cohomology class $[V]$ is completely determined by $[V_{mod}|_{Z}]$, i.e. different choices of tubular neighborhood embeddings $\psi:(Z,\Pi|_{Z})\rightarrow(T^{*}\mathcal{F}_{L},\Pi_{can})$ of neighborhoods of $L$ yield cohomologous Poisson vector fields $V$. This is a consequence of the fact that any two tubular neighborhood embeddings are isotopic; a proof can be made using the concrete isotopy constructed in the proof of \cite[Theorem 5.3]{hirsch}. As a consequence, the Poisson cohomology class $[V_{mod}|_{Z}]$ is faithfully encoded by $\big([\gamma],[X]\big)\in H^{1}(\mathcal{F}_{L})\times\mathfrak{X}(L)^{\mathcal{F}_{L}}/\Gamma(T\mathcal{F}_{L})$. 
\end{remark}
}

\bigskip	
{Slightly abusing notation, we will often denote the local model by $\big(T^{*}\mathcal{F}_{L}\times\mathbb{R},\widetilde{\Pi}\big)$ although it is only defined on the neighborhood $U$. Throughout the rest of the paper, $U$ denotes this fixed neighborhood. We only make reference to it when strictly necessary.}

	\vspace{0.1cm}
	
	\subsection{The Maurer-Cartan equation}\label{subsec:MC}
	\leavevmode
	\vspace{0.1cm}

	\vspace{0.3cm}
	Studying $\mathcal{C}^{1}$-small deformations of $L$ now amounts to studying Lagrangian sections in $\big(T^{*}\mathcal{F}_{L}\times\mathbb{R},\widetilde{\Pi}\big)$,
	the vector bundle over $L$ given by the Whitney sum of $T^{*}\mathcal{F}_{L}$ and the trivial $\RR$-bundle.
	By the following little lemma, it is equivalent to look at coisotropic sections.
	
	\begin{lemma}\label{lagcois}
		The graph of a section $(\alpha,f)\in\Gamma(T^{*}\mathcal{F}_{L}\times\mathbb{R})$ is coisotropic iff  it is Lagrangian.
	\end{lemma}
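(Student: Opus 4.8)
The plan is to reduce the statement to a pointwise linear-algebra comparison along the symplectic leaves, using the characterization in condition i) of Definition \ref{lagrdef} together with its coisotropic counterpart: a submanifold $C$ is Lagrangian (resp.\ coisotropic) precisely when, at each $p\in C$, the subspace $W_p:=T_pC\cap T_p\mathcal{O}$ is Lagrangian (resp.\ coisotropic) in the symplectic vector space $(T_p\mathcal{O},(\omega_{\mathcal{O}})_p)$, where $\mathcal{O}$ denotes the symplectic leaf of $\widetilde{\Pi}$ through $p$. Throughout, $C$ is the graph of $(\alpha,f)$, a submanifold of dimension $n=\tfrac12\dim M$.

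The elementary observation I would isolate first is the following: if $W$ is a subspace of a symplectic vector space $(S,\omega)$ with $\dim W\le \tfrac12\dim S$, then $W$ is coisotropic if and only if it is Lagrangian. Indeed, coisotropy means $W^{\omega}\subseteq W$; since $\dim W^{\omega}=\dim S-\dim W\ge \tfrac12\dim S\ge \dim W$, this inclusion forces $W^{\omega}=W$, i.e.\ $W$ is Lagrangian, while the converse is immediate. Thus the whole lemma reduces to verifying the single bound $\dim W_p\le\tfrac12\dim T_p\mathcal{O}$ at every $p\in C$.

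For points $p\in C$ lying off $Z$, i.e.\ with $f(x)\neq 0$, this is automatic: there $\widetilde{\Pi}$ is nondegenerate, so $\mathcal{O}$ is open, $T_p\mathcal{O}=T_pM$ and $W_p=T_pC$ has dimension $n=\tfrac12\dim M$. The essential case is $p\in Z$, i.e.\ $f(x)=0$. Then $\widetilde{\Pi}|_Z=\Pi_{can}$, so by the decomposition \eqref{decomposition} the leaf $\mathcal{O}$ through $p$ is $T^{*}\mathcal{O}_L$ for a leaf $\mathcal{O}_L$ of $\mathcal{F}_{L}$; hence $\dim\mathcal{O}=2n-2$, and $\mathcal{O}$ is contained in the level set $\{s=\mathrm{const}\}$ of the pullback to the total space of a local defining function $s$ for $\mathcal{F}_{L}$. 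Consequently $T_p\mathcal{O}\subseteq\ker(ds_p)$, so $W_p\subseteq T_pC\cap\ker(ds_p)$. Since $C$ is the graph of a section, the bundle projection restricts to an isomorphism $T_pC\to T_xL$ under which $ds$ corresponds to the nowhere-vanishing form $ds\in\Omega^1(L)$; hence $ds|_{T_pC}\neq 0$ and $\dim\big(T_pC\cap\ker ds_p\big)=n-1$. Therefore $\dim W_p\le n-1=\tfrac12\dim T_p\mathcal{O}$, as required, and the linear-algebra fact yields the equivalence at $p$.

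The main obstacle is exactly this estimate at singular-locus points: away from $Z$ there is nothing to prove, and one should resist trying to match the coisotropic and Lagrangian conditions by directly computing $\Pi^{\sharp}(T_pC^{0})$. The clean route is to notice that at points of $Z$ the symplectic leaf is squeezed into the hypersurface $\{s=\mathrm{const}\}$, which is transverse to the graph $C$; this forces $W_p$ to be at most half-dimensional in $T_p\mathcal{O}$, after which coisotropy and Lagrangianity coincide for purely dimensional reasons.
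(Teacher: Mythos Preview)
Your proof is correct and follows essentially the same strategy as the paper's: both reduce to showing $\dim W_p\le n-1$ at points of the singular locus, then use that a coisotropic subspace of dimension at most half must be Lagrangian. The only cosmetic difference is that the paper identifies $W_p$ explicitly as $\{(d_q\alpha)(v):v\in T_q\mathcal{O}_L,\ (d_qf)(v)=0\}$ and reads off the bound from the domain, whereas you obtain the same bound by observing $W_p\subseteq T_pC\cap\ker(d\,pr^*s)$ and using that $C$, being a graph, is transverse to the level set of $pr^*s$.
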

	\begin{proof}
		We only have to check the forward implication at points $(\alpha(q),0)$ inside the singular locus $T^{*}\mathcal{F}_{L}\times\{0\}$. The symplectic leaf of $\big(T^{*}\mathcal{F}_{L}\times\mathbb{R},\widetilde{\Pi}\big)$ through $(\alpha(q),0)$ is given by $p^{-1}(\mathcal{O})\times\{0\}$, where $\mathcal{O}$ is the leaf of $\mathcal{F}_{L}$ through $q$. By assumption, the subspace
		\begin{equation}\label{subspace}
		T_{(\alpha(q),0)}\text{Graph}(\alpha,f)\cap T_{(\alpha(q),0)}\big(p^{-1}(\mathcal{O})\times\{0\}\big)=\left\{(d_{q}\alpha)(v): v\in T_{q}\mathcal{O}\ \text{and}\ (d_{q}f)(v)=0\right\}
		\end{equation}
		is coisotropic in $T_{(\alpha(q),0)}\big(p^{-1}(\mathcal{O})\times\{0\}\big)$, so it is at least $(n-1)$-dimensional. But clearly the right hand side of \eqref{subspace} is at most $(n-1)$-dimensional, which shows that the subspace \eqref{subspace} is Lagrangian in $T_{(\alpha(q),0)}\big(p^{-1}(\mathcal{O})\times\{0\}\big)$. 
	\end{proof}
	
	We now derive the equations that cut out coisotropic sections in $\big(T^{*}\mathcal{F}_{L}\times\mathbb{R},\widetilde{\Pi}\big)$.
	
	\begin{thm}\label{equations}
		The graph of a section $(\alpha,f)\in\Gamma(T^{*}\mathcal{F}_{L}\times\mathbb{R})$ is coisotropic in $\big(T^{*}\mathcal{F}_{L}\times\mathbb{R},\widetilde{\Pi}\big)$ exactly when
		\begin{equation}\label{eqns}
		\boxed{
			\begin{cases}
			d_{\mathcal{F}_{L}}\alpha=0\\
			d_{\mathcal{F}_{L}}f+f(\gamma-\pounds_{X}\alpha)=0
			\end{cases}
		}
		\end{equation}
		
	\end{thm}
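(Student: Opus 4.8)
\emph{Approach.} The plan is to use the standard criterion that a submanifold is coisotropic precisely when its vanishing ideal is closed under the Poisson bracket; since the graph $G := \mathrm{Graph}(\alpha,f)$ has a clean set of defining functions, it suffices to check that their pairwise brackets vanish along $G$. First I would work in adapted foliated coordinates $(x_{1},\ldots,x_{k},x_{k+1},\ldots,x_{n})$ on $L$, so that $T\mathcal{F}_{L}=\mathrm{span}(\partial_{x_{1}},\ldots,\partial_{x_{k}})$, with induced fiber coordinates $y_{1},\ldots,y_{k}$ on $T^{*}\mathcal{F}_{L}$ and the coordinate $t$ on $\mathbb{R}$, exactly as in Lemma \ref{diff}. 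Writing $\alpha=\sum_{i\le k}\alpha_{i}\,dx_{i}$, the graph $G$ is cut out by the $k+1$ functions $\phi_{i}:=y_{i}-\alpha_{i}(x)$ for $i\le k$ and $\psi:=t-f(x)$, whose differentials are independent along $G$. Hence $G$ is coisotropic iff $\{\phi_{i},\phi_{j}\}$ and $\{\phi_{i},\psi\}$ vanish on $G$ for all $i,j\le k$.

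Next I would record the coordinate form of $\widetilde{\Pi}=(V_{vert}+V_{lift})\wedge t\partial_{t}+\Pi_{can}$. Here $\Pi_{can}=\sum_{i\le k}\partial_{x_{i}}\wedge\partial_{y_{i}}$, the vertical fiberwise constant field is $V_{vert}=\sum_{i\le k}\gamma_{i}\,\partial_{y_{i}}$ (with $\gamma=\sum_{i\le k}\gamma_{i}\,dx_{i}$), and the cotangent lift of $X=\sum_{j}X^{j}\partial_{x_{j}}$ reads $V_{lift}=\sum_{j}X^{j}\partial_{x_{j}}-\sum_{i,j\le k}y_{i}(\partial_{x_{j}}X^{i})\partial_{y_{j}}$. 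The crucial input here is $X\in\mathfrak{X}(L)^{\mathcal{F}_{L}}$: it is exactly what forces $\partial_{x_{j}}X^{i}=0$ whenever $j\le k<i$, so that the cotangent lift on $T^{*}L$ is $r$-projectable and descends to $T^{*}\mathcal{F}_{L}$, in accordance with Lemma \ref{lem:lift}.

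A direct computation using $(A\wedge B)(dg,dh)=A(g)B(h)-A(h)B(g)$ then settles the two families of brackets. Since $\phi_{i},\phi_{j}$ are $t$-independent, only $\Pi_{can}$ contributes and $\{\phi_{i},\phi_{j}\}=\partial_{x_{i}}\alpha_{j}-\partial_{x_{j}}\alpha_{i}$, whose vanishing along the (surjective) projection of $G$ to $L$ is precisely $d_{\mathcal{F}_{L}}\alpha=0$. For the mixed bracket, because $t\partial_{t}(\phi_{i})=0$ only the term $W(\phi_{i})\,t\partial_{t}(\psi)=t\,W(\phi_{i})$ survives from $(V_{vert}+V_{lift})\wedge t\partial_{t}$, while $\Pi_{can}(d\phi_{i},d\psi)=\partial_{x_{i}}f$. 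Computing $W(\phi_{i})=\gamma_{i}-X(\alpha_{i})-\sum_{m\le k}y_{m}\,\partial_{x_{i}}X^{m}$ and restricting to $G$, where $t=f$ and $y_{m}=\alpha_{m}$, gives $\{\phi_{i},\psi\}|_{G}=\partial_{x_{i}}f+f\big(\gamma_{i}-X(\alpha_{i})-\sum_{m\le k}\alpha_{m}\,\partial_{x_{i}}X^{m}\big)$.

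The main obstacle is the final identification: I must recognize the parenthetical expression as the $i$-th component of $\gamma-\pounds_{X}\alpha$, i.e. verify that $(\pounds_{X}\alpha)_{i}=X(\alpha_{i})+\sum_{m\le k}\alpha_{m}\,\partial_{x_{i}}X^{m}$ for the \emph{foliated} Lie derivative of \eqref{lieder}. The hard part will be justifying this intrinsically: I would compute the ordinary Lie derivative of an arbitrary genuine extension of $\alpha$ and restrict to $T\mathcal{F}_{L}$, and here the identity $\partial_{x_{j}}X^{m}=0$ for $j\le k<m$ again intervenes, showing that the foliated components are independent of the chosen extension and thus agree with the flow-theoretic definition. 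With this identification $\{\phi_{i},\psi\}|_{G}=\big(d_{\mathcal{F}_{L}}f+f(\gamma-\pounds_{X}\alpha)\big)_{i}$, so the vanishing of all mixed brackets is exactly the second equation. Combining the two families of conditions yields the boxed system, completing the proof.
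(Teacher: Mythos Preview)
Your proof is correct and complete: the vanishing-ideal criterion applies because your defining functions $\phi_i,\psi$ have independent differentials and hence locally generate the ideal of $G$, and your coordinate identifications (including the foliated Lie derivative formula, where the condition $\partial_{x_j}X^i=0$ for $j\le k<i$ kills the extension-dependent terms) are all right.

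However, your route differs substantially from the paper's. You work in adapted coordinates and compute the Poisson brackets of defining functions directly. The paper instead uses the fiberwise translation $\phi^{(-\alpha,-f)}$ to move $\mathrm{Graph}(\alpha,f)$ to the zero section $L$, and then computes the vertical projection $\wedge^2 P_E$ of the pushed-forward bivector $\phi^{(-\alpha,-f)}_*\widetilde{\Pi}$; the computation is coordinate-free, relying on the identification \eqref{cor} between foliated forms and vertical fiberwise-constant multivector fields, and on the identity $\phi^{-\alpha}_*V_{lift}-V_{lift}=-[V_{lift},\alpha]$. Your approach is more elementary and self-contained, requiring no auxiliary machinery. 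The paper's approach, while heavier, is chosen because the expression $\wedge^2 P_E(\phi^{(-\alpha,-f)}_*\widetilde{\Pi})$ is precisely (up to sign) the Maurer--Cartan series of the Cattaneo--Felder $L_\infty$-algebra evaluated at $(-\alpha,-f)$: the translation by a section is the exponential of a vertical constant vector field, which is exactly how the multibrackets \eqref{CaFe} arise. So the paper's computation feeds directly into Corollary~\ref{DGLA} and the fiberwise-entire framework of Theorem~\ref{entire}, whereas your argument, while cleaner for the theorem at hand, would need to be redone or translated to make that connection.
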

	
	Recall that any vector bundle $E\rightarrow L$ carries natural maps $\wedge^{\bullet}P_{E}:\mathfrak{X}^{\bullet}(E)\rightarrow \Gamma(\wedge^{\bullet}E)$, given by restriction to $L$ composed with the vertical projection $\Gamma\left(\wedge^{\bullet}TE|_{L}\right)\rightarrow\Gamma(\wedge^{\bullet}E)$. In particular, for $E:=T^{*}\mathcal{F}_{L}\times\mathbb{R}$ we have the following map in degree two:
	\[
	\wedge^{2}P_{E}:\mathfrak{X}^{2}(E)\rightarrow\Gamma(\wedge^{2}T^{*}\mathcal{F}_{L})\oplus\Gamma(T^{*}\mathcal{F}_{L}).
	\]
	It is clear that $L$ is coisotropic with respect to $\widetilde{\Pi}$ if and only if $\wedge^{2}P_{E}(\widetilde{\Pi})=0$. Below, we denote the bundle projections by $pr_{E}:E\rightarrow L$ and  $pr_{T^{*}\mathcal{F}_{L}}:T^{*}\mathcal{F}_{L}\rightarrow L$ respectively.
	
	\begin{proof}[Proof of Thm. \ref{equations}]
		A section $(\alpha,f)\in\Gamma(E)$ gives rise to a diffeomorphism 
		\[
		\phi^{(-\alpha,-f)}:E\rightarrow E:(p,\xi,t)\mapsto \big(p,\xi-\alpha(p),t-f(p)\big)
		\]
		which maps the graph of $(\alpha,f)$ to the zero section $L\subset E$. So it suffices to single out the sections $(\alpha,f)$ such that $L$ is coisotropic with respect to $\phi^{(-\alpha,-f)}_{*}\widetilde{\Pi}$. This amounts to asking that
		\begin{equation}\label{proj}
		0=\wedge^{2}P_{E}\left(\phi^{(-\alpha,-f)}_{*}\big(\left(V_{vert}+V_{lift}\right)\wedge t\partial_{t}+\Pi_{can}\big)\right).
		\end{equation}
		We now simplify the expression \eqref{proj} in two steps, identifying throughout vertical fiberwise constant vector fields on $T^{*}\mathcal{F}_{L}$ with foliated one-forms on $(L,\mathcal{F}_{L})$ via the bijection \eqref{cor}.
		\begin{enumerate}[i)]
			\item \underline{Claim 1:} $\wedge^{2}P_{E}\left(\phi^{(-\alpha,-f)}_{*}\big(\left(V_{vert}+V_{lift}\right)\wedge t\partial_{t}\big)\right)=\left(0,f\gamma-f\pounds_{X}\alpha\right).$ \hfill (*)
			
			\vspace{0.1cm}
			\noindent
			First, we note that
			\[
			P_{E}\left(\phi_{*}^{(-\alpha,-f)}V_{vert}\right)=P_{T^{*}\mathcal{F}_{L}}(V_{vert})=V_{vert}
			\]
			and
			\[
			P_{E}\left(\phi_{*}^{(-\alpha,-f)}t\partial_{t}\right)=P_{E}\big((t+pr_{E}^{*}f)\partial_{t}\big)=(pr_{E}^{*}f)\partial_{t},
			\]
			which yields the first term on the right in (*). Secondly, we have
			\begin{align*}
			\wedge^{2}P_{E}\left(\phi_{*}^{(-\alpha,-f)}\left(V_{lift}\wedge t\partial_{t}\right)\right)&=P_{E}\left(\phi_{*}^{(-\alpha,-f)}V_{lift}\right)\wedge(pr_{E}^{*}f)\partial_{t}\\
			&=P_{T^{*}\mathcal{F}_{L}}\left(\phi^{-\alpha}_{*}V_{lift}\right)\wedge(pr_{E}^{*}f)\partial_{t},
			\end{align*}
			so Claim 1 follows if we show that
			\begin{equation}\label{toshow}
			P_{T^{*}\mathcal{F}_{L}}\left(\phi^{-\alpha}_{*}V_{lift}\right)=-\pounds_{X}\alpha.
			\end{equation}
			To do so, we compute
			\begin{align}\label{cc}
			P_{T^{*}\mathcal{F}_{L}}\left(\phi^{-\alpha}_{*}V_{lift}\right)&=\left.\left(\phi^{-\alpha}_{*}V_{lift}\right)\right|_{L}-\left(pr_{T^{*}\mathcal{F}_{L}}\right)_{*}(\phi^{-\alpha}_{*}V_{lift})\nonumber\\
			&=\left.\left(\phi^{-\alpha}_{*}V_{lift}\right)\right|_{L}-\left(pr_{T^{*}\mathcal{F}_{L}}\right)_{*}(V_{lift})\nonumber\\
			&=\left.\left(\phi^{-\alpha}_{*}V_{lift}\right)\right|_{L}-V_{lift}|_{L}\nonumber\\
			&=\int_{0}^{1}\left.\frac{d}{dt}\left(\phi^{-t\alpha}_{*}V_{lift}\right)\right|_{L}dt,
			\end{align}
			using that $pr_{T^{*}\mathcal{F}_{L}}\circ\phi^{-\alpha}=pr_{T^{*}\mathcal{F}_{L}}$. Now, note that
			\begin{align}\label{use}
			\frac{d}{dt}\left(\phi^{-t\alpha}_{*}V_{lift}\right)&=\left.\frac{d}{ds}\right|_{s=0}\phi^{-t\alpha}_{*}\left(\phi^{-s\alpha}_{*}V_{lift}\right)\nonumber\\
			&=\phi^{-t\alpha}_{*}\left(\left.\frac{d}{ds}\right|_{s=0}\phi^{-s\alpha}_{*}V_{lift}\right)\nonumber\\
			&=\phi^{-t\alpha}_{*}\left([\alpha,V_{lift}]\right)\nonumber\\
			&=[\alpha,V_{lift}]\nonumber\\
			&=-[V_{lift},\alpha],
			\end{align}
			In the fourth equality, we used that $[\alpha,V_{lift}]$ is vertical and fiberwise constant, which follows from Lemma \ref{rewrite}. Therefore, the equality \eqref{cc} becomes
			\begin{align*}
			P_{T^{*}\mathcal{F}_{L}}\left(\phi^{-\alpha}_{*}V_{lift}\right)&=-\int_{0}^{1}\left.[V_{lift},\alpha]\right|_{L}dt=-\left.[V_{lift},\alpha]\right|_{L},
			\end{align*}
			which is exactly \eqref{toshow} under the identification \eqref{cor}. 
			This proves Claim 1.
			
			\vspace{0.1cm}
			
			\item \underline{Claim 2:} $\wedge^{2}P_{E}\left(\phi^{(-\alpha,-f)}_{*}\Pi_{can}\right)=\left(d_{\mathcal{F}_{L}}\alpha,d_{\mathcal{F}_{L}}f\right)$.\hfill (**)
			
			\vspace{0.1cm}
			\noindent
			Since $L\subset\left(T^{*}\mathcal{F}_{L},\Pi_{can}\right)$ is Lagrangian, we have $\wedge^{2}P_{E}(\Pi_{can})=\wedge^{2}P_{T^{*}\mathcal{F}_{L}}\Pi_{can}=0$, so the left hand side of (**) is equal to
			\begin{equation}\label{lhs}
			\wedge^{2}P_{E}\left(\phi^{(-\alpha,-f)}_{*}\Pi_{can}-\Pi_{can}\right).
			\end{equation}
			We can decompose
			\begin{equation}\label{dec}
			\phi^{(-\alpha,-f)}_{*}\Pi_{can}-\Pi_{can}=A_{t}\wedge\partial_{t}+B_{t}
			\end{equation}
			for some $A_{t}\in\mathfrak{X}(T^{*}\mathcal{F}_{L})$ and $B_{t}\in\mathfrak{X}^{2}(T^{*}\mathcal{F}_{L})$ depending smoothly on $t$. We find $A_{t}$ by contracting with $dt$:
			\begin{align}\label{At}
			A_{t}&=-\left(\phi^{(-\alpha,-f)}_{*}\Pi_{can}-\Pi_{can}\right)^{\sharp}(dt)\nonumber\\
			&=-\left[\phi^{(-\alpha,-f)}_{*}\circ\Pi_{can}^{\sharp}\circ\left(\phi^{(-\alpha,-f)}\right)^{*}\right](dt)\nonumber\\
			&=-\phi^{(-\alpha,-f)}_{*}\left(\Pi_{can}^{\sharp}\left(d(t-pr_{E}^{*}f)\right)\right)\nonumber\\
			&=\phi^{(-\alpha,-f)}_{*}\left(X_{pr_{T^{*}\mathcal{F}_{L}}^{*}f}\right)\nonumber\\
			&=X_{pr_{T^{*}\mathcal{F}_{L}}^{*}f},
			\end{align}
			using that $X_{pr_{T^{*}\mathcal{F}_{L}}^{*}f}$ is vertical and fiberwise constant. Next, since
			\begin{align*}
			\pounds_{\partial_{t}}\left(\phi^{(-\alpha,-f)}_{*}\Pi_{can}\right)&=\left.\frac{d}{dt}\right|_{t=0}\phi_{*}^{(0,-t)}\left(\phi^{(-\alpha,-f)}_{*}\Pi_{can}\right)\\
			&=\phi^{(-\alpha,-f)}_{*}\left(\left.\frac{d}{dt}\right|_{t=0}\phi_{*}^{(0,-t)}\Pi_{can}\right)\\
			&=\phi^{(-\alpha,-f)}_{*}(\pounds_{\partial_{t}}\Pi_{can})\\
			&=0,
			\end{align*}
			it follows that 
			\[
			\pounds_{\partial_{t}}B_{t}=\pounds_{\partial_{t}}\left(\phi^{(-\alpha,-f)}_{*}\Pi_{can}-\Pi_{can}-X_{pr_{T^{*}\mathcal{F}_{L}}^{*}f}\wedge\partial_{t}\right)=0,
			\]
			i.e. $B_{t}=B$ is independent of $t$. So $B$ is equal to its pushforward under the projection $T^{*}\mathcal{F}_{L}\times\mathbb{R}\rightarrow T^{*}\mathcal{F}_{L}$, which yields
			\begin{align}\label{B}
			B&=\phi^{-\alpha}_{*}\Pi_{can}-\Pi_{can}\nonumber\\
			&=\int_{0}^{1}\frac{d}{dt}\left(\phi^{-t\alpha}_{*}\Pi_{can}\right)dt\nonumber\\
			&=\int_{0}^{1}\pounds_{\alpha}\Pi_{can}dt\nonumber\\
			&=\pounds_{\alpha}\Pi_{can}.
			\end{align}
			Here the third equality follows from a computation similar to the one that led to \eqref{use}. Inserting \eqref{At} and \eqref{B} into \eqref{dec} gives
			\[
			\phi^{(-\alpha,-f)}_{*}\Pi_{can}-\Pi_{can}=X_{pr_{T^{*}\mathcal{F}_{L}}^{*}f}\wedge\partial_{t}+\pounds_{\alpha}\Pi_{can}.
			\]
			Applying the identification \eqref{cor} now yields the conclusion of Claim 2:
			\[
			\wedge^{2}P_{E}\left(\phi^{(-\alpha,-f)}_{*}\Pi_{can}-\Pi_{can}\right)=\wedge^{2}P_{E}\left(X_{pr_{T^{*}\mathcal{F}_{L}}^{*}f}\wedge\partial_{t}+\pounds_{\alpha}\Pi_{can}\right)=(d_{\mathcal{F}_{L}}\alpha,d_{\mathcal{F}_{L}}f).
			\]
		\end{enumerate}
		Combining Claim 1 and Claim 2, we see that the requirement \eqref{proj} is equivalent with the equations \eqref{eqns} in the statement of the theorem.
	\end{proof}
	
	{Recall that in symplectic geometry, the space of Lagrangian submanifolds close to a given one is contractible, hence path connected. Using Theorem \ref{equations}, we now show that the same statement is true in our setting.}
	
	\begin{cor}\label{connected}
		%Any Lagrangian section of $\big(T^{*}\mathcal{F}_{L}\times\mathbb{R},\widetilde{\Pi}\big)$ can be connected to $L$ by a smooth path of Lagrangian sections. In particular, 
		{The set of Lagrangian sections of $\big(T^{*}\mathcal{F}_{L}\times\mathbb{R},\widetilde{\Pi}\big)$ is contractible when endowed with the compact-open topology.}
	\end{cor}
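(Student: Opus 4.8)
The plan is to invoke Theorem \ref{equations}, which identifies the Lagrangian sections of $\big(T^{*}\mathcal{F}_{L}\times\mathbb{R},\widetilde{\Pi}\big)$ with the sections $(\alpha,f)\in\Gamma(T^{*}\mathcal{F}_{L}\times\mathbb{R})$, having graph inside the neighborhood $U$, that solve the quadratic system
\[
d_{\mathcal{F}_{L}}\alpha=0,\qquad d_{\mathcal{F}_{L}}f+f(\gamma-\pounds_{X}\alpha)=0.
\]
Denote by $\mathcal{L}$ this solution set, topologized by the compact-open topology. The zero section $(0,0)$ clearly lies in $\mathcal{L}$, and I would prove contractibility by exhibiting an explicit homotopy $H\colon[0,1]\times\mathcal{L}\to\mathcal{L}$ with $H_{0}=\mathrm{id}_{\mathcal{L}}$ and $H_{1}\equiv(0,0)$.

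The heart of the argument is a pair of observations about the structure of the system. First, the second equation is homogeneous of degree one in $f$, so for any solution $(\alpha,f)$ and any $c\in\mathbb{R}$ the pair $(\alpha,cf)$ is again a solution (the first equation does not involve $f$). Second, once $f\equiv 0$ the second equation becomes vacuous and the first is linear, so $(c\alpha,0)$ is a solution for every $c$. I would therefore contract in two stages: for $s\in[0,\tfrac12]$ set $H_{s}(\alpha,f)=(\alpha,(1-2s)f)$, scaling $f$ down to $0$; for $s\in[\tfrac12,1]$ set $H_{s}(\alpha,f)=((2-2s)\alpha,0)$, scaling $\alpha$ down to $0$. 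The two definitions agree at $s=\tfrac12$ (both give $(\alpha,0)$), and by the two observations each $H_{s}(\alpha,f)$ solves the system, so $H$ takes values in $\mathcal{L}$. Continuity of $H$ in the compact-open topology is immediate, since it is given fiberwise by multiplication with scalars depending continuously on $s$.

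The one point requiring care is that the graphs must stay inside the domain $U$ of the local model throughout the homotopy; this is exactly where the naive straight-line contraction $(\alpha,f)\mapsto(c\alpha,cf)$ — which does remain in a fiberwise star-shaped $U$ — fails, because the cross term produces a residual $c(1-c)\,f\,\pounds_{X}\alpha$ in the second equation and hence leaves $\mathcal{L}$. To reconcile staying in $\mathcal{L}$ with staying in $U$, I would choose the neighborhood $U$ in Corollary \ref{normal} to be of the form $U=\{(p,t):p\in W,\ |t|<\delta(p)\}$, where $W\subset T^{*}\mathcal{F}_{L}$ is a neighborhood of the zero section that is star-shaped with respect to the origin in each fiber $T^{*}_{q}\mathcal{F}_{L}$, and $\delta\colon W\to(0,\infty)$ is continuous; every neighborhood of the zero section contains such a subneighborhood, and since the normal form holds on any sufficiently small neighborhood we are free to fix this one. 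With this choice the first stage keeps $\alpha$ fixed and only shrinks $|t|$, so it stays in $U$ because $|(1-2s)f(q)|\le|f(q)|<\delta(\alpha(q))$; the second stage keeps $t=0$ and moves $\alpha(q)$ along the fiber segment to the origin, which stays in the star-shaped $W$.

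The main obstacle is precisely this interaction between the quadratic nonlinearity and the constraint of remaining in $U$: the obvious linear contraction is incompatible with the Lagrangian condition, and the two-stage contraction I propose is tailored to the homogeneity of the system (linearity in $f$, and triviality of the $f$-equation at $f=0$). Once the shape of $U$ is arranged as above, verifying that both stages land in $\mathcal{L}$ and that $H$ is continuous is routine, completing the proof that $\mathcal{L}$ is contractible.
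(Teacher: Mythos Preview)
Your proposal is correct and follows essentially the same approach as the paper: a two-stage contraction that first scales $f$ to zero while keeping $\alpha$ fixed, then scales $\alpha$ to zero with $f\equiv 0$. The paper packages this using a pair of smooth bump functions $\Phi,\Psi$ with $\Phi\cdot\Psi=\Phi$ (so that $F_s(\alpha,f)=(\Psi(s)\alpha,\Phi(s)f)$ remains a solution), whereas you use the equivalent piecewise-linear version; your additional care about staying inside the star-shaped neighborhood $U$ is a point the paper does not make explicit.
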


\begin{proof}
{
Let us denote for short
\[
\text{Def}(L):=\left\{(\alpha,f)\in\Gamma(T^{*}\mathcal{F}_{L}\times\mathbb{R}):\ \text{graph}(\alpha,f)\ \text{is Lagrangian in}\ \big(T^{*}\mathcal{F}_{L}\times\mathbb{R},\widetilde{\Pi}\big)\right\}.
\]
We will construct a deformation retraction $F$ of $\text{Def}(L)$ onto the zero section. To this end, fix a smooth function $\Phi\in C^{\infty}(\RR)$ satisfying
\[
\begin{cases}
\Phi(s)=1\ \hspace{1.2cm}\forall\ s\leq 0\\
0<\Phi(s)<1\ \hspace{0.5cm}\forall\ 0<s<\frac{1}{2}\\
\Phi(s)=0\ \hspace{1.2cm}\forall\ s\geq\frac{1}{2}
\end{cases},
\]
and define $\Psi\in C^{\infty}(\RR)$ by setting \[\Psi(s):=\Phi\left(s-\frac{1}{2}\right).\]
Note that $\Psi\equiv 1$ on the support of $\Phi$, so that in particular $\Phi\cdot\Psi=\Phi$. We now define the map $F$ as follows:
\[
F:\text{Def}(L)\times[0,1]\rightarrow\text{Def}(L):\big((\alpha,f),s\big)\mapsto\big(\Psi(s)\alpha,\Phi(s)f\big).
\]
This map is well-defined: if $(\alpha,f)$ solves the equations \eqref{eqns}, then also $\big(\Psi(s)\alpha,\Phi(s)f\big)$ solves these equations because $\Phi\cdot\Psi=\Phi$. Clearly, the map $F$ is continuous when $\text{Def}(L)$ is endowed with the compact-open topology. Next, for $(\alpha,f)\in\text{Def}(L)$ we have $F\big((\alpha,f),0\big)=(\alpha,f)$ and $F\big((\alpha,f),1\big)=(0,0)$. Since clearly $F\big((0,0),s\big)=(0,0)$ for all $s\in[0,1]$, we conclude that $F$ is indeed a deformation retraction of $\text{Def}(L)$ onto $L$.}

%Let $(\alpha,f)\in\Gamma(T^{*}\mathcal{F}_{L}\times\mathbb{R})$ be a Lagrangian section. Fix a smooth function $\Psi\in C^{\infty}(\RR)$ satisfying $\Psi(s)=0$ for $s\leq 0$, $0<\Psi(s)<1$ for $0<s<1$ and $\Psi(s)=1$ for $s\geq 1$. Define $\Phi\in C^{\infty}(\RR)$ by putting $\Phi(s):=\Psi(s-1)$, and notice that $\Phi\cdot\Psi=\Phi$. Consequently, the smooth path
%$
%s\mapsto\big(\Psi(s)\alpha,\Phi(s)f\big)
%$
%consists of Lagrangian sections, since $\big(\Psi(s)\alpha,\Phi(s)f\big)$ is a solution to the equations \eqref{eqns} for each value of $s\in\RR$. Clearly, this path is continuous for the compact-open topology, it passes through the zero section at $s=0$, and it reaches $(\alpha,f)$ at time $s=2$. This proves the statement.
\end{proof}

	\begin{remark}\label{rem}
		We comment on the Maurer-Cartan equation \eqref{eqns}.
		\begin{enumerate}[i)]
			\item Twisting the foliated de Rham differential with a closed element $\eta\in\Omega^{1}(\mathcal{F}_{L})$ gives a differential
			\begin{equation}\label{m-n}
			d^{\eta}_{\mathcal{F}_{L}}:\Omega^{k}(\mathcal{F}_{L})\rightarrow\Omega^{k+1}(\mathcal{F}_{L}):\alpha\mapsto d_{\mathcal{F}_{L}}\alpha+\eta\wedge\alpha.
			\end{equation}
			The associated cohomology groups, which we denote by $H^{k}_{\eta}(\mathcal{F}_{L})$, will be discussed in more detail later. If $\mathcal{F}_{L}$ is the one-leaf foliation on $L$, then we recover what is called the Morse-Novikov cohomology, which appears in the context of locally conformal symplectic structures \cite[Section 1]{HR}.
			\item The Maurer-Cartan equation \eqref{eqns} shows that the problem of deforming $L$ into a nearby Lagrangian $\text{Graph}(\alpha,f)$ can essentially be done in two steps. Indeed, one can  solve the first (linear) equation in \eqref{eqns} for $\alpha$,
			and then solve the second equation -- which for fixed $\alpha$ becomes  linear --  for $f$. Geometrically, this amounts to the following.
			First, one deforms $L$ inside the singular locus along the leafwise closed one-form $\alpha$, and then one moves the obtained Lagrangian $L'=\text{Graph}(\alpha)\subset T^{*}\mathcal{F}_{L}$ in the direction normal to $T^{*}\mathcal{F}_{L}$ along the function $f\in H_{\gamma-\pounds_{X}\alpha}^{0}(\mathcal{F}_{L})$. 
			
			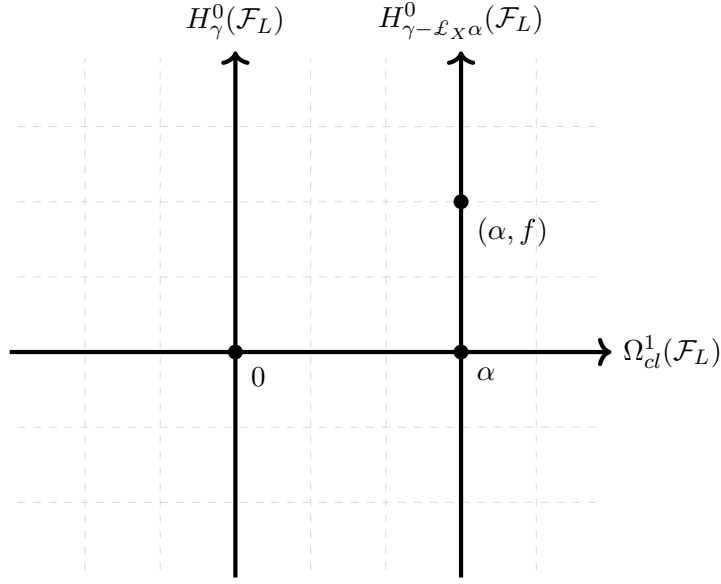
\begin{figure}[H]
				\centering
				\begin{tikzpicture}
				\draw[help lines, color=gray!30, dashed] (-2.9,-2.9) grid (4.9,3.9);
				\draw[->,ultra thick] (-3,0)--(5,0) node[right]{$\Omega^{1}_{cl}(\mathcal{F}_{L})$};
				\draw[->,ultra thick] (0,-3)--(0,4) node[above]{$H^{0}_{\gamma}(\mathcal{F}_{L})$};
				\draw[->,ultra thick] (3,-3)--(3,4) node[above]{$H^{0}_{\gamma-\pounds_{X}\alpha}(\mathcal{F}_{L})$};
				\node[label={315:{$0$}},circle,fill,inner sep=2pt] at (0,0) {};
				\node[label={315:{$\alpha$}},circle,fill,inner sep=2pt] at (3,0) {};
				\node[label={315:{$(\alpha,f)$}},circle,fill,inner sep=2pt] at (3,2) {};
				\end{tikzpicture}
				\caption{Deforming $L$ into $\text{Graph}(\alpha,f)$.}
			\end{figure}
		\end{enumerate}
	\end{remark}

	So heuristically, it seems like deforming $L$ into $\text{Graph}(\alpha)\subset T^{*}\mathcal{F}_{L}$ for closed $\alpha\in\Omega^{1}(\mathcal{F}_{L})$ transforms $\gamma$ into $\gamma-\pounds_{X}\alpha$. We will now make this precise.

	\begin{prop}
		Let
		$
		\psi:U_{0}\subset \left(Z,\Pi|_{Z}\right) \rightarrow U_{1}\subset \left(T^{*}\mathcal{F}_{L},\Pi_{T^{*}\mathcal{F}_{L}}\right)
		$
		be a fixed tubular neighborhood embedding of $L$ into $T^{*}\mathcal{F}_{L}$,
		where $\Pi_{T^{*}\mathcal{F}_{L}}$ denotes the canonical Poisson structure $\Pi_{can}$. Assume that $V=V_{vert}+V_{lift}$ is a representative of the Poisson cohomology class $\psi_{*}[V_{mod}|_{Z}]$ satisfying the requirements of Thm. \ref{thm:modvfL}, with associated data $(X,\gamma)\in\mathfrak{X}(L)^{\mathcal{F}_{L}}\times\Omega^{1}_{cl}(\mathcal{F}_{L})$. Consider a Lagrangian $L'=\text{Graph}(\alpha)\subset U_{1}\subset T^{*}\mathcal{F}_{L}$ for some closed $\alpha\in\Omega^{1}(\mathcal{F}_{L})$. Then the following hold:
		\begin{enumerate}[i)]
			\item There is a canonical diffeomorphism of affine bundles
			\[
			\left(\Phi,\phi\right):\left(T^{*}\mathcal{F}_{L},\Pi_{T^{*}\mathcal{F}_{L}}\right)\rightarrow\left(T^{*}\mathcal{F}_{L'},\Pi_{T^{*}\mathcal{F}_{L'}}\right)
			\] 
			which is a Poisson diffeomorphism between the total spaces and fixes points of $L'$, so that $\Phi\circ\psi$ is a tubular neighborhood embedding of $\psi^{-1}(L')$ into $\left(T^{*}\mathcal{F}_{L'},\Pi_{T^{*}\mathcal{F}_{L'}}\right)$.
			\item The representative $\Phi_{*}V$ satisfies the requirements of Thm. \ref{thm:modvfL} too, and its associated data are $(X',\gamma')=\Big(\phi_{*}X,\left(\phi^{-1}\right)^{*}(\gamma-\pounds_{X}\alpha)\Big)\in\mathfrak{X}(L')^{\mathcal{F}_{L'}}\times\Omega^{1}_{cl}(\mathcal{F}_{L'})$.
		\end{enumerate}
	\end{prop}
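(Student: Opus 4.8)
The plan is to realise $\Phi$ as a composition of two Poisson diffeomorphisms whose effect on $V$ can be tracked separately. First, since $\alpha$ is leafwise closed, the fibrewise translation
\[
T_{-\alpha}\colon T^{*}\mathcal{F}_{L}\to T^{*}\mathcal{F}_{L},\qquad (q,\xi)\mapsto(q,\xi-\alpha(q)),
\]
is a Poisson diffeomorphism of $(T^{*}\mathcal{F}_{L},\Pi_{can})$ carrying the graph $L'$ onto the zero section $L$; in the notation of the proof of Theorem \ref{equations} this is exactly the map $\phi^{-\alpha}$. Second, the projection $p$ restricts to a foliated diffeomorphism $L'\to L$ (the symplectic leaves of $T^{*}\mathcal{F}_{L}$ being the $T^{*}\mathcal{O}$ of \eqref{decomposition}), so its inverse $\phi:=(p|_{L'})^{-1}\colon L\to L'$, $q\mapsto (q,\alpha(q))$, is a foliated diffeomorphism $(L,\mathcal{F}_{L})\to(L',\mathcal{F}_{L'})$. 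Its cotangent lift $C_{\phi}\colon T^{*}\mathcal{F}_{L}\to T^{*}\mathcal{F}_{L'}$, defined leafwise by $(q,\xi)\mapsto\big(\phi(q),\xi\circ(d_{q}\phi)^{-1}\big)$, is a fibrewise-linear Poisson diffeomorphism. I then set $\Phi:=C_{\phi}\circ T_{-\alpha}$, a fibrewise-affine Poisson diffeomorphism covering $\phi$. Tracing a point $(q,\alpha(q))\in L'$ through $T_{-\alpha}$ (to $(q,0)$) and then through $C_{\phi}$ (to $(\phi(q),0)$) shows that $\Phi$ fixes $L'$, identified with the zero section of $T^{*}\mathcal{F}_{L'}$; this settles (i), and the final clause about $\Phi\circ\psi$ then follows since $\psi$ maps $\psi^{-1}(L')$ onto $L'$.

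For (ii) I would push $V=V_{vert}+V_{lift}$ forward in the same two steps. Under $T_{-\alpha}$ the vertical fibrewise-constant field $V_{vert}$ is unchanged, while the computation already carried out in the proof of Theorem \ref{equations} (equations \eqref{use} and \eqref{toshow}) gives $(T_{-\alpha})_{*}V_{lift}=V_{lift}-[V_{lift},\alpha]$, whose correction is the vertical fibrewise-constant field corresponding under \eqref{cor} to the foliated one-form $\pounds_{X}\alpha$ (closed, since $\alpha$ is). Hence
\[
(T_{-\alpha})_{*}V=V_{lift}+\big(V_{vert}-(\pounds_{X}\alpha)^{\mathrm{vert}}\big),
\]
which is again of the shape prescribed by Theorem \ref{thm:modvfL}, with base field $X$ and foliated one-form $\gamma-\pounds_{X}\alpha$. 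Applying $C_{\phi}$ and using naturality of the cotangent lift — namely $(C_{\phi})_{*}\widetilde{X}=\widetilde{\phi_{*}X}$ for the lifted part, and $(C_{\phi})_{*}V_{\beta}=V_{(\phi^{-1})^{*}\beta}$ for the vertical fibrewise-constant field $V_{\beta}$ associated to $\beta\in\Omega^{1}(\mathcal{F}_{L})$ — I obtain $\Phi_{*}V=\widetilde{\phi_{*}X}+V_{(\phi^{-1})^{*}(\gamma-\pounds_{X}\alpha)}$, which exhibits exactly the claimed data $(X',\gamma')=\big(\phi_{*}X,(\phi^{-1})^{*}(\gamma-\pounds_{X}\alpha)\big)$.

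It remains to verify that $\Phi_{*}V$ genuinely meets the hypotheses of Theorem \ref{thm:modvfL}: the base field $\phi_{*}X$ lies in $\mathfrak{X}(L')^{\mathcal{F}_{L'}}$ because $X\in\mathfrak{X}(L)^{\mathcal{F}_{L}}$ and $\phi$ is a foliated diffeomorphism, and $(\phi^{-1})^{*}(\gamma-\pounds_{X}\alpha)$ is closed because closedness of foliated forms is preserved by $\phi$. I expect the main obstacle to be not any single deep step but the careful bookkeeping of the two transformations in the foliated setting: in particular, justifying the naturality identities $(C_{\phi})_{*}\widetilde{X}=\widetilde{\phi_{*}X}$ and $(C_{\phi})_{*}V_{\beta}=V_{(\phi^{-1})^{*}\beta}$ leafwise, and confirming that the correction produced by $T_{-\alpha}$ is precisely $\pounds_{X}\alpha$ under the identification \eqref{cor}. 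This is exactly where the hypothesis that $\alpha$ be leafwise closed is essential, since it is what makes $T_{-\alpha}$ Poisson and keeps all the one-forms in sight closed.
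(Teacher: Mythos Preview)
Your proof is correct and follows essentially the same route as the paper's: the map $\Phi$ is realized as the composition of the fibrewise translation $\phi^{-\alpha}$ (your $T_{-\alpha}$) with the foliated cotangent lift $T^{*}_{\mathcal{F}}\tau$ of the projection $\tau=p|_{L'}\colon L'\to L$ (your $C_{\phi}$ with $\phi=\tau^{-1}$), and part (ii) is obtained by pushing $V$ through these two maps using the computations \eqref{cc}--\eqref{use} and the functoriality of the lift. The only blemish is a small sign slip in your narration: the correction $-[V_{lift},\alpha]$ corresponds under \eqref{cor} and Lemma \ref{rewrite} to $-\pounds_{X}\alpha$, not $\pounds_{X}\alpha$; your displayed formula $(T_{-\alpha})_{*}V=V_{lift}+\big(V_{vert}-(\pounds_{X}\alpha)^{\mathrm{vert}}\big)$ is nonetheless correct.
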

	\begin{proof}
		\begin{enumerate}[i)]
			\item Since $\alpha$ is closed, the translation map
			\[
			\phi^{-\alpha}:\left(T^{*}\mathcal{F}_{L},\Pi_{T^{*}\mathcal{F}_{L}}\right)\overset{\sim}{\rightarrow}\left(T^{*}\mathcal{F}_{L},\Pi_{T^{*}\mathcal{F}_{L}}\right):(p,\xi)\mapsto(p,\xi-\alpha(p))
			\]
			is a Poisson diffeomorphism; 	this follows from the computation \eqref{B} and the isomorphism \eqref{cor}. Its restriction to $L'$, which coincides with the restriction of the vector bundle projection $p_L$  to $L'$, is a foliated diffeomorphism $\tau:(L',\mathcal{F}_{L'})\overset{\sim}{\rightarrow} (L,\mathcal{F}_{L})$, and the cotangent lift $T^{*}\tau$ of $\tau$ descends to a Poisson diffeomorphism
			\[
			T^{*}_{\mathcal{F}}\tau:\left(T^{*}\mathcal{F}_{L},\Pi_{T^{*}\mathcal{F}_{L}}\right)\overset{\sim}{\rightarrow}\left(T^{*}\mathcal{F}_{L'},\Pi_{T^{*}\mathcal{F}_{L'}}\right).
			\]
			In summary, we have a commutative diagram 
			\begin{equation}\label{dia}
			\begin{tikzcd}[column sep=large, row sep=large]
			\left(T^{*}L',\Pi_{T^{*}L'}\right)\arrow{d}{r_{L'}} & \left(T^{*}L,\Pi_{T^{*}L}\right) \arrow{d}{r_{L}} \arrow{l}{T^{*}\tau}\\
			\left(T^{*}\mathcal{F}_{L'},\Pi_{T^{*}\mathcal{F}_{L'}}\right)\arrow{d}{p_{L'}}  &\left(T^{*}\mathcal{F}_{L},\Pi_{T^{*}\mathcal{F}_{L}}\right)\arrow{d}{p_{L}}\arrow{l}{T^{*}_{\mathcal{F}}\tau}\\
			(L',\mathcal{F}_{L'})\arrow{r}{\tau} & (L,\mathcal{F}_{L})
			\end{tikzcd}.
			\end{equation}
			
			\noindent
			The affine bundle map $(\Phi,\phi):=\left(T^{*}_{\mathcal{F}}\tau\circ\phi^{-\alpha},\tau^{-1}\right)$ meets the requirements.
			\item Clearly $\Phi_{*}V$ is $p_{L'}$-projectable, since $V$ is $p_{L}$-projectable and $\Phi$ covers the diffeomorphism $\phi$.	Using that $p_{L}\circ\phi^{-\alpha}=p_{L}$, we have
			\[
			\left(p_{L'}\right)_{*}(\Phi_{*}V)=\left(p_{L'}\circ T^{*}_{\mathcal{F}}\tau\circ\phi^{-\alpha}\right)_{*}V=\left(\tau^{-1}\circ p_{L}\circ\phi^{-\alpha}\right)_{*}V=\left(\tau^{-1}\right)_{*}X.
			\]
			Since $\tau^{-1}:(L,\mathcal{F}_{L})\rightarrow (L',\mathcal{F}_{L'})$ is a foliated diffeomorphism and $X\in\mathfrak{X}(L)^{\mathcal{F}_{L}}$, we also have $\left(\tau^{-1}\right)_{*}X\in\mathfrak{X}(L')^{\mathcal{F}_{L'}}$. Moreover, 
			$\widetilde{\left(\tau^{-1}\right)_{*}X}=\left(T^{*}_{\mathcal{F}}\tau\right)_{*}V_{lift}$ by functoriality.
			It remains to show that $\Phi_{*}V- \widetilde{(p_{L'})_{*}(\Phi_{*}V)}=
			\left(T^{*}_{\mathcal{F}}\tau\circ\phi^{-\alpha}\right)_{*}V-\left(T^{*}_{\mathcal{F}}\tau\right)_{*}V_{lift}$ is vertical, fiberwise constant, and that it corresponds with the closed foliated one-form $\tau^{*}\left(\gamma-\pounds_{X}\alpha\right)\in\Omega^{1}(\mathcal{F}_{L'})$. We rewrite it as
			\begin{align}\label{vec}
			&\left(T^{*}_{\mathcal{F}}\tau\right)_{*}\left[\left(\phi^{-\alpha}\right)_{*}(V-V_{lift})\right]+\left(T^{*}_{\mathcal{F}}\tau\right)_{*}\left[\left(\phi^{-\alpha}\right)_{*}V_{lift}-V_{lift}\right]\nonumber\\
			&\hspace{0.5cm}=\left(T^{*}_{\mathcal{F}}\tau\right)_{*}(V-V_{lift})+\left(T^{*}_{\mathcal{F}}\tau\right)_{*}\left[\left(\phi^{-\alpha}\right)_{*}V_{lift}-V_{lift}\right],
			\end{align}
			using that $V-V_{lift}$ is vertical and fiberwise constant.  {The computations done in \eqref{cc} and \eqref{use} show that} $\left(\phi^{-\alpha}\right)_{*}V_{lift}-V_{lift}=-[V_{lift},\alpha]$,  so it is vertical fiberwise constant and it corresponds with the closed one-form $-\pounds_{X}\alpha\in\Omega^{1}(\mathcal{F}_{L})$ under the identification \eqref{cor}. Since $V-V_{lift}$ corresponds with $\gamma\in\Omega^{1}(\mathcal{F}_{L})$, we get that the vertical fiberwise constant vector field \eqref{vec} indeed corresponds with the closed one-form $\tau^{*}\left(\gamma-\pounds_{X}\alpha\right)$.
		\end{enumerate}
	\end{proof}

	\subsection{The DGLA behind the deformation problem}\label{corres}
	\leavevmode
	\vspace{0.1cm}
	
	We now show that the equations \eqref{eqns} obtained in Theorem \ref{equations} represent the Maurer-Cartan equation of a differential graded Lie algebra (DGLA) that governs the deformations of the Lagrangian $L\subset\big(T^{*}\mathcal{F}_{L}\times\mathbb{R},\widetilde{\Pi}\big)$.
	To this end, recall the following.
	
	Suppose $E\rightarrow C$ is a vector bundle and let $\Pi$ be a Poisson structure on $E$ such that $C$ is coisotropic. Cattaneo and Felder showed in \cite{CaFeCo2} that the graded vector space $\Gamma\left(\wedge^{\bullet}E\right)[1]$ supports a canonical $L_{\infty}[1]$-algebra structure whose multibrackets are defined by
	\begin{equation}\label{CaFe}
	\lambda_{k}:\Gamma\left(\wedge^{\bullet}E\right)[1]^{\otimes^{k}}\rightarrow\Gamma\left(\wedge^{\bullet}E\right)[1]:\xi_{1}\otimes\cdots\otimes\xi_{k}\mapsto \wedge P\left([\ldots[[\Pi,\xi_{1}],\xi_{2}]\ldots,\xi_{k}]\right).
	\end{equation}
	Here the $\xi_{i}$ are interpreted as vertical fiberwise constant multivector fields on $E$ and the map $\wedge^{\bullet}P:\mathfrak{X}^{\bullet}(E)\rightarrow\Gamma(\wedge^{\bullet}E)$ is the restriction to $C$ composed with the vertical projection $\Gamma\left(\wedge^{\bullet}TE|_{L}\right)\rightarrow\Gamma(\wedge^{\bullet}E)$. These structure maps $\lambda_{k}$ only depend on the $\infty$-jet of $\Pi$ along the submanifold $C$, so the $L_{\infty}[1]$-algebra usually does not carry enough information to codify $\Pi$ in a neighborhood of $C$.
	Consequently, this $L_{\infty}[1]$-algebra fails to encode coisotropic deformations of $C$ in general (see \cite[Ex. 3.2]{Schatz}).
	
	However, if the Poisson structure $\Pi$ is analytic in the fiber directions, then the $L_{\infty}[1]$-algebra of Cattaneo-Felder does govern the smooth coisotropic deformation problem of $C$. In \cite{fiber}, such bivector fields are called fiberwise entire, and there one proves the following.
	\begin{thm}\cite[Thm. 1.12]{fiber}\label{entire}
		Let $E\rightarrow C$ be a vector bundle and $\Pi$ a fiberwise entire Poisson structure which is defined on a tubular neighborhood $U$ of $C$ in $E$. Suppose that $C$ is coisotropic with respect to $\Pi$, and consider the $L_{\infty}[1]$-algebra associated with $C\subset (U,\Pi)$. For any section $\alpha\in\Gamma(E)$ such that $\text{Graph}(-\alpha)$ is contained in $U$, the Maurer-Cartan series $MC(\alpha)$ converges. Furthermore, for any such $\alpha\in\Gamma(E)$, the following are equivalent:
		\begin{enumerate}
			\item The graph of $-\alpha$ is a coisotropic submanifold of $(U,\Pi)$.
			\item The Maurer-Cartan series $MC(\alpha)$ converges to zero.
		\end{enumerate}
	\end{thm}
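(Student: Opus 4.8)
The plan is to reduce both the convergence claim and the Maurer--Cartan characterization to a single Taylor-expansion identity relating the Cattaneo--Felder series to a genuine pushforward of $\Pi$. First I would fix $\alpha\in\Gamma(E)$ with $\mathrm{Graph}(-\alpha)\subset U$ and consider the fiberwise translation $\phi^{\alpha}\colon E\to E,\ e\mapsto e+\alpha(p(e))$, which is the time-one flow of the vertical, fiberwise constant vector field $V_{\alpha}$ determined by $\alpha$. Since $\phi^{\alpha}$ is a diffeomorphism mapping $\mathrm{Graph}(-\alpha)$ onto the zero section $C$, the submanifold $\mathrm{Graph}(-\alpha)$ is coisotropic for $\Pi$ if and only if $C$ is coisotropic for the pushforward $\phi^{\alpha}_{*}\Pi$, i.e. if and only if $\wedge^{2}P(\phi^{\alpha}_{*}\Pi)=0$. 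The whole theorem then follows once one proves the identity
\[
MC(\alpha)=\wedge^{2}P\big(\phi^{\alpha}_{*}\Pi\big),
\]
together with the statement that the left-hand series converges precisely when the right-hand side is well defined along $C$.

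To establish the identity term by term, I would expand the pushforward along the flow of $V_{\alpha}$, exactly as in the computation leading to \eqref{B}. Since $\phi^{\alpha}$ is the time-one flow of $V_{\alpha}$, one has formally $\phi^{\alpha}_{*}\Pi=e^{-\mathcal{L}_{V_{\alpha}}}\Pi=\sum_{k\geq 0}\frac{(-1)^{k}}{k!}\mathcal{L}_{V_{\alpha}}^{k}\Pi$. Writing $\mathcal{L}_{V_{\alpha}}=[V_{\alpha},\cdot\,]$ and using the graded symmetry of the Schouten bracket, each $\mathcal{L}_{V_{\alpha}}^{k}\Pi$ equals $(-1)^{k}[\dots[[\Pi,V_{\alpha}],V_{\alpha}]\dots,V_{\alpha}]$ with $k$ insertions of $V_{\alpha}$; the two sign factors cancel, so that after applying $\wedge^{2}P$ one recovers exactly $\tfrac{1}{k!}\lambda_{k}(\alpha,\dots,\alpha)$ from \eqref{CaFe}. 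This is the formal heart of the matter and amounts to bookkeeping with the definitions.

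The analytic content, where the hypothesis that $\Pi$ is fiberwise entire is indispensable, is the convergence step. Here I would work in fiber coordinates over a neighborhood of an arbitrary point $c\in C$ and observe that $\wedge^{2}P\big(\mathcal{L}_{V_{\alpha}}^{k}\Pi\big)\big|_{c}$ depends only on the order-$k$ fiberwise Taylor coefficient of $\Pi$ at $c$, contracted with $\alpha(c)^{\otimes k}$. Summing over $k$ is then precisely the fiberwise Taylor expansion of $\Pi$ at $c$ evaluated at the point $c-\alpha(c)\in\mathrm{Graph}(-\alpha)$, whose membership in $U$ is guaranteed by the hypothesis $\mathrm{Graph}(-\alpha)\subset U$. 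Because $\Pi$ is entire in the fiber directions, this Taylor series converges (absolutely, and uniformly on compacta) to the true value of $\phi^{\alpha}_{*}\Pi$ at $c$. Hence $MC(\alpha)$ converges and equals $\wedge^{2}P(\phi^{\alpha}_{*}\Pi)$, and the equivalence of (1) and (2) follows from the coisotropicity reformulation of the first paragraph.

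I expect the main obstacle to be this convergence step rather than the formal identity. One must verify carefully that the vertical projection and restriction to $C$ turn the $k$-fold Schouten bracket with $V_{\alpha}$ into exactly the $k$-th fiberwise derivative of $\Pi$, with no spurious horizontal-derivative contributions surviving after $\wedge^{2}P$, and then control the resulting series uniformly in $c$ using entireness of the fiber Taylor expansion. The sign bookkeeping in the nested Schouten brackets and the interchange of the infinite sum with the map $\wedge^{2}P$ are the two places where genuine care is required.
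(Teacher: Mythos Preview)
The paper does not prove this theorem: it is quoted verbatim from \cite{fiber} (Sch\"atz--Zambon) and used as a black box, so there is no ``paper's own proof'' to compare against. Your sketch is, in outline, the argument of \cite{fiber}: one relates the Maurer--Cartan series to $\wedge^{2}P(\phi^{\alpha}_{*}\Pi)$ via the exponential expansion of the pushforward along the vertical constant vector field $V_{\alpha}$, and then uses fiberwise analyticity to turn the formal Taylor series in the fiber directions into an honest convergent series.

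A couple of technical comments on your write-up. First, the identity $\phi^{\alpha}_{*}\Pi=e^{-\mathcal{L}_{V_{\alpha}}}\Pi$ is itself only formal until you invoke the entireness hypothesis; the rigorous statement in \cite{fiber} is phrased the other way around (the MC series is shown to compute the fiberwise Taylor coefficients of $\Pi$ at the translated point), and convergence is an input rather than a consequence. Second, your claim that ``no spurious horizontal-derivative contributions survive after $\wedge^{2}P$'' is not automatic and is precisely the content of one of the lemmas in \cite{fiber}: one must check that applying $\wedge^{\bullet}P$ after an iterated bracket with $V_{\alpha}$ picks out exactly the fiberwise partial derivatives of the coefficient functions of $\Pi$. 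This uses that $V_{\alpha}$ is vertical and fiberwise constant in an essential way. Your sign analysis is correct. Overall the strategy is right, but the two points above are where the actual work lies, and your sketch flags them appropriately without carrying them out.
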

	
	In the rest of this section, we show that the $L_{\infty}$-algebra of Cattaneo-Felder associated with $\big(T^{*}\mathcal{F}_{L}\times\mathbb{R},\widetilde{\Pi}\big)$ reduces to a DGLA, and that this DGLA governs the deformation problem of the Lagrangian $L$.
	
	\begin{lemma}\label{fiber}
		The Poisson structure $\widetilde{\Pi}=\left(V_{vert}+V_{lift}\right)\wedge t\partial_{t}+\Pi_{can}$, defined on a neighborhood $U$ of $L$ in $T^{*}\mathcal{F}_{L}\times\mathbb{R}$, is fiberwise entire.
	\end{lemma}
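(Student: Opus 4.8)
The plan is to verify the definition of \emph{fiberwise entire} directly in vector bundle coordinates, reducing the claim to the elementary observation that every structure function of $\widetilde{\Pi}$ is \emph{polynomial} along the fibers. Recall that a bivector field on a vector bundle $E\to C$ is fiberwise entire when there is a vector bundle atlas in whose fiber coordinates $u=(u_1,\dots,u_r)$ the components of the bivector, taken with respect to the associated coordinate frame, are entire functions of $u$ for each base point (smoothly in the base); in particular it suffices to exhibit an atlas in which all these components are polynomial in $u$, since polynomials are entire.

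First I would fix a foliated chart $(x_1,\dots,x_k,x_{k+1},\dots,x_n)$ on $L$, where the leaves of $\mathcal{F}_{L}$ are the level sets of $(x_{k+1},\dots,x_n)$, as in Lemma \ref{diff}. This induces a vector bundle chart on $E=T^{*}\mathcal{F}_{L}\times\mathbb{R}$ with base coordinates $(x_1,\dots,x_n)$ and fiber coordinates $(y_1,\dots,y_k,t)$, in which $\Pi_{can}=\sum_{i=1}^{k}\partial_{x_i}\wedge\partial_{y_i}$ has constant coefficients and is thus fiber-independent. Next I would record the coordinate expressions of the remaining constituent vector fields. Writing the closed foliated one-form as $\gamma=\sum_{i=1}^{k}\gamma_i(x)\,dx_i$, the vertical fiberwise constant field is $V_{vert}=\sum_{i=1}^{k}\gamma_i(x)\,\partial_{y_i}$, again with coefficients independent of the fiber coordinates. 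The field $V_{lift}$ is the cotangent lift of $X=p_{*}V=\sum_{j}a_j(x)\,\partial_{x_j}$; since a cotangent lift is fiberwise linear, on $T^{*}L$ it reads
\[
W_{T^{*}L}=\sum_{j=1}^{n}a_j(x)\,\partial_{x_j}-\sum_{i,j=1}^{n}\frac{\partial a_i}{\partial x_j}(x)\,y_i\,\partial_{y_j},
\]
and, because $X\in\mathfrak{X}(L)^{\mathcal{F}_{L}}$, this field descends under the restriction map $r$ (as in Lemma \ref{lem:lift}) to a field on $T^{*}\mathcal{F}_{L}$ whose coefficients are affine in the surviving fiber momenta $(y_1,\dots,y_k)$ and independent of $t$.

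Finally I would assemble $\widetilde{\Pi}=(V_{vert}+V_{lift})\wedge t\partial_t+\Pi_{can}$ and read off its components: wedging with $t\partial_t$ multiplies each coefficient of $V_{vert}+V_{lift}$ by $t$, so the only fiber dependence produced is through the monomials $t$ and $t\,y_i$; together with the constant coefficients of $\Pi_{can}$, every structure function of $\widetilde{\Pi}$ is a polynomial of degree at most two in $(y_1,\dots,y_k,t)$ with coefficients smooth in the base. Hence $\widetilde{\Pi}$ is fiberwise entire. I expect no genuine obstacle here: the whole content is that the normal form is fiberwise affine in $V$ and linear in $t$, so no higher powers of the fiber variables can arise. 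The only point requiring a little care is the coordinate form of the cotangent lift $V_{lift}$ and the bookkeeping guaranteeing that its descent under $r$ stays affine in $(y_1,\dots,y_k)$, which the explicit formula above settles.
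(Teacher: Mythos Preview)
Your proposal is correct and follows essentially the same route as the paper: choose a foliated chart, write $\Pi_{can}$, $V_{vert}$, and the cotangent lift $V_{lift}$ in the induced bundle coordinates, and observe that all coefficients of $\widetilde{\Pi}$ are polynomial (in fact of degree at most two) in the fiber variables $(y,t)$. The only cosmetic difference is that the paper writes the computation for the codimension-one foliation $\mathcal{F}_L$ relevant to the log-symplectic setting (so a single transverse coordinate $x_1$), whereas you phrase it for general codimension.
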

	\begin{proof}
		This is straightforward computation. Choose coordinates $(x_{1},\ldots,x_{n})$ on $L$ adapted to the foliation $\mathcal{F}_{L}$, such that plaques of $\mathcal{F}_{L}$ are level sets of $x_{1}$. Let $(y_{1},\ldots,y_{n})$ be the corresponding fiber coordinates on $T^{*}L$. Then write
		\begin{align*}
		\Pi_{can}=\sum_{i=2}^{n}\partial_{x_{i}}\wedge\partial_{y_{i}},\hspace{0.5cm}
		V_{vert}=\sum_{j=2}^{n}f_{j}(x)\partial_{y_{j}},\hspace{0.5cm}
		p_{*}V=\sum_{j=1}^{n}h_{j}(x)\partial_{x_{j}},
		\end{align*}
		where $p:T^{*}\mathcal{F}_{L}\rightarrow L$ is the projection and $h_{1}(x)$ only depends on $x_{1}$ since $p_{*}V\in\mathfrak{X}(L)^{\mathcal{F}_{L}}$.
		We then obtain
		\[
		V_{lift}=\sum_{j=1}^{n}h_{j}(x)\partial_{x_{j}}-\sum_{i=2}^{n}\sum_{j=2}^{n}y_{j}\frac{\partial h_{j}}{\partial x_{i}}(x)\partial_{y_{i}}.
		\]
		So the Poisson structure $\widetilde{\Pi}$ reads
		\begin{equation}\label{eq:coordinate-expression}
		\widetilde{\Pi}=\left(\sum_{j=2}^{n}f_{j}(x)\partial_{y_{j}}+\sum_{j=1}^{n}h_{j}(x)\partial_{x_{j}}-\sum_{i=2}^{n}\sum_{j=2}^{n}y_{j}\frac{\partial h_{j}}{\partial x_{i}}(x)\partial_{y_{i}}
		\right)\wedge t\partial_{t}+\sum_{i=2}^{n}\partial_{x_{i}}\wedge\partial_{y_{i}},
		\end{equation}
		which is clearly a fiberwise entire bivector field.
	\end{proof}
	
	{Notice that the coefficients in the coordinate expression \eqref{eq:coordinate-expression} are at most quadratic in the fiber coordinates $(y_2,\ldots,y_n,t)$. This has the following consequence.}
	
	\begin{lemma}\label{dgla}
		The $L_{\infty}[1]$-algebra $\big(\Gamma(\wedge^{\bullet}(T^{*}\mathcal{F}_{L}\times\mathbb{R}))[1],\{\lambda_{k}\}\big)$ of Cattaneo-Felder associated with  $\big(T^{*}\mathcal{F}_{L}\times\mathbb{R},\widetilde{\Pi}\big)$ corresponds to a DGLA-structure on $\Gamma(\wedge^{\bullet}(T^{*}\mathcal{F}_{L}\times\mathbb{R}))$.
	\end{lemma}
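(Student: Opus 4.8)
The plan is to show that the only possibly non-vanishing multibrackets $\lambda_{k}$ of \eqref{CaFe} are $\lambda_{1}$ and $\lambda_{2}$: an $L_{\infty}[1]$-algebra with $\lambda_{k}=0$ for all $k\geq 3$ is precisely a DGLA, and the remaining $L_{\infty}$-relations then reduce to the Jacobi and Leibniz identities. I would establish the vanishing by a weight-counting argument, exploiting that by \eqref{eq:coordinate-expression} the coefficients of $\widetilde{\Pi}$ are polynomial of low order in the fiber variables.

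First I would introduce a grading on multivector fields on $E=T^{*}\mathcal{F}_{L}\times\mathbb{R}$. In the coordinates $(x_{1},\ldots,x_{n})$ on $L$ with fiber coordinates $(y_{2},\ldots,y_{n},t)$, assign to a monomial the weight $w:=(\text{polynomial degree in the fiber coordinates } y_{2},\ldots,y_{n},t)+(\text{number of horizontal legs } \partial_{x_{i}})$. Conceptually this is the weight under the scaling of $T^{*}E$ in which the fiber base-coordinates and the momenta dual to horizontal directions carry weight $+1$ and all other coordinates weight $0$; equivalently, it is a non-negative grading for which the canonical symplectic form on $T^{*}E$ is homogeneous of weight $+1$. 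Two observations make this grading the right tool. On one hand, the map $\wedge P$ occurring in \eqref{CaFe} is restriction to $L$, which kills all positive powers of the fiber coordinates, followed by the vertical projection, which discards all horizontal legs; hence $\wedge P$ extracts exactly the weight-$0$ component. On the other hand, reading off \eqref{eq:coordinate-expression} term by term, every monomial of $\widetilde{\Pi}$ has weight at most $2$.

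The crux is the claim that bracketing with a vertical fiberwise constant multivector field $\xi$ lowers the weight by exactly $1$. This is most transparent through the identification of the Schouten--Nijenhuis bracket with the canonical Poisson bracket on $T^{*}E$: since the symplectic form has weight $+1$ the bracket is homogeneous of weight $-1$, and each $\xi$ has weight $0$, having fiber degree $0$ and no horizontal legs. Concretely, in $[\,\boldsymbol{\cdot}\,,\xi]$ only two kinds of terms survive: either a vertical leg $\partial_{y_{j}}$ or $\partial_{t}$ of $\xi$ is contracted against a fiber derivative of the other factor, lowering its fiber degree by one, or a horizontal leg of the other factor is contracted against an $x$-derivative of the coefficient of $\xi$, removing one horizontal leg; in both cases $w$ drops by exactly $1$. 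I expect this homogeneity statement to be the main point to get right, as it requires checking that no other terms contribute, using precisely that $\xi$ has fiberwise constant coefficients and purely vertical legs.

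Granting the claim the conclusion is immediate. The iterated bracket $[\ldots[[\widetilde{\Pi},\xi_{1}],\xi_{2}]\ldots,\xi_{k}]$ has weight at most $2-k$, since $\widetilde{\Pi}$ has weight at most $2$ and each of the $k$ brackets lowers the weight by $1$. As weights are non-negative, for $k\geq 3$ this forces the iterated bracket to have no weight-$0$ component, so $\lambda_{k}(\xi_{1},\ldots,\xi_{k})=\wedge P\big([\ldots[\widetilde{\Pi},\xi_{1}]\ldots,\xi_{k}]\big)=0$. Thus only $\lambda_{1}$ and $\lambda_{2}$ survive, and the Cattaneo--Felder $L_{\infty}[1]$-algebra associated with $\big(T^{*}\mathcal{F}_{L}\times\mathbb{R},\widetilde{\Pi}\big)$ is a DGLA. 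Note that the argument is purely algebraic in the coordinate expression \eqref{eq:coordinate-expression}, which is valid near $L$, and this suffices since the brackets $\lambda_{k}$ depend only on the behaviour of $\widetilde{\Pi}$ along $L$.
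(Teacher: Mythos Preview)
Your argument is correct and rests on the same underlying observation as the paper's proof --- that the coordinate expression \eqref{eq:coordinate-expression} for $\widetilde{\Pi}$ is at most quadratic in the fiber variables together with the horizontal legs --- but the packaging is genuinely different. The paper invokes the multiderivation property of the $\lambda_{k}$ to reduce to tuples built from basic functions $h(x)$ and vertical coordinate vector fields $\partial_{y_{j}},\partial_{t}$, then checks case by case that after two brackets one obtains a vertical fiberwise constant multivector field (or zero), so a further bracket with a vertical coordinate vector field vanishes. Your weight grading bypasses the reduction to generators: once the Schouten bracket is identified as homogeneous of weight $-1$ and each $\xi_{i}$ as weight $0$, the vanishing for $k\geq 3$ follows from the single inequality $2-k<0$. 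Your approach is more conceptual and makes the role of the quadratic bound transparent; the paper's direct check has the minor advantage of not requiring one to verify the homogeneity of the bracket (which, as you note, is the main point to get right). One small wording fix: the bracket lowers the top weight by \emph{at least} $1$, not exactly $1$, but only the inequality is needed.
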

	\begin{proof}
		We will show that the multibrackets $\lambda_{k}$ defined in \eqref{CaFe} vanish for $k\geq 3$. {Choose coordinates $(x_1,\ldots,x_n,y_2,\ldots,y_n,t)$ on $T^{*}\mathcal{F}_{L}\times\RR$ as in the proof of Lemma \ref{fiber} above.
		Since the $\lambda_{k}$ are multiderivations, it is enough to evaluate them on basic functions and vertical coordinate vector fields. 
		%$C^{\infty}(L)$ and $\Gamma(T^{*}\mathcal{F}_{L}\times\mathbb{R})$. 
		As the  $\lambda_{k}$ have degree one, a degree counting argument shows that 
		%they are completely determined by their evaluation 
		they can be non-zero only when evaluated on tuples of the form 
		\[
		\big(Y_{1},\ldots,Y_{k}\big),\ \  \big(h(x),Y_{1},\ldots,Y_{k-1}\big)\ \  \text{and}\ \ \big(h(x),h'(x),Y_{1},\ldots,Y_{k-2}\big),
		\] 
		where $Y_{1},\ldots,Y_{k}\in\{\partial_{y_2},\ldots,\partial_{y_n},\partial_{t}\}$. Firstly, it is clear from the expression \eqref{eq:coordinate-expression} that
		\[
		\left[\left[\widetilde{\Pi},h(x)\right],h'(x)\right]=0.
		\]
		Secondly, the expression \eqref{eq:coordinate-expression} shows that the multivector fields
		\[
		\left[\left[\widetilde{\Pi},Y_1\right],Y_2\right] \text{ and }
		\left[\left[\widetilde{\Pi},h(x)\right],Y_1\right]
		\]
		are vertical and fiberwise constant, hence differentiating them once more along a vertical coordinate vector field gives zero. These observations imply that $\lambda_{k}=0$ whenever $k\geq 3$.}
	\end{proof}
	
	We now established the existence of a DGLA-structure supported on $\Gamma(\wedge^{\bullet}(T^{*}\mathcal{F}_{L}\times\mathbb{R}))$ which governs the deformations of $L$ as a coisotropic submanifold. Thanks to Lemma \ref{lagcois}, this DGLA in fact governs the Lagrangian deformation problem of $L$. We now provide more explicit descriptions for the structure maps of the DGLA.
	
	\begin{cor}\label{DGLA}
		The deformation problem of a Lagrangian submanifold $L^{n}$ contained in the singular locus of an {orientable} log-symplectic manifold $(M^{2n},Z,\Pi)$ is governed by a DGLA supported on the graded vector space $\Gamma\left(\wedge^{\bullet}\left(T^{*}\mathcal{F}_{L}\times\mathbb{R}\right)\right)=\Gamma\left(\wedge^{\bullet}T^{*}\mathcal{F}_{L}\oplus\wedge^{\bullet-1}T^{*}\mathcal{F}_{L}\right)$, whose structure maps $\left(d,[\![\cdot,\cdot]\!]\right)$ are defined by
		\begin{align*}
		&d:\Gamma\left(\wedge^{k}\left(T^{*}\mathcal{F}_{L}\times\mathbb{R}\right)\right)\rightarrow\Gamma\left(\wedge^{k+1}\left(T^{*}\mathcal{F}_{L}\times\mathbb{R}\right)\right): (\alpha,\beta)\mapsto\left(-d_{\mathcal{F}_{L}}\alpha,-d_{\mathcal{F}_{L}}\beta-\gamma\wedge\beta\right),\\
		&[\![\cdot,\cdot]\!]:\Gamma\left(\wedge^{k}\left(T^{*}\mathcal{F}_{L}\times\mathbb{R}\right)\right)\otimes\Gamma\left(\wedge^{l}\left(T^{*}\mathcal{F}_{L}\times\mathbb{R}\right)\right)\rightarrow\Gamma\left(\wedge^{k+l}\left(T^{*}\mathcal{F}_{L}\times\mathbb{R}\right)\right):\\
		&\hspace{3.3cm}(\alpha,\beta)\otimes(\delta,\epsilon)\mapsto\left(0,\pounds_{X}\alpha\wedge\epsilon-(-1)^{kl}\pounds_{X}\delta\wedge\beta\right).
		\end{align*}
	\end{cor}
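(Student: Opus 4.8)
The plan is to evaluate the Cattaneo--Felder structure maps $\lambda_{k}$ of \eqref{CaFe} explicitly, knowing from Lemma \ref{dgla} that only $\lambda_{1}$ and $\lambda_{2}$ survive, so that what we obtain is genuinely a DGLA. Throughout I identify a section $(\alpha,\beta)\in\Gamma(\wedge^{k}(T^{*}\mathcal{F}_{L}\times\mathbb{R}))$ with the vertical fiberwise constant multivector field $\widetilde{\alpha}+\widetilde{\beta}\wedge\partial_{t}$ on $T^{*}\mathcal{F}_{L}\times\mathbb{R}$, where $\widetilde{\alpha}=(\wedge^{k}\Pi_{can}^{\sharp})(p^{*}\alpha)$ and $\widetilde{\beta}=(\wedge^{k-1}\Pi_{can}^{\sharp})(p^{*}\beta)$ come from the correspondence \eqref{cor} extended by $\partial_{t}$ in the $\mathbb{R}$-direction; under this identification $\wedge P$ is simply the inverse correspondence, reading off a vertical fiberwise constant multivector field as a pair of foliated forms. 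The whole computation then reduces to expanding Schouten brackets against $\widetilde{\Pi}=\Pi_{can}+(V_{vert}+V_{lift})\wedge t\partial_{t}$ and restricting to the zero section.

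For the differential $\lambda_{1}(\alpha,\beta)=\wedge P([\widetilde{\Pi},\widetilde{\alpha}+\widetilde{\beta}\wedge\partial_{t}])$ I would split off the three pieces of $\widetilde{\Pi}$. The $\Pi_{can}$-piece contributes $[\Pi_{can},\widetilde{\alpha}]$ and $[\Pi_{can},\widetilde{\beta}]\wedge\partial_{t}$, which by \eqref{cor} (matching $d_{\mathcal{F}_{L}}$ with $-[\Pi_{can},\cdot]$, and using $[\Pi_{can},\partial_{t}]=0$) give exactly $-d_{\mathcal{F}_{L}}\alpha$ in the first slot and $-d_{\mathcal{F}_{L}}\beta$ in the second. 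The $V_{lift}\wedge t\partial_{t}$-piece dies under $\wedge P$: the terms it produces either carry a leftover factor $t\partial_{t}$ that vanishes on the zero section, or have vanishing vertical projection there, as one reads off from \eqref{eq:coordinate-expression}. Finally the $V_{vert}\wedge t\partial_{t}$-piece, using $[V_{vert},\partial_{t}]=0=[t\partial_{t},\widetilde{\beta}]$ and $[t\partial_{t},\partial_{t}]=-\partial_{t}$, yields a single surviving term proportional to $\widetilde{\beta}\wedge V_{vert}\wedge\partial_{t}$; since $V_{vert}$ corresponds to $\gamma$, this is the term $-\gamma\wedge\beta$ in the second slot. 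This produces the claimed formula for $d$.

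For the bracket $\lambda_{2}((\alpha,\beta),(\delta,\epsilon))=\wedge P([[\widetilde{\Pi},\widetilde{\alpha}+\widetilde{\beta}\wedge\partial_{t}],\widetilde{\delta}+\widetilde{\epsilon}\wedge\partial_{t}])$ the key simplification is that the $\Pi_{can}$-piece contributes nothing: after the first bracket it is vertical and fiberwise constant, and any two such multivector fields Schouten-commute (their coefficients depend only on the base). The $V_{vert}$-piece likewise produces no surviving term, since $[V_{vert},\cdot]$ kills the relevant vertical fiberwise constant multivectors. Only $V_{lift}\wedge t\partial_{t}$ matters, and here Lemma \ref{rewrite}, which matches $[V_{lift},\cdot]$ with $\pounds_{X}$ on vertical fiberwise constant multivector fields, is exactly what turns the inner bracket into $\pounds_{X}\alpha$ (acting on the first slot of the first input) and $\pounds_{X}\delta$ (acting on the first slot of the second input). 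Tracking the $\partial_{t}$-factors shows that precisely two terms survive, namely $\pounds_{X}\alpha\wedge\epsilon$ and $\pm\pounds_{X}\delta\wedge\beta$ in the second slot, with the first slot identically zero.

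The main obstacle is the sign bookkeeping: I must apply the graded Leibniz rule and graded antisymmetry of the Schouten bracket carefully, and then account for the degree-shift signs relating the $L_{\infty}[1]$-brackets $\lambda_{k}$ on the shifted space to the DGLA operations $(d,[\![\cdot,\cdot]\!])$, which is where the factor $-(-1)^{kl}$ in the second term and the overall signs in $d$ originate. As a consistency check I would verify that the resulting Maurer--Cartan equation $d\xi+\tfrac12[\![\xi,\xi]\!]=0$ for a degree-one element $\xi=(\alpha,f)$ reproduces system \eqref{eqns} of Theorem \ref{equations}: indeed $\tfrac12[\![(\alpha,f),(\alpha,f)]\!]=(0,f\pounds_{X}\alpha)$ and $d(\alpha,f)=(-d_{\mathcal{F}_{L}}\alpha,-d_{\mathcal{F}_{L}}f-f\gamma)$ combine to $d_{\mathcal{F}_{L}}\alpha=0$ and $d_{\mathcal{F}_{L}}f+f(\gamma-\pounds_{X}\alpha)=0$, which fixes all remaining sign ambiguities.
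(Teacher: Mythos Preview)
Your proposal is correct and follows essentially the same approach as the paper: compute $\lambda_{1}$ and $\lambda_{2}$ by expanding the Schouten brackets $[\widetilde{\Pi},\cdot]$ and $[[\widetilde{\Pi},\cdot],\cdot]$, apply $\wedge P$, and then pass through the d\'ecalage shift to obtain $(d,[\![\cdot,\cdot]\!])$. The only cosmetic difference is organizational: you split $\widetilde{\Pi}=\Pi_{can}+V_{vert}\wedge t\partial_{t}+V_{lift}\wedge t\partial_{t}$ into three pieces from the outset and track each separately, whereas the paper first computes the full inner bracket \eqref{differential} (keeping $V=V_{vert}+V_{lift}$ together) and only afterwards isolates the surviving $V_{lift}$-contributions to $\lambda_{2}$; the underlying calculations and the use of Lemma~\ref{rewrite} and the correspondence \eqref{cor} are identical.
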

	\begin{proof}
		We start by writing down explicitly the structure maps $\lambda_{1},\lambda_{2}$ of the $L_{\infty}[1]$-algebra $\big(\Gamma(\wedge^{\bullet}(T^{*}\mathcal{F}_{L}\times\mathbb{R}))[1],\lambda_{1},\lambda_{2}\big)$, as defined in \eqref{CaFe}. We then apply the d\'ecalage isomorphisms to obtain the associated DGLA  $\big(\Gamma(\wedge^{\bullet}(T^{*}\mathcal{F}_{L}\times\mathbb{R})),d,[\![\cdot,\cdot]\!]\big)$. In the computations below, we again identify elements of $\Gamma\big(\wedge^{\bullet}T^{*}\mathcal{F}_{L}\big)$ with vertical fiberwise constant multivector fields on $T^{*}\mathcal{F}_{L}$ via the isomorphism \eqref{cor}.

		Choose homogeneous elements $(\alpha,\beta)\in\Gamma\left(\wedge^{k}\left(T^{*}\mathcal{F}_{L}\times\mathbb{R}\right)\right)$ and $(\delta,\epsilon)\in\Gamma\left(\wedge^{l}\left(T^{*}\mathcal{F}_{L}\times\mathbb{R}\right)\right)$. We then have 
		\begin{align}\label{differential}
		\left[\widetilde{\Pi},\alpha+\beta\wedge\partial_{t}\right]&=\big[(V_{vert}+V_{lift})\wedge t\partial_{t}+\Pi_{can},\alpha+\beta\wedge\partial_{t}\big]\nonumber\\
		&=(-1)^{k-1}[V_{lift},\alpha]\wedge t\partial_{t}-V\wedge\beta\wedge\partial_{t}+[\Pi_{can},\alpha]+[\Pi_{can},\beta]\wedge\partial_{t},
		\end{align}
		which implies that
		\begin{equation}\label{L1diff}
		\lambda_{1}\big((\alpha,\beta)\big)=\left(-d_{\mathcal{F}_{L}}\alpha,-d_{\mathcal{F}_{L}}\beta-\gamma\wedge\beta\right).
		\end{equation}
		Next, using the computation \eqref{differential}, we have
		\begin{align*}
		&\left[\left[\widetilde{\Pi},\alpha+\beta\wedge\partial_{t}\right],\delta+\epsilon\wedge\partial_{t}\right]\\
		&\hspace{1.5cm}=\left[(-1)^{k-1}[V_{lift},\alpha]\wedge t\partial_{t}-V\wedge\beta\wedge\partial_{t}+[\Pi_{can},\alpha]+[\Pi_{can},\beta]\wedge\partial_{t},\delta+\epsilon\wedge\partial_{t}\right]\\
		&\hspace{1.5cm}=\left[(-1)^{k-1}[V_{lift},\alpha]\wedge t\partial_{t}-V_{lift}\wedge\beta\wedge\partial_{t},\delta+\epsilon\wedge\partial_{t}\right]\\
		&\hspace{1.5cm}=(-1)^{k}\left[V_{lift},\alpha\right]\wedge\epsilon\wedge\partial_{t}-(-1)^{k(l-1)}\left[V_{lift},\delta\right]\wedge\beta\wedge\partial_{t},
		\end{align*}
		which implies that
		\begin{equation}\label{L1bracket}
		\lambda_{2}\big((\alpha,\beta)\otimes(\delta,\epsilon)\big)=\left(0,(-1)^{k}\pounds_{X}\alpha\wedge\epsilon-(-1)^{k(l-1)}\pounds_{X}\delta\wedge\beta\right).
		\end{equation}
		The d\'ecalage isomorphisms 
		act as 
		\begin{align*}
		&(\alpha,\beta)\mapsto(\alpha,\beta)\\
		&(\alpha,\beta)\otimes(\delta,\epsilon)\mapsto(-1)^{k}(\alpha,\beta)\otimes(\delta,\epsilon),
		\end{align*}
		and applying them to \eqref{L1diff} and \eqref{L1bracket} yields the expressions stated in the corollary.
	\end{proof}
	
	In more detail, the fact that this DGLA governs the deformations of $L$ means the following. For convenience, we assume the neighborhood $U$ of $L$ in $T^{*}\mathcal{F}_{L}\times\mathbb{R}$ where $\widetilde{\Pi}$ is defined to be invariant under fiberwise multiplication by $-1$. Then for any section $(\alpha,f)\in\Gamma(T^{*}\mathcal{F}_{L}\times\mathbb{R})$ whose graph lies inside $U$, we have
	\begin{align*}
	\text{Graph}(\alpha,f)\ \text{is Lagrangian}&\Leftrightarrow \ \text{Graph}(\alpha,f)\ \text{is coisotropic}\\
	&\Leftrightarrow \ (-\alpha,-f)\ \text{is a Maurer-Cartan element of}\nonumber\\ &\hspace{0.8cm}\text{the}\ L_{\infty}[1]-\text{algebra}\  \big(\Gamma(T^{*}\mathcal{F}_{L}\times\mathbb{R})[1],\lambda_{1},\lambda_{2}\big)\\
	&\Leftrightarrow \ (\alpha,f)\ \text{is a Maurer-Cartan element of}\nonumber\\ &\hspace{0.8cm}\text{the}\ \text{DGLA}\  \big(\Gamma(T^{*}\mathcal{F}_{L}\times\mathbb{R}),d,[\![\cdot,\cdot]\!]\big)\\
	&\Leftrightarrow\begin{cases}
	d_{\mathcal{F}_{L}}\alpha=0\\
	d_{\mathcal{F}_{L}}f+f(\gamma-\pounds_{X}\alpha)=0
	\end{cases},
	\end{align*}
	where the first equivalence is Lemma \ref{lagcois} and the second one is Thm. \ref{entire}. So we recover the equations \eqref{eqns} that we derived in Thm. \ref{equations} by direct computation.

{We will see later that this DGLA also governs the moduli problem of $L$, defined by considering deformations of $L$ up to Hamiltonian isotopy. Indeed, in \S \ref{subsec:eqrig} we prove that the gauge equivalence relation of $\big(\Gamma(\wedge^{\bullet}(T^{*}\mathcal{F}_{L}\times\mathbb{R})),d,[\![\cdot,\cdot]\!]\big)$ agrees with the geometric notion of equivalence given by Hamiltonian isotopies.}	
	
\begin{remark}[Formality]\label{rem:formality}
	
We do not know whether the DGLA in Corollary \ref{DGLA}	is formal, i.e. $L_{\infty}$-quasi-isomorphic to its cohomology $H^{\bullet}(\mathcal{F}_{L})\oplus H^{\bullet-1}_{\gamma}(\mathcal{F}_{L})$ with the induced graded Lie algebra structure.
On one side, such a result would not be so surprising when $L$ is compact, because of the following.
Any graded Lie algebra $(H,[\cdot,\cdot])$ has the property that the Kuranishi map completely characterizes  unobstructedess: a first order deformation $A$ is  unobstructed if and only\footnote{This is immediate, since if $Kr(A)=[A,A]$ vanishes then $t\mapsto tA$ is a curve of Maurer-Cartan elements.}  if $Kr(A)=0$. 
When $L$ is compact, we know that the DGLA in Corollary \ref{DGLA} satisfies this property, as a consequence of Prop. \ref{prop:krunob}. Further we expect the property to be invariant under $L_{\infty}$-quasi-isomorphisms satisfying  mild assumptions.
We do not address the formality question any further here. A possible approach is to apply Manetti's formality criteria in Thm. 3.3 or Thm. 3.4 of \cite{ManettiFormalityDGLA}.
  
\end{remark}

	\begin{remark}\label{generalization}
		We comment on the structure of the DGLA $\big(\Gamma(T^{*}\mathcal{F}_{L}\times\mathbb{R}),d,[\![\cdot,\cdot]\!]\big)$ introduced in Corollary \ref{DGLA}.
		\begin{enumerate}[i)]
			\item One can write down this DGLA in more generality. Let $\big(A,\rho,[\cdot,\cdot]\big)$ be a Lie algebroid over a manifold $M$, and let $\nabla$ be a flat $A$-connection on a line bundle $E\rightarrow M$. Let $D\in\text{Der}(A)$ be a derivation of $A$. Then there is an induced DGLA-structure $(d,[\![\cdot,\cdot]\!])$  on the graded vector space $\Gamma\left(\wedge^{\bullet}(A^{*}\oplus E)\right)=\Gamma\left(\wedge^{\bullet}A^{*}\right)\oplus\Gamma\left(\wedge^{\bullet-1}A^{*}\otimes E\right)$ defined by
			\begin{align}
			&d(\alpha,\varphi)=\left(d_{A}\alpha,d_{\nabla}\varphi\right)\nonumber\\
			&[\![(\alpha,\varphi),(\beta,\psi)]\!]=\left(0,\pounds_{D}\alpha\wedge\psi -(-1)^{kl}\pounds_{D}\beta\wedge\varphi\right),\label{eq:general}
			\end{align}
			for homogeneous elements  $(\alpha,\varphi)\in\Gamma\left(\wedge^{k}\left(A^{*}\oplus E\right)\right)$ and $(\beta,\psi)\in\Gamma\left(\wedge^{l}\left(A^{*}\oplus E\right)\right)$. Here the Lie derivative $\pounds_{D}$ is obtained extending the derivation on $A^*$ dual to $D$. 
			
			\item We discuss the structure of the DGLA $\left(\Gamma\left(\wedge^{\bullet}(A^{*}\oplus E)\right),d,[\![\cdot,\cdot]\!]\right)$. The underlying cochain complex is a direct sum of complexes $\left(\Gamma\left(\wedge^{\bullet}A^{*}\right),d_{A}\right)\oplus\left(\Gamma\left(\wedge^{\bullet}A^{*}\otimes E\right)[-1],d_{\nabla}\right)$. It can also be described as the cochain complex of differential forms on the Lie algebroid $A\oplus E^*$, the semidirect product  of $A$ by the representation on $E^{*}$ given by the dual connection $\nabla^*$.
			The underlying graded Lie algebra structure is the semidirect product of the abelian graded Lie algebras $\Gamma\left(\wedge^{\bullet}A^{*}\right)$ and $\Gamma\left(\wedge^{\bullet}A^{*}\otimes E\right)[-1]$ with respect to the action
			\[
			\Gamma\left(\wedge^{\bullet}A^{*}\right)\rightarrow\text{Der}\big(\Gamma\left(\wedge^{\bullet}A^{*}\otimes E\right)[-1]\big):\alpha\mapsto \pounds_{D}\alpha\wedge\bullet.
			\]
			
			\item We can recover the DGLA   $\big(\Gamma(T^{*}\mathcal{F}_{L}\times\mathbb{R}),d,[\![\cdot,\cdot]\!]\big)$ described in Corollary \ref{DGLA} by making the following choices in the general construction of i) above:
			\begin{itemize}
				\item Take the Lie algebroid $A:=\left(T\mathcal{F}_{L},-\iota,-[\cdot,\cdot]\right)$, where $\iota:T\mathcal{F}_{L}\hookrightarrow TL$ is the inclusion and $[\cdot,\cdot]$ is the Lie bracket of vector fields. The Lie algebroid differential $d_{A}$ on $\Gamma\left(\wedge^{\bullet}A^{*}\right)$ is then $-d_{\mathcal{F}_{L}}$.
				\item Let $D:=[X,\cdot]$ be the derivation determined by $X\in\mathfrak{X}(L)^{\mathcal{F}_{L}}$.
				\item Let $E:=L\times\mathbb{R}\rightarrow L$ be the trivial line bundle.
				\item Let the representation $\nabla$ of $A$ on $E$ be defined by
				\[
				\nabla_{Y}\bullet = \pounds_{-Y}\bullet -\gamma(Y)\bullet
				\]
				for $Y\in\Gamma(A)$. Since $\gamma$ is closed, this is indeed a representation, and the induced differential $d_{\nabla}$ on $\Gamma\left(\wedge^{\bullet}A^{*}\right)$ is given by
				\[
				d_{\nabla}\bullet=-d_{\mathcal{F}_{L}}\bullet - \gamma\wedge\bullet.
				\]
			\end{itemize}
		\end{enumerate}
	\end{remark}
	
	\subsection{On foliated Morse-Novikov cohomology}\label{subsec:Nov}
	\leavevmode
	\vspace{0.1cm}
	
	This subsection discusses the cohomology of the DGLA $\big(\Gamma(\wedge^{\bullet}(T^{*}\mathcal{F}_{L}\times\mathbb{R})),d,[\![\cdot,\cdot]\!]\big)$ in degree one, which in the notation of Remark \ref{rem} is given by $H^{1}(\mathcal{F}_{L})\oplus H^{0}_{\gamma}(\mathcal{F}_{L})$. We explicitly compute the second summand of this cohomology group for Lagrangians that are compact and connected. 
	We first collect some foliated analogs of well-known facts about Morse-Novikov cohomology \cite[Section 1]{HR}. 
	
	\begin{lemma}\label{cohomology}
		Let $L$ be a manifold, $\mathcal{F}_{L}$ a foliation on $L$ and $\eta\in\Omega^{1}(\mathcal{F}_{L})$ a closed foliated one-form. As before, denote by $H_{\eta}^{\bullet}(\mathcal{F}_{L})$ the cohomology groups of the differential $d_{\mathcal{F}_{L}}^{\eta}$ defined in \eqref{m-n}. We then have the following:
		\begin{enumerate}[i)]
			\item If $[\eta]=[\eta']\in H^{1}(\mathcal{F}_{L})$, then $H^{k}_{\eta}(\mathcal{F}_{L})\cong H^{k}_{\eta'}(\mathcal{F}_{L})$. In particular, if $[\eta]=0$ in $H^{1}(\mathcal{F}_{L})$ then $H^{k}_{\eta}(\mathcal{F}_{L})\cong H^{k}(\mathcal{F}_{L})$.
			\item Assume $[\eta]\neq 0$ in $H^{1}(\mathcal{F}_{L})$ and let $f\in H^{0}_{\eta}(\mathcal{F}_{L})$. Then there is a leaf $\mathcal{O}$ of $\mathcal{F}_{L}$ on which $f$ vanishes identically.
		\end{enumerate}
	\end{lemma}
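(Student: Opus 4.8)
The plan is to treat the two parts independently, since both are foliated versions of standard facts in Morse--Novikov theory, and the twisting multiplies rather than adds.

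For part i), the natural approach is to exhibit an explicit isomorphism of cochain complexes by gauging. Suppose $[\eta]=[\eta']$, so that $\eta-\eta'=d_{\mathcal{F}_{L}}g$ for some $g\in C^{\infty}(L)$. I would define
\[
\Phi\colon\big(\Omega^{\bullet}(\mathcal{F}_{L}),d^{\eta}_{\mathcal{F}_{L}}\big)\to\big(\Omega^{\bullet}(\mathcal{F}_{L}),d^{\eta'}_{\mathcal{F}_{L}}\big),\qquad \Phi(\alpha)=e^{g}\alpha,
\]
and check directly that it intertwines the two twisted differentials. Using the Leibniz rule $d_{\mathcal{F}_{L}}(e^{g}\alpha)=e^{g}(d_{\mathcal{F}_{L}}g)\wedge\alpha+e^{g}d_{\mathcal{F}_{L}}\alpha$, the chain-map identity $\Phi\circ d^{\eta}_{\mathcal{F}_{L}}=d^{\eta'}_{\mathcal{F}_{L}}\circ\Phi$ reduces precisely to $d_{\mathcal{F}_{L}}g+\eta'=\eta$, which holds by assumption. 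Since $e^{g}$ is nowhere vanishing, $\Phi$ is invertible, hence an isomorphism on cohomology. The special case $[\eta]=0$ is then $\eta'=0$, giving $H^{k}_{\eta}(\mathcal{F}_{L})\cong H^{k}(\mathcal{F}_{L})$. I expect this part to present no real difficulty.

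For part ii), a function $f\in H^{0}_{\eta}(\mathcal{F}_{L})$ is by definition a solution of $d_{\mathcal{F}_{L}}f=-f\eta$. The first and key step will be a leafwise dichotomy: on each leaf $\mathcal{O}$, the function $f$ is either identically zero or nowhere zero. To establish this I would restrict the equation to $\mathcal{O}$ and evaluate it along a smooth path $c\colon[0,1]\to\mathcal{O}$; setting $u(s):=f(c(s))$ yields the linear ODE $u'(s)=-\eta(c'(s))\,u(s)$, so by uniqueness of solutions a zero at one parameter value forces $u\equiv 0$. As leaves are connected, vanishing at one point of $\mathcal{O}$ then propagates to all of $\mathcal{O}$.

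The second step produces a leaf on which $f$ vanishes, and I would argue by contradiction. If $f$ had no zero on any leaf, then by the dichotomy $f$ would be nowhere zero on all of $L$, so $\log|f|$ is a well-defined smooth function with $d_{\mathcal{F}_{L}}\log|f|=f^{-1}\,d_{\mathcal{F}_{L}}f=-\eta$. This would exhibit $\eta$ as foliated exact, contradicting $[\eta]\neq 0$. Hence $f$ vanishes at some point $p\in L$, and by the leafwise dichotomy it vanishes identically on the leaf through $p$. The only point requiring genuine care is the leafwise uniqueness argument, which is what upgrades ``vanishing somewhere'' to ``vanishing on an entire leaf''; everything else is formal.
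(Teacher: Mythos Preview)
Your proposal is correct and follows essentially the same approach as the paper. For part i) you give exactly the same gauging isomorphism $\beta\mapsto e^{g}\beta$; for part ii) the paper also uses the $\log|f|$ argument to force a zero and then propagates it across the leaf, the only cosmetic difference being that the paper argues via a local primitive of $\eta|_{\mathcal{O}}$ (showing $e^{g}f$ is locally constant, hence the zero set is open) rather than your ODE-along-paths formulation, which is an equivalent way of saying the same thing.
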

	\begin{proof}
		\begin{enumerate}[i)]
			\item If $\eta'=\eta+d_{\mathcal{F}_{L}}g$ for $g\in C^{\infty}(L)$, then the following map is an isomorphism of cochain complexes:
			\begin{equation}\label{isocompl}
			\big(\Omega^{\bullet}(\mathcal{F}_{L}),d^{\eta'}_{\mathcal{F}_{L}}\big)\rightarrow \big(\Omega^{\bullet}(\mathcal{F}_{L}),d^{\eta}_{\mathcal{F}_{L}}\big):\beta\mapsto e^{g}\beta.
			\end{equation}
			\item By assumption we have that 
			\begin{equation}\label{eq}
			d_{\mathcal{F}_{L}}f+f\eta=0.
			\end{equation}
			If $f$ would be nowhere zero, then we could write $\eta=-d_{\mathcal{F}_{L}}log|f|$, contradicting that $\eta$ is not exact. So $f$ must have a zero, say in the leaf $\mathcal{O}\in\mathcal{F}_{L}$. Consider the vanishing set $\mathcal{Z}_{f}:=\{x\in\mathcal{O}:f(x)=0\}$, which is nonempty and closed in $\mathcal{O}$. If we show that $\mathcal{Z}_{f}$ is also open in $\mathcal{O}$, then we reach the conclusion $f|_{\mathcal{O}}\equiv0$, since $\mathcal{O}$ is connected.
			
			To this end, let $x\in \mathcal{Z}_{f}$. Since $\eta|_{\mathcal{O}}\in\Omega^{1}(\mathcal{O})$ is closed, there exist a neighborhood $U$ of $x$ in $\mathcal{O}$ and $g\in C^{\infty}(U)$ such that $\eta|_{U}=dg$. Using the isomorphism \eqref{isocompl} for the one-leaf foliation on $U$, we obtain that $d(e^{g}f|_{U})=0$. So $e^{g}f|_{U}$ is constant on $U$, and since $f(x)=0$ we must have $e^{g}f|_{U}\equiv0$. Consequently $f|_{U}\equiv 0$, which shows that $U\subset \mathcal{Z}_{f}$. So $\mathcal{Z}_{f}$ is open, and this finishes the proof.
		\end{enumerate}
	\end{proof}
	
	\begin{remark}
		If we replace the hypothesis $[\eta]\neq 0$ in $ii)$ of Lemma \ref{cohomology} by the stronger requirement that $\eta|_{\mathcal{O}}\in\Omega^{1}(\mathcal{O})$ be not exact for all leaves $\mathcal{O}\in\mathcal{F}_{L}$, then, restricting the equality \eqref{eq} to each leaf $\mathcal{O}$, the above proof shows that $H^{0}_{\eta}(\mathcal{F}_{L})=0$.
	\end{remark}

 {
\begin{remark}\label{rem:wedge}
Let $(L,\mathcal{F}_{L})$ be a foliated manifold, and $\eta,\delta \in\Omega^{1}(\mathcal{F}_{L})$  closed foliated one-forms.
Then the wedge product induces a  bilinear map
$H^{\bullet}_{\eta}(\mathcal{F}_{L})\times  H^{\bullet}_{\delta}(\mathcal{F}_{L})\to H^{\bullet}_{\eta+\delta}(\mathcal{F}_{L})$. This can be proven like the corresponding statement for   manifolds \cite[\S 1]{HR}.
\end{remark}
}

	We now specialize to compact, connected manifolds $L$ endowed with a codimension-one foliation $\mathcal{F}_{L}$ defined by a nowhere vanishing closed one-form. 
	Under these assumptions it is well-known  \cite[Theorem 9.3.13]{conlon} that:
	
	\begin{itemize}
		\item either $(L,\mathcal{F}_{L})$ is the fiber foliation of a fiber bundle $p:L\rightarrow S^{1}$, 
		\item or all leaves of $\mathcal{F}_{L}$ are dense.\hspace{9cm}($\star$)
	\end{itemize}
	
	Recall moreover that in the former case, the $k$-th cohomology groups of the fibers of $p:L\rightarrow S^{1}$ constitute a vector bundle $\mathcal{H}^{k}$ over $S^{1}$:
	\[
	\mathcal{H}^{k}_{q}=H^{k}\left(p^{-1}(q)\right),
	\] 
	and one has 
	\begin{equation}\label{isom}
	H^{k}(\mathcal{F}_{L})\overset{\sim}{\rightarrow}\Gamma\left(\mathcal{H}^{k}\right):[\alpha]\mapsto \left(\sigma_{\alpha}:q\mapsto\left[\left.\alpha\right|_{p^{-1}(q)}\right]\right).
	\end{equation}
	Using the identification 
	\[
	\mathfrak{X}(S^{1})\overset{\sim}{\longrightarrow}\frac{\mathfrak{X}(L)^{\mathcal{F}_{L}}}{\Gamma(T\mathcal{F}_{L})}:Y\mapsto\overline{Y},
	\]	
	one can define a natural flat connection $\nabla$
	on the vector bundle $\mathcal{H}^{k}$ by the formula
	\begin{equation}\label{connection}
	\nabla_{Y}\sigma_{\alpha}:=\sigma_{\pounds_{\overline{Y}}\alpha},
	\end{equation}
	for $\alpha\in\Omega^{1}_{cl}(\mathcal{F}_{L})$ and $Y\in\mathfrak{X}(S^{1})$. Note that $\nabla$ is well-defined, because of Cartan's formula. If $F$ denotes the typical fiber of $p:L\rightarrow S^{1}$ and $\{[\beta_{1}],\ldots,[\beta_{m}]\}$ is a basis of $H^{k}(F)$, then in a local trivialization $U\times F$, the constant functions $[\beta_{1}],\ldots,[\beta_{m}]\in C^{\infty}(U,H^{k}(F))\cong\Gamma(\mathcal{H}^{k}|_{U})$ constitute a local frame of flat sections.
	
	To compute the foliated Morse-Novikov cohomology, we will need the following lemma.

	\begin{lemma}\label{function}
		Let $L$ be a compact manifold endowed with a foliation $\mathcal{F}_{L}$ that is the fiber foliation of a fiber bundle\footnote{Note that under these assumptions, $L$ is automatically connected.} $p:L\rightarrow S^{1}$. Let $\eta\in\Omega^{1}(\mathcal{F}_{L})$ be a closed foliated one-form, denote by $\sigma_{\eta}\in\Gamma(\mathcal{H}^{1})$ the section corresponding with $[\eta]\in H^{1}(\mathcal{F}_{L})$ under \eqref{isom}, and let $\mathcal{Z}_{\eta}:=\sigma_{\eta}^{-1}(0)$. Then there exists a smooth function $g\in C^{\infty}(L)$ such that
		\[
		\left.\eta\right|_{p^{-1}(q)}=d\left(\left.g\right|_{p^{-1}(q)}\right)\hspace{1cm}\text{for all}\ q\in\mathcal{Z}_{\eta}.
		\]
	\end{lemma}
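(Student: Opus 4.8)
The plan is to build $g$ in one stroke by fiberwise Hodge theory, rather than by patching together local primitives (the latter would make smoothness near the boundary of $\mathcal{Z}_{\eta}$ delicate). First I would fix a fiber metric on the vertical bundle $T\mathcal{F}_{L}$. Since $p\colon L\to S^{1}$ is a fiber bundle with compact fibers $F_{q}:=p^{-1}(q)$, this endows each leaf with a Riemannian metric, so that the leafwise codifferential $d^{*}_{\mathcal{F}_{L}}$, the leafwise Laplacian $\Delta_{\mathcal{F}_{L}}=d_{\mathcal{F}_{L}}d^{*}_{\mathcal{F}_{L}}+d^{*}_{\mathcal{F}_{L}}d_{\mathcal{F}_{L}}$, the Green operator $G$ and the harmonic projection $\mathbb{H}$ all make sense fiber by fiber. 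I would then simply set
\[
g:=d^{*}_{\mathcal{F}_{L}}\,G\,\eta\in C^{\infty}(L),
\]
understanding $G$ and $d^{*}_{\mathcal{F}_{L}}$ as acting on the restriction $\eta|_{F_{q}}$ on each leaf.

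Next I would verify the defining identity. On each fiber the Hodge decomposition gives $\eta|_{F_{q}}=\mathbb{H}(\eta|_{F_{q}})+\Delta_{\mathcal{F}_{L}}\,G(\eta|_{F_{q}})$. Because $\eta$ is $d_{\mathcal{F}_{L}}$-closed and $G$ commutes with $d_{\mathcal{F}_{L}}$, the term $d^{*}_{\mathcal{F}_{L}}d_{\mathcal{F}_{L}}G(\eta|_{F_{q}})=d^{*}_{\mathcal{F}_{L}}G\,(d_{\mathcal{F}_{L}}\eta)|_{F_{q}}$ vanishes, and therefore
\[
d\big(g|_{F_{q}}\big)=d_{\mathcal{F}_{L}}d^{*}_{\mathcal{F}_{L}}G(\eta|_{F_{q}})=\eta|_{F_{q}}-\mathbb{H}(\eta|_{F_{q}}).
\]
Here $\mathbb{H}(\eta|_{F_{q}})$ is exactly the harmonic representative of the class $[\eta|_{F_{q}}]=\sigma_{\eta}(q)\in H^{1}(F_{q})=\mathcal{H}^{1}_{q}$. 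For $q\in\mathcal{Z}_{\eta}$ this class is zero, so $\mathbb{H}(\eta|_{F_{q}})=0$ and hence $d\big(g|_{F_{q}}\big)=\eta|_{F_{q}}$, which is the required conclusion.

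The only genuine obstacle is the global smoothness of $g$, i.e.\ that the leafwise constructions assemble into an honest element of $C^{\infty}(L)$. Working in a local trivialization $p^{-1}(U)\cong U\times F$, the forms $\eta|_{F_{q}}$ and the fiber metrics constitute smooth families parametrized by $q\in U$, so the fiberwise Laplacians $\Delta_{q}$ form a smooth family of elliptic operators on the fixed compact manifold $F$. Since all fibers are diffeomorphic, the first Betti number $b_{1}(F)$ is constant, whence $\dim\ker\Delta_{q}$ is locally constant; by the standard continuity of Hodge theory under a smoothly varying family of elliptic operators with constant-rank kernel, both $G$ and $\mathbb{H}$ depend smoothly on $q$. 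Consequently $g(q,x)=\big(d^{*}_{\mathcal{F}_{L}}G\,\eta|_{F_{q}}\big)(x)$ is jointly smooth in $(q,x)$, giving $g\in C^{\infty}(L)$. I expect this smooth-dependence step to carry the only real content of the argument, though it is entirely routine once phrased in terms of families of elliptic operators with constant-dimensional kernel.
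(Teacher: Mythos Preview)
Your argument is correct and takes a genuinely different route from the paper. The paper proceeds by hand: it fixes an embedded transverse loop $\tau\colon S^{1}\to L$ meeting every leaf once, defines on $p^{-1}(\mathcal{Z}_{\eta})$ the unique fiberwise primitive of $\eta$ that vanishes along $\tau(S^{1})$, and then shows this partial function extends smoothly to all of $L$ by a local argument (choosing contractible tubular neighborhoods of paths in the fibers and normalizing the local primitives along $\tau$). By contrast, you write down a single global formula $g=d^{*}_{\mathcal{F}_{L}}G\,\eta$ using fiberwise Hodge theory, and smoothness then comes for free from the smooth dependence of the Green operator on a parameter when the kernel dimension is constant. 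The paper's approach is more elementary in that it avoids any elliptic machinery, but it pays for this with a somewhat fiddly extension argument; your approach is slicker and produces $g$ in one stroke, at the cost of invoking a (standard but nontrivial) result on smooth families of elliptic operators. Both are perfectly valid; yours is arguably the more conceptual proof.
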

	\begin{proof}
		By \cite[Lemma 2.28]{moerdijk}, we can fix an embedded loop $\tau:S^{1}\rightarrow L$ transverse to the leaves of $\mathcal{F}_{L}$ which hits each leaf of $\mathcal{F}_{L}$ exactly once. Define a function $h$ on $p^{-1}(\mathcal{Z}_{\eta})$ by setting $h|_{p^{-1}(q)}$ to be the unique primitive of $\eta|_{p^{-1}(q)}$ that vanishes at the point $p^{-1}(q)\cap\tau(S^{1})$. We claim that $h$ extends to a smooth function $g\in C^{\infty}(L)$. To prove this, it suffices to show that around each point $x\in p^{-1}(\mathcal{Z}_{\eta})$ there exist a neighborhood $U\subset L$ and a smooth function on $U$ that agrees with $h$ on $U\cap p^{-1}(\mathcal{Z}_{\eta})$.
		
		Let $x\in p^{-1}(q)$ for $q\in\mathcal{Z}_{\eta}$ and denote $y:=p^{-1}(q)\cap\tau(S^{1})$. 
		Working in a local trivialization $V\times p^{-1}(q)$, choose a path $\gamma:(-\epsilon,1+\epsilon)\rightarrow p^{-1}(q)$ such that $\gamma(0)=x$ and $\gamma(1)=y$, take a tubular neighborhood $N$ of this path in $p^{-1}(q)$ and define $U:=V\times N$. Since $N$ is contractible, we have for each value of $v\in V$ that $\eta_{v}\in\Omega^{1}(N)$ is exact. Since one can choose primitives varying smoothly in $v$ (see \cite{primitives}), it follows that $\eta|_{U}$ is foliated exact. 
		Choose any primitive $k\in C^{\infty}(U)$ of $\eta|_{U}$. Shrinking $V$ if necessary, we can assume that each fiber $\{v\}\times N$ intersects the loop $\tau(S^{1})$. Define a map $\phi:U\rightarrow U\cap\tau(S^{1})$ by setting $\phi(z)$ to be the intersection point of $\tau(S^{1})$ with the fiber through $z$. Then setting $\widetilde{h}:=k-\phi^{*}\big(k|_{U\cap\tau(S^{1})}\big)$, we obtain a primitive of $\eta|_{U}$ that vanishes along $U\cap\tau(S^{1})$.

		Uniqueness of such primitives implies that $\widetilde{h}$ agrees with $h$ wherever both of them are defined. This shows that $h$ can be extended to a smooth function $g\in C^{\infty}(L)$.
	\end{proof}
	
	We can now compute the zeroth foliated Morse-Novikov cohomology group.

	\begin{thm}\label{H}
		Let $(L,\mathcal{F}_{L})$ be a compact, connected manifold with codimension-one foliation defined by a closed one-form. Let $\eta\in\Omega^{1}(\mathcal{F}_{L})$ be a closed foliated one-form.
		\begin{enumerate}[i)]
			\item Assume $\mathcal{F}_{L}$ is the fiber foliation of a fiber bundle $p:L\rightarrow S^{1}$. Then we have 
			\[
			H_{\eta}^{0}(\mathcal{F}_{L})\cong \{f\in C^{\infty}(S^{1}):f\cdot\sigma_{\eta}=0\},
			\]
			where $\sigma_{\eta}\in\Gamma(\mathcal{H}^{1})$ denotes the section corresponding with $[\eta]\in H^{1}(\mathcal{F}_{L})$ under \eqref{isom}.
			\item Assume all leaves of $\mathcal{F}_{L}$ are dense. Then 
			\[
			H_{\eta}^{0}(\mathcal{F}_{L})=\begin{cases}\mathbb{R}\hspace{1cm} \text{if}\  \eta\  \text{is foliated exact}\\ 0\hspace{1.1cm} \text{otherwise}
			\end{cases}.
			\]
		\end{enumerate}
	\end{thm}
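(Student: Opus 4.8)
The plan is to compute $H^0_\eta(\mathcal{F}_L)$, which is the space of $f\in C^\infty(L)$ solving the leafwise equation $d_{\mathcal{F}_L}f+f\eta=0$, by reducing everything to the fiberwise behaviour of this equation and then reassembling the fiberwise solutions into a global smooth object. I dispose of case ii) first, as it is quick. If $\eta$ is foliated exact, then $[\eta]=0$ and Lemma~\ref{cohomology} i) gives $H^0_\eta(\mathcal{F}_L)\cong H^0(\mathcal{F}_L)$; a class in $H^0(\mathcal{F}_L)$ is a function constant along each (connected) leaf, and since $L$ is connected and every leaf is dense, such a function is constant on a dense set, hence globally constant, so $H^0(\mathcal{F}_L)=\mathbb{R}$. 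If $\eta$ is not foliated exact, then $[\eta]\neq 0\in H^1(\mathcal{F}_L)$, so by Lemma~\ref{cohomology} ii) any $f\in H^0_\eta(\mathcal{F}_L)$ vanishes identically on some leaf $\mathcal{O}$; as $\mathcal{O}$ is dense and $f$ is continuous, $f\equiv 0$, whence $H^0_\eta(\mathcal{F}_L)=0$.

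For case i), I would write $F_q:=p^{-1}(q)$ and $\eta_q:=\eta|_{F_q}$, a closed $1$-form on the compact connected fiber $F_q$. Restricting the defining equation to $F_q$ shows that $f|_{F_q}$ solves the ordinary Morse--Novikov equation $d(f|_{F_q})+f|_{F_q}\,\eta_q=0$. Applying Lemma~\ref{cohomology} ii) to the one-leaf foliation of $F_q$ yields the dichotomy: if $q\notin\mathcal{Z}_\eta$, i.e. $\sigma_\eta(q)=[\eta_q]\neq 0$, then $f|_{F_q}\equiv 0$; whereas if $q\in\mathcal{Z}_\eta$, then $\eta_q$ is exact and the fiberwise solution space is the line spanned by $e^{-g_q}$ for any primitive $g_q$ of $\eta_q$. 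Thus every $f\in H^0_\eta(\mathcal{F}_L)$ vanishes on $p^{-1}(S^1\setminus\mathcal{Z}_\eta)$ and over $\mathcal{Z}_\eta$ is prescribed by one scalar per fiber.

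The device that turns this fiberwise picture into a global one is Lemma~\ref{function}, which supplies a single $g\in C^\infty(L)$ with $\eta|_{F_q}=d(g|_{F_q})$ for all $q\in\mathcal{Z}_\eta$. Fixing the transverse loop $\tau:S^1\to L$ meeting each fiber exactly once (from the proof of Lemma~\ref{function}), I would define the linear map
\[
\Theta:H^0_\eta(\mathcal{F}_L)\to C^\infty(S^1),\qquad \Theta(f)(q):=(e^{g}f)(\tau(q)),
\]
and show it is an isomorphism onto $\{h\in C^\infty(S^1):h\cdot\sigma_\eta=0\}$, which is precisely the space of smooth functions vanishing on $S^1\setminus\mathcal{Z}_\eta$. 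For $q\in\mathcal{Z}_\eta$ the computation $d_{\mathcal{F}_L}(e^{g}f)|_{F_q}=e^{g}(d_{\mathcal{F}_L}f+f\eta)|_{F_q}=0$ shows $e^{g}f$ is constant along $F_q$, while for $q\notin\mathcal{Z}_\eta$ one has $f|_{F_q}\equiv 0$; together these give that $\Theta(f)$ vanishes off $\mathcal{Z}_\eta$ (so $\Theta(f)\cdot\sigma_\eta=0$) and that $\Theta$ is injective. For surjectivity, given $h$ vanishing on $S^1\setminus\mathcal{Z}_\eta$, I would set $f:=(p^{*}h)\,e^{-g}$ and verify fiberwise, using $\eta|_{F_q}=d(g|_{F_q})$ on $\mathcal{Z}_\eta$ and $h(q)=0$ off $\mathcal{Z}_\eta$, that $f\in H^0_\eta(\mathcal{F}_L)$ and $\Theta(f)=h$.

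The hard part will be smoothness across the boundary $\partial\mathcal{Z}_\eta$: the fiberwise solution space jumps from a line (over $\mathcal{Z}_\eta$) to $0$ (over its complement), and a priori the fiberwise solutions need not patch to a smooth function. This is exactly what Lemma~\ref{function} resolves: because $g$ is globally smooth on $L$ and $h$ is globally smooth on $S^1$ and vanishes off $\mathcal{Z}_\eta$, the product $f=(p^{*}h)\,e^{-g}$ is automatically smooth, the potential irregularity at $\partial\mathcal{Z}_\eta$ being absorbed by the vanishing of $h$. Once $\Theta$ is established as an isomorphism, the identification $H^0_\eta(\mathcal{F}_L)\cong\{h\in C^\infty(S^1):h\cdot\sigma_\eta=0\}$ follows.
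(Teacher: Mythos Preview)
Your proof is correct and follows essentially the same approach as the paper: part ii) is reduced to Lemma~\ref{cohomology} via the density of leaves, and part i) uses Lemma~\ref{function} to produce a global smooth primitive $g$, then builds the isomorphism $f\mapsto e^{g}f$ (with inverse $h\mapsto (p^{*}h)e^{-g}$) by the same fiberwise analysis. The only cosmetic difference is that you evaluate along the transverse loop $\tau$ to identify $e^{g}f$ with a function on $S^1$, whereas the paper simply observes that $e^{g}f$ is leafwise constant and hence descends.
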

	\begin{proof}
		\begin{enumerate}[i)]
			\item Fix a smooth function $g\in C^{\infty}(L)$ as constructed in Lemma \ref{function} and define
			\[
			\Psi: H_{\eta}^{0}(\mathcal{F}_{L})\rightarrow\{f\in C^{\infty}(S^{1}):f\cdot\sigma_{\eta}=0\}:h\mapsto e^{g}h.
			\]
			We first check that $\Psi$ is well-defined. Choosing $h\in H_{\eta}^{0}(\mathcal{F}_{L})$, we must show that $e^{g}h$ is constant along the leaves of $\mathcal{F}_{L}$, and that the induced function on the leaf space $S^{1}$ lies in the annihilator ideal of $\sigma_{\eta}\in\Gamma(\mathcal{H}^{1})$. Note that for any $q\in S^{1}$, we have
			\[
			d\left(\left.h\right|_{p^{-1}(q)}\right)+\left.h\right|_{p^{-1}(q)}\left.\eta\right|_{p^{-1}(q)}=0.
			\]
			In case $\sigma_{\eta}(q)=0$, then $\eta|_{p^{-1}(q)}=d(g|_{p^{-1}(q)})$ and the isomorphism \eqref{isocompl} implies that
			\[
			\left.(e^{g}h)\right|_{p^{-1}(q)}\in H^{0}(p^{-1}(q))=\mathbb{R}.
			\]
			Next, assume that $\sigma_{\eta}(q)\neq 0$, i.e. $\eta|_{p^{-1}(q)}$ is not exact. Then $h|_{p^{-1}(q)}\equiv 0$ by applying $ii)$ of Lemma \ref{cohomology} to the one-leaf foliation on $p^{-1}(q)$, and therefore $(e^{g}h)|_{p^{-1}(q)}\equiv 0$. 
			
			Clearly, the map $\Psi$ is linear and injective. For surjectivity, we let $f\in C^{\infty}(S^{1})$ be such that $f\cdot\sigma_{\eta}=0$ and we have to check that $e^{-g}p^{*}f\in H_{\eta}^{0}(\mathcal{F}_{L})$, i.e.
			\begin{equation}\label{tocheck}
			d_{\mathcal{F}_{L}}\left(\frac{p^{*}f}{e^{g}}\right)+\frac{p^{*}f}{e^{g}}\eta=0.
			\end{equation}
			On fibers $p^{-1}(q)$ with $\sigma_{\eta}(q)\neq 0$, the equality \eqref{tocheck} is satisfied since $p^{*}f$ vanishes there. On fibers $p^{-1}(q)$ with $\sigma_{\eta}(q)=0$, we have $\eta|_{p^{-1}(q)}=d(g|_{p^{-1}(q)})$, so that the left hand side of \eqref{tocheck} becomes
			\begin{align*}
			-f(q)(e^{-g}|_{p^{-1}(q)})d(g|_{p^{-1}(q)})+f(q)(e^{-g}|_{p^{-1}(q)})d(g|_{p^{-1}(q)})=0.
			\end{align*}
			
			\item This is an immediate consequence of Lemma \ref{cohomology}.
		\end{enumerate}
	\end{proof}
	
	\begin{ex}\label{coh}
		Take $L=(S^{1}\times S^{1},\theta_{1},\theta_{2})$ and let $\mathcal{F}_{L}$ be the foliation by fibers of the projection $(S^{1}\times S^{1},\theta_{1},\theta_{2})\rightarrow (S^{1},\theta_{1})$. To compute $H^{0}_{\eta}(\mathcal{F}_{L})$ for closed $\eta\in\Omega^{1}(\mathcal{F}_{L})$, we can choose a convenient representative of $[\eta]\in H^{1}(\mathcal{F}_{L})$, by $i)$ of Lemma \ref{cohomology}.  
		In this respect, notice that every class $[g(\theta_{1},\theta_{2})d\theta_{2}]\in H^{1}(\mathcal{F}_{L})$ has a unique representative of the form $h(\theta_{1})d\theta_{2}$. Namely, setting $h(\theta_{1}):=\frac{1}{2\pi}\int_{S^{1}}g(\theta_{1},\theta_{2})d\theta_{2}$, we have
		\[
		\int_{ S^{1}}\left[g(\theta_{1},\theta_{2})-h(\theta_{1})\right]d\theta_{2}=0,
		\]
		which implies that there exists $k(\theta_{1},\theta_{2})\in C^{\infty}(S^{1}\times S^{1})$ such that
		\[
		g(\theta_{1},\theta_{2})-h(\theta_{1})=\frac{\partial k}{\partial \theta_{2}}(\theta_{1},\theta_{2}).
		\]
		This implies that
		\[
		g(\theta_{1},\theta_{2})d\theta_{2}-h(\theta_{1})d\theta_{2}=\frac{\partial k}{\partial \theta_{2}}(\theta_{1},\theta_{2})d\theta_{2}=d_{\mathcal{F}_{L}}k.
		\]
		Uniqueness of such representatives follows by integrating around circles $\{\theta_{1}\}\times S^{1}$. Now, fix $\eta=h(\theta_{1})d\theta_{2}$ in $\Omega^{1}(\mathcal{F}_{L})$ and assume that $f\in H_{\eta}^{0}(\mathcal{F}_{L})$. Then
		\begin{equation}\label{zero}
		0=\frac{\partial f}{\partial\theta_{2}}d\theta_{2}+f\cdot h(\theta_{1})d\theta_{2}.
		\end{equation}
		For fixed $\theta_{1}$, the restriction of $f$ to $\{\theta_{1}\}\times S^{1}$ reaches a maximum $M$ and a minimum $m$. The equality \eqref{zero} implies that
		\[
		\begin{cases}
		M\cdot h(\theta_{1})=0\\
		m\cdot h(\theta_{1})=0
		\end{cases}.
		\]
		So either $h(\theta_{1})=0$ or $\left.f\right|_{\{\theta_{1}\}\times S^{1}}\equiv 0$. Hence, we get that $f\cdot h(\theta_{1})=0$, and \eqref{zero} then implies that also $\partial f/\partial\theta_{2}=0$. In conclusion, we get
		\begin{align*}
		H_{h(\theta_{1})d\theta_{2}}^{0}(\mathcal{F}_{L})&=\{f(\theta_{1}):f(\theta_{1})h(\theta_{1})d\theta_{2}=0\}\\
		&=\{f(\theta_{1}):f(\theta_{1})\cdot\sigma_{h(\theta_{1})d\theta_{2}}=0\},
		\end{align*}
		using in the last equality that $\sigma_{h(\theta_{1})d\theta_{2}}(\theta_{1})=0\Leftrightarrow h(\theta_{1})d\theta_{2}=0$. So we obtain the result that was predicted by $i$) of {Theorem} \ref{H}.
	\end{ex}
	
	\begin{remark}
		The example we have in mind throughout this subsection is of course that of a compact connected Lagrangian submanifold $L^{n}$ contained in the singular locus $Z$ of a log-symplectic manifold $(M^{2n},Z,\Pi)$. The induced foliation $\mathcal{F}_{L}$ on $L$ is defined by a nowhere vanishing closed one-form, which is obtained by pulling back a closed defining one-form for the foliation on $Z$. So $(L,\mathcal{F}_{L})$ is either the fiber foliation of a fiber bundle $L\rightarrow S^{1}$, or all leaves of $\mathcal{F}_{L}$ are dense in $L$.
		
		Moreover, the foliation type of $\mathcal{F}_{L}$ is stable under small deformations of the Lagrangian $L$ inside $Z$. To see this, we can work in the local model $p:T^{*}\mathcal{F}_{L}\rightarrow L$, where the total space $T^{*}\mathcal{F}_{L}$ is endowed with the pullback foliation $p^{-1}(\mathcal{F}_{L})$. Any Lagrangian deformation $L'$ of $L$ is of the form $L'=\text{Graph}(\alpha)$ for some $\alpha\in\Omega^{1}_{cl}(\mathcal{F}_{L})$, and the induced foliation $\mathcal{F}_{L'}$ is obtained by intersecting $L'$ with the leaves of $p^{-1}(\mathcal{F}_{L})$. Therefore, the map $p:(L',\mathcal{F}_{L'})\rightarrow(L,\mathcal{F}_{L})$ is a foliated diffeomorphism (with inverse $\alpha:(L,\mathcal{F}_{L})\rightarrow(L',\mathcal{F}_{L'})$), 
		which shows that $(L,\mathcal{F}_{L})$ and $(L',\mathcal{F}_{L'})$ are of the same type.
	\end{remark}

	\section{Deformations of Lagrangian submanifolds in log-symplectic manifolds: geometric aspects}\label{sec:geom}
	
	We  present some geometric consequences of the algebraic results obtained in the previous section. 
	We address three different geometric questions, each in a separate  subsection, as we now outline. Throughout, we assume the set-up given at the beginning of \S\ref{sec:alg}.

\begin{description}
\item[\S\ref{subs} Deformations constrained to the singular locus]
We investigate when all sufficiently small deformations of the Lagrangian $L$ are constrained to the singular locus. Prop. \ref{prop:Vtangent} gives a condition under which this does not happen.
%there exists a  Lagrangian deformation that is not constrained in the above sense. 
On the opposite extreme, in   Cor. \ref{constrained} and Prop. \ref{denserestr} we obtain positive results assuming that $L$ is compact, by considering separately the case that $L$ is the total space of a fibration and the case that $L$ has a dense leaf. The latter case is subtle, and we show that the conclusion of Prop. \ref{denserestr} fails to hold if we remove a certain finite dimensionality assumption.
%that omitting a finite dimensionality assumption from Prop. \ref{denserestr}, there do exist  Lagrangian deformation that are not constrained to the singular locus.
\item[\S\ref{subsec:obstr} Obstructedness of deformations] We ask when infinitesimal deformations of the Lagrangian $L$ can be extended to a smooth curve of Lagrangian deformations.
A sufficient criterium is given in Prop. \ref{unobstructed}. 
(All smoothly unobstructed deformations arise this way under an additional assumption, see Lemma \ref{lem:Kurexact}). Our main results here, under the assumption that $L$ is compact, are the computable ``if and only if'' criteria of Prop. \ref{prop:krunob} and Cor. \ref{cor:Krsimple}.
\item[\S\ref{subsec:eqrig} Equivalences and rigidity of deformations] On the set of Lagrangian deformations of $L$ there are two natural notions of equivalence: an algebraic one
% namely the gauge equivalence of the DGLA governing the deformation problem, 
and a geometric one, given by 
Hamiltonian isotopies. In Prop. \ref{prop:ham} we show that they coincide. We also show that there are no Lagrangian submanifolds which are infinitesimally rigid under Hamiltonian isotopies, so that the moduli space (which typically is not smooth) does not have any isolated points.
This leads us to consider the more flexible equivalence relation given by Poisson isotopies.
The formal tangent space of its moduli space is computed in
Prop. \ref{prop:poisequiv}. There do exist Lagrangians which are rigid under Poisson isotopies, as follows using
Prop. \ref{rigidity}.
% provides hypotheses under which infinitesimal rigidity implies rigidity, used to show that there exist rigit...
\end{description}

	%\bigskip
	\begin{remark}[The local deformation problem]
	We summarize here how  our results specialize to the local deformation problem,
	i.e. to a Lagrangian $L$ as in the local model of Prop. \ref{coordinates}:
	\begin{itemize}
		\item  $L$ can be deformed smoothly to a Lagrangian submanifold outside of the singular locus (Remark \ref{rem:locpush}).
		\item  all first order deformations of $L$ are smoothly unobstructed (Cor. \ref{H1}).
		\item  The space of local Lagrangian deformations modulo Hamiltonian isotopies is not smooth at $[L]$. Indeed, the formal tangent space at $[L]$ is isomorphic to $C^{\infty}(\RR)$ (see eq. \eqref{formalmoduli}), while at   Lagrangians contained in $M\setminus Z$ it is the zero vector space.  The same is true  if one replaces Hamiltonian isotopies by Poisson isotopies.
	\end{itemize}
\end{remark}

	\subsection{Deformations constrained to the singular locus}\label{subs}
	\leavevmode
	\vspace{0.1cm}
	
	We now investigate whether it is always possible to find deformations of the Lagrangian $L$ that escape from the singular locus. {Working in the model $\big(U\subset T^{*}\mathcal{F}_{L}\times\RR,V\wedge t\partial_{t}+\Pi_{can}\big)$, 
	a sufficient condition is the existence of a representative of the fixed first Poisson cohomology class $[V]$ that is tangent to $L$. Below, we denote by $W:=U\cap\{t=0\}\subset T^{*}\mathcal{F}_{L}$ the neighborhood of $L$ in $T^{*}\mathcal{F}_{L}$ where $V$ is defined}.
	
	\begin{prop}\label{prop:Vtangent}
		The Poisson cohomology class $[V]\in H^{1}_{\Pi_{can}}({W})$ has a representative tangent to $L$ if and only if $[\gamma]=0\in H^1(\mathcal{F}_{L})$. If these equivalent conditions hold, then there is a smooth path of  Lagrangian deformations $L_s$ starting at $L_0=L$ which is not contained in the singular locus  for  $s>0$.
	\end{prop}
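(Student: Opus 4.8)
The plan is to produce an explicit smooth family of solutions to the Maurer--Cartan system that escapes the singular locus. Recall from the local model fixed at the beginning of \S\ref{sec:alg} that the singular locus $Z$ is $\{t=0\}\subset T^{*}\mathcal{F}_{L}\times\mathbb{R}$; consequently a Lagrangian deformation $\text{Graph}(\alpha,f)$ of $L$ is contained in $Z$ if and only if $f\equiv 0$. By Theorem \ref{equations} together with Lemma \ref{lagcois}, the Lagrangian deformations of $L$ are exactly the sections $(\alpha,f)\in\Gamma(T^{*}\mathcal{F}_{L}\times\mathbb{R})$ solving the system \eqref{eqns}. So it suffices to exhibit a smooth path $s\mapsto(\alpha_{s},f_{s})$ of solutions of \eqref{eqns} with $(\alpha_{0},f_{0})=(0,0)$ and $f_{s}\not\equiv 0$ for $s>0$.

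Since the equivalent conditions hold we have $[\gamma]=0\in H^{1}(\mathcal{F}_{L})$, so we may write $\gamma=d_{\mathcal{F}_{L}}g$ for some $g\in C^{\infty}(L)$. I would then set
\[
(\alpha_{s},f_{s}):=\big(0,\,s\,e^{-g}\big),\qquad s\in[0,\varepsilon).
\]
The first equation in \eqref{eqns} is trivially satisfied. For the second, using the foliated Leibniz rule $d_{\mathcal{F}_{L}}(e^{-g})=-e^{-g}\,d_{\mathcal{F}_{L}}g=-e^{-g}\gamma$ and $\pounds_{X}\alpha_{s}=0$, we obtain
\[
d_{\mathcal{F}_{L}}f_{s}+f_{s}\,(\gamma-\pounds_{X}\alpha_{s})=s\,e^{-g}\big(-\gamma+\gamma\big)=0.
\]
Hence each $(\alpha_{s},f_{s})$ solves \eqref{eqns}, the dependence on $s$ is smooth (indeed linear), and $(\alpha_{0},f_{0})=(0,0)$ corresponds to $L_{0}=L$.

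Finally, since $e^{-g}>0$ everywhere, the function $f_{s}=s\,e^{-g}$ is nowhere vanishing for $s>0$; thus the Lagrangian $L_{s}:=\text{Graph}(0,\,s\,e^{-g})$ is disjoint from $Z=\{t=0\}$, and in particular not contained in it. Conceptually this is exactly ``pushing $L$ off in the $t$-direction'': the hypothesis $[\gamma]=0$ is what allows the modular vector field to be chosen tangent to $L$, equivalently to normalize the convenient representative so that $\gamma=0$, in which model one may simply take the constant $f_{s}\equiv s$ and the slices $\{t=s\}$. The only point requiring a little care --- and the sole mild obstacle --- is ensuring that the graphs remain inside the neighborhood $U$ on which $\widetilde{\Pi}$ is defined; this holds for $\varepsilon>0$ small enough (automatically so when $L$ is compact, since then $e^{-g}$ is bounded), which yields the desired smooth path $\{L_{s}\}_{s\in[0,\varepsilon)}$.
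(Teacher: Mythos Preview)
Your argument for the \emph{second} sentence of the proposition is correct and is exactly the paper's approach: once $\gamma=d_{\mathcal{F}_{L}}g$, the paper also takes the path $s\mapsto(0,s f e^{-g})$ for a nonzero leafwise constant $f$, and your choice $f=1$ is a special case. The observation that $f_{s}=se^{-g}$ is nowhere vanishing, so that $L_{s}$ lies entirely outside $Z$, is a nice strengthening.

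However, you have not proved the \emph{first} sentence of the proposition, namely the equivalence
\[
\text{$[V]$ has a representative tangent to $L$}\quad\Longleftrightarrow\quad [\gamma]=0\in H^{1}(\mathcal{F}_{L}).
\]
You invoke ``the equivalent conditions'' as a hypothesis and later gesture at the equivalence as a conceptual remark, but the statement asserts this biconditional and it requires proof. The implication $\Leftarrow$ is short: if $\gamma=d_{\mathcal{F}_{L}}g$, then $V-X_{p^{*}g}$ is tangent to $L$ (its vertical component along $L$ is $\gamma-d_{\mathcal{F}_{L}}g=0$). The implication $\Rightarrow$ is the one with content: assuming $V-X_{h}$ is tangent to $L$ for some $h\in C^{\infty}(W)$, one applies the vertical projection $P$ along $L$ and computes
\[
0=P(V-X_{h})=P(V_{vert})+P(V_{lift})-P(X_{h})=\gamma+0-d_{\mathcal{F}_{L}}(i^{*}h),
\]
using that $V_{lift}|_{L}$ is tangent to $L$, that $X_{h}-X_{p^{*}i^{*}h}$ is tangent to $L$ (since $L$ is coisotropic and $h-p^{*}i^{*}h$ vanishes on $L$), and the correspondence \eqref{cor}. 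This yields $\gamma=d_{\mathcal{F}_{L}}(i^{*}h)$, hence $[\gamma]=0$. Without this computation your proof of the proposition is incomplete.
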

	\begin{proof}
		We start by showing that the conditions are equivalent. First assume that $V-X_{g}$ for $g\in C^{\infty}({W})$ is a representative of $[V]$ that is tangent to $L$. As before, let $P$ denote the map that restricts vector fields on ${W}$ to $L$ and then takes their vertical component. By assumption, we then have
		\begin{align*}
		0=P(V-X_{g})&=P(V_{vert}+V_{lift}-X_{g})\\
		&=\gamma-P(X_{g}-X_{p^{*}(i^{*}g)}+X_{p^{*}(i^{*}g)})\\
		&=\gamma-P(X_{p^{*}(i^{*}g)})\\
		&=\gamma-d_{\mathcal{F}_{L}}(i^{*}g).
		\end{align*}
		Here $p:{W}\rightarrow L$ and $i:L\hookrightarrow {W}$ denote the projection and inclusion, respectively, the  fourth equality holds since $L$ is coisotropic,  and the last equality holds by the correspondence \eqref{cor}. This shows that $\gamma=d_{\mathcal{F}_{L}}(i^{*}g)$, and therefore $[\gamma]=0\in H^{1}(\mathcal{F}_{L})$.
		Conversely, if $\gamma=d_{\mathcal{F}_{L}}g$ for some $g\in C^{\infty}(L)$, then $V-X_{p^{*}g}$ is a representative of $[V]$ that is tangent to $L$.
		
		If the equivalent conditions hold, then by Remark \ref{rem:freedom} we can assume that $\gamma=0$. The Maurer-Cartan equation \eqref{eqns} then shows that any path of the form $s\mapsto(0,sf)$ for a nonzero leafwise constant function $f\in C^{\infty}(L)$ consists of Lagrangian deformations of $L$ that are no longer contained in the singular locus for $s>0$. Alternatively, if $\gamma=d_{\mathcal{F}_{L}}g$ for some $g\in C^{\infty}(L)$, then for any nonzero leafwise constant function $f$ on $L$, the path $s\mapsto(0,sfe^{-g})$ meets the criteria.
	\end{proof}
	
	\begin{remark}\label{rem:locpush}
		For the local deformation problem  we have $\gamma=0$, see Prop. \ref{coordinates}. Hence locally every half-dimensional Lagrangian submanifold contained in the singular locus can be deformed smoothly to one outside of the singular locus, by Prop. \ref{prop:Vtangent}.
	\end{remark}

	We will single out some Lagrangians whose deformations are constrained to the singular locus. We restrict ourselves to Lagrangians $L$ that are compact and connected. Recalling the dichotomy ($\star$) from \S\ref{subsec:Nov}, these assumptions imply that 
	either $(L,\mathcal{F}_{L})$ is the fiber foliation of a fiber bundle $L\rightarrow S^{1}$ or the leaves of $\mathcal{F}_{L}$ are dense.
	
	\subsubsection{\underline{The fibration case}}
	
	We need the following lemma about the map which, under the identification  \eqref{isom}, assigns to a closed foliated one-form  its cohomology class.
	
	\begin{lemma}\label{continuous}
		Let $(L,\mathcal{F}_{L})$ be a compact {foliated} manifold, where $\mathcal{F}_{L}$ is the fiber foliation of a fiber bundle $p:L\rightarrow S^{1}$. Then the following map is continuous for the $\mathcal{C}^{0}$-topology:
		\begin{equation}\label{cont}
		\Omega^{1}_{cl}(\mathcal{F}_{L})\rightarrow\Gamma(\mathcal{H}^{1}):\alpha\mapsto\sigma_{\alpha}.
		\end{equation}
	\end{lemma}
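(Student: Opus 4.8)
The plan is to realize $\sigma_{\alpha}$ concretely through its periods, i.e. through integration over a continuous family of $1$-cycles contained in the leaves, and to observe that these periods depend linearly and boundedly on $\alpha$ in the $\mathcal{C}^{0}$-topology (taken on both source and target, the latter being the space of $\mathcal{C}^{0}$-sections of the finite-rank bundle $\mathcal{H}^{1}\to S^{1}$). Since $\alpha\mapsto\sigma_{\alpha}$ is manifestly linear, it suffices to establish such a bound.

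First I would fix a finite cover of $S^{1}$ by trivializing intervals $U_{1},\dots,U_{N}$ for $p\colon L\to S^{1}$, with trivializations $\Phi_{a}\colon p^{-1}(U_{a})\xrightarrow{\sim}U_{a}\times F$. Over each $U_{a}$ I use the flat frame $[\beta_{1}^{a}],\dots,[\beta_{m}^{a}]\in\Gamma(\mathcal{H}^{1}|_{U_{a}})$ described before the lemma, coming from a basis of $H^{1}(F)$. Choosing smooth $1$-cycles $z_{1}^{a},\dots,z_{m}^{a}$ in $F$ whose homology classes form the dual basis of $H_{1}(F;\mathbb{R})$ (so that $\int_{z_{i}^{a}}\beta_{j}^{a}=\delta_{ij}$, using that the de Rham pairing $H^{1}(F)\times H_{1}(F;\mathbb{R})\to\mathbb{R}$ is perfect), the coefficient of $\sigma_{\alpha}$ along $[\beta_{j}^{a}]$ is exactly the period
\begin{equation*}
c_{j}^{a}(q)=\int_{(\Phi_{a,q})_{*}z_{j}^{a}}\alpha,\qquad q\in U_{a},
\end{equation*}
where $\Phi_{a,q}\colon F\xrightarrow{\sim}p^{-1}(q)$ is the restriction of $\Phi_{a}^{-1}$ to the fiber over $q$. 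This identity is just the pairing of $\sigma_{\alpha}(q)=[\alpha|_{p^{-1}(q)}]$ with the homology class $[(\Phi_{a,q})_{*}z_{j}^{a}]$, after transporting along the trivialization.

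The key estimate is then immediate. Fixing a Riemannian metric on $L$ and a fiber metric on $\mathcal{H}^{1}$, and writing $\ell$ for the length of a cycle, I obtain
\begin{equation*}
|c_{j}^{a}(q)|\le\ell\big((\Phi_{a,q})_{*}z_{j}^{a}\big)\,\|\alpha\|_{\mathcal{C}^{0}}.
\end{equation*}
Because $\overline{U_{a}}$ is compact and the family $q\mapsto(\Phi_{a,q})_{*}z_{j}^{a}$ is continuous, these lengths are uniformly bounded on $\overline{U_{a}}$, so $\|c_{j}^{a}\|_{\mathcal{C}^{0}(\overline{U_{a}})}\le C_{a}\|\alpha\|_{\mathcal{C}^{0}}$. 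On each chart $|\sigma_{\alpha}(q)|$ is equivalent to the Euclidean norm of the coefficient vector $(c_{j}^{a}(q))_{j}$ with constants bounded on $\overline{U_{a}}$, whence $\|\sigma_{\alpha}\|_{\mathcal{C}^{0}}\le C\|\alpha\|_{\mathcal{C}^{0}}$ after taking the maximum over the finite cover. Boundedness of a linear map gives continuity.

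I expect the only delicate points to be bookkeeping rather than conceptual: verifying that the period formula records precisely the coefficients in the flat frame (which is where the perfect de Rham pairing enters) and checking that the $\mathcal{C}^{0}$-norms coming from different charts are comparable, which follows from compactness of $S^{1}$ and smoothness of the transition data for $\mathcal{H}^{1}$. No genuine analysis is involved; the content is simply that taking cohomology classes leafwise is, expressed through periods, a bounded operation for the $\mathcal{C}^{0}$-topology.
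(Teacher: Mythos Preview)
Your proposal is correct and follows essentially the same approach as the paper: both arguments trivialize the bundle over a finite cover of $S^{1}$, use the de Rham pairing to express $\sigma_{\alpha}$ locally as a tuple of periods $\int_{\gamma_{j}}\alpha_{\theta}$ over $1$-cycles in the fiber, and observe that these integrals are bounded by the $\mathcal{C}^{0}$-norm of $\alpha$. The only minor technical refinement in the paper is that it works with shrunken opens $V_{i}$ satisfying $\overline{V_{i}}\subset U_{i}$ to ensure the trivializations (and hence the cycle lengths) remain controlled on the relevant closures; you should do the same rather than taking suprema over $\overline{U_{a}}$ directly.
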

	\begin{proof}
		Let $F$ denote the typical fiber of $p:L\rightarrow S^{1}$. {Since $F$ is compact, the homology group $H_{1}(F;\mathbb{Z})$ is finitely generated, hence it is the direct sum of a free abelian group and a torsion subgroup. Let $[\gamma_{1}],\ldots,[\gamma_{n}]$ be independent generators of the free part of $H_{1}(F;\mathbb{Z})$, where the representatives $\gamma_{i}$ are smooth $1$-cycles. Consider the de Rham isomorphism
		\[
		H^{1}_{dR}(F)\rightarrow\text{Hom}\left(H_{1}(F;\mathbb{Z}),\mathbb{R}\right):[\omega]\mapsto\left([\gamma_{i}]\mapsto\int_{\gamma_{i}}\omega\right),
		\]
		where we use implicitly that a group homomorphism from $H_{1}(F;\mathbb{Z})$ to $\mathbb{R}$ is determined by the images of the generators of the free part. Let $\{[\alpha_{1}],\ldots,[\alpha_{n}]\}$ be the basis of $H^{1}_{dR}(F)$ that corresponds under this isomorphism with the basis of $\text{Hom}\left(H_{1}(F;\mathbb{Z}),\mathbb{R}\right)$ induced by $[\gamma_1],\ldots,[\gamma_n]$, so that}
		\[
		\int_{\gamma_{i}}\alpha_{j}=\delta_{ij}.
		\] 
		This provides an isomorphism $H^{1}_{dR}(F)\cong \RR^n$.
		Choose local trivialisations of $p:L\rightarrow S^{1}$ over open subsets $U_{1},\ldots,U_{r}$ covering $S^{1}$, and let $V_{1},\ldots,V_{r}$ be open subsets whose compact closures satisfy $\overline{V_{i}}\subset U_{i}$, such that $V_{1},\ldots,V_{r}$ still cover $S^{1}$. Then locally the map \eqref{cont} reads
		\begin{equation*}
		\left.\Omega_{cl}^{1}\left(\mathcal{F}\right)\right|_{p^{-1}(U_{i})}\rightarrow\Gamma\left(U_{i}\times H^{1}_{dR}(F)\right)\cong C^{\infty}(U_{i},\mathbb{R})^{n}:\alpha_{\theta}\mapsto\left(\int_{\gamma_{1}}\alpha_{\theta},\ldots,\int_{\gamma_{n}}\alpha_{\theta}\right).
		\end{equation*}
		Therefore the $\mathcal{C}^{0}$-norm of $\sigma_{\alpha}$ is
		\[
		\sum_{1\leq i\leq r}\sum_{1\leq j\leq n}\sup_{\theta\in \overline{V_{i}}}\left|\int_{\gamma_{j}}\alpha_{\theta}\right|,
		\]
		which can be made arbitrarily small by shrinking $\alpha$ in $\mathcal{C}^{0}$. Since the map \eqref{cont} is linear, this proves the lemma. \end{proof}
	
	The following corollary states that, under hypotheses that are antipodal to those of Prop. \ref{prop:Vtangent}, small deformations of $L$ stay inside the singular locus $Z$.
	
	\begin{cor}\label{constrained}
		Let $L^{n}$ be a compact connected Lagrangian submanifold contained in the singular locus $Z$ of an orientable log-symplectic manifold $(M^{2n},Z,\Pi)$. Assume that the induced foliation $\mathcal{F}_{L}$ on $L$ is the fiber foliation of a fiber bundle $p:L\rightarrow S^{1}$, and that the section $\sigma_{\gamma}\in\Gamma\left(\mathcal{H}^{1}\right)$ is nowhere zero. Then $\mathcal{C}^{1}$-small deformations of $L$ stay inside $Z$.
	\end{cor}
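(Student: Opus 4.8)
The plan is to work in the local model $(U \subset T^*\mathcal{F}_{L} \times \mathbb{R}, \widetilde{\Pi})$ of Corollary \ref{normal} and to exploit the characterization of Lagrangian deformations in Theorem \ref{equations}. There, a $\mathcal{C}^1$-small deformation of $L$ is the graph of a $\mathcal{C}^1$-small section $(\alpha, f) \in \Gamma(T^*\mathcal{F}_{L} \times \mathbb{R})$ solving $d_{\mathcal{F}_{L}}\alpha = 0$ and $d_{\mathcal{F}_{L}}f + f(\gamma - \pounds_X\alpha) = 0$. Since the singular locus corresponds to $Z = \{t = 0\}$, the graph lies in $Z$ exactly when $f = 0$, so I would reduce the statement to showing that every sufficiently $\mathcal{C}^1$-small solution has $f = 0$.

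First I would note that $\eta := \gamma - \pounds_X\alpha$ is a closed foliated one-form: $\gamma$ is closed, and $\pounds_X\alpha$ is closed because $\alpha$ is closed by the first equation and $\pounds_X$ commutes with $d_{\mathcal{F}_{L}}$. The second equation then reads precisely $f \in H^0_\eta(\mathcal{F}_{L})$. Invoking Theorem \ref{H} i), which applies since $\mathcal{F}_{L}$ is the fiber foliation of $p: L \to S^{1}$, I would identify $H^0_\eta(\mathcal{F}_{L}) \cong \{h \in C^\infty(S^{1}) : h \cdot \sigma_\eta = 0\}$, where $\sigma_\eta \in \Gamma(\mathcal{H}^1)$ is the section corresponding to $[\eta]$ under \eqref{isom}. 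Thus it suffices to prove that $\sigma_\eta$ is nowhere vanishing, for then $h \cdot \sigma_\eta = 0$ forces $h \equiv 0$, hence $f = 0$.

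The heart of the argument is therefore to control $\sigma_\eta$ in terms of the size of $\alpha$. Using linearity of $\alpha \mapsto \sigma_\alpha$ I would write $\sigma_\eta = \sigma_\gamma - \sigma_{\pounds_X\alpha}$. Since $\pounds_X$ is a first-order differential operator preserving $\Omega^{1}_{cl}(\mathcal{F}_{L})$, the assignment $\alpha \mapsto \pounds_X\alpha$ is continuous from the $\mathcal{C}^1$- to the $\mathcal{C}^0$-topology; composing with the $\mathcal{C}^0$-continuous map of Lemma \ref{continuous} makes $\alpha \mapsto \sigma_{\pounds_X\alpha}$ continuous from $\mathcal{C}^1$ into the $\mathcal{C}^0$-topology on $\Gamma(\mathcal{H}^1)$. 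Fixing a fiber metric on $\mathcal{H}^1$ and using that $\sigma_\gamma$ is nowhere zero on the compact circle, I would set $c := \inf_{S^{1}}|\sigma_\gamma| > 0$; then for $\alpha$ sufficiently $\mathcal{C}^1$-small one has $\|\sigma_{\pounds_X\alpha}\|_{\mathcal{C}^0} < c$, so that $\sigma_\eta = \sigma_\gamma - \sigma_{\pounds_X\alpha}$ remains nowhere zero.

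Combining the two previous paragraphs yields $f = 0$, so the deformation $\mathrm{Graph}(\alpha, 0)$ stays inside $Z$. The main obstacle I anticipate is the topological bookkeeping in the third paragraph: transferring the smallness of the deformation into $\mathcal{C}^0$-smallness of $\sigma_{\pounds_X\alpha}$, which is exactly where the first-order nature of $\pounds_X$ and the continuity statement of Lemma \ref{continuous} are indispensable. Once that is in place, the conclusion follows directly from Theorems \ref{equations} and \ref{H}.
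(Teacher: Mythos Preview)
Your proposal is correct and follows essentially the same approach as the paper's proof: both use the continuity of $\alpha \mapsto \sigma_{\gamma-\pounds_X\alpha}$ from $\mathcal{C}^1$ to $\mathcal{C}^0$ (via Lemma \ref{continuous} and the first-order nature of $\pounds_X$), then invoke Theorem \ref{H} i) to conclude $H^0_{\gamma-\pounds_X\alpha}(\mathcal{F}_L)=0$ and hence $f=0$. Your version is slightly more explicit in decomposing $\sigma_\eta=\sigma_\gamma-\sigma_{\pounds_X\alpha}$ and choosing a fiber metric to quantify the nowhere-vanishing condition, but the argument is the same.
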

	\begin{proof}
		Clearly, we have a continuous map
		\[
		\left(\Omega^{1}_{cl}(\mathcal{F}_{L}),\mathcal{C}^{1}\right)\rightarrow\left(\Omega^{1}_{cl}(\mathcal{F}_{L}),\mathcal{C}^{0}\right):\alpha\mapsto\gamma-\pounds_{X}\alpha,
		\]
		so composing with the map \eqref{cont} gives a continuous map
		\[
		\left(\Omega^{1}_{cl}(\mathcal{F}_{L}),\mathcal{C}^{1}\right)\rightarrow\left(\Gamma(\mathcal{H}^{1}),\mathcal{C}^{0}\right):\alpha\mapsto\sigma_{\gamma-\pounds_{X}\alpha}.
		\]
		Therefore, since $\sigma_{\gamma}\in\Gamma(\mathcal{H}^{1})$ is nowhere zero, the same holds for $\sigma_{\gamma-\pounds_{X}\alpha}\in\Gamma(\mathcal{H}^{1})$ provided that $\alpha\in\Omega^{1}_{cl}(\mathcal{F}_{L})$ is sufficiently $\mathcal{C}^{1}$-small. By {Theorem} \ref{H}, this means that the cohomology group $H_{\gamma-\pounds_{X}\alpha}^{0}(\mathcal{F}_{L})$ is zero for $\mathcal{C}^{1}$-small $\alpha$. Looking at the Maurer-Cartan equation \eqref{eqns}, this implies that $\mathcal{C}^{1}$-small deformations of $L$ necessarily lie inside the singular locus.
	\end{proof}

	\begin{remark}
		The assumption in Corollary \ref{constrained} cannot be weakened. Clearly, if the interior of $\sigma_{\gamma}^{-1}(0)$ is nonempty, then by {Thm.} \ref{H} i) there exists $f\in H_{\gamma}^{0}(\mathcal{F}_{L})$ which is not identically zero, and $s\mapsto(0,sf)$ is a path of Lagrangian sections not inside the singular locus for $s>0$.
		
		Even if we ask that the support of $\sigma_{\gamma}$ be all of $S^{1}$, then the conclusion of Corollary \ref{constrained} does not hold. Indeed, one can construct counterexamples where the vanishing set of $\sigma_{\gamma-\pounds_{X}\alpha}$ has nonempty interior, for arbitrarily $\mathcal{C}^{1}$-small $\alpha\in\Omega_{cl}^{1}(\mathcal{F}_{L})$. 
	\end{remark}

	The following is an example of a Lagrangian $L$ for which small deformations stay inside the singular locus. However there exist  ``long'' paths of Lagrangian deformations that start at  $L$ and end at a Lagrangian that is no longer contained in the singular locus.

	\begin{ex}\label{inZ} 	Consider the manifold $(\mathbb{T}^{2}\times\mathbb{R}^{2},\theta_{1},\theta_{2},\xi_{1},\xi_{2})$ with log-symplectic structure
		\[
		\Pi:=(\partial_{\theta_{1}}-\partial_{\xi_{2}})\wedge \xi_{1}\partial_{\xi_{1}}+\partial_{\theta_{2}}\wedge\partial_{\xi_{2}}
		\]
		and Lagrangian $L:=\mathbb{T}^{2}\times\{(0,0)\}$. Note that the leaves of $\mathcal{F}_{L}$ are the fibers of the fibration $(\mathbb{T}^{2},\theta_{1},\theta_{2})\rightarrow(S^{1},\theta_{1})$.
		Considering $(\xi_{1},\xi_{2})$ as the fiber coordinates on $T^{*}L$ induced by the frame $\{d\theta_{1},d\theta_{2}\}$, we have that $T^{*}\mathcal{F}_{L}=(\mathbb{T}^{2}\times\mathbb{R},\theta_{1},\theta_{2},\xi_{2})$ with canonical Poisson structure $\partial_{\theta_{2}}\wedge\partial_{\xi_{2}}$. Therefore, using the notation established in the beginning of this section, we have
		\[
		\begin{cases}
		X=\partial_{\theta_{1}}\\
		\gamma=-d\theta_{2}
		\end{cases}.
		\]
		So the section $\sigma_{\gamma}\in\Gamma(\mathcal{H}^{1})$ is nowhere zero, and Corollary \ref{constrained} shows that $\mathcal{C}^{1}$-small deformations of the Lagrangian $L$ stay inside the singular locus.
		
		It is however possible to construct (large) deformations of $L$ that don't lie in the singular locus $T^{*}\mathcal{F}_{L}\times\{0\}\subset T^{*}\mathcal{F}_{L}\times\mathbb{R}$, first deforming $L$ inside the singular locus by large enough $\alpha\in\Omega_{cl}^{1}(\mathcal{F}_{L})$ such that $H_{\gamma-\pounds_{X}\alpha}^{0}(\mathcal{F}_{L})$ is no longer zero. To do so explicitly, note that $\big(g(\theta_{1},\theta_{2})d\theta_{2},f(\theta_{1},\theta_{2})\big)\in\Omega^{1}(\mathcal{F}_{L})\times C^{\infty}(L)$, gives rise to a Lagrangian section of $T^{*}\mathcal{F}_{L}\times\mathbb{R}$ exactly when
		\begin{equation}\label{eqn}
		\frac{\partial f}{\partial\theta_{2}}-f=f\frac{\partial g}{\partial\theta_{1}}.
		\end{equation}
		
		We construct a solution $(g,f)$ to \eqref{eqn} with $f$ not identically zero. For instance, let $f(\theta_{1})$ be any bump function and let $H(\theta_{1})$ be another bump function with $H|_{\text{supp}(f)}\equiv -1$ and $-1\leq H(\theta_{1})\leq 0$. Define $C:=\int_{S^{1}}H(\theta_{1})d\theta_{1}$, so $C>-2\pi$. Let $K:=-C/(C+2\pi)$ and put $G(\theta_{1}):=K\big(1+H(\theta_{1})\big)+H(\theta_{1})$. Notice that $G|_{\text{supp}(f)}\equiv -1$, and since 
		\[
		\int_{S^{1}}G(\theta_{1})d\theta_{1}=K(2\pi+C)+C=0,
		\]
		there exists a periodic primitive $g(\theta_{1})$ with $\partial g/\partial\theta_{1}=G$. We check that $(g,f)$ is a solution to the Maurer-Cartan equation \eqref{eqn}: for $p\notin\text{supp}(f)$ both sides of \eqref{eqn} evaluate to zero, whereas for $p\in\text{supp}(f)$ both sides of \eqref{eqn} are equal to $-f(p)$. Clearly, $f\not\equiv 0$.
		
		So first deforming $L$ along $\alpha:=g(\theta_{1})d\theta_{2}$ and then moving outside of $T^{*}\mathcal{F}_{L}\times\{0\}$ along $f$ gives a Lagrangian deformation that is no longer contained in the singular locus. As a sanity check, looking at $i)$ of {Theorem} \ref{H}, we notice that $H_{\gamma-\pounds_{X}\alpha}^{0}(\mathcal{F}_{L})$ is indeed nonzero and that $f\in H_{\gamma-\pounds_{X}\alpha}^{0}(\mathcal{F}_{L})$, since the section $\sigma_{\gamma-\pounds_{X}\alpha}$ vanishes on the support of $f$.
		
		Moreover, the proof of Corollary \ref{connected} shows that this procedure can be done smoothly, in the sense that one can construct a smooth ``long'' path of Lagrangians that connects $L$ with Lagrangians that are no longer contained in the singular locus. Concretely, let $(g,f)$ be the solution to \eqref{eqn} just constructed, and let $\Psi:\RR\rightarrow\RR$ be any smooth function satisfying $\Psi(s)=0$ for $s\leq 0$, $0<\Psi(s)<1$ for $0<s<1$ and $\Psi(s)=1$ for $s\geq 1$. Take $\Phi:\RR\rightarrow\RR$ to be defined by $\Phi(s)=\Psi(s-1)$.
		Then the path $s\mapsto(\Psi(s)gd\theta_{2},\Phi(s)f)$ consists of Lagrangian sections, it starts at $L$ for $s=0$, passes through $(\alpha,0)$ at time $s=1$, and it reaches $\text{Graph}(\alpha,f)$ at time $s=2$.
		
		The Lagrangian $\text{Graph}(\alpha,f)$ constructed above does not lie entirely outside of the singular locus. Interestingly, it is not possible to find such deformations of $L$. For if we assume by contradiction that $(g,f)$ is a solution to \eqref{eqn} with $f$ nowhere zero, then  
		\[
		\int_{0}^{2\pi}\frac{1}{f}\left(\frac{\partial f}{\partial\theta_{2}}-f\right)d\theta_{1}=\int_{0}^{2\pi}\frac{\partial g}{\partial\theta_{1}}d\theta_{1}=0,
		\]
		so that
		\begin{equation}\label{q}
		\int_{0}^{2\pi}\frac{1}{f}\frac{\partial f}{\partial\theta_{2}}d\theta_{1}=2\pi.
		\end{equation}
		But then we would get
		\begin{align*}
		0=\int_{\mathbb{T}^{2}}d\big(\ln|f|d\theta_{1}\big)=\int_{\mathbb{T}^{2}}\frac{1}{f}\frac{\partial f}{\partial\theta_{2}}d\theta_{2}\wedge d\theta_{1}=-\int_{0}^{2\pi}\left(\int_{0}^{2\pi}\frac{1}{f}\frac{\partial f}{\partial\theta_{2}}d\theta_{1}\right)d\theta_{2}=-4\pi^{2},
		\end{align*}
		using Stokes' theorem in the first and \eqref{q} in the last equality. This contradiction shows that $f$ must have a zero, i.e. the Lagrangian $\text{Graph}(\alpha,f)$ intersects the singular locus.
		
		Alternatively, if there would exist $\alpha=g(\theta_{1},\theta_{2})d\theta_{2}$ and a function $f\in H_{\gamma-\pounds_{X}\alpha}^{0}(\mathcal{F}_{L})$ that is nowhere zero, then {Theorem} \ref{H} implies that $\sigma_{\gamma-\pounds_{X}\alpha}\equiv 0$. Therefore, $\gamma-\pounds_{X}\alpha$ is foliated exact, which implies that there exists a function $k\in C^{\infty}(\mathbb{T}^{2})$ such that
		\[
		-1-\frac{\partial g}{\partial\theta_{1}}=\frac{\partial k}{\partial\theta_{2}}.
		\]
		Integrating this equality against the standard volume form $d\theta_{1}\wedge d\theta_{2}$ on the torus $\mathbb{T}^{2}$ gives a contradiction, since the left hand side integrates to $-4\pi^{2}$ and the right hand side to zero.
	\end{ex}
	
	\subsubsection{\underline{The case with dense leaves}}\label{denseleaves}
	Corollary \ref{constrained} has no counterpart for Lagrangians whose induced foliation $\mathcal{F}_{L}$ has dense leaves, at least not without additional assumptions. Indeed, looking at {Theorem} \ref{H} and the Maurer-Cartan equation \eqref{eqns}, we would need a positive answer to the following question: 
	
	\vspace{0.2cm}
	\noindent
	\emph{If $\gamma\in\Omega^{1}_{cl}(\mathcal{F}_{L})$ is not exact, then is $\gamma-\pounds_{X}\alpha$ still not exact for small enough $\alpha\in\Omega^{1}_{cl}(\mathcal{F}_{L})$?}  
	\vspace{0.2cm}
	
	Drawing inspiration from \cite[Section 4]{bertelson}, we construct an explicit counterexample which answers this question in the negative. Let $L=\left(\mathbb{T}^{2},\theta_{1},\theta_{2}\right)$ be the torus with Kronecker foliation $T\mathcal{F}_{L}=\text{Ker}(d\theta_{1}-\lambda d\theta_{2})$, for $\lambda\in\mathbb{R}\setminus\mathbb{Q}$ irrational. A global frame for $T^{*}\mathcal{F}_{L}$ is given by $d\theta_{2}$, so that every foliated one-form looks like $f(\theta_{1},\theta_{2})d\theta_{2}$, which is automatically closed by dimension reasons. It is exact when there exists $g(\theta_{1},\theta_{2})\in C^{\infty}\left(\mathbb{T}^{2}\right)$ such that
	\begin{equation}\label{exact}
	f=\lambda\frac{\partial g}{\partial\theta_{1}}+\frac{\partial g}{\partial\theta_{2}}.
	\end{equation}
	Expanding $f$ and $g$ in double Fourier series,
	\[
	f(\theta_{1},\theta_{2})=\sum_{m,n\in\mathbb{Z}}f_{m,n}e^{2\pi i(n\theta_{1}+m\theta_{2})} \hspace{0.5cm}\text{and}\hspace{0.5cm} g(\theta_{1},\theta_{2})=\sum_{m,n\in\mathbb{Z}}g_{m,n}e^{2\pi i(n\theta_{1}+m\theta_{2})},
	\]
	the equality \eqref{exact} is equivalent with
	\begin{equation}\label{coef}
	f_{m,n}=2\pi i(m+\lambda n)g_{m,n}\hspace{1cm}\forall m,n\in\mathbb{Z},
	\end{equation}
	which implies in particular that $f_{0,0}=0$. Note that the $g_{m,n}$ for $(m,n)\neq (0,0)$ are uniquely determined by the relation \eqref{coef} since $\lambda$ is irrational. 
	
	Assume moreover that the slope $\lambda\in\mathbb{R}\setminus\mathbb{Q}$
	is a Liouville number (see Definition \ref{liouville} in the Appendix). In this case the foliated cohomology group $H^{1}(\mathcal{F}_{L})$ is infinite dimensional \cite{haefliger}, \cite[Chapter III]{kronecker}, as one can construct smooth functions $f(\theta_{1},\theta_{2})$ in such a way that the $g_{m,n}$ defined in \eqref{coef} are not the Fourier coefficients of a smooth function. We give an example of such a function $f(\theta_{1},\theta_{2})$ in part i) of the proof below.
	
	\begin{lemma}\label{dense}
		Consider the torus $L=\left(\mathbb{T}^{2},\theta_{1},\theta_{2}\right)$ endowed with the Kronecker foliation $T\mathcal{F}_{L}=\text{Ker}(d\theta_{1}-\lambda d\theta_{2})$, for $\lambda\in\mathbb{R}\setminus\mathbb{Q}$ a Liouville number.
		There exist a non-exact foliated one-form $\gamma\in\Omega^{1}(\mathcal{F}_{L})$ and $X\in\mathfrak{X}(L)^{\mathcal{F}_{L}}$ such that every  $\mathcal{C}^{\infty}$-open neighborhood of $0$ in $\Omega^{1}(\mathcal{F}_{L})$ contains a one-form $\alpha$ for which $\gamma-\pounds_{X}\alpha$ is foliated exact.
	\end{lemma}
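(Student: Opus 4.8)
The plan is to reduce everything to double Fourier series on $\mathbb{T}^2$, exactly as in the discussion preceding the lemma. Writing a foliated one-form as $\psi\,d\theta_2$ with $\psi=\sum_{m,n}\psi_{m,n}e^{2\pi i(n\theta_1+m\theta_2)}$, recall that $d_{\mathcal{F}_{L}}g$ has $(m,n)$-th Fourier coefficient $2\pi i(m+\lambda n)g_{m,n}$, so that $\psi\,d\theta_2$ is exact iff $\psi_{0,0}=0$ and the numbers $\psi_{m,n}/(m+\lambda n)$ for $(m,n)\ne(0,0)$ decay rapidly, i.e. are the Fourier coefficients of a smooth function. I would take $X:=\partial_{\theta_1}$, which indeed lies in $\mathfrak{X}(L)^{\mathcal{F}_{L}}$ (a constant-coefficient field preserving the translation-invariant Kronecker foliation) and is nowhere tangent to $\mathcal{F}_{L}$; since its flow is translation in $\theta_1$, one computes $\pounds_X(\psi\,d\theta_2)=(\partial_{\theta_1}\psi)\,d\theta_2$, whose $(m,n)$-th coefficient is $2\pi i n\,\psi_{m,n}$. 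Thus the whole problem becomes a statement about adjusting Fourier coefficients precisely at frequencies where the denominator $m+\lambda n$ is anomalously small.

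Those anomalous frequencies are supplied by the Liouville hypothesis. I would fix integers $(p_k,q_k)$ with $q_k\to\infty$ and $\varepsilon_k:=|\lambda q_k-p_k|$ decaying faster than any power of $q_k^{-1}$ (say $\varepsilon_k<q_k^{-k}$). The form $\gamma$ is then built supported only on the frequencies $(m,n)=(-p_k,q_k)$: set $\gamma:=\big(\sum_k b_k\,e^{2\pi i(q_k\theta_1-p_k\theta_2)}\big)\,d\theta_2$ with $b_k:=\varepsilon_k^{1/2}$. The two competing requirements are reconciled by the size of $\varepsilon_k$: since $b_k<q_k^{-k/2}$ the series converges rapidly and $\gamma$ is smooth, whereas the only candidate primitive of $\gamma$ must have coefficients $b_k/\big(2\pi i(\lambda q_k-p_k)\big)$ of size $\varepsilon_k^{-1/2}\to\infty$ (all other coefficients being forced to $0$ since $m+\lambda n\neq 0$ off the origin), so no smooth primitive exists and $\gamma$ is non-exact. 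This trade-off is where the Liouville property is essential, and it is the heart of the construction.

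Finally, to produce arbitrarily small $\alpha$ I would exploit that $\pounds_X$ can push $\gamma$ onto an exact form by cancelling its tail. For each $K$ set $\alpha_K:=\big(\sum_{k\ge K}\tfrac{b_k}{2\pi i q_k}\,e^{2\pi i(q_k\theta_1-p_k\theta_2)}\big)\,d\theta_2$; rapid decay of $b_k/q_k$ shows that $\alpha_K\in\Omega^1(\mathcal{F}_{L})$ is smooth and that $\alpha_K\to 0$ in $\mathcal{C}^\infty$ as $K\to\infty$. By construction $\pounds_X\alpha_K$ has coefficient $b_k$ at $(-p_k,q_k)$ for $k\ge K$ and $0$ otherwise, so $\gamma-\pounds_X\alpha_K$ is the \emph{finite} trigonometric form $\big(\sum_{k<K}b_k\,e^{2\pi i(q_k\theta_1-p_k\theta_2)}\big)\,d\theta_2$. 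Having only finitely many nonzero Fourier coefficients, all at frequencies $\neq(0,0)$, it admits the trigonometric-polynomial primitive $g=\sum_{k<K}\frac{b_k}{2\pi i(\lambda q_k-p_k)}e^{2\pi i(q_k\theta_1-p_k\theta_2)}$, so $\gamma-\pounds_X\alpha_K=d_{\mathcal{F}_{L}}g$ is foliated exact. Hence every $\mathcal{C}^\infty$-neighborhood of $0$ contains some $\alpha_K$ with $\gamma-\pounds_X\alpha_K$ exact, as required. The main obstacle is the balancing act of the second paragraph — choosing $b_k$ relative to $\varepsilon_k$ and $q_k$ so that $\gamma$ is simultaneously smooth and non-exact — while the truncation step and the fact that finite Fourier sums are automatically exact are routine.
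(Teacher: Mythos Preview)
Your argument is correct and follows essentially the same route as the paper: take $X=\partial_{\theta_1}$, build $\gamma$ supported on the Liouville-approximation frequencies with coefficients sized so that $\gamma$ is smooth but its formal primitive is not, then produce $\alpha_K$ by cancelling the tail so that $\gamma-\pounds_X\alpha_K$ is a finite trigonometric sum and hence foliated exact. Your choices $b_k=\varepsilon_k^{1/2}$ and a sharp truncation are slightly cleaner than the paper's $b_p=(m_p+\lambda n_p)n_p$ together with a bump-function cutoff $\phi_k(1/p)$, but the mechanism is identical; just be sure to arrange the pairs $(p_k,q_k)$ to be distinct (pass to a subsequence if necessary) and note that the Liouville definition gives $\varepsilon_k<q_k^{1-k}$ rather than $q_k^{-k}$, which is harmless for your estimates.
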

	\begin{proof}
		The proof is divided into two steps. In the first step, we construct $\gamma\in\Omega^{1}(\mathcal{F}_{L})$. In the second step, we fix $X\in\mathfrak{X}(L)^{\mathcal{F}_{L}}$ and we construct a sequence of foliated one-forms $\alpha_{k}$ such that $\gamma-\pounds_{X}\alpha_{k}$ is exact for each value of $k$, and $\alpha_{k}\rightarrow 0$ in the Fr\'echet $\mathcal{C}^{\infty}$-topology.
		\begin{enumerate}[i)]
			\item We first have to find a foliated one-form $\gamma=f(\theta_{1},\theta_{2})d\theta_{2}$ that is not exact. Moreover, since we want to approach $\gamma$ by means of exact one-forms, we need that the  coefficient $f_{0,0}=\frac{1}{4\pi^{2}}\int_{\mathbb{T}^{2}}fd\theta_{1}\wedge d\theta_{2}$ is zero. This can be done as follows. By Lemma \ref{liou} in the Appendix, for each integer $p\geq 1$, there exists a pair of integers $(m_{p},n_{p})$ such that
			\begin{equation}\label{e}
			|m_{p}+\lambda n_{p}|\leq\frac{1}{\left(|m_{p}|+|n_{p}|\right)^{p}}.
			\end{equation}
			We can moreover assume that $(m_{p},n_{p})\neq(m_{q},n_{q})$ for $p\neq q$, and that $n_{p}\geq p$ (see Remark \ref{assumptions}). We now define $f(\theta_{1},\theta_{2})$ by means of its Fourier coefficients $f_{m,n}$, setting
			\[
			f_{m,n}=\begin{cases}
			(m_{p}+\lambda n_{p})n_{p}\hspace{1cm}\text{if}\ (m,n)=(m_{p},n_{p})\\
			0\hspace{3.08cm}\text{else}
			\end{cases}.
			\]
			To see that these coefficients define a smooth function, we estimate for $k\in\mathbb{N}$:
			\begin{align*}
			|f_{m_{p},n_{p}}|\cdot \|(m_{p},n_{p})\|^{k}&=|m_{p}+\lambda n_{p}|\cdot n_{p}\cdot\|(m_{p},n_{p})\|^{k}\\
			&\leq|m_{p}+\lambda n_{p}|\cdot(|m_{p}|+|n_{p}|)\cdot\left(|m_{p}|+|n_{p}|\right)^{k}\\
			&=|m_{p}+\lambda n_{p}|\cdot\left(|m_{p}|+|n_{p}|\right)^{k+1}\\
			&\leq \frac{1}{\left(|m_{p}|+|n_{p}|\right)^{p}}\cdot\left(|m_{p}|+|n_{p}|\right)^{k+1}\\
			&\leq\left(\frac{1}{p}\right)^{p-k-1},
			\end{align*}
			where the last inequality holds for $p\geq k+1$. This shows that $\sup_{(m,n)\in\mathbb{Z}^{2}}|f_{m,n}|\|(m,n)\|^{k}$ is finite for each value of $k\in\mathbb{N}$, and therefore $f(\theta_{1},\theta_{2})$ is indeed smooth. To see that $\gamma=f(\theta_{1},\theta_{2})d\theta_{2}$ is not exact, note that the Fourier coefficients of a primitive $g(\theta_{1},\theta_{2})$ are given by \eqref{coef}:
			\begin{equation}\label{gmn}
			g_{m,n}=\frac{f_{m,n}}{2\pi i (m+\lambda n)}\hspace{1cm} \text{for}\ (m,n)\neq(0,0).
			\end{equation}
			Therefore
			\[
			|g_{m_{p},n_{p}}|=\frac{1}{2\pi}n_{p}\geq \frac{1}{2\pi}p,
			\]
			which does not tend to zero for $p\rightarrow\infty$. So the $g_{m,n}$ defined in \eqref{gmn} are not the Fourier coefficients of a smooth function.
			\item We let $X:=\partial_{\theta_{1}}$. Notice that $X\in\mathfrak{X}(L)^{\mathcal{F}_{L}}$ and that $X$ is transverse to the leaves of $\mathcal{F}_{L}$. We now construct a sequence $\alpha_{k}\in\Omega^{1}_{cl}(\mathcal{F}_{L})$ such that $\gamma-\pounds_{X}\alpha_{k}$ is exact and $\alpha_{k}\rightarrow 0$ in the $\mathcal{C}^{\infty}$-topology. For each integer $k\geq 1$ we define $\alpha_{k}=h_{k}(\theta_{1},\theta_{2})d\theta_{2}$, where $h_{k}(\theta_{1},\theta_{2})$ is given by its Fourier coefficients
			\[
			h_{m,n}^{k}=\begin{cases}
			\left(\frac{m_{p}+\lambda n_{p}}{2\pi i}\right)\cdot \phi_{k}\left(\frac{1}{p}\right)\hspace{1cm}\text{if}\  (m,n)=(m_{p},n_{p})\\
			0\hspace{4cm}\text{else}
			\end{cases}.
			\] 
			Here $\phi_{k}$ is a bump function on $\mathbb{R}$ that is identically equal to $1$ on the interval $[0,\frac{1}{k}]$. As before, we see that $h_{k}$ is a smooth function by the estimate
			\[
			|h^{k}_{m_{p},n_{p}}|\cdot\|(m_{p},n_{p})\|^{l}\leq \frac{1}{2\pi}|m_{p}+\lambda n_{p}|\cdot\left(|m_{p}|+|n_{p}|\right)^{l}\leq \frac{1}{2\pi}\left(\frac{1}{p}\right)^{p-l},
			\]
			where the last inequality holds for $p\geq l$. Note that $\gamma-\pounds_{X}\alpha_{k}=(f-\partial_{\theta_{1}}h_{k})d\theta_2$ is indeed exact because the Fourier coefficients of $f-\partial_{\theta_{1}}h_{k}$ are given by
			\[
			f_{m,n}-2\pi i \cdot n\cdot h^{k}_{m,n}=\begin{cases}
			(m_{p}+\lambda n_{p})n_{p}\left(1-\phi_{k}\left(\frac{1}{p}\right)\right)\hspace{1cm}\text{if}\ (m,n)=(m_{p},n_{p})\\
			0\hspace{5.5cm}\text{else}
			\end{cases},
			\]
			only finitely many of which are nonzero. Finally, by letting $k$ increase, we can make $\alpha_{k}$ as $\mathcal{C}^{\infty}$-small as desired. Indeed, for each integer $l$ we have
			\begin{align}\label{est}
			\|h_{k}\|_{l}&\leq \sum_{0\leq j\leq l} \sum_{p\geq k}|m_{p}+\lambda n_{p}|\cdot\|(m_{p},n_{p})\|^{j}\cdot(2\pi)^{j-1}\nonumber\\
			&\leq \sum_{0\leq j\leq l} \sum_{p\geq k}\left(\frac{1}{p}\right)^{p-j}\cdot(2\pi)^{j-1},
			\end{align}
			where the last inequality holds for $k\geq l$. The expression \eqref{est} tends to zero for $k\rightarrow\infty$, since the inner sum is the tail of a convergent series for each value of $j\in\{0,\ldots,l\}$.
		\end{enumerate}
	\end{proof}
	
	The above construction gives a concrete counterexample to the version of Corollary \ref{constrained} for Lagrangians $(L,\mathcal{F}_{L})$ with dense leaves. We only have to realize $L=(\mathbb{T}^{2},\theta_{1},\theta_{2})$ with $T\mathcal{F}_{L}=\text{Ker}(d\theta_{1}-\lambda d\theta_{2})$ as a Lagrangian submanifold contained in the singular locus of some log-symplectic manifold. The normal form \eqref{pitilde} tells us how to construct this log-symplectic manifold. If $(\xi_{1},\xi_{2})$ are the fiber coordinates on $T^{*}L$, and $\xi$ is the fiber coordinate on $T^{*}\mathcal{F}_{L}$ corresponding with the frame $\{d\theta_{2}\}$, then the restriction map reads
	\[
	r:T^{*}L\rightarrow T^{*}\mathcal{F}_{L}:(\theta_{1},\theta_{2},\xi_{1},\xi_{2})\mapsto(\theta_{1},\theta_{2},\lambda\xi_{1}+\xi_{2}),
	\]
	and therefore the canonical Poisson structure on $T^{*}\mathcal{F}_{L}$ is
	\[
	\Pi_{can}=r_{*}\left(\partial_{\theta_{1}}\wedge\partial_{\xi_{1}}+\partial_{\theta_{2}}\wedge\partial_{\xi_{2}}\right)=\left(\lambda\partial_{\theta_{1}}+\partial_{\theta_{2}}\right)\wedge\partial_{\xi}.
	\]
	Let $V$ denote the vertical Poisson vector field on $T^{*}\mathcal{F}_{L}$ defined by the one-form $\gamma\in\Gamma(T^{*}\mathcal{F}_{L})$ constructed in Lemma \ref{dense}, and let $X:=\partial_{\theta_{1}}$. Then $V+X$ is a Poisson vector field on $\left(T^{*}\mathcal{F}_{L},\Pi_{can}\right)$ transverse to the symplectic leaves, so the following is a log-symplectic structure:
	\begin{equation*}%\label{lagrliouville}
	\Big(T^{*}\mathcal{F}_{L}\times\mathbb{R},(V+X)\wedge t\partial_{t}+\left(\lambda\partial_{\theta_{1}}+\partial_{\theta_{2}}\right)\wedge\partial_{\xi}\Big),
	\end{equation*}
	and $L$ is Lagrangian inside $T^{*}\mathcal{F}_{L}$ with induced foliation $\mathcal{F}_{L}$. The above argument shows that, for each integer $k\geq 0$, there exists arbitrarily $\mathcal{C}^{k}$-small $\alpha\in\Omega^{1}_{cl}(\mathcal{F}_{L})$ for which $\gamma-\pounds_{X}\alpha$ is exact. By {Theorem} \ref{H}, there exists  $f\in H^{0}_{\gamma-\pounds_{X}\alpha}(\mathcal{F}_{L})$ not identically zero, where $f$ can be made arbitrarily $\mathcal{C}^{k}$-small by rescaling with a nonzero constant. The Maurer-Cartan equation \eqref{eqns} now implies that $\text{Graph}(\alpha,f)$ is an arbitrarily $\mathcal{C}^{k}$-small Lagrangian deformation of $L$ that is not completely contained in the singular locus $T^{*}\mathcal{F}_{L}$.

	\begin{remark}\label{generic}
		In the above counterexample, it is crucial that the slope $\lambda$ is a Liouville number. If $\mathcal{F}_{L}$ is the Kronecker foliation with generic (i.e. not Liouville) irrational slope $\lambda$, then $H^{1}(\mathcal{F}_{L})=\mathbb{R}[d\theta_{2}]$. In this case, exactness is detected by integration, for if we denote 
		\[
		I:C^{\infty}(\mathbb{T}^{2})\rightarrow\mathbb{R}:h(\theta_{1},\theta_{2})\mapsto\int_{\mathbb{T}^{2}}h(\theta_{1},\theta_{2})d\theta_{1}\wedge d\theta_{2},
		\]
		then $hd\theta_{2}\in\Omega^{1}(\mathcal{F}_{L})$ being exact is equivalent with $h\in I^{-1}(0)$. Since integration is $\mathcal{C}^{0}$-continuous, it follows that the space of exact one-forms $\text{Im}(d_{\mathcal{F}_{L}})\subset\left(\Omega^{1}(\mathcal{F}_{L}),\mathcal{C}^{0}\right)$ is closed. Therefore, if we take $\gamma\in\Omega^{1}(\mathcal{F}_{L})$ not exact, so that $H^{0}_{\gamma}(\mathcal{F}_{L})=0$, then also $\gamma-\pounds_{X}\alpha$ is not exact for $\mathcal{C}^{1}$-small $\alpha$, so that still $H^{0}_{\gamma-\pounds_{X}\alpha}(\mathcal{F}_{L})=0$. This shows that, if in the above counterexample we take a generic slope $\lambda\in\mathbb{R}\setminus\mathbb{Q}$, then $\mathcal{C}^{1}$-small deformations of $L$ do stay inside the singular locus.
	\end{remark}
	
	The problem in the above counterexample is that the space of exact one-forms in $\Omega^{1}_{cl}(\mathcal{F}_{L})$ is not closed with respect to the Fr\'echet $\mathcal{C}^{\infty}$-topology generated by $\mathcal{C}^{k}$-norms $\{\|\cdot\|_{k}\}_{k\geq 0}$. Under the additional assumption that $H^{1}(\mathcal{F}_{L})$ is finite dimensional, this problem does not occur, and we obtain the following analog to Corollary \ref{constrained}. 
	
	\begin{prop}\label{denserestr}
		Let $L$ be a compact, connected Lagrangian whose induced foliation $\mathcal{F}_{L}$ has dense leaves. Assume that $H^{1}(\mathcal{F}_{L})$ is finite dimensional and that $\gamma\in\Omega^{1}_{cl}(\mathcal{F}_{L})$ is not exact. Then there exists a neighborhood $\mathcal{V}$ of $0$ in $\big(\Gamma(T^{*}\mathcal{F}_{L}\times\RR),\mathcal{C}^{\infty}\big)$ such that if $\text{Graph}(\alpha,f)$ is Lagrangian for $(\alpha,f)\in\mathcal{V}$, then $f\equiv 0$.
	\end{prop}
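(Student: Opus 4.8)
The plan is to reduce the statement, via the Maurer--Cartan equations \eqref{eqns} of Theorem \ref{equations}, to a single assertion about foliated cohomology, and then to settle that assertion by a functional-analytic argument in which the finite dimensionality of $H^1(\mathcal{F}_L)$ plays the decisive role. Recall that by Theorem \ref{equations}, $\text{Graph}(\alpha,f)$ is Lagrangian exactly when $\alpha\in\Omega^1_{cl}(\mathcal{F}_L)$ and $d_{\mathcal{F}_L}f+f(\gamma-\pounds_X\alpha)=0$; the latter says precisely that $f\in H^0_{\gamma-\pounds_X\alpha}(\mathcal{F}_L)$, since in degree zero this twisted cohomology is just the kernel of the twisted differential. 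As $L$ is compact and connected with dense leaves, Theorem \ref{H} ii) gives $H^0_{\eta}(\mathcal{F}_L)=0$ whenever $\eta$ is not foliated exact. Hence it suffices to produce a $\mathcal{C}^\infty$-neighborhood $\mathcal{U}$ of $0$ in $\Omega^1_{cl}(\mathcal{F}_L)$ on which $\gamma-\pounds_X\alpha$ is never exact: taking $\mathcal{V}$ to be the preimage of $\mathcal{U}$ under the continuous projection $(\alpha,f)\mapsto\alpha$ then forces $f\equiv 0$ for every Lagrangian graph in $\mathcal{V}$.

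The crux, as anticipated in Remark \ref{generic}, is that the space of exact foliated one-forms is $\mathcal{C}^\infty$-closed in $\Omega^1_{cl}(\mathcal{F}_L)$; equivalently, the cohomology class map $\Omega^1_{cl}(\mathcal{F}_L)\to H^1(\mathcal{F}_L)$ is continuous. I would prove this via the open mapping theorem. Since $L$ is compact, $\Omega^1(\mathcal{F}_L)$ is a Fr\'echet space and $\Omega^1_{cl}(\mathcal{F}_L)$, being the kernel of the continuous operator $d_{\mathcal{F}_L}$, is a closed subspace, hence Fr\'echet. Fix closed one-forms $\omega_1,\dots,\omega_k$ whose classes form a basis of $H^1(\mathcal{F}_L)$ (finite by hypothesis) and consider the continuous linear map
\[
\Phi\colon C^\infty(L)\times\mathbb{R}^k\to\Omega^1_{cl}(\mathcal{F}_L),\qquad (g,c)\mapsto d_{\mathcal{F}_L}g+\textstyle\sum_i c_i\omega_i,
\]
which is surjective because the $[\omega_i]$ span $H^1(\mathcal{F}_L)$. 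By the open mapping theorem for Fr\'echet spaces it is open, so it descends to a topological isomorphism $(C^\infty(L)\times\mathbb{R}^k)/\ker\Phi\cong\Omega^1_{cl}(\mathcal{F}_L)$. Since the $[\omega_i]$ are linearly independent, one checks $\ker\Phi\subseteq C^\infty(L)\times\{0\}$; therefore the projection onto the $\mathbb{R}^k$-factor annihilates $\ker\Phi$ and descends to the quotient, yielding, after composing with the inverse isomorphism, a continuous map $\Omega^1_{cl}(\mathcal{F}_L)\to\mathbb{R}^k$ that sends $\alpha$ to the coordinates of $[\alpha]$ in the basis $\{[\omega_i]\}$. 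This is the desired continuity.

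With continuity in hand the proof finishes quickly. The operator $\pounds_X$ is a first-order differential operator, hence $\mathcal{C}^\infty$-continuous, and it preserves closedness since it commutes with $d_{\mathcal{F}_L}$; thus $\alpha\mapsto\gamma-\pounds_X\alpha$ is a continuous affine self-map of $\Omega^1_{cl}(\mathcal{F}_L)$. Composing with the cohomology class map above produces a continuous map $\Omega^1_{cl}(\mathcal{F}_L)\to H^1(\mathcal{F}_L)$ whose value at $\alpha=0$ is $[\gamma]\neq 0$. By continuity there is a $\mathcal{C}^\infty$-neighborhood $\mathcal{U}$ of $0$ on which $[\gamma-\pounds_X\alpha]\neq 0$, so $\gamma-\pounds_X\alpha$ is not exact there, and the reduction of the first paragraph applies.

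The main obstacle is the second step, the closedness of the space of exact foliated one-forms. This is exactly the point that breaks down in the Liouville counterexample of Lemma \ref{dense}, where $H^1(\mathcal{F}_L)$ is infinite dimensional and the exact forms are dense but not closed; the finite dimensionality hypothesis is precisely what licenses the open mapping theorem and hence the continuity of the cohomology class map. The argument of Remark \ref{generic} is the special case $k=1$, where a single continuous functional (integration over $\mathbb{T}^2$) detects exactness, and the open mapping theorem is what supplies the analogous family of $k$ functionals in general.
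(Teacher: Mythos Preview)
Your proof is correct and follows essentially the same approach as the paper: both reduce to showing that the exact foliated one-forms form a $\mathcal{C}^\infty$-closed subspace of $\Omega^1_{cl}(\mathcal{F}_L)$, invoke the open mapping theorem together with the finite-dimensionality of $H^1(\mathcal{F}_L)$ to get this, and then use continuity of $\alpha\mapsto\gamma-\pounds_X\alpha$ and Theorem~\ref{H}~ii). The only cosmetic difference is that the paper argues closedness of $\operatorname{Im}(d_{\mathcal{F}_L})$ directly from its finite codimension (citing \cite[Remark 3.2]{bertelson}), whereas you make the continuity of the cohomology-class map explicit by applying the open mapping theorem to the surjection $\Phi\colon C^\infty(L)\times\mathbb{R}^k\to\Omega^1_{cl}(\mathcal{F}_L)$.
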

	\begin{proof}
		Consider the Fr\'echet space $\big(\Omega^{1}(\mathcal{F}_{L}),\mathcal{C}^{\infty}\big)$ and notice that the space of closed foliated one-forms $\Omega^{1}_{cl}(\mathcal{F}_{L})\subset\big(\Omega^{1}(\mathcal{F}_{L}),\mathcal{C}^{\infty}\big)$ is closed. To see this, it is enough to note that $d_{\mathcal{F}_{L}}$ is continuous with respect to the $\mathcal{C}^{\infty}$-topology and that $\{0\}\subset\big(\Omega^{2}(\mathcal{F}_{L}),\mathcal{C}^{\infty}\big)$ is closed since Fr\'echet spaces are Hausdorff. Consequently, $\big(\Omega^{1}_{cl}(\mathcal{F}_{L}),\mathcal{C}^{\infty}\big)$ is itself a Fr\'echet space.  Moreover, the space of exact foliated one-forms $\text{Im}(d_{\mathcal{F}_{L}})\subset\left(\Omega^{1}_{cl}(\mathcal{F}_{L}),\mathcal{C}^{\infty}\right)$ is a closed subspace. Indeed, by assumption, the range of $d_{\mathcal{F}_{L}}:\big(C^{\infty}(L),\mathcal{C}^{\infty}\big)\rightarrow\big(\Omega^{1}_{cl}(\mathcal{F}_{L}),\mathcal{C}^{\infty}\big)$ has finite codimension, so it must be closed because of the open mapping theorem (see \cite[Remark 3.2]{bertelson}).
		\newline
		\indent
		As $\gamma$ is not foliated exact, there exists a $\mathcal{C}^{\infty}$-open neighborhood of $\gamma$ consisting of non-exact one-forms. By continuity of the map $\big(\Omega^{1}_{cl}(\mathcal{F}_{L}),\mathcal{C}^{\infty}\big)\rightarrow\big(\Omega^{1}_{cl}(\mathcal{F}_{L}),\mathcal{C}^{\infty}\big):\alpha\mapsto\gamma-\pounds_{X}\alpha$, we find a $\mathcal{C}^{\infty}$-open neighborhood $\mathcal{U}$ of $0$ in $\Omega^{1}_{cl}(\mathcal{F}_{L})$ such that $\gamma-\pounds_{X}\alpha$ is not exact for all $\alpha\in\mathcal{U}$. There exists a $\mathcal{C}^{\infty}$-open subset $\mathcal{U}'\subset\Omega^{1}(\mathcal{F}_{L})$ such that $\mathcal{U}= \mathcal{U}'\cap\Omega^{1}_{cl}(\mathcal{F}_{L})$. We now define the $\mathcal{C}^{\infty}$-neighborhood $\mathcal{V}$ of $0$ in $\left(\Gamma(T^{*}\mathcal{F}_{L}\times\mathbb{R}),\mathcal{C}^{\infty}\right)$ by
		\[
		\mathcal{V}:=\{(\alpha,f)\in\Gamma(T^{*}\mathcal{F}_{L}\times\mathbb{R}):\alpha\in\mathcal{U}'\}.
		\]
		To see that $\mathcal{V}$ satisfies the criteria, let $(\alpha,f)\in\mathcal{V}$ be such that $\text{Graph}(\alpha,f)$ is Lagrangian in $(T^{*}\mathcal{F}_{L}\times\mathbb{R},\widetilde{\Pi})$. {Then $f\in H^{0}_{\gamma-\pounds_{X}\alpha}(\mathcal{F}_{L})$ and $\alpha\in\mathcal{U}$, so that $\gamma-\pounds_{X}\alpha$ is not foliated exact. Recalling that the leaves of $\mathcal{F}_{L}$ are assumed to be dense, we now use $ii)$ of {Theorem} \ref{H} to see that $ H^{0}_{\gamma-\pounds_{X}\alpha}(\mathcal{F}_{L})$ vanishes. Consequently $f\equiv 0$, which finishes the proof.}
	\end{proof}
	
	{Since the $\mathcal{C}^{\infty}$-topology is generated by the increasing family of $\mathcal{C}^{k}$-norms, every $\mathcal{C}^{\infty}$-open neighborhood contains a $\mathcal{C}^{k}$-open neighborhood for some $k\in\mathbb{N}$. So shrinking the neighborhood $\mathcal{V}$ obtained in the above proposition, one can assume that it is a $\mathcal{C}^{k}$-neighborhood of the zero section, for some (unspecified) $k\in\mathbb{N}$.}
	
	\subsection{Obstructedness of deformations}\label{subsec:obstr}
	\leavevmode
	\vspace{0.1cm}
	
	Recall that a deformation problem governed by a DGLA $(W,d,[\![\cdot,\cdot]\!])$ is formally/smoothly unobstructed if every closed element $\alpha\in W_{1}$ -- i.e. every first order deformation -- can be extended to a formal/smooth curve of Maurer-Cartan elements. A way to detect obstructedness is by means of the Kuranishi map 
	\[
	Kr:H^{1}(W)\rightarrow H^{2}(W):[\alpha]\mapsto \big[[\![\alpha,\alpha]\!]\big],
	\]
	for if $Kr([\alpha])$ is nonzero, then $\alpha$ is formally (hence smoothly) obstructed \cite[Theorem 11.4]{OP}.

	For the deformation problem of a Lagrangian $L^{n}$ contained in the singular locus of a log-symplectic manifold $(M^{2n},Z,\Pi)$, a first order deformation is a pair $(\alpha_{1},f_{1})\in\Gamma\left(T^{*}\mathcal{F}_{L}\times\mathbb{R}\right)$ such that 
	\begin{equation}\label{eq:linMC}
	\begin{cases}
	d_{\mathcal{F}_{L}}\alpha_{1}=0\\
	d_{\mathcal{F}_{L}}f_{1}+f_{1}\gamma=0
	\end{cases}.
	\end{equation}
	Clearly, first order deformations of the specific form $(\alpha_1,0)$ or $(0,f_1)$ are smoothly unobstructed, since $s(\alpha_1,0)$ and $s(0,f_1)$ satisfy the Maurer-Cartan equation
	\eqref{eqns} for all $s\in\RR$.
	
	\subsubsection{\underline{Obstructedness}}
	We show that the above deformation problem is formally obstructed in general. The Kuranishi map of the DGLA $\big(\Gamma\left(\wedge^{\bullet}\left(T^{*}\mathcal{F}_{L}\times\mathbb{R}\right)\right),d,[\![\cdot,\cdot]\!]\big)$ described in Corollary \ref{DGLA} reads
	\begin{equation}\label{kur}
	Kr:H^{1}\left(\Gamma\left(\wedge^{\bullet}(T^{*}\mathcal{F}_{L}\times\mathbb{R})\right)\right)\rightarrow H^{2}\left(\Gamma\left(\wedge^{\bullet}(T^{*}\mathcal{F}_{L}\times\mathbb{R})\right)\right):\left[(\alpha,f)\right]\mapsto\left[(0,2f\pounds_{X}\alpha)\right],
	\end{equation}
	and the following example shows that this map need not be identically zero.
{(This example will be generalized later on, in Ex. \ref{ex:T2obstr} i)).}
	
	\begin{ex}[An obstructed example]\label{ex:unob}
		Consider the manifold $\mathbb{T}^{2}\times\mathbb{R}^{2}$, regarded as a trivial vector bundle over $\mathbb{T}^{2}$. Denote its coordinates by $(\theta_{1},\theta_{2},\xi_1,\xi_2)$ and endow it with a log-symplectic structure $\Pi$ given by
		\[
		\Pi=\partial_{\theta_{1}}\wedge \xi_1\partial_{\xi_1}+\partial_{\theta_{2}}\wedge\partial_{\xi_2}.
		\]
		Note that $L:=\mathbb{T}^{2}\times\{(0,0)\}$ is a Lagrangian submanifold contained in the singular locus $\mathbb{T}^{2}\times\mathbb{R}=\{\xi_1=0\}$. It inherits a codimension-one foliation $\mathcal{F}_{L}$ with tangent distribution $T\mathcal{F}_{L}=\text{Ker}(d\theta_{1})$, so the cotangent bundle $T^{*}\mathcal{F}_{L}$ has a global frame given by $d\theta_{2}$. In the notation established earlier, we now have
		\[
		\begin{cases}
		\gamma=0\\
		X=\partial_{\theta_{1}}
		\end{cases},
		\]
		and the differential $d$ of the DGLA acts as
		\begin{equation}\label{dif}
		d:\Gamma\left(T^{*}\mathcal{F}_{L}\times\mathbb{R}\right)\rightarrow\Gamma\left(\wedge^{2}(T^{*}\mathcal{F}_{L}\times\mathbb{R})\right):(g d\theta_{2},k)\mapsto \left(0,-\frac{\partial k}{\partial\theta_{2}}d\theta_{2}\right).
		\end{equation}
		Since the Kuranishi map \eqref{kur} is given by
		\[
		Kr\Big(\left[\big(gd\theta_{2},f\big)\right]\Big)=\left[\left(0,2f\frac{\partial g}{\partial\theta_{1}}d\theta_{2}\right)\right],
		\]
		it is clear that
		\begin{align}\label{obstruction}
		Kr\Big(\left[\big(gd\theta_{2},f\big)\right]\Big)=0&\Leftrightarrow f\frac{\partial g}{\partial\theta_{1}}=\frac{\partial k}{\partial\theta_{2}}\ \text{for some}\ k\in C^{\infty}(\mathbb{T}^{2})\nonumber\\
		&\Leftrightarrow \int_{S^{1}}f\frac{\partial g}{\partial\theta_{1}}d\theta_{2}=0.
		\end{align}
		The equation \eqref{obstruction} is a non-trivial obstruction to the prolongation of infinitesimal deformations. For instance, the section $\big(\sin(\theta_{1})d\theta_{2},\cos(\theta_{1})\big)\in\Gamma\left(T^{*}\mathcal{F}_{L}\times\mathbb{R}\right)$ is an infinitesimal deformation of $L$ since it is closed with respect to the differential \eqref{dif}. But it cannot be prolonged to a path of deformations, since the integral \eqref{obstruction} is nonzero. 
	\end{ex}

	\subsubsection{\underline{Formally unobstructed deformations}}
	It is well-known that a deformation problem is formally unobstructed whenever the second cohomology group of the DGLA governing it vanishes \cite[Theorem 11.2]{OP}. Specializing to our situation, say we have a first order deformation $(\alpha_{1},f_{1})$ as in eq. \eqref{eq:linMC}
	and we wish to prolong it to a formal power series solution $\sum_{k\geq 1}(\alpha_{k},f_{k})\epsilon^{k}$ of the Maurer-Cartan equation. So we require that
	\[
	d\left(\sum_{k\geq 1}(\alpha_{k},f_{k})\epsilon^{k}\right)+\frac{1}{2}\left[\!\left[\sum_{k\geq 1}(\alpha_{k},f_{k})\epsilon^{k},\sum_{k\geq 1}(\alpha_{k},f_{k})\epsilon^{k}\right]\!\right]=0.
	\] 
	Collecting all terms in $\epsilon^{n}$ gives 
	\begin{align*}
	&d(\alpha_{n},f_{n})+\frac{1}{2}\sum_{\substack{k,l\geq 1\\k+l=n }}\left[\!\left[(\alpha_{k},f_{k}),(\alpha_{l},f_{l})\right]\!\right]=0\\
	&\hspace{-0.7cm}\Leftrightarrow\left(-d_{\mathcal{F}_{L}}\alpha_{n},-d_{\mathcal{F}_{L}}f_{n}-f_{n}\gamma\right)+\frac{1}{2}\sum_{\substack{k,l\geq 1\\k+l=n }}\left(0,f_{l}\pounds_{X}\alpha_{k}+f_{k}\pounds_{X}\alpha_{l}\right)=0\\
	&\hspace{-0.7cm}\Leftrightarrow\begin{cases}
	d_{\mathcal{F}_{L}}\alpha_{n}=0\\
	d_{\mathcal{F}_{L}}f_{n}+f_{n}\gamma-\frac{1}{2}\sum_{\substack{k,l\geq 1\\k+l=n }}\left(f_{l}\pounds_{X}\alpha_{k}+f_{k}\pounds_{X}\alpha_{l}\right)=0
	\end{cases}.
	\end{align*}
	We can always construct a formal power series solution if $H^{1}_{\gamma}(\mathcal{F}_{L})=0$. Concretely, constructing $(\alpha_{k},f_{k})$ inductively, we can set $\alpha_{k}=0$ for $k\geq 2$ and choose $f_{k}$ such that
	\begin{equation}\label{formalpath}
	d_{\mathcal{F}_{L}}f_{k}+f_{k}\gamma=f_{k-1}\pounds_{X}\alpha_{1}.
	\end{equation}
	A quick proof by induction indeed shows that the right hand side of \eqref{formalpath} is closed with respect to the differential $d_{\mathcal{F}_{L}}^{\gamma}$ for each $k\geq 2$. In conclusion, we have proved the following:
	
	\begin{cor}\label{formal}
		If $H^{1}_{\gamma}(\mathcal{F}_{L})=0$, then the deformation problem is formally unobstructed.
	\end{cor}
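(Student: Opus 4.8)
The plan is to build the formal deformation directly, by the order-by-order recursion already set up above. Starting from a first order deformation $(\alpha_1,f_1)$ solving eq.~\eqref{eq:linMC}, I would look for a formal power series solution $\sum_{k\geq 1}(\alpha_k,f_k)\epsilon^k$ of the Maurer--Cartan equation using the simplest possible ansatz: set $\alpha_k=0$ for all $k\geq 2$, keeping only the original $\alpha_1$. With this choice the first component of the order-$n$ equation, $d_{\mathcal{F}_L}\alpha_n=0$, holds trivially, and in the bracket term only the cross terms indexed by $\alpha_1$ survive, so the sum $\tfrac12\sum_{k+l=n}(f_l\pounds_X\alpha_k+f_k\pounds_X\alpha_l)$ collapses to $f_{n-1}\pounds_X\alpha_1$. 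Thus the second component becomes exactly eq.~\eqref{formalpath}, i.e. $d^\gamma_{\mathcal{F}_L}f_n=f_{n-1}\pounds_X\alpha_1$, where $d^\gamma_{\mathcal{F}_L}=d_{\mathcal{F}_L}+\gamma\wedge\,\cdot\,$ is the twisted differential of \eqref{m-n}. Solving the problem formally then reduces to solving this single equation for $f_n$ at each order $n\geq 2$, which is possible as soon as its right-hand side is $d^\gamma_{\mathcal{F}_L}$-exact.

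This is where the hypothesis $H^1_\gamma(\mathcal{F}_L)=0$ enters: it guarantees that every $d^\gamma_{\mathcal{F}_L}$-closed foliated one-form is $d^\gamma_{\mathcal{F}_L}$-exact, so it suffices to check, by induction on $n$, that $f_{n-1}\pounds_X\alpha_1$ is $d^\gamma_{\mathcal{F}_L}$-closed. First I would record that $\beta:=\pounds_X\alpha_1$ is $d_{\mathcal{F}_L}$-closed: since $X\in\mathfrak{X}(L)^{\mathcal{F}_L}$ the Lie derivative $\pounds_X$ commutes with $d_{\mathcal{F}_L}$, and $\alpha_1$ is closed, so $d_{\mathcal{F}_L}\beta=\pounds_X d_{\mathcal{F}_L}\alpha_1=0$. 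Combining this with the Leibniz rule yields the clean identity
\begin{equation*}
d^\gamma_{\mathcal{F}_L}(f_{n-1}\beta)=(d_{\mathcal{F}_L}f_{n-1})\wedge\beta+f_{n-1}\,\gamma\wedge\beta=(d^\gamma_{\mathcal{F}_L}f_{n-1})\wedge\beta.
\end{equation*}
By the induction hypothesis, namely the first order equation \eqref{eq:linMC} when $n-1=1$ and eq.~\eqref{formalpath} at the previous step when $n-1\geq 2$, the factor $d^\gamma_{\mathcal{F}_L}f_{n-1}$ equals either $0$ or $f_{n-2}\beta$. In both cases wedging with $\beta$ gives zero, using $\beta\wedge\beta=0$ because $\beta$ is a one-form. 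Hence $f_{n-1}\pounds_X\alpha_1$ is $d^\gamma_{\mathcal{F}_L}$-closed, so $H^1_\gamma(\mathcal{F}_L)=0$ lets me pick $f_n$ with $d^\gamma_{\mathcal{F}_L}f_n=f_{n-1}\pounds_X\alpha_1$, and the induction continues.

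I do not expect a serious obstacle: the argument is a routine order-by-order induction, and the only substantive input beyond bookkeeping is the $d^\gamma_{\mathcal{F}_L}$-closedness of the right-hand side, which rests on the two facts $d_{\mathcal{F}_L}\pounds_X\alpha_1=0$ and $\beta\wedge\beta=0$. The one mild point I would state carefully is the twisted Leibniz identity $d^\gamma_{\mathcal{F}_L}(f\beta)=(d^\gamma_{\mathcal{F}_L}f)\wedge\beta+f\,d_{\mathcal{F}_L}\beta$, which I would verify straight from the definition of $d^\gamma_{\mathcal{F}_L}$; the key feature is that the $\gamma$-twist attaches to the degree-zero factor $f$ only, so the two $f\,\gamma\wedge\beta$ contributions reorganize into $(d^\gamma_{\mathcal{F}_L}f)\wedge\beta$ rather than producing extra terms. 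Assembling the $f_n$ so constructed yields a formal curve of Maurer--Cartan elements extending $(\alpha_1,f_1)$, which is exactly formal unobstructedness.
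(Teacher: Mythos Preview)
Your proposal is correct and follows essentially the same approach as the paper: setting $\alpha_k=0$ for $k\geq 2$, reducing to the recursion $d^{\gamma}_{\mathcal{F}_L}f_n=f_{n-1}\pounds_X\alpha_1$, and verifying by induction that the right-hand side is $d^{\gamma}_{\mathcal{F}_L}$-closed so that the hypothesis $H^1_\gamma(\mathcal{F}_L)=0$ allows one to solve for $f_n$. In fact you spell out the inductive closedness check (via the twisted Leibniz rule and $\beta\wedge\beta=0$) more explicitly than the paper, which simply asserts that ``a quick proof by induction'' establishes it.
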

	
	Note that this assumption is weaker than requiring that the second cohomology group of the DGLA is zero, since the latter is given by $H^{2}(\mathcal{F}_{L})\oplus H^{1}_{\gamma}(\mathcal{F}_{L})$. 
	
	We will see that the vanishing of $H^{1}_{\gamma}(\mathcal{F}_{L})$ in fact ensures that the deformation problem is smoothly unobstructed, at least for Lagrangians that are compact and connected.

	\subsubsection{\underline{Smoothly unobstructed deformations: general results}}
	We  give a sufficient condition for smooth unobstructedness.
	When $\pounds_{X}\alpha$ is foliated exact, we have 
	{an isomorphism in cohomology}
  $H^{0}_{\gamma}(\mathcal{F}_{L})\cong H^{0}_{\gamma-\pounds_{X}\alpha}(\mathcal{F}_{L})$, 
 {obtained from the isomorphism of cochain complexes given in eq. \eqref{isocompl}.} 
  Using this isomorphism, from a solution of the linearized Maurer-Cartan equation \eqref{eq:linMC} we can construct a solution of the Maurer-Cartan equation \eqref{eqns}.
	This leads to the following result, which we prove with a short direct computation.
	
	\begin{prop}\label{unobstructed}
		If $(\alpha,f)\in\Gamma\left(T^{*}\mathcal{F}_{L}\times\mathbb{R}\right)$ is a first order deformation such that $\pounds_{X}\alpha$ is foliated exact, then $(\alpha,f)$ is smoothly unobstructed.
	\end{prop}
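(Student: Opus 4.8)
The plan is to exhibit an explicit smooth path of Maurer-Cartan elements of the DGLA from Corollary \ref{DGLA}, starting at $(0,0)$ and having prescribed first-order jet $(\alpha,f)$; by Theorem \ref{entire} together with Lemma \ref{lagcois}, such a path is precisely a smooth path of Lagrangian deformations of $L$. Since by hypothesis $\pounds_X\alpha$ is foliated exact, I would first fix $h\in C^{\infty}(L)$ with $\pounds_X\alpha=d_{\mathcal{F}_{L}}h$. The natural ansatz, keeping $\alpha$ linear in the deformation parameter and correcting $f$ by a gauge factor, is
\[
(\alpha(s),f(s)):=\big(s\alpha,\;s\,e^{sh}f\big),\qquad s\in\RR.
\]

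Next I would verify that $(\alpha(s),f(s))$ solves the Maurer-Cartan equation \eqref{eqns} for every $s$. The first equation $d_{\mathcal{F}_{L}}(s\alpha)=0$ is immediate from $d_{\mathcal{F}_{L}}\alpha=0$. For the second equation, the key observation is that $\gamma-s\pounds_X\alpha=\gamma+d_{\mathcal{F}_{L}}(-sh)$, so the twisting one-forms $\gamma$ and $\gamma-s\pounds_X\alpha$ differ by a foliated exact term. Hence the cochain isomorphism \eqref{isocompl} (applied with $g=-sh$) intertwines the twisted differentials as $d^{\gamma}_{\mathcal{F}_{L}}(e^{-sh}\,\cdot\,)=e^{-sh}\,d^{\gamma-s\pounds_X\alpha}_{\mathcal{F}_{L}}(\,\cdot\,)$. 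Evaluating on $f(s)=s\,e^{sh}f$ gives $e^{-sh}\,d^{\gamma-s\pounds_X\alpha}_{\mathcal{F}_{L}}f(s)=d^{\gamma}_{\mathcal{F}_{L}}(sf)=s\,d^{\gamma}_{\mathcal{F}_{L}}f=0$, where the last equality is exactly the linearized equation \eqref{eq:linMC} satisfied by $f$. Therefore $d^{\gamma-s\pounds_X\alpha}_{\mathcal{F}_{L}}f(s)=0$, which is the second equation of \eqref{eqns}.

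Finally I would record that the path has the correct endpoint and derivative: $(\alpha(0),f(0))=(0,0)$, while $\frac{d}{ds}\big|_{s=0}(s\alpha,\,s\,e^{sh}f)=(\alpha,f)$, since the factor $e^{sh}$ contributes only at higher order in $s$. This shows that $(\alpha,f)$ extends to a smooth curve of Lagrangian deformations, i.e. it is smoothly unobstructed. I do not expect a genuine obstacle here: once the gauge factor $e^{sh}$ is identified via Lemma \ref{cohomology} i), the verification collapses to a one-line computation. The entire difficulty is absorbed into the hypothesis that $\pounds_X\alpha$ be foliated exact, which is exactly what renders the twisted cohomologies $H^{0}_{\gamma}(\mathcal{F}_{L})$ and $H^{0}_{\gamma-s\pounds_X\alpha}(\mathcal{F}_{L})$ isomorphic and thereby lets a solution of the linearized equation be transported to a genuine solution.
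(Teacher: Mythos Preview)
Your proof is correct and follows essentially the same approach as the paper: the same explicit path $s\mapsto(s\alpha,\,sfe^{sh})$ is proposed, and the second Maurer-Cartan equation is verified via the cochain isomorphism of Lemma~\ref{cohomology}~i) intertwining $d^{\gamma}_{\mathcal{F}_{L}}$ and $d^{\gamma-s\pounds_X\alpha}_{\mathcal{F}_{L}}$. Your presentation is slightly more explicit in spelling out the intertwining relation, but the content is identical.
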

	
	\begin{proof}
		Let $\pounds_{X}\alpha=d_{\mathcal{F}_{L}}h$ for $h\in C^{\infty}(L)$. We claim that the path 
		\begin{equation}\label{eq:prolong}
		s\mapsto(s\alpha,sfe^{sh})
		\end{equation}
		is a prolongation of $(\alpha,f)$ consisting of Lagrangian sections for all times $s$. Indeed, we have the following equivalences:\begin{align}\label{f}
		&d_{\mathcal{F}_{L}}(sfe^{sh})+sfe^{sh}\left(\gamma-\pounds_{X}s\alpha\right)=0\nonumber\\
		&\hspace{-0.6cm}\Leftrightarrow d_{\mathcal{F}_{L}}(sfe^{sh})+sfe^{sh}\left(\gamma-d_{\mathcal{F}_{L}}sh\right)=0\nonumber\\
		&\hspace{-0.6cm}\Leftrightarrow d_{\mathcal{F}_{L}}sf+sf\gamma=0.
		\end{align}
		In the last equivalence we use $i)$ of Lemma \ref{cohomology}, which says that the following map is an isomorphism of cochain complexes:
		\[
		\left(\Omega^{\bullet}(\mathcal{F}_{L}),d_{\mathcal{F}_{L}}^{\gamma-d_{\mathcal{F}_{L}}sh}\right)\rightarrow\left(\Omega^{\bullet}(\mathcal{F}_{L}),d_{\mathcal{F}_{L}}^{\gamma}\right):\beta\mapsto e^{-sh}\beta.
		\]
		The equality \eqref{f} is satisfied, since $(\alpha,f)$ is a first order deformation. So, by Theorem \ref{equations}, $(s\alpha,sfe^{sh})$ is indeed a Lagrangian section for each time $s$. Clearly, the path passes through the zero section at $s=0$ with velocity $(\alpha,f)$. This proves the claim.
	\end{proof}

	As a consistency check, we note that a first order deformation  $(\alpha,f)$ as in Prop. \ref{unobstructed} maps  to zero under the Kuranishi map:  we have $Kr([(\alpha,f)])=[(0, 2f\pounds_{X}\alpha)]$ by eq. \eqref{kur}. If  $\pounds_{X}\alpha=d_{\mathcal{F}_{L}}h$ for some $h\in C^{\infty}(L)$, then $d^{\gamma}_{\mathcal{F}_{L}}(f\cdot h)=d^{\gamma}_{\mathcal{F}_{L}}f\cdot h+f\cdot d_{\mathcal{F}_{L}}h =f\pounds_{X}\alpha$.

	\begin{remark}\label{rem:geominter}
		We give a geometric interpretation of Proposition \ref{unobstructed}.
		\begin{enumerate}[i)]
			\item For a closed foliated one-form $\alpha\in\Omega^{1}(\mathcal{F}_{L})$, exactness of $\pounds_{X}\alpha$ is equivalent with the existence of a closed one-form $\widetilde{\alpha}\in\Omega^{1}(L)$ that extends $\alpha$.
			Indeed, if $\widetilde{\alpha}\in\Omega^{1}(L)$ is a closed extension of $\alpha$ and $r:\Omega^{1}(L)\rightarrow\Omega^{1}\left(\mathcal{F}_{L}\right)$ is the restriction map, then 
			\[
			0=r\left(\iota_{X}d\widetilde{\alpha}\right)=r\left(\pounds_{X}\widetilde{\alpha}-d\iota_{X}\widetilde{\alpha}\right)=\pounds_{X}\alpha-d_{\mathcal{F}_{L}}\left(\iota_{X}\widetilde{\alpha}\right),
			\]
			which shows that $\pounds_{X}\alpha=d_{\mathcal{F}_{L}}\left(\iota_{X}\widetilde{\alpha}\right)$ is exact. Conversely, assume that $\pounds_{X}\alpha=d_{\mathcal{F}_{L}}h$ for some $h\in C^{\infty}(L)$. Let $\widetilde{\alpha}\in\Omega^{1}(L)$ be the unique extension\footnote{{Recall that $\mathcal{F}_{L}$ is a codimension-one foliation on $L$ and that $X\in\mathfrak{X}(L)$ is transverse to the leaves of $\mathcal{F}_{L}$. Hence, extensions of a foliated one-form $\alpha\in\Omega^{1}(\mathcal{F}_{L})$ are uniquely specified by their value on $X\in\mathfrak{X}(L)$.}} of $\alpha$ satisfying $\widetilde{\alpha}(X)=h$. Then $\widetilde{\alpha}$ is closed, since
			\[
			r\left(\iota_{X}d\widetilde{\alpha}\right)=r\left(\pounds_{X}\widetilde{\alpha}-d\iota_{X}\widetilde{\alpha}\right)=\pounds_{X}\alpha-d_{\mathcal{F}_{L}}h=0.
			\]
			\item The smooth path\footnote{This path can certainly not be obtained by applying Poisson diffeomorphisms to $L$ itself, since the latter preserve the Poisson submanifold $T^{*}\mathcal{F}_{L}$.} of Lagrangian deformations given by \eqref{eq:prolong} is obtained by applying certain Poisson diffeomorphisms of $T^{*}\mathcal{F}_{L}\times\mathbb{R}$ to the smooth path of Lagrangian sections $s\mapsto (0,sf)$. More precisely, as in item i), assume that $\pounds_{X}\alpha=d_{\mathcal{F}_{L}}h$, and let $\widetilde{\alpha}\in\Omega^{1}(L)$ be the closed one-form extending $\alpha$ determined by $\widetilde{\alpha}(X)=h$. As before, denote by $pr\colon T^{*}\mathcal{F}_{L}\times\mathbb{R}\to L$ and $p:T^{*}\mathcal{F}_{L}\rightarrow{L}$ the vector bundle projections. 
			Since $\widetilde{\alpha}$ is a closed one-form, it gives rise to a Poisson vector field on $T^{*}\mathcal{F}_{L}\times\mathbb{R}$, namely $$\widetilde{\Pi}^{\sharp}(pr^*\widetilde{\alpha})=(pr^*h)t\partial_{t}+\Pi_{can}^\sharp(p^*\widetilde{\alpha}).$$
			Notice that this vector field is tangent to the fibers of $pr$, and that the second summand is the constant vector field  on the fibers of $T^{*}\mathcal{F}_{L}$ with value $\alpha$.
			The flow at time $s$ of $\widetilde{\Pi}^{\sharp}(pr^*\widetilde{\alpha})$ maps $graph(0,sf)$ to $graph(s\alpha,sfe^{sh})$.
			
			In case $\alpha=d_{\mathcal{F}_{L}}g$ is exact, then we can interpret this construction in terms of the DGLA governing the deformation problem. Indeed, Remark \ref{Gaction} shows that the gauge action by the degree zero element $(g,0)$ takes the Maurer-Cartan element $(0,sf)$ to $\big(sd_{\mathcal{F}_{L}}g,sfe^{sX(g)}\big)$. This is consistent with the above, since $X(g)$ is a primitive of $\pounds_{X}\alpha$.
		\end{enumerate}
	\end{remark}
	
	\begin{cor}\label{H1}
		If $H^{1}(\mathcal{F}_{L})=0$, then all first order deformations $(\alpha,f)\in\Gamma\left(T^{*}\mathcal{F}_{L}\times\mathbb{R}\right)$ are smoothly unobstructed.
	\end{cor}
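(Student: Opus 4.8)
The plan is to deduce this immediately from Proposition \ref{unobstructed}, whose hypothesis is that $\pounds_X\alpha$ be foliated exact. So the entire task reduces to verifying that hypothesis under the assumption $H^1(\mathcal{F}_L)=0$, and then invoking the explicit prolongation \eqref{eq:prolong} constructed there.

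First I would recall that a first order deformation $(\alpha,f)$ satisfies the linearized equation \eqref{eq:linMC}, whose first component reads $d_{\mathcal{F}_L}\alpha=0$. Thus $\alpha$ is a closed foliated one-form, and since $H^1(\mathcal{F}_L)=0$ by hypothesis, $\alpha$ is foliated exact: there exists $g\in C^\infty(L)$ with $\alpha=d_{\mathcal{F}_L}g$. The only genuine point to check is that exactness of $\alpha$ forces exactness of $\pounds_X\alpha$ — which is what Proposition \ref{unobstructed} actually requires. For this I would use that $X\in\mathfrak{X}(L)^{\mathcal{F}_L}$ preserves the foliation, so that its Lie derivative commutes with the foliated differential $d_{\mathcal{F}_L}$ (this is exactly the fact recorded in the discussion preceding \eqref{action}, via Cartan's formula). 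Consequently
\[
\pounds_X\alpha=\pounds_X\left(d_{\mathcal{F}_L}g\right)=d_{\mathcal{F}_L}\left(X(g)\right),
\]
so $\pounds_X\alpha$ is foliated exact with primitive $X(g)\in C^\infty(L)$.

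Having verified the hypothesis, I would conclude by applying Proposition \ref{unobstructed} directly: the path $s\mapsto\big(s\alpha,\,sfe^{sX(g)}\big)$ is a smooth prolongation of $(\alpha,f)$ through Lagrangian sections, so $(\alpha,f)$ is smoothly unobstructed. Since $(\alpha,f)$ was an arbitrary first order deformation, the corollary follows.

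I do not expect a real obstacle here: the argument is a one-line reduction. The only thing worth flagging is that the assumption $H^1(\mathcal{F}_L)=0$ is stronger than strictly necessary — Proposition \ref{unobstructed} asks only for exactness of $\pounds_X\alpha$, whereas vanishing of $H^1(\mathcal{F}_L)$ delivers exactness of $\alpha$ itself, and hence of $\pounds_X\alpha$, for \emph{every} first order deformation at once. It is precisely the commutation $\pounds_X\circ d_{\mathcal{F}_L}=d_{\mathcal{F}_L}\circ\pounds_X$ that turns the (a priori weaker) statement ``$\alpha$ exact'' into the needed ``$\pounds_X\alpha$ exact,'' so that step should be stated explicitly rather than taken for granted.
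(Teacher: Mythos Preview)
Your proposal is correct and follows essentially the same route as the paper: both argue that $\alpha$ is closed, hence exact by $H^1(\mathcal{F}_L)=0$, hence $\pounds_X\alpha$ is exact, and then invoke Proposition \ref{unobstructed}. You are simply more explicit than the paper about the step ``$\alpha$ exact $\Rightarrow$ $\pounds_X\alpha$ exact'' via the commutation $\pounds_X\circ d_{\mathcal{F}_L}=d_{\mathcal{F}_L}\circ\pounds_X$, which the paper compresses into the phrase ``The same then holds for $\pounds_X\alpha$.''
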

	\begin{proof}
		If $(\alpha,f)$ is a first order deformation, then $\alpha$ is closed. Since $H^{1}(\mathcal{F}_{L})=0$, it is exact. The same then holds for $\pounds_{X}\alpha$. So the result follows from Proposition \ref{unobstructed}.
	\end{proof}
	
	Corollary \ref{H1} shows in particular that obstructedness is a global issue, since the cohomology group $H^{1}(\mathcal{F}_{L})$ always vanishes locally.

	One may wonder if all first order deformations $(\alpha,f)$ that are smoothly unobstructed arise as in Prop. \ref{unobstructed}. The answer is negative, but it becomes positive if we restrict to first order deformations for which $f\in C^{\infty}(L)$ is nowhere vanishing.   We spell this out in the following remark and lemma.
 	
	\begin{remark}\label{rem:Kurexact}
		First order deformations of the form $(\alpha,0)$, hence $d_{\mathcal{F}_{L}}\alpha=0$, are smoothly unobstructed, but
		in general $\pounds_{X}\alpha$ is not foliated exact.  For instance, consider the log-symplectic manifold $(\mathbb{T}^{2}\times\mathbb{R}^{2},\Pi)$ and Lagrangian submanifold $L:=\mathbb{T}^{2}\times\{(0,0)\}$ as in Example \ref{ex:unob}, for which the foliation $\mathcal{F}_{L}$ is one-dimensional.
		Any $\alpha=g(\theta_1,\theta_2)d\theta_2\in \Omega^1(\mathcal{F}_{L})$ is foliated closed, but in general the integral of $\alpha$ along the fibers of $L\to S^1:(\theta_1,\theta_2)\to \theta_1$ is not independent of $\theta_1$, implying that $\pounds_{X}\alpha$ is not foliated exact. 
	\end{remark}
	
	\begin{lemma}\label{lem:Kurexact}
		Let $(\alpha,f)$ be a first order deformation such that  $Kr\big([(\alpha,f)]\big)=0$.
		Assume that  $f\in C^{\infty}(L)$ is nowhere vanishing. Then 
		$\pounds_{X}\alpha$ is foliated exact.
	\end{lemma}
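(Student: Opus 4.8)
The statement to prove is Lemma \ref{lem:Kurexact}: given a first order deformation $(\alpha,f)$ with $Kr\big([(\alpha,f)]\big)=0$ and $f$ nowhere vanishing, we must show $\pounds_X\alpha$ is foliated exact. Let me think about what each hypothesis gives us. That $(\alpha,f)$ is a first order deformation means it satisfies the linearized Maurer-Cartan equation \eqref{eq:linMC}: $d_{\mathcal{F}_L}\alpha=0$ and $d_{\mathcal{F}_L}f+f\gamma=0$. The second equation, using that $f$ is nowhere zero, can be rewritten as $\gamma=-\frac{1}{f}d_{\mathcal{F}_L}f=-d_{\mathcal{F}_L}\ln|f|$, so $\gamma$ is automatically foliated exact — this is the key leverage that nowhere-vanishing of $f$ provides. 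Meanwhile, by the formula \eqref{kur} for the Kuranishi map, $Kr\big([(\alpha,f)]\big)=\big[(0,2f\pounds_X\alpha)\big]\in H^1(\mathcal{F}_L)\oplus H^1_\gamma(\mathcal{F}_L)$, so its vanishing means precisely that $f\pounds_X\alpha$ is exact in the twisted complex $\big(\Omega^\bullet(\mathcal{F}_L),d^\gamma_{\mathcal{F}_L}\big)$.

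\begin{proof}
Since $(\alpha,f)$ is a first order deformation, eq. \eqref{eq:linMC} gives $d_{\mathcal{F}_L}f+f\gamma=0$. As $f$ is nowhere vanishing, we may divide by $f$ to obtain
\begin{equation}\label{eq:gammaexact}
\gamma=-\frac{d_{\mathcal{F}_L}f}{f}=-d_{\mathcal{F}_L}\ln|f|.
\end{equation}
In particular $\gamma$ is foliated exact, so by Lemma \ref{cohomology} i) the isomorphism of cochain complexes $\beta\mapsto e^{-\ln|f|}\beta=\tfrac{1}{|f|}\beta$ identifies $\big(\Omega^\bullet(\mathcal{F}_L),d^\gamma_{\mathcal{F}_L}\big)$ with the untwisted complex $\big(\Omega^\bullet(\mathcal{F}_L),d_{\mathcal{F}_L}\big)$.

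By eq. \eqref{kur}, the hypothesis $Kr\big([(\alpha,f)]\big)=0$ means that the second component $(0,2f\pounds_X\alpha)$ represents the zero class, i.e. $f\pounds_X\alpha$ is exact with respect to the twisted differential $d^\gamma_{\mathcal{F}_L}$. Thus there exists $k\in C^\infty(L)$ with
\begin{equation}\label{eq:twistedprim}
d^\gamma_{\mathcal{F}_L}k=d_{\mathcal{F}_L}k+k\gamma=f\pounds_X\alpha.
\end{equation}
Now consider the function $h:=k/f\in C^\infty(L)$, which is well-defined and smooth since $f$ is nowhere zero. Using the Leibniz rule together with \eqref{eq:gammaexact}, we compute
\[
d_{\mathcal{F}_L}(fh)=h\,d_{\mathcal{F}_L}f+f\,d_{\mathcal{F}_L}h=-hf\gamma+f\,d_{\mathcal{F}_L}h,
\]
and since $fh=k$, the left-hand side equals $d_{\mathcal{F}_L}k$. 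Substituting into \eqref{eq:twistedprim} gives
\[
f\pounds_X\alpha=d_{\mathcal{F}_L}k+k\gamma=\big(-hf\gamma+f\,d_{\mathcal{F}_L}h\big)+hf\gamma=f\,d_{\mathcal{F}_L}h.
\]
Dividing by the nowhere-vanishing function $f$ yields $\pounds_X\alpha=d_{\mathcal{F}_L}h$, so $\pounds_X\alpha$ is foliated exact.
\end{proof}

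The main conceptual point, and the only place the nowhere-vanishing hypothesis on $f$ is genuinely used, is the passage in \eqref{eq:gammaexact} from the twisted cohomology statement to an untwisted one: once $\gamma$ is realized as $-d_{\mathcal{F}_L}\ln|f|$, the twisting by $\gamma$ is cohomologically trivial and the $d^\gamma_{\mathcal{F}_L}$-primitive $k$ of $f\pounds_X\alpha$ can be converted into an honest $d_{\mathcal{F}_L}$-primitive $h=k/f$ of $\pounds_X\alpha$. I would expect no real obstacle beyond keeping the Leibniz bookkeeping straight; the computation is short and the only subtlety is ensuring the division by $f$ is legitimate, which is exactly guaranteed by the hypothesis. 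It is worth noting (as a sanity check consistent with the consistency remark following Prop. \ref{unobstructed}) that this lemma is a partial converse to that proposition, establishing that for nowhere-vanishing $f$ the Kuranishi obstruction captures exactly the exactness of $\pounds_X\alpha$.
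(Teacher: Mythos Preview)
Your proof is correct and follows essentially the same approach as the paper's: both use the first order condition $d_{\mathcal{F}_L}f+f\gamma=0$ together with the twisted primitive coming from $Kr=0$ to produce the untwisted primitive $k/f$ (the paper's $g/f$) of $\pounds_X\alpha$. The only difference is cosmetic---you expand $d_{\mathcal{F}_L}(fh)$ via Leibniz, while the paper divides first and then collects terms into $d_{\mathcal{F}_L}(g/f)$.
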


	\begin{proof}
		The assumption $Kr\big([(\alpha,f)]\big)=0$ 
		is equivalent to $[f\pounds_{X}\alpha]=0$ in $H^{1}_{\gamma}(\mathcal{F}_{L})$ by \eqref{kur}, so it
		implies that there exists $g\in C^{\infty}(L)$ such that
		\[
		f\pounds_{X}\alpha=d_{\mathcal{F}_{L}}g+g\gamma.
		\]
		Since $f$ is nowhere zero, we can divide by $f$ and we obtain
		\begin{align}\label{com}
		\pounds_{X}\alpha&=\frac{1}{f}d_{\mathcal{F}_{L}}g+\frac{g}{f}\gamma\nonumber\\
		&=\frac{1}{f}d_{\mathcal{F}_{L}}g-\frac{g}{f^{2}}d_{\mathcal{F}_{L}}f\nonumber\\
		&=\frac{1}{f}d_{\mathcal{F}_{L}}g+gd_{\mathcal{F}_{L}}\left(\frac{1}{f}\right)\nonumber\\
		&=d_{\mathcal{F}_{L}}\left(\frac{g}{f}\right),
		\end{align}
		using in the second equality that $d^{\gamma}_{\mathcal{F}_{L}}f=0$. This shows that $\pounds_{X}\alpha$ is foliated exact.
	\end{proof}
	 	
{
\begin{remark}
We can phrase the above proof in more conceptual (cohomological) terms, as follows.
Since $f$ is $d^{\gamma}_{\mathcal{F}_{L}}$-closed and nowhere vanishing, it follows that $f^{-1}$ is
$d^{-\gamma}_{\mathcal{F}_{L}}$-closed. By Remark \ref{rem:wedge}	 the wedge product induces a bilinear map
$H^{0}_{-\gamma}(\mathcal{F}_{L})\times  H^{1}_{\gamma}(\mathcal{F}_{L})\to H^{1}(\mathcal{F}_{L})$. This map sends $[f^{-1}]\times [f\pounds_{X}\alpha]$ to $[\pounds_{X}\alpha]$, which has to be zero because  $[f\pounds_{X}\alpha]=0$ and the map is bilinear.
\end{remark}
}

	\subsubsection{\underline{Smoothly unobstructed deformations: the compact case}}
	We now show that for compact connected Lagrangians $(L,\mathcal{F}_{L})$, the condition $H^{1}_{\gamma}(\mathcal{F}_{L})=0$ from Corollary \ref{formal} in fact implies that the deformation problem is smoothly unobstructed. We actually prove  more: one only needs that the Kuranishi map \eqref{kur} is trivial.
	
	\begin{prop}\label{prop:krunob}
		Let $(L^{n},\mathcal{F}_{L})$ be a compact connected Lagrangian submanifold that is contained in the singular locus of a log-symplectic manifold $(M^{2n},Z,\Pi)$. A first order deformation $(\alpha,f)\in\Gamma\left(T^{*}\mathcal{F}_{L}\times\mathbb{R}\right)$ of $L$ is smoothly unobstructed if and only if $Kr\big([(\alpha,f)]\big)=0$.
	\end{prop}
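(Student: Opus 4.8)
The plan is to prove the two implications separately, the forward one being soft and the reverse one requiring the dichotomy $(\star)$ together with the explicit prolongations already at our disposal.

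\emph{Forward direction.} If $(\alpha,f)$ is smoothly unobstructed, then Taylor expanding a smooth curve of Maurer--Cartan elements through $(\alpha,f)$ produces a formal one, so $(\alpha,f)$ is in particular formally unobstructed; by the contrapositive of \cite[Theorem 11.4]{OP} this forces $Kr\big([(\alpha,f)]\big)=0$. This direction uses neither compactness nor the fibration/dense-leaf alternative.

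\emph{Reverse direction and easy cases.} Assume $Kr\big([(\alpha,f)]\big)=0$, i.e. $[f\pounds_{X}\alpha]=0$ in $H^{1}_{\gamma}(\mathcal{F}_{L})$ by \eqref{kur}, so that $f\pounds_{X}\alpha=d^{\gamma}_{\mathcal{F}_{L}}g$ for some $g\in C^{\infty}(L)$. I would first dispose of the soft cases. If $f\equiv 0$, the straight path $s\mapsto(s\alpha,0)$ already solves \eqref{eqns}. If $f$ is nowhere vanishing, Lemma \ref{lem:Kurexact} shows that $\pounds_{X}\alpha$ is foliated exact, and Proposition \ref{unobstructed} then provides the prolongation. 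Invoking the dichotomy $(\star)$: when $\mathcal{F}_{L}$ has dense leaves, Theorem \ref{H}(ii) forces $f$ to be either $0$ or a constant multiple of a nowhere-vanishing generator of $H^{0}_{\gamma}(\mathcal{F}_{L})$, hence nowhere vanishing, so both sub-cases are already covered. The remaining, genuinely new situation is that $\mathcal{F}_{L}$ is the fibre foliation of some $p\colon L\to S^{1}$ and $f$ vanishes on some but not all fibres; there $\mathcal{Z}_{f}=p^{-1}(Z')$ is a union of leaves, with $Z'=(f')^{-1}(0)$ under the identification of Theorem \ref{H}(i).

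\emph{The hard case.} On the open saturated set $V:=L\setminus\mathcal{Z}_{f}$ the function $f$ is nowhere zero, so dividing $f\pounds_{X}\alpha=d^{\gamma}_{\mathcal{F}_{L}}g$ by $f$ exactly as in \eqref{com} gives $\pounds_{X}\alpha=d_{\mathcal{F}_{L}}h$ with $h:=g/f$ on $V$. Generalizing Proposition \ref{unobstructed}, the natural candidate prolongation keeping the first slot equal to $s\alpha$ is
\[
s\longmapsto\big(s\alpha,\; s f\,e^{sh}\big)\qquad\text{on } V,
\]
which solves \eqref{eqns} leafwise on $V$ by the computation \eqref{f}. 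Before using it I would exploit the freedom in $g$: any element of $\ker d^{\gamma}_{\mathcal{F}_{L}}=H^{0}_{\gamma}(\mathcal{F}_{L})$ may be added to $g$, and on fibres $\mathcal{O}\subset\mathcal{Z}_{f}$ with $\sigma_{\gamma}\neq 0$ Lemma \ref{cohomology}(ii) already forces $g|_{\mathcal{O}}\equiv 0$, while on the fibres of $\mathcal{Z}_{f}$ with $\sigma_{\gamma}=0$ the restriction $g|_{\mathcal{O}}$ lies in the line $H^{0}_{\gamma|_{\mathcal{O}}}(\mathcal{O})$, which is matched by the corresponding generator of $H^{0}_{\gamma}(\mathcal{F}_{L})$ (Theorem \ref{H}(i)). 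I would use this to arrange that $g$ vanishes along $\mathcal{Z}_{f}$, or at least that $g/f$ stays controlled there.

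\emph{Main obstacle.} The crux is to show that, with such a choice of $g$, the function $s f\,e^{sg/f}$ extends to a \emph{smooth} family on all of $L$ (necessarily by $0$ along $\mathcal{Z}_{f}$), so that the displayed path becomes a genuine smooth curve of Lagrangian sections with velocity $(\alpha,f)$ at $s=0$. This is where the argument really bites: one must control $g/f$ together with all its derivatives as one approaches $\mathcal{Z}_{f}$, estimating the vanishing of $g$ against the vanishing order of $f$ (equivalently of $f'$ on $S^{1}$) separately on the two types of fibres. I expect this smoothness-across-the-zero-locus estimate — rather than any further cohomological input — to be the main difficulty; the hypothesis $Kr=0$ enters only to produce $g$ and, via Lemma \ref{cohomology}, to force $g$ to vanish on the fibres where $f$ itself is forced to vanish.
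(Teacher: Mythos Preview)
Your forward direction and the dense-leaf case are fine and match the paper. The fibration case, however, has a genuine gap, and the obstacle you identify as ``the main difficulty'' is one the paper simply sidesteps.

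You propose to take $h:=g/f$ on $V=L\setminus\mathcal{Z}_f$ and then argue that $sfe^{sg/f}$ extends smoothly by $0$ across $\mathcal{Z}_f$. Even after normalising $g$ by an element of $H^0_\gamma(\mathcal{F}_L)$ so that it vanishes on $\mathcal{Z}_f$, you would need $g$ to vanish to at least the same (possibly infinite) order as $f$ along $\partial\mathcal{Z}_f$, and there is no mechanism in your setup that guarantees this: the freedom in $g$ is only a $C^\infty(S^1)$-parameter's worth, while the vanishing behaviour of $f'$ on $S^1$ can be arbitrary. So the estimate you are aiming for is not available in general.

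The paper's trick is not to divide by $f$ at all. Instead it applies Lemma~\ref{function} to the closed foliated one-form $\pounds_X\alpha$: this produces a \emph{globally smooth} $h\in C^\infty(L)$ which is a leafwise primitive of $\pounds_X\alpha$ on every fibre $p^{-1}(q)$ with $\sigma_{\pounds_X\alpha}(q)=0$. Now Lemma~\ref{lem:Kurexact} (your own computation on $V$) shows $p(L\setminus\mathcal{Z}_f)\subset\mathcal{Z}_{\pounds_X\alpha}$, so $\pounds_X\alpha=d_{\mathcal{F}_L}h$ holds on $L\setminus\mathcal{Z}_f$. Since $h$ is smooth on all of $L$, the path $s\mapsto(s\alpha,sfe^{sh})$ is manifestly smooth; it solves \eqref{eqns} on $L\setminus\mathcal{Z}_f$ by the computation \eqref{f}, and trivially on $\mathcal{Z}_f$ because $f$ vanishes there. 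No extension estimate is needed.

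In short: the missing idea is to build $h$ globally from $\pounds_X\alpha$ via Lemma~\ref{function}, rather than locally as $g/f$; the Kuranishi hypothesis is then used only through Lemma~\ref{lem:Kurexact} to guarantee that this global $h$ is a primitive on the set where it matters.
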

	\begin{proof}
		We only have to prove the backward implication. Let $(\alpha,f)$ be a first order deformation of $L$ with $Kr\big([(\alpha,f)]\big)=0$. We know that either the leaves of $\mathcal{F}_{L}$ are dense, or $(L,\mathcal{F}_{L})$ is the foliation by fibers of a fiber bundle over $S^{1}$. 
		\begin{enumerate}[i)]
			\item First assume that the leaves of $\mathcal{F}_{L}$ are dense. 
			\begin{itemize}
				\item If $\gamma$ is not exact, then $H^{0}_{\gamma}(\mathcal{F}_{L})=\{0\}$ by {Theorem} \ref{H}. Since  $(\alpha,f)$ is a first order deformation, we have that $f\in H^{0}_{\gamma}(\mathcal{F}_{L})=\{0\}$. Therefore $(\alpha,f)=(\alpha,0)$ and a path of Lagrangian sections that prolongs $(\alpha,0)$ is simply given by $s\mapsto(s\alpha,0)$.
				\item Now assume that $\gamma=d_{\mathcal{F}_{L}}k$ is exact. Thanks to (the proof of) Lemma \ref{cohomology} $i)$, we know that $e^{k}f$ is constant on $L$. So either $f\equiv 0$, in which case we conclude that $(\alpha,f)$ is smoothly unobstructed as in the previous bullet point. Or $f$ is nowhere zero, in which case we can use Lemma \ref{lem:Kurexact}. There we showed that $\pounds_{X}\alpha$ is foliated exact, and Proposition \ref{unobstructed} then implies that the first order deformation $(\alpha,f)$ is smoothly unobstructed.		
			\end{itemize}
			\item Now assume that $\mathcal{F}_{L}$ is the fiber foliation of a fiber bundle $p:L\rightarrow S^{1}$. The closed foliated one-form $\pounds_{X}\alpha$ defines a section $\sigma_{\pounds_{X}\alpha}$ of the vector bundle $\mathcal{H}^{1}\rightarrow S^{1}$ via the correspondence \eqref{isom}. By Lemma \ref{function}, we can fix a smooth function $h\in C^{\infty}(L)$ satisfying
			\[
			\left(\left.\pounds_{X}\alpha\right)\right|_{p^{-1}(q)}=d\left(\left.h\right|_{p^{-1}(q)}\right)\hspace{1cm}\forall q\in \mathcal{Z}_{\pounds_{X}\alpha},
			\]
			where we denote   $\mathcal{Z}_{\pounds_{X}\alpha}:=\sigma_{\pounds_{X}\alpha}^{-1}(0)$.
			Mimicking the proof of Proposition \ref{unobstructed}, we claim that the path $s\mapsto(s\alpha,sfe^{sh})$ is a prolongation of $(\alpha,f)$ by Lagrangian sections. So we have to show that 
			\begin{equation}\label{ts}
			d_{\mathcal{F}_{L}}(sfe^{sh})+sfe^{sh}(\gamma-\pounds_{X}s\alpha)=0.
			\end{equation}
			
			To do so, we denote $\mathcal{Z}_{f}:=f^{-1}(0)\subset L$. Recall here that $f\in H^{0}_{\gamma}(\mathcal{F}_{L})$, so that $\mathcal{Z}_{f}$ is a union of fibers of $p:L\rightarrow S^{1}$ (cf. the proof of {Theorem} \ref{H}).
			Clearly, the equality \eqref{ts} holds on $\mathcal{Z}_{f}$. On the other hand, Lemma \ref{lem:Kurexact} implies that $\pounds_{X}\alpha$ is exact on $L\setminus\mathcal{Z}_{f}$. Therefore, $\pounds_{X}\alpha=d_{\mathcal{F}_{L}}h$ on $L\setminus\mathcal{Z}_{f}$, and the computation \eqref{f} in the proof of Prop. \ref{unobstructed} shows that \eqref{ts} holds on $L\setminus\mathcal{Z}_{f}$.
		\end{enumerate}
	\end{proof}

	\begin{remark}
		A crucial point in the proof of Prop. \ref{prop:krunob} is that $h$ is a smooth function defined on the whole of $L$. Its existence is guaranteed by Lemma \ref{function}, a statement about fiber bundles over $S^1$. Due to this, we do not expect the statement of Prop. \ref{prop:krunob} to hold if one removes the compactness assumption on $L$.
	\end{remark}

	We give an algorithmic overview of first order deformations and their obstructedness, for Lagrangians that are compact and connected.
	\begin{enumerate}[i)]
		\item Assume $(L,\mathcal{F}_{L})$ is the foliation by fibers of a fiber bundle $p:L\rightarrow S^{1}$. Fix a smooth function $g\in C^{\infty}(L)$ that is a primitive of $\gamma$ on $\mathcal{Z}_{\gamma}:=\sigma_{\gamma}^{-1}(0)$, as constructed in Lemma \ref{function}. Thanks to {Thm.} \ref{H} i) and its proof, we can characterize first order deformations $(\alpha,f)$ of $L$ by the requirements
		\[
		\begin{cases}
		d_{\mathcal{F}_{L}}\alpha=0\\
		e^{g}f\ \text{is constant on each}\ p\text{-fiber and vanishes on}\  S^1\setminus \mathcal{Z}_{\gamma}
		\end{cases}.
		\]
		By Prop. \ref{prop:krunob}, a first order deformation $(\alpha,f)$ of $L$ is smoothly unobstructed exactly when $Kr\big([(\alpha,f)]\big)=0$. We claim that the latter condition is equivalent to the following:
		\begin{equation}\label{eq:condstar}
		[\pounds_{X}\alpha]=0\in H^1(\mathcal{F}_{L}) \text{ on } L\setminus\mathcal{Z}_{f}.  
		\end{equation}
		Here $\mathcal{Z}_{f}$ denotes the zero locus of $f$, as in the proof of Prop. \ref{prop:krunob}. {In terms of the natural flat connection $\nabla$ of eq. \eqref{connection} on the vector bundle $\mathcal{H}^{1}\to S^1$,  eq. \eqref{eq:condstar} is simply saying that 
$[\alpha]$ is a flat section when restricted to $L\setminus\mathcal{Z}_{f}$.}				

		To see that the two conditions are equivalent, recall that 
		$Kr\big([(\alpha,f)]\big)=0$  
		implies the condition \eqref{eq:condstar}, by Lemma \ref{lem:Kurexact}.
		Conversely, assume that the condition \eqref{eq:condstar} holds. As in the proof of Prop. \ref{prop:krunob}, choose a smooth function $h\in C^{\infty}(L)$ such that $\pounds_{X}\alpha=d_{\mathcal{F}_{L}}h$ on $p^{-1}(\mathcal{Z}_{\pounds_{X}\alpha})$. In particular, this equality holds on $L\setminus\mathcal{Z}_{f}$. From this, we conclude that
		\[
		f\pounds_{X}\alpha=d^{\gamma}_{\mathcal{F}_{L}}(fh).
		\]
		Indeed, on $\mathcal{Z}_{f}$ this equation holds because both sides are zero; on the complement $L\setminus\mathcal{Z}_{f}$ it also holds because $d^{\gamma}_{\mathcal{F}_{L}}(fh)=d^{\gamma}_{\mathcal{F}_{L}}f\cdot h+f\cdot d_{\mathcal{F}_{L}}h=f\pounds_{X}\alpha$. 
		This shows that $[f\pounds_{X}\alpha]=0$ in $H^{1}_{\gamma}(\mathcal{F}_{L})$, 
		which by \eqref{kur} is equivalent to $Kr\big([(\alpha,f)]\big)=0$.
		\item In case $\mathcal{F}_{L}$ has dense leaves, then we distinguish between two types of first order deformations. The first type are the ones of the form $(\alpha,0)$ for closed $\alpha\in\Omega^{1}(\mathcal{F}_{L})$. Clearly, these are smoothly unobstructed.
		
		First order deformations of the second type, those with nonzero second component, can only occur if $\gamma$ is foliated exact, by {Thm.} \ref{H}. They are characterized as the $(\alpha,f)$ for which
		\[
		\begin{cases}
		d_{\mathcal{F}_{L}}\alpha=0\\
		e^{g}f\ \text{is a nonzero constant}
		\end{cases},
		\]
		where $g\in C^{\infty}(L)$ is a primitive of $\gamma$. Such a first order deformation $(\alpha,f)$ is smoothly unobstructed exactly when $[\pounds_{X}\alpha]=0$ in $H^{1}(\mathcal{F}_{L})$: the forward implication follows from Lemma \ref{lem:Kurexact}, and the backward implication from Prop. \ref{unobstructed}.

		Notice that we now showed that the criterion \eqref{eq:condstar} for unobstructedness in the fibration case also holds if $\mathcal{F}_{L}$ has dense leaves: the two types of infinitesimal deformations just described correspond with the extreme cases $L\setminus\mathcal{Z}_{f}=\emptyset$ and $L\setminus\mathcal{Z}_{f}=L$.
	\end{enumerate}
	
	In conclusion, we have proved the following.
	\begin{cor}\label{cor:Krsimple}
		A first order deformation $(\alpha,f)\in\Gamma(T^{*}\mathcal{F}_{L}\times\mathbb{R})$ of a compact, connected Lagrangian $L$ is smoothly unobstructed exactly when 
		\begin{equation}\label{lie}
		[\pounds_{X}\alpha]=0\in H^{1}(\mathcal{F}_{L})\ \text{on}\ L\setminus\mathcal{Z}_{f}.
		\end{equation}
		Here $\mathcal{Z}_{f}$ denotes the zero locus of $f$.
	\end{cor}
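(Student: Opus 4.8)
The plan is to deduce this statement by combining Proposition \ref{prop:krunob} with the explicit description of the Kuranishi map, and then to verify both implications separately through the dichotomy ($\star$). First I would reduce smooth unobstructedness to a purely cohomological condition: since $L$ is compact and connected, Proposition \ref{prop:krunob} guarantees that a first order deformation $(\alpha,f)$ is smoothly unobstructed if and only if $Kr\big([(\alpha,f)]\big)=0$. By the formula \eqref{kur} for the Kuranishi map this class equals $\big[(0,2f\pounds_{X}\alpha)\big]$, so its vanishing is equivalent to $[f\pounds_{X}\alpha]=0$ in $H^{1}_{\gamma}(\mathcal{F}_{L})$. Thus the whole statement reduces to the equivalence
\[
[f\pounds_{X}\alpha]=0 \text{ in } H^{1}_{\gamma}(\mathcal{F}_{L}) \quad\Longleftrightarrow\quad [\pounds_{X}\alpha]=0 \text{ in } H^{1}(\mathcal{F}_{L}) \text{ on } L\setminus\mathcal{Z}_{f}.
\]

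For the forward implication I would argue on the open set $L\setminus\mathcal{Z}_{f}$, where $f$ is nowhere vanishing. Writing $f\pounds_{X}\alpha=d^{\gamma}_{\mathcal{F}_{L}}g$ and dividing by $f$ exactly as in the computation of Lemma \ref{lem:Kurexact} (using $d^{\gamma}_{\mathcal{F}_{L}}f=0$), one finds that $\pounds_{X}\alpha$ is foliated exact there, which is precisely condition \eqref{lie}. Note that only the local computation of Lemma \ref{lem:Kurexact} is used, so the global nonvanishing hypothesis of that lemma is not needed.

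For the backward implication, the essential input is a \emph{single} smooth primitive defined globally on $L$. In the fibration case I would invoke Lemma \ref{function} to obtain $h\in C^{\infty}(L)$ with $(\pounds_{X}\alpha)|_{p^{-1}(q)}=d\big(h|_{p^{-1}(q)}\big)$ on the vanishing locus of the section $\sigma_{\pounds_{X}\alpha}$; under condition \eqref{lie} this equality in particular holds on $L\setminus\mathcal{Z}_{f}$. I would then check that $f\pounds_{X}\alpha=d^{\gamma}_{\mathcal{F}_{L}}(fh)$: on $\mathcal{Z}_{f}$ both sides vanish, since $f\in H^{0}_{\gamma}(\mathcal{F}_{L})$ has vanishing locus a union of leaves, while on $L\setminus\mathcal{Z}_{f}$ the Leibniz rule $d^{\gamma}_{\mathcal{F}_{L}}(fh)=(d^{\gamma}_{\mathcal{F}_{L}}f)\,h+f\,d_{\mathcal{F}_{L}}h=f\pounds_{X}\alpha$ finishes the computation, giving $[f\pounds_{X}\alpha]=0$. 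The dense-leaf case splits into the two extremes $L\setminus\mathcal{Z}_{f}=\emptyset$ and $L\setminus\mathcal{Z}_{f}=L$, handled respectively by the trivial prolongation $s\mapsto(s\alpha,0)$ and by Proposition \ref{unobstructed}.

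The main obstacle is this backward direction, and specifically the production of the globally defined smooth function $h$ serving as a leafwise primitive of $\pounds_{X}\alpha$ across the whole region $L\setminus\mathcal{Z}_{f}$. The delicate point is that $\pounds_{X}\alpha$ is only assumed \emph{leafwise} exact, so the primitives on individual leaves must be assembled into a single smooth function; this is exactly the content of Lemma \ref{function} and relies on the compact fibration structure over $S^{1}$. Without compactness this gluing can fail, so the hypotheses on $L$ are genuinely used, consistent with the remark following Proposition \ref{prop:krunob}.
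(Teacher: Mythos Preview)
Your proposal is correct and follows essentially the same approach as the paper: reduce to the vanishing of the Kuranishi class via Proposition \ref{prop:krunob}, obtain the forward implication from the computation in Lemma \ref{lem:Kurexact} restricted to $L\setminus\mathcal{Z}_f$, and for the backward implication use Lemma \ref{function} in the fibration case to produce the global smooth primitive $h$ and verify $f\pounds_X\alpha=d^{\gamma}_{\mathcal{F}_L}(fh)$, while in the dense-leaf case invoke the dichotomy $\mathcal{Z}_f\in\{\emptyset,L\}$ coming from Theorem \ref{H} ii). The only point you leave implicit is why this dichotomy holds in the dense case (namely that $e^g f$ is constant when $\gamma$ is exact and $H^0_\gamma(\mathcal{F}_L)=0$ otherwise), but this is immediate from Theorem \ref{H}.
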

	Given a first order deformation $(\alpha,f)$, the  condition \eqref{lie} is equivalent with $\alpha$ extending to a closed one-form on $ L\setminus\mathcal{Z}_{f}$, by the argument of Remark \ref{rem:geominter} i).
	Therefore the condition \eqref{lie} is independent of the data $(X,\gamma)$ coming from the modular vector field. {We remark that, as seen just before Corollary \ref{cor:Krsimple}, the condition that $(\alpha,f)$ is a first order deformation involves $\gamma$ but not  $X$.}

	\begin{ex} \label{ex:T2obstr}
		Consider the manifold $\mathbb{T}^{2}\times\mathbb{R}^{2}$, regarded as a trivial vector bundle over $\mathbb{T}^{2}$. Denote its coordinates by $(\theta_{1},\theta_{2}, {\xi_1}, {\xi_2})$. Let $Z:=\mathbb{T}^{2}\times\mathbb{R}=\{\xi_1=0\}$ and  $L:=\mathbb{T}^{2}\times\{(0,0)\}$. 
		\begin{enumerate}[i)]
			\item Any orientable log-symplectic structure with singular locus $Z$ so that $L$ is Lagrangian with induced foliation $T\mathcal{F}_{L}=\text{Ker}(d\theta_{1})$, up to Poisson diffeomorphism, looks as follows nearby $L$:
			\[
			\Pi=V\wedge  {\xi_1}\partial_{\xi_1}+\partial_{\theta_{2}}\wedge\partial_{\xi_2},
			\]
			where	
			$$V=
			g_{X}(\theta_1)\partial_{\theta_{1}}+g_{\gamma}(\theta_1)\partial_{\xi_2}$$
			for some  function $g_{\gamma}\in C^{\infty}(S^1)$ and some  nowhere vanishing function $g_{X}\in C^{\infty}(S^1)$. Here we use Corollary \ref{normal}, Remark \ref{rem:freedom} and Corollary \ref{cor:isocoho} along with Remark \ref{rem:codim1PoisCoho}  ii).
			
			We have $\gamma=g_{\gamma}(\theta_1) d\theta_{2}$, and a function on $L$ satisfying the properties of Lemma \ref{function} is the constant function zero.
			Hence first order deformations are given by pairs $(\alpha,f)$,  subject to the condition that $f=f(\theta_1)$ and $f\cdot g_{\gamma}=0$. 
			
			To see when such a first order deformation is unobstructed, we apply Corollary \ref{cor:Krsimple}. In the case at hand, since the fibers of $p:L\rightarrow S^{1}$ are circles and thanks to Stokes' theorem, the condition \eqref{lie} can be rephrased as:
			$$\text{
				the function $q\mapsto \int_{p^{-1}(q)} \alpha$ is locally constant on $p(L\setminus\mathcal{Z}_{f})\subset S^1$.}$$

			For instance, in case $g_{\gamma}=0$ (as in Ex. \ref{ex:unob}), any pair $(\alpha,f)$ with $f=f(\theta_1)$ is a first other deformation. Such a pair is unobstructed exactly when, writing $\alpha=a(\theta_1,\theta_2)d\theta_2$,  the expression $$ \int_{\{\theta_1\}\times S^1} a(\theta_1,\theta_2)d\theta_2$$
			is constant on connected components of $p(L\setminus\mathcal{Z}_{f})$.

			\item Now let $\lambda\in\mathbb{R}\setminus\mathbb{Q}$ be a generic (i.e. not Liouville) irrational number. Any orientable log-symplectic structure with singular locus $Z$ so that $L$ is Lagrangian with induced foliation $T\mathcal{F}_{L}=\text{Ker}(d\theta_{1}-\lambda d\theta_{2})$ is Poisson diffeomorphic around $L$ with
			\begin{equation}\label{K}
			\Pi=(C\partial_{\theta_{1}}+K\partial_{\xi_{2}})\wedge\xi_{1}\partial_{\xi_{1}}+(\lambda\partial_{\theta_{1}}+\partial_{\theta_{2}})\wedge\partial_{\xi_{2}},
			\end{equation}
			for some $C,K\in\RR$ with $C$ nonzero.
			This follows from a similar reasoning as above, now using that 
			\[
			\mathfrak{X}(L)^{\mathcal{F}_{L}}/\Gamma(T\mathcal{F}_{L})\cong H^{0}(\mathcal{F}_{L})=\mathbb{R}\hspace{0.5cm}\text{and}\hspace{0.5cm}H^{1}(\mathcal{F}_{L})=\mathbb{R}[d\theta_{2}].
			\]
			Note that $\gamma=Kd\theta_{2}$ is exact if and only if $K=0$. Therefore, first order deformations are given by $(\alpha,0)$ if $K\neq 0$ and $(\alpha,c)$ if $K=0$, with $c\in\RR$. Clearly, the Lie derivative along $X=C\partial_{\theta_{1}}$ acts trivially in cohomology, since $H^{1}(\mathcal{F}_{L})=\RR [d\theta_{2}]$. Therefore, all first order deformations of $L$ are smoothly unobstructed, by Corollary \ref{cor:Krsimple}.

			The situation is different when $\lambda\in\RR\setminus\mathbb{Q}$ is a Liouville number. Disregarding trivially unobstructed first order deformations of the form $(\alpha,0)$, {Thm.} \ref{H} ii) implies that the ones with nonzero second component can only occur for log-symplectic structures that are isomorphic around $L$ to the following model:
			\[
			\Pi=C\partial_{\theta_{1}}\wedge\xi_{1}\partial_{\xi_{1}}+(\lambda\partial_{\theta_{1}}+\partial_{\theta_{2}})\wedge\partial_{\xi_{2}},
			\]
			where $C\in\mathbb{R}_{0}$. Notice that $H^{1}(\mathcal{F}_{L})$ is now infinite dimensional, and that the Lie derivative along $X=C\partial_{\theta_{1}}$ no longer acts trivially in cohomology, which is a direct consequence of (the proof of) Lemma \ref{dense}. This shows that there exist obstructed first order deformations.
		\end{enumerate}
	\end{ex}

	\subsection{Equivalences and rigidity of deformations}\label{subsec:eqrig}
	\leavevmode
	\vspace{0.1cm}
	
	We now consider two natural equivalence relations on the space of Lagrangian deformations: equivalence by Hamiltonian diffeomorphisms and equivalence by Poisson isotopies. We show that the action by Hamiltonian diffeomorphisms agrees with the gauge action of the DGLA that governs the deformation problem. We also discuss rigidity of Lagrangians, both for Hamiltonian and Poisson equivalence.
	
	\subsubsection{\underline{Hamiltonian isotopies}}
	We showed in \S\ref{corres} that the graph of  $(\alpha,f)\in\Gamma(T^{*}\mathcal{F}_{L}\times\mathbb{R})$ defines a Lagrangian submanifold of $(U,\widetilde{\Pi})$ exactly when $(\alpha,f)$ is a Maurer-Cartan element of the DGLA $\big(\Gamma(\wedge^{\bullet}(T^{*}\mathcal{F}_{L}\times\mathbb{R})),d,[\![\cdot,\cdot]\!]\big)$. So if we write for short
	\[
	\text{Def}_{U}(L):=\big\{(\alpha,f)\in\Gamma(U):\ \text{graph}(\alpha,f)\ \text{is Lagrangian inside}\ \big(U,\widetilde{\Pi}\big)\big\} 
	\]
	and
	\[
	\text{MC}_{U}\big(\Gamma(\wedge^{\bullet}(T^{*}\mathcal{F}_{L}\times\mathbb{R}))\big):=\left\{(\alpha,f)\in MC\big(\Gamma(\wedge^{\bullet}(T^{*}\mathcal{F}_{L}\times\mathbb{R}))\big):\ \text{graph}(\alpha,f)\subset U \right\},
	\]
	then we have a correspondence
	\begin{equation}\label{1:1}
	\text{Def}_{U}(L)\overset{1:1}\longleftrightarrow \text{MC}_{U}\big(\Gamma(\wedge^{\bullet}(T^{*}\mathcal{F}_{L}\times\mathbb{R}))\big).
	\end{equation}
	We now define equivalence relations on both sides of \eqref{1:1} and we show that they agree under this correspondence. We closely follow the exposition in \cite{equivalences}. There one considers equivalences of coisotropic submanifolds in symplectic geometry, but most of their results remain valid in the more general setting of fiberwise entire Poisson structures.
	
	\begin{defi}
		\begin{enumerate}[i)]
			\item Two Lagrangian sections $(\alpha_{0},f_{0})$ and $(\alpha_{1},f_{1})$ in $\text{Def}_{U}(L)$ are \emph{Hamiltonian equivalent} if they are interpolated by a smooth family $(\alpha_{s},f_{s})$ of Lagrangian sections in $\text{Def}_{U}(L)$ that is generated by a (locally defined) Hamiltonian isotopy. In other words, there exists a time-dependent Hamiltonian vector field $X_{H_{s}}$ on $U$ such that the associated isotopy $\phi_{s}$ maps $\text{graph}(\alpha_{0},f_{0})$ to $\text{graph}(\alpha_{s},f_{s})$, for all $s\in[0,1]$.
			\item Two Maurer-Cartan elements $(\alpha_{0},f_{0}),(\alpha_{1},f_{1})\in \text{MC}_{U}\big(\Gamma(\wedge^{\bullet}(T^{*}\mathcal{F}_{L}\times\mathbb{R}))\big)$ are \emph{gauge equivalent} if they are interpolated by a smooth family $\{(\alpha_{s},f_{s})\}_{s\in[0,1]}$ of sections whose graph lies inside $U$, and there exists a smooth family $\{g_{s}\}_{s\in[0,1]}$ of functions on $L$ such that
			\begin{align}\label{gauge}
			\frac{d}{ds}(\alpha_{s},f_{s})&=[\![(g_{s},0),(\alpha_{s},f_{s})]\!]-d(g_{s},0)\nonumber\\
			&=\big(d_{\mathcal{F}_{L}}g_{s},f_{s}\pounds_{X}g_{s}\big).
			\end{align}
		\end{enumerate}
	\end{defi}

	\begin{remark}\label{Gaction}
		By solving the flow equation \eqref{gauge}, we obtain an explicit description for the gauge action of the DGLA. Namely, a path of degree zero elements $(g_{s},0)$ acts on a Maurer-Cartan element $(\alpha_{0},f_{0})$, which yields a path of Maurer-Cartan elements $(\alpha_{s},f_{s})$ given by
		\begin{equation}\label{gaugeaction}
		(\alpha_{s},f_{s})=\left(\alpha_{0}+d_{\mathcal{F}_{L}}\left(\int_{0}^{s}g_{u}du\right),f_{0}\exp\left(\pounds_{X}\int_{0}^{s}g_{u}du\right)\right).
		\end{equation}
	\end{remark}
	
	We rewrite the gauge equivalence relation in more geometric terms.
	
	\begin{lemma}\label{gaugegeometric}
		Two Maurer-Cartan elements $(\alpha_{0},f_{0}),(\alpha_{1},f_{1})\in \text{MC}_{U}\big(\Gamma(\wedge^{\bullet}(T^{*}\mathcal{F}_{L}\times\mathbb{R}))\big)$ are gauge equivalent if and only if they are interpolated by a smooth family $\{(\alpha_{s},f_{s})\}_{s\in[0,1]}$ of sections whose graph lies inside $U$, and there exists a smooth family $\{g_{s}\}_{s\in[0,1]}$ of functions on $L$ such that
		\begin{equation}\label{ham}
		\frac{d}{ds}(\alpha_{s},f_{s})=\left.X_{pr^{*}g_{s}}\right|_{graph(\alpha_{s},f_{s})}.
		\end{equation}
		Here $pr:U\subset T^{*}\mathcal{F}_{L}\times\mathbb{R}\rightarrow L$ denotes the bundle projection, and we see $\eqref{ham}$ as an equality of sections of the vertical bundle restricted to $graph(\alpha_{s},f_{s})$. 
	\end{lemma}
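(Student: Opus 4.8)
The plan is to show the two characterizations of gauge equivalence coincide by matching the infinitesimal generators on each side. The key observation is that the gauge equation \eqref{gauge} records the vertical variation of the family, whereas equation \eqref{ham} prescribes a full Hamiltonian vector field; so what I must verify is that these two prescriptions agree precisely when we restrict to the vertical component along the graph. Concretely, I would compute the Hamiltonian vector field $X_{pr^{*}g_{s}}$ of the pullback function $pr^{*}g_{s}$ with respect to $\widetilde{\Pi}$, and then project it to the vertical bundle along $graph(\alpha_{s},f_{s})$.

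First I would carry out the computation of $X_{pr^{*}g}=\widetilde{\Pi}^{\sharp}(pr^{*}dg)$ for a function $g\in C^{\infty}(L)$. Using $\widetilde{\Pi}=(V_{vert}+V_{lift})\wedge t\partial_{t}+\Pi_{can}$ and that $pr^{*}g$ is fiberwise constant (so $\partial_{t}(pr^{*}g)=0$), the term $t\partial_{t}$ in the bivector contracts to zero against $pr^{*}dg$, and one is left with
\[
X_{pr^{*}g}=-\big\langle pr^{*}dg,V_{vert}+V_{lift}\big\rangle\, t\partial_{t}+\Pi_{can}^{\sharp}(p^{*}dg).
\]
Now I would identify the vertical projection $P$ of this vector field along the graph. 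The second summand $\Pi_{can}^{\sharp}(p^{*}dg)$ is exactly the Hamiltonian vector field of $g$ on $T^{*}\mathcal{F}_{L}$, whose vertical part corresponds to $d_{\mathcal{F}_{L}}g$ under the identification \eqref{cor}; this matches the first component $d_{\mathcal{F}_{L}}g$ in \eqref{gauge}. For the $\partial_{t}$-component I would use that $\langle dg, V_{vert}\rangle=\gamma(p_{*}\text{(something)})$ vanishes (as $V_{vert}$ is vertical and $dg$ is horizontal), so only $\langle dg, V_{lift}\rangle$ survives, and since $V_{lift}=\widetilde{X}$ projects to $X=p_{*}V$, one gets $\langle dg, V_{lift}\rangle= X(g)=\pounds_{X}g$ along the zero section, with the value of $t$ along $graph(\alpha_{s},f_{s})$ being $f_{s}$. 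This reproduces the second component $f_{s}\pounds_{X}g_{s}$ of \eqref{gauge}.

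The main obstacle I anticipate is handling the fact that the identities above are literally correct only along the zero section $L$, whereas \eqref{ham} is evaluated along the graph of $(\alpha_{s},f_{s})$. I would resolve this by pulling back via the diffeomorphism $\phi^{(-\alpha_{s},-f_{s})}$ (as in the proof of Thm. \ref{equations}) which sends the graph to the zero section and conjugates $\widetilde{\Pi}$ to $\phi^{(-\alpha_{s},-f_{s})}_{*}\widetilde{\Pi}$. Applying the vertical projection $\wedge^{\bullet}P$ after this translation converts the computation along the graph into a computation along $L$, at which point the identities \eqref{cc}, \eqref{use} and the coordinate expression \eqref{eq:coordinate-expression} already established let me read off both components. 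The remaining point is purely formal: equation \eqref{gauge} and equation \eqref{ham} are then seen to be the same ODE for the curve $(\alpha_{s},f_{s})$, so the existence of a solution to one is equivalent to the existence of a solution to the other, giving the claimed equivalence of the two definitions.
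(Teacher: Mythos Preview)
Your approach is essentially the same as the paper's: compute $X_{pr^{*}g_{s}}=\widetilde{\Pi}^{\sharp}(d\,pr^{*}g_{s})$ explicitly and check that, as a vertical vector field along $graph(\alpha_{s},f_{s})$, it corresponds to $(d_{\mathcal{F}_{L}}g_{s},\,f_{s}\pounds_{X}g_{s})$, i.e.\ to the right-hand side of \eqref{gauge}. The paper does exactly this, obtaining
\[
X_{pr^{*}g_{s}}=pr^{*}(\pounds_{X}g_{s})\,t\partial_{t}+\Pi_{can}^{\sharp}(p^{*}dg_{s}),
\]
and then simply evaluates at $t=f_{s}$.

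Where you diverge is in your third paragraph. You worry that the identification of $\Pi_{can}^{\sharp}(p^{*}dg_{s})$ with $d_{\mathcal{F}_{L}}g_{s}$ is only valid along the zero section, and propose to fix this by conjugating with $\phi^{(-\alpha_{s},-f_{s})}$. This is unnecessary: $\Pi_{can}^{\sharp}(p^{*}dg_{s})$ is a \emph{vertical, fiberwise constant} vector field on $T^{*}\mathcal{F}_{L}$ (this is precisely the content of the isomorphism \eqref{cor}), so its value at any point of a fiber equals its value at the zero section. Likewise the $\partial_{t}$-component has coefficient $pr^{*}(\pounds_{X}g_{s})\,t$, which is defined everywhere and simply becomes $pr^{*}(f_{s}\pounds_{X}g_{s})$ when you set $t=f_{s}$. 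Thus $X_{pr^{*}g_{s}}$ is already vertical on the total space, and restricting it to $graph(\alpha_{s},f_{s})$ is a one-line substitution, not a conjugation argument. Your proposed detour through $\phi^{(-\alpha_{s},-f_{s})}$ would work, but it imports the machinery of the proof of Theorem \ref{equations} where none is needed.

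A minor point: your sign in $X_{pr^{*}g}=-\langle pr^{*}dg,V\rangle\,t\partial_{t}+\cdots$ disagrees with the paper's. With the convention $(A\wedge B)^{\sharp}(\xi)=\langle\xi,A\rangle B-\langle\xi,B\rangle A$ and $\partial_{t}(pr^{*}g)=0$, the first term comes out as $+\langle pr^{*}dg,V\rangle\,t\partial_{t}=+pr^{*}(\pounds_{X}g)\,t\partial_{t}$, matching \eqref{gauge}.
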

	\begin{proof}
		We compute the Hamiltonian vector field $X_{pr^{*}g_{s}}$. As before, let $p:T^{*}\mathcal{F}_{L}\rightarrow L$ denote the bundle projection. We obtain
		\begin{align}\label{vertical}
		X_{pr^{*}g_{s}}&=\left((V_{vert}+V_{lift})\wedge t\partial_{t}+\Pi_{can}\right)^{\sharp}(dpr^{*}g_{s})\nonumber\\
		&=pr^{*}(\pounds_{X}g_{s})t\partial_{t}+\Pi_{can}^{\sharp}(p^{*}dg_{s}),
		\end{align}
		and therefore
		\[
		\left.X_{pr^{*}g_{s}}\right|_{graph(\alpha_{s},f_{s})}=pr^{*}(f_{s}\pounds_{X}g_{s})\partial_{t}+\Pi_{can}^{\sharp}(p^{*}dg_{s}).
		\]
		The section of $T^{*}\mathcal{F}_{L}\times\mathbb{R}$ corresponding with this vertical fiberwise constant vector field is $\big(d_{\mathcal{F}_{L}}g_{s},f_{s}\pounds_{X}g_{s}\big)\in\Gamma(T^{*}\mathcal{F}_{L}\times\mathbb{R})$, in agreement with \eqref{gauge}. This proves the lemma.
	\end{proof}
	
	We need some technical results that appeared in \cite{equivalences}. We state them here for convenience.
	
	\begin{lemma}\label{forward}
		Let $A\rightarrow M$ be a vector bundle with vertical bundle $V$. Let $X_{s}$ be one-parameter family of vector fields on $A$ with flow $\phi_{s}$, and let $\tau_{0}$ be a section of $A$.
		\begin{enumerate}[i)]
			\item If $\tau_{s}$ is a one-parameter family of sections of $A$ such that
			$
			\text{graph}(\tau_{s})=\phi_{s}(\text{graph}(\tau_{0}))
			$
			holds for all $s\in[0,1]$, then $\tau_{s}$ satisfies the equation
			\begin{equation*}
			\frac{d}{ds}\tau_{s}=P_{\tau_{s}}X_{s}\hspace{0.5cm}\forall s\in[0,1].
			\end{equation*}
			Here $P_{\tau_{s}}$ means vertical projection with respect to $TA|_{graph(\tau_{s})}=Tgraph(\tau_{s})\oplus V|_{graph(\tau_{s})}.$
			\item Conversely, assume that the integral curves of $X_{s}$ starting at points of $\text{graph}(\tau_{0})$ exist for all times $s\in[0,1]$, and suppose that $\tau_{s}$ is a one-parameter family of sections of $A$ satisfying
			\[
			\frac{d}{ds}\tau_{s}=P_{\tau_{s}}X_{s}\hspace{0.5cm}\forall s\in[0,1].
			\]
			Then the family of submanifolds $\text{graph}(\tau_{s})$ coincides with $\phi_{s}(\text{graph}(\tau_{0}))$ for all $s\in[0,1]$.
		\end{enumerate}
	\end{lemma}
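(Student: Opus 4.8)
The plan is to encode both hypotheses through a single \emph{master equation} comparing the flow $\phi_s$ with the two families of sections, and then to differentiate it in $s$, reading off the conclusion from the canonical splitting $TA|_{\mathrm{graph}(\tau_s)}=T\,\mathrm{graph}(\tau_s)\oplus V|_{\mathrm{graph}(\tau_s)}$. Write $\pi\colon A\to M$ for the bundle projection.

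For part i), the hypothesis $\mathrm{graph}(\tau_s)=\phi_s(\mathrm{graph}(\tau_0))$ says precisely that $\phi_s$ carries the graph of $\tau_0$ onto the graph of $\tau_s$; composing the bijections $M\xrightarrow{\tau_0}\mathrm{graph}(\tau_0)\xrightarrow{\phi_s}\mathrm{graph}(\tau_s)\xrightarrow{\pi}M$ shows that $\Phi_s:=\pi\circ\phi_s\circ\tau_0$ is a diffeomorphism of $M$, and that
\[
\phi_s\circ\tau_0=\tau_s\circ\Phi_s.
\]
Differentiating this identity in $s$ at a fixed point of $M$ gives, on the left, $X_s$ evaluated along $\mathrm{graph}(\tau_s)$, and on the right the sum $(\partial_s\tau_s)\circ\Phi_s+d\tau_s(\partial_s\Phi_s)$. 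The first summand is vertical, while the second lies in $T\,\mathrm{graph}(\tau_s)$; hence this is exactly the decomposition of $X_s|_{\mathrm{graph}(\tau_s)}$ into its $V$- and $T\,\mathrm{graph}(\tau_s)$-components. Identifying vertical vectors with elements of the fibre of $A$ and precomposing with $\Phi_s^{-1}$ yields $\partial_s\tau_s=P_{\tau_s}X_s$, as claimed.

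For part ii) I would run this backwards via uniqueness of solutions of the governing ODE. Working in a local trivialization $A|_U\cong U\times\mathbb{R}^k$ and writing $X_s=(X_s^{\mathrm{hor}},X_s^{\mathrm{vert}})$, the equation $\partial_s\tau_s=P_{\tau_s}X_s$ becomes the first order system $\partial_sT_s=X_s^{\mathrm{vert}}(\cdot,T_s)-dT_s\big(X_s^{\mathrm{hor}}(\cdot,T_s)\big)$ for the fibre component $T_s$ of $\tau_s$. Fixing $m_0\in M$, I would follow the single flow line $s\mapsto\phi_s(\tau_0(m_0))$, set $m(s):=\pi(\phi_s(\tau_0(m_0)))$, and estimate the fibre difference $\delta(s):=\phi_s(\tau_0(m_0))-\tau_s(m(s))$. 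Substituting the system into $\dot\delta$ and using smoothness of $X_s$ (a local Lipschitz bound in the fibre direction) produces an inequality $|\dot\delta(s)|\le C(s)\,|\delta(s)|$ with $C$ bounded on $[0,1]$; since $\delta(0)=0$, Gr\"onwall forces $\delta\equiv0$, i.e. $\phi_s(\tau_0(m_0))\in\mathrm{graph}(\tau_s)$ for all $s$ and all $m_0$. This gives the inclusion $\phi_s(\mathrm{graph}(\tau_0))\subseteq\mathrm{graph}(\tau_s)$, and applying the same argument to the time-reversed data $\tilde\tau_u=\tau_{1-u}$, $\tilde X_u=-X_{1-u}$ (whose flow lines are the given ones traversed backwards, hence exist on $[0,1]$ by hypothesis) yields the opposite inclusion; equality of the two $\dim M$-dimensional submanifolds follows.

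The routine differential-geometric bookkeeping in part i) is harmless; the genuine difficulty is the \emph{global}-in-$s$ control in part ii). A priori the flowed graph $\phi_s(\mathrm{graph}(\tau_0))$ need not remain transverse to the fibres, so one cannot simply define an interpolating section and invoke abstract ODE uniqueness on the (infinite-dimensional) space of sections. The Gr\"onwall estimate along individual flow lines circumvents this, reducing everything to the all-time existence of integral curves assumed in the statement, and the time-reversal trick is what upgrades the inclusion to the asserted equality of graphs.
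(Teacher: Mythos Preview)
The paper does not actually prove this lemma; it is quoted from \cite{equivalences} ``for the reader's convenience,'' so there is no in-paper proof to compare against. Your argument for part~i) is correct and is the standard one.

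For part~ii), your forward Gr\"onwall argument correctly gives the inclusion $\phi_s(\mathrm{graph}(\tau_0))\subseteq\mathrm{graph}(\tau_s)$, but the time-reversal step has a genuine gap. To run the same argument for $\tilde X_u=-X_{1-u}$ you need the integral curves of $\tilde X_u$ starting at points of $\mathrm{graph}(\tilde\tau_0)=\mathrm{graph}(\tau_1)$ to exist on $[0,1]$. The hypothesis only supplies \emph{forward} flow lines from $\mathrm{graph}(\tau_0)$; their time-$1$ endpoints fill $\phi_1(\mathrm{graph}(\tau_0))$, which at this stage is merely known to be \emph{contained in} $\mathrm{graph}(\tau_1)$, so for points outside that image there is no ``given'' flow line to reverse. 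This is not a removable technicality: take $A=\RR\times\RR\to M=\RR$, $\tau_s\equiv 0$, and $X_s(x,y)=(-e^x,0)$. Then $P_{\tau_s}X_s=0=\partial_s\tau_s$ and every forward integral curve from the zero section exists for $s\in[0,1]$, yet $\phi_1$ maps the zero section onto $\{(x,0):x<0\}\subsetneq\mathrm{graph}(\tau_1)$. So the asserted equality can genuinely fail without an extra assumption (e.g.\ $M$ compact, or $X_s$ vertical, or $\phi_s$ a global diffeomorphism of $A$). In the paper's only use of part~ii), namely the proof of Proposition~\ref{prop:ham}, the relevant vector field $X_{pr^*g_s}$ is vertical, so $\pi\circ\phi_s\circ\tau_0=\mathrm{id}_M$ and the one-sided inclusion you do establish already forces equality; that is all the paper actually needs.
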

	
	Making some minor modifications to the proofs of \cite[Proposition 3.18]{equivalences} and \cite[Proposition 3.19]{equivalences}, 
	 we can show that Hamiltonian equivalence coincides with gauge equivalence.
	
	\begin{prop}\label{prop:ham}
		The bijection between Lagrangian sections and Maurer-Cartan elements
		\[
		\text{Def}_{U}(L)\rightarrow \text{MC}_{U}\big(\Gamma(\wedge^{\bullet}(T^{*}\mathcal{F}_{L}\times\mathbb{R}))\big):(\alpha,f)\mapsto(\alpha,f)
		\]
		descends to a bijection between $\text{Def}_{U}(L)/{\sim_{\text{Ham}}}$ and $\text{MC}_{U}\big(\Gamma(\wedge^{\bullet}(T^{*}\mathcal{F}_{L}\times\mathbb{R}))\big)/{\sim_{\text{gauge}}}$.
	\end{prop}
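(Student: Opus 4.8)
The plan is to show the two equivalence relations coincide by proving each implication separately, leveraging the geometric reformulation of gauge equivalence from Lemma \ref{gaugegeometric} together with the transport lemma (Lemma \ref{forward}). The key conceptual point is that both relations are, at the infinitesimal level, governed by the same ordinary differential equation: the flow of a Hamiltonian vector field $X_{pr^*g_s}$ acting on graphs of sections. I would structure the argument around this common ODE.

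First I would treat the direction ``gauge $\Rightarrow$ Hamiltonian''. Suppose $(\alpha_0,f_0)\sim_{\text{gauge}}(\alpha_1,f_1)$ via a smooth family $(\alpha_s,f_s)$ and functions $g_s$ satisfying \eqref{gauge}. By Lemma \ref{gaugegeometric}, this is equivalent to the geometric flow equation \eqref{ham}, namely
\[
\frac{d}{ds}(\alpha_s,f_s)=\left.X_{pr^*g_s}\right|_{graph(\alpha_s,f_s)}.
\]
Now I would apply Lemma \ref{forward} ii) to the vector bundle $A:=T^*\mathcal{F}_{L}\times\mathbb{R}$, the time-dependent Hamiltonian vector field $X_s:=X_{pr^*g_s}$, and the section $\tau_0:=(\alpha_0,f_0)$: since the right-hand side of \eqref{ham} is precisely the vertical projection $P_{(\alpha_s,f_s)}X_s$, the lemma concludes that $graph(\alpha_s,f_s)=\phi_s(graph(\alpha_0,f_0))$, where $\phi_s$ is the flow of the Hamiltonian isotopy generated by $X_{pr^*g_s}$. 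This exhibits the Hamiltonian equivalence directly, once one checks that the flow stays inside $U$ for all $s\in[0,1]$ (which holds because the interpolating graphs are assumed to lie inside $U$).

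For the converse ``Hamiltonian $\Rightarrow$ gauge'', I would start from a Hamiltonian isotopy $\phi_s$ generated by a time-dependent Hamiltonian vector field $X_{H_s}$ on $U$ mapping $graph(\alpha_0,f_0)$ to $graph(\alpha_s,f_s)$. Applying Lemma \ref{forward} i) gives the flow equation $\frac{d}{ds}(\alpha_s,f_s)=P_{(\alpha_s,f_s)}X_{H_s}$. The main obstacle, and the technical heart of the proof, is that $H_s$ is an arbitrary function on $U$, whereas gauge equivalence requires a Hamiltonian of the special form $pr^*g_s$ pulled back from the base $L$. I expect the resolution to mirror \cite[Prop. 3.18]{equivalences}: one shows that only the restriction of $H_s$ to $graph(\alpha_s,f_s)$, composed appropriately with $pr$, matters for the vertical projection, so one may replace $H_s$ by $pr^*g_s$ with $g_s:=(\text{restriction of }H_s\text{ to the graph, transported to }L)$ without changing the evolution of the sections. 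Concretely, since $P_{(\alpha_s,f_s)}X_{H_s}$ depends only on the leafwise-vertical data of $H_s$ along the graph, defining $g_s$ as the pullback of $H_s|_{graph(\alpha_s,f_s)}$ under the section map $(\alpha_s,f_s)\colon L\to U$ yields a family with $P_{(\alpha_s,f_s)}X_{pr^*g_s}=P_{(\alpha_s,f_s)}X_{H_s}$, and then \eqref{ham} holds. Invoking Lemma \ref{gaugegeometric} once more converts this into the gauge equation \eqref{gauge}, completing the equivalence. Throughout, I would note that the identification \eqref{1:1} is by construction the identity on sections, so descending to quotients is immediate once the two equivalence relations are shown to have identical equivalence classes.
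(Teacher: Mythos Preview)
Your approach is essentially identical to the paper's, and the overall structure is correct. However, there is one step in the ``Hamiltonian $\Rightarrow$ gauge'' direction that you do not justify properly. You assert that $P_{(\alpha_s,f_s)}X_{H_s}$ ``depends only on the leafwise-vertical data of $H_s$ along the graph'', and conclude from this that replacing $H_s$ by $pr^*g_s$ leaves the vertical projection unchanged. This claim is not self-evident and, as stated, is not quite the right reason. The actual mechanism, which the paper makes explicit, is the following: with $g_s:=H_s\circ(\alpha_s,f_s)$, the function $H_s-pr^*g_s$ vanishes along $\text{graph}(\alpha_s,f_s)$; since this graph is \emph{coisotropic} (being Lagrangian), the Hamiltonian vector field $X_{H_s-pr^*g_s}$ is tangent to it, hence has zero vertical projection. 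This is what gives $P_{(\alpha_s,f_s)}X_{H_s}=P_{(\alpha_s,f_s)}X_{pr^*g_s}$. Without invoking coisotropy, your argument has a genuine gap here.

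A minor additional point: in both directions you implicitly use that $X_{pr^*g_s}$ is vertical (computed in \eqref{vertical}). In the gauge $\Rightarrow$ Hamiltonian direction, this is what guarantees that the integral curve of $X_{pr^*g_s}$ through $(\alpha_0,f_0)(p)$ is exactly $s\mapsto(\alpha_s,f_s)(p)$, so the flow hypothesis of Lemma~\ref{forward}~ii) is satisfied. Your phrasing ``the flow stays inside $U$ because the interpolating graphs are assumed to lie inside $U$'' is correct in spirit but would benefit from making the verticality explicit.
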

	\begin{proof}
		First assume that $(\alpha_{0},f_{0}),(\alpha_{1},f_{1})\in\text{Def}_{U}(L)$ are Hamiltonian equivalent. Then they are interpolated by a smooth family of sections $(\alpha_{s},f_{s})\in\text{Def}_{U}(L)$ generated by the flow $\phi_{s}$ of a time-dependent Hamiltonian vector field $X_{H_{s}}\in \mathfrak{X}(U)$. Part $i)$ of Lemma \ref{forward} then implies that
		\begin{equation}\label{velocity}
		\frac{d}{ds}(\alpha_{s},f_{s})=P_{(\alpha_{s},f_{s})}X_{H_{s}}
		\end{equation}
		for all $s\in[0,1]$. Define $g_{s}:=H_{s}\circ(\alpha_{s},f_{s})\in C^{\infty}(L)$ and observe that $H_{s}-pr^{*}g_{s}$ vanishes along $\text{graph}(\alpha_{s},f_{s})$. Because $\text{graph}(\alpha_{s},f_{s})$ is coisotropic, this implies that the Hamiltonian vector field $X_{H_{s}-pr^{*}g_{s}}=X_{H_{s}}-X_{pr^{*}g_{s}}$ is tangent to $\text{graph}(\alpha_{s},f_{s})$. Consequently, the equality \eqref{velocity} becomes
		\[
		\frac{d}{ds}(\alpha_{s},f_{s})=P_{(\alpha_{s},f_{s})}X_{pr^{*}g_{s}}=\left.X_{pr^{*}g_{s}}\right|_{graph(\alpha_{s},f_{s})},
		\]
		where we also used that $X_{pr^{*}g_{s}}$ is vertical (which is clear from the expression \eqref{vertical}). By Lemma \ref{gaugegeometric}, we conclude that $(\alpha_{0},f_{0})$ and $(\alpha_{1},f_{1})$ are gauge equivalent.
		
		Conversely, assume that $(\alpha_{0},f_{0}),(\alpha_{1},f_{1})\in\text{MC}_{U}\big(\Gamma(\wedge^{\bullet}(T^{*}\mathcal{F}_{L}\times\mathbb{R}))\big)$ are gauge equivalent. By Lemma \ref{gaugegeometric}, this means that they are interpolated by a smooth family of sections $(\alpha_{s},f_{s})$ inside $U$, such that
		\[
		\frac{d}{ds}(\alpha_{s},f_{s})=\left.X_{pr^{*}g_{s}}\right|_{graph(\alpha_{s},f_{s})}=P_{(\alpha_{s},f_{s})}X_{pr^{*}g_{s}}\hspace{0.5cm}\forall s\in[0,1],
		\]
		for a smooth family of functions $g_{s}\in C^{\infty}(L)$. In particular, the integral curve of $X_{pr^{*}g_{s}}$ starting at a point $(\alpha_{0},f_{0})(p)\in\text{graph}(\alpha_{0},f_{0})$ is defined up to time $1$, and is given by $(\alpha_{s},f_{s})(p)$ for $s\in[0,1]$. Part $ii)$ of Lemma \ref{forward} gives $\phi_{s}(\text{graph}(\alpha_{0},f_{0}))=\text{graph}(\alpha_{s},f_{s})$ for all $s\in[0,1]$, where $\phi_{s}$ is the flow of $X_{pr^{*}g_{s}}$. This shows that $(\alpha_{0},f_{0})$ and $(\alpha_{1},f_{1})$ are Hamiltonian equivalent.
	\end{proof}
	
	\begin{remark}
		The above proof is almost identical to the one presented in \cite{equivalences}. The main difference is that in \cite{equivalences}, one needs to impose compactness on the coisotropic submanifold to obtain the implication ``gauge equivalence $\Rightarrow$ Hamiltonian equivalence'', as otherwise the flow lines of $X_{pr^{*}g_{s}}$ need not be defined for long enough time. Since in our setting Hamiltonian vector fields of basic functions are vertical, we don't need this additional assumption.
	\end{remark}
	
	As a consequence, we obtain that the formal tangent space at zero to the moduli space $\mathcal{M}_{U}^{Ham}(L):=\text{Def}_{U}(L)/{\sim_{Ham}}$ can be identified with the first cohomology group of the differential graded Lie algebra $\big(\Gamma\left(\wedge^{\bullet}\left(T^{*}\mathcal{F}_{L}\times\mathbb{R}\right)\right),d,[\![\cdot,\cdot]\!]\big)$:
	\begin{equation}\label{formalmoduli}
	T_{[0]}\mathcal{M}_{U}^{Ham}(L)=H^{1}(\mathcal{F}_{L})\oplus H^{0}_{\gamma}(\mathcal{F}_{L}).
	\end{equation}
	Indeed, if $(\alpha_{s},f_{s})$ is a path of Lagrangian deformations of $L$, then $\frac{d}{ds}|_{s=0}(\alpha_{s},f_{s})$ is closed with respect to the differential $d$ of the DGLA. Moreover, if the path $(\alpha_{s},f_{s})$ is generated by the flow of a time-dependent Hamiltonian vector field, then $(\alpha_{s},f_{s})$ is obtained by gauge transforming the zero section, as we just proved. The expression \eqref{gaugeaction} then shows that $\frac{d}{ds}|_{s=0}(\alpha_{s},f_{s})$ is of the form $(d_{\mathcal{F}_{L}}g,0)$ for $g\in C^{\infty}(L)$. This proves the assertion \eqref{formalmoduli}.

	\subsubsection{\underline{Smoothness of the moduli space by Hamiltonian isotopies}}\label{subsubsec:hammod}
	
	In general, the moduli space $\mathcal{M}_{U}^{Ham}(L)$ is by no means smooth, since the formal tangent spaces at different points can be drastically different. For instance, let us look again at Example \ref{ex:unob}, where we considered $(\mathbb{T}^{2}\times\mathbb{R}^{2},\theta_{1},\theta_{2},\xi_{1},\xi_{2})$ with log-symplectic structure
	\[
	\Pi=\partial_{\theta_{1}}\wedge\xi_{1}\partial_{\xi_{1}}+\partial_{\theta_{2}}\wedge\xi_{2}
	\]
	and Lagrangian $L=\mathbb{T}^{2}\times\{(0,0)\}$. The induced foliation on $L$ is the fiber foliation of $(L,\theta_{1},\theta_{2})\rightarrow(S^{1},\theta_{1})$. Since $\gamma=0$, we get for any nonzero constant $c\in\RR$ a Lagrangian section $(0,c)\in\Gamma(T^{*}\mathcal{F}_{L}\times\RR)$ whose graph lies outside the singular locus. Hence, by symplectic geometry, we have
	\[
	T_{[(0,c)]}\mathcal{M}_{U}^{Ham}(L)\cong H^{1}(graph(0,c))\cong H^{1}(L)=\RR^{2},
	\]
	which is finite dimensional.
	On the other hand, we have
	\[
	T_{[0]}\mathcal{M}_{U}^{Ham}(L)=H^{1}(\mathcal{F}_{L})\oplus H_{\gamma}^{0}(\mathcal{F}_{L})\cong H^{1}(\mathcal{F}_{L})\oplus H^{0}(\mathcal{F}_{L})\cong C^{\infty}(S^{1})\oplus C^{\infty}(S^{1}),
	\]
	which is infinite dimensional.
	
	\bigskip
	On the other hand, there are instances in which the moduli space is locally smooth.
	
	Suppose a Lagrangian submanifold $L^{n}$ contained in the singular locus $Z$
	has the property 
	that $\mathcal{C}^{1}$-small Lagrangian deformations of $L$ stay inside $Z$. 
	This means that the $\mathcal{C}^{1}$-small deformations are precisely the graphs of 
	$\mathcal{C}^{1}$-small elements of $\Omega^1_{cl}(\cF_L)$. Then $\mathcal{M}_{U}^{Ham}(L)$ is naturally isomorphic to an open neighborhood of the origin in $H^1(\cF_L)$, by Corollary \ref{moduliZ}.
	In particular, $\mathcal{M}_{U}^{Ham}(L)$ is smooth. We present two classes of examples.

	\begin{itemize}
		\item [i)]  
		A class of Lagrangians $L$ as above are those satisfying the assumptions of Corollary \ref{constrained}. In that case $\mathcal{M}_{U}^{Ham}(L)$ is infinite-dimensional. Indeed, recall that
		$H^{1}(\mathcal{F}_{L})\cong\Gamma(\mathcal{H}^{1})$; if this was finite-dimensional, then $\mathcal{H}^{1}$ would be of rank zero, which implies that $H^{1}(\mathcal{F}_{L})=0$. Then $\gamma$ would be exact, which is impossible under the  assumptions of Corollary \ref{constrained}.
		
		\item [ii)] Another class of Lagrangians $L$ as above are those that are $\mathcal{C}^1$-rigid under Poisson equivalences (see \S \ref{subsubsec:poisrig} later on), since Poisson diffeomorphisms of the ambient log-symplectic manifold necessarily preserve $Z$. In that case $\mathcal{M}_{U}^{Ham}(L)$ is finite-dimensional by Lemma \ref{finitedim}, assuming $L$ is compact and connected. We exhibit concrete examples of such $L$ in Example \ref{ex:C^1rigid}. Notice that Proposition \ref{rigidity} as stated does not quite 
		provide examples, since it makes a statement only about $\mathcal{C}^{\infty}$-small deformations.
	\end{itemize}

	\subsubsection{\underline{Rigidity and Hamiltonian isotopies}}
	
	At this point, we would like to address some rigidity phenomena. A Lagrangian $L$ is called \emph{infinitesimally rigid} under Hamiltonian equivalence if the formal tangent space $T_{[0]}\mathcal{M}_{U}^{Ham}(L)$ is zero. We call a Lagrangian $L$ \emph{rigid} under Hamiltonian equivalence if small deformations of $L$ are Hamiltonian equivalent with $L$. It turns out however that Hamiltonian equivalence is too restrictive for rigidity purposes: there are  no Lagrangians that are infinitesimally rigid. Indeed, if the formal tangent space $T_{[0]}\mathcal{M}_{U}^{Ham}(L)=H^{1}(\mathcal{F}_{L})\oplus H^{0}_{\gamma}(\mathcal{F}_{L})$ is zero, then the triviality of the first summand  implies that $\gamma$ is foliated exact. But then $H^{0}(\mathcal{F}_{L})=H^{0}_{\gamma}(\mathcal{F}_{L})=\{0\}$ by Proposition \ref{cohomology} $i)$, which is impossible. This is a motivation to look at a more flexible notion of equivalence.
	
	\subsubsection{\underline{Poisson isotopies}}
	
	We will use flows of Poisson vector fields instead of Hamiltonian vector fields to obtain a less restrictive equivalence relation on the space of Lagrangian deformations of $L$.
	
	\begin{defi}
		We call two Lagrangian sections $(\alpha_{0},f_{0})$ and $(\alpha_{1},f_{1})$ in $\text{Def}_{U}(L)$ \emph{Poisson equivalent} if they are interpolated by a smooth family $(\alpha_{s},f_{s})$ of Lagrangian sections in $\text{Def}_{U}(L)$ that is generated by a (locally defined) Poisson isotopy. In other words, there exists a time-dependent Poisson vector field $Y_{s}$ on $U$ such that the associated isotopy $\phi_{s}$ maps $\text{graph}(\alpha_{0},f_{0})$ to $\text{graph}(\alpha_{s},f_{s})$, for all $s\in[0,1]$.
	\end{defi}
	
	We denote the moduli space $\text{Def}_{U}(L)/{\sim_{Poiss}}$ of Lagrangian deformations under Poisson equivalence by $\mathcal{M}_{U}^{Poiss}(L)$. In order to study rigidity under Poisson equivalence, we want to compute the formal tangent space $T_{[0]}\mathcal{M}_{U}^{Poiss}(L)$, as done in \eqref{formalmoduli} for Hamiltonian equivalence. We now quotient first order deformations of $L$ by elements of the form $\frac{d}{ds}|_{s=0}(\alpha_{s},f_{s})$, where $(\alpha_{s},f_{s})$ is generated by the flow of a time-dependent Poisson vector field $Y_{s}\in\mathfrak{X}(U)$. Lemma \ref{forward} $i)$ implies that
	\begin{equation}\label{PY0}
	\left.\frac{d}{ds}\right|_{s=0}(\alpha_{s},f_{s})=P(Y_{0}),
	\end{equation}
	where $P:\mathfrak{X}(U)\rightarrow\Gamma(T^{*}\mathcal{F}_{L}\times\mathbb{R})$ is the restriction to $L$ composed with the vertical projection induced by the splitting $\big(T(T^{*}\mathcal{F}_{L}\times\mathbb{R})\big)|_{L}=TL\oplus (T^{*}\mathcal{F}_{L}\times\mathbb{R})$. So we have to take a closer look at (vertical components of) Poisson vector fields on {$U\subset T^{*}\mathcal{F}_{L} \times\mathbb{R}$}.

	\begin{lemma}\label{poiscoh}
		Given the Poisson structure $\widetilde{\Pi}=V\wedge t\partial_{t}+\Pi_{can}$ on $U\subset T^{*}\mathcal{F}_{L}\times\mathbb{R}$, the following map is an isomorphism:
		\[
		H^{1}(L)\oplus H^{0}(L)\rightarrow H^{1}_{\widetilde{\Pi}}(U):\big([\xi],g\big)\mapsto \left[\widetilde{\Pi}^{\sharp}(pr^{*}\xi)+(pr^{*}g)V\right],
		\]
		where $pr:U\rightarrow L$ is the projection.
	\end{lemma}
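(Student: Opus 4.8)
The plan is to identify $H^1_{\widetilde{\Pi}}(U)$ by passing to $b$-cohomology, and then to check that the explicit map in the statement realizes this identification. First I would verify that the map is well defined at the level of cocycles and descends to cohomology. For a closed $\xi\in\Omega^1(L)$ the form $pr^{*}\xi$ is closed, so $\widetilde{\Pi}^{\sharp}(pr^{*}\xi)$ is a Poisson vector field; replacing $\xi$ by $\xi+dh$ changes it by the Hamiltonian vector field $X_{pr^{*}h}$, so the class depends only on $[\xi]\in H^1(L)$. For $g\in H^0(L)$, i.e. $g$ locally constant, $pr^{*}g$ is a Casimir, whence $[\widetilde{\Pi},(pr^{*}g)V]=[\widetilde{\Pi},pr^{*}g]\wedge V+(pr^{*}g)[\widetilde{\Pi},V]=0$ because $pr^{*}g$ is Casimir and $V$ is Poisson; so $(pr^{*}g)V$ is a Poisson vector field. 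The same Casimir property shows the class is unchanged if $V$ is replaced by another representative $V-X_k$ of its Poisson cohomology class, since $(pr^{*}g)X_k=X_{(pr^{*}g)k}$ is Hamiltonian. Thus $([\xi],g)\mapsto[\widetilde{\Pi}^{\sharp}(pr^{*}\xi)+(pr^{*}g)V]$ is a well-defined linear map.

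For bijectivity I would exploit that $\widetilde{\Pi}$ is log-symplectic with singular locus $Z=U\cap\{t=0\}$. Contraction with the associated $b$-symplectic form gives an isomorphism of cochain complexes between $(\mathfrak{X}^{\bullet}(U),[\widetilde{\Pi},\cdot])$ and the $b$-de Rham complex of $(U,Z)$, so $H^1_{\widetilde{\Pi}}(U)\cong{}^{b}H^1(U)$ by Guillemin--Miranda--Pires. The Mazzeo--Melrose decomposition then yields ${}^{b}H^1(U)\cong H^1(U)\oplus H^0(Z)$, and since both $U$ and $Z$ deformation retract onto the zero section $L$ this equals $H^1(L)\oplus H^0(L)$, which already matches the source of our map dimensionwise. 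To see that the explicit map is precisely this composite, I would compute the contraction
\begin{equation*}
\widetilde{\Pi}^{\sharp}\!\left(\tfrac{dt}{t}\right)=-V,
\end{equation*}
using that $V$ is tangent to $Z$ (it has no $\partial_t$-component) and that $\Pi_{can}$ does not involve $t$. Hence $(pr^{*}g)V=-\widetilde{\Pi}^{\sharp}\big((pr^{*}g)\tfrac{dt}{t}\big)$, where $(pr^{*}g)\tfrac{dt}{t}$ is a $b$-closed $b$-one-form whose Mazzeo--Melrose residue is $g\in H^0(Z)$, while $pr^{*}\xi$ is a genuine closed one-form with vanishing residue representing the image of $[\xi]$ in $H^1(U)$. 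Under the isomorphism $H^1_{\widetilde{\Pi}}(U)\cong{}^{b}H^1(U)$ implemented by $\widetilde{\Pi}^{\sharp}$, the two summands of the source therefore map isomorphically onto the two summands of the target, proving the claim.

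The main obstacle I anticipate is not the dimension count but the bookkeeping needed to match the explicit generators with the abstract $b$-cohomology summands: one must be careful that the $b$-symplectic isomorphism $H^\bullet_{\widetilde{\Pi}}\cong{}^{b}H^\bullet$ is implemented by $\widetilde{\Pi}^{\sharp}$ in the normalization used above, that the Mazzeo--Melrose residue of $(pr^{*}g)\tfrac{dt}{t}$ is indeed $g$ after the identification $H^0(Z)\cong H^0(L)$, and that the retractions of $U$ and $Z$ onto $L$ are compatible with $pr$. A fully self-contained alternative would be to solve $[\widetilde{\Pi},Y]=0$ directly, modulo Hamiltonian vector fields, by filtering multivector fields according to their order of vanishing in $t$; this avoids citing $b$-geometry but replaces the clean argument above with a more laborious order-by-order computation, which is why I would favor the $b$-cohomology route.
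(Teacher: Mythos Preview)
Your proposal is correct and follows essentially the same route as the paper: both arguments invoke the $b$-symplectic form associated to $\widetilde{\Pi}$, the Mazzeo--Melrose decomposition ${}^{b}H^{1}(U)\cong H^{1}(U)\oplus H^{0}(Z)$, and the deformation retraction of $U$ and $Z$ onto $L$. The only organizational difference is that the paper treats injectivity by a short direct argument (restricting to the singular locus and using that $V$ is transverse to the symplectic leaves there) before turning to $b$-geometry for surjectivity, whereas you handle both directions at once by checking that the explicit map realizes the composite isomorphism; your computation $\widetilde{\Pi}^{\sharp}(dt/t)=-V$ is exactly what makes this identification go through, and the paper's remark preceding its proof already signals that your streamlined version is available.
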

We remark that the existence of the isomorphism follows from known facts:
$L$ is a deformation retract of   $U$ and of $U\cap T^{*}\mathcal{F}_{L}$,
and both cohomology groups appearing above are isomorphic to the first $b$-cohomology group of the pair $(U,U\cap T^{*}\mathcal{F}_{L})$, by \cite{Melrose}  and \cite[Prop. 1]{defslog} respectively.
	
	\begin{proof}
		Clearly, the map is well-defined. For injectivity, assume  $\widetilde{\Pi}^{\sharp}(pr^{*}\xi)+(pr^{*}g)V=\widetilde{\Pi}^{\sharp}(dh)$ for some $h\in C^{\infty}(U)$. Restricting to {$W:=U\cap \{t=0\}$}, this implies that $\Pi_{can}^{\sharp}(p^{*}\xi)+(p^{*}g)V$ is tangent to the symplectic leaves, where $p:{W}\rightarrow L$ is the projection. Since $V$ is transverse to the leaves, we get that $p^{*}g=0$, and therefore $g=0$. This means that  $\widetilde{\Pi}^{\sharp}(pr^{*}\xi)=\widetilde{\Pi}^{\sharp}(dh)$, and since $\widetilde{\Pi}$ is invertible away from {$W\subset U$}, we get that $pr^{*}\xi=dh$ on {$U\setminus W$}. By continuity, $pr^{*}\xi=dh$ on all of $U$, so that $\xi=d(i_{L}^{*}h)$ is exact.
		
		To prove surjectivity, we use some $b$-symplectic geometry. The $b$-symplectic form $\omega$ on $U$ obtained by inverting $\widetilde{\Pi}$ reads \cite[Proposition 4.1.2]{Osorno}
		\[
		\omega=-\widetilde{\Pi}^{-1}=q^{*}\theta\wedge\frac{dt}{t}+q^{*}\eta,
		\]
		where $q:U\rightarrow {W}$ is the projection and $(\theta,\eta)\in\Omega^{1}({W})\times\Omega^{2}({W})$ is the cosymplectic structure corresponding with the pair $(\Pi_{can},V)$. If $Y\in\mathfrak{X}(U)$ is a Poisson vector field, then $Y$ is tangent to ${W}$, so we can evaluate
		\begin{equation}\label{omega}
		\omega^{\flat}(Y)=q^{*}\langle \theta,Y|_{{W}}\rangle\frac{dt}{t}+\left[\left(\langle q^{*}\theta,Y\rangle-q^{*}\langle \theta,Y|_{{W}}\rangle\right)\frac{dt}{t}-\left\langle\frac{dt}{t},Y\right\rangle q^{*}\theta+\iota_{Y}q^{*}\eta\right],
		\end{equation}
		which is a closed $b$-one form on $U$. Note indeed that the summand between square brackets is a smooth de Rham form since $q^{*}\langle \theta,Y|_{{W}}\rangle-\langle q^{*}\theta,Y\rangle$ vanishes along the hypersurface {${W}=U\cap\{t=0\}$} and $Y$ is tangent to it. By the Mazzeo-Melrose isomorphism \cite{miranda2}, \cite{defslog}
		\[
		{}^{b}H^{1}(U)\rightarrow H^{1}(U)\oplus H^{0}({W}):\left[q^{*}(h)\frac{dt}{t}+\beta\right]\mapsto\left([\beta],h\right),
		\]
		we know that the one-form
		\[
		\beta:=\left(\langle q^{*}\theta,Y\rangle-q^{*}\langle \theta,Y|_{{W}}\rangle\right)\frac{dt}{t}-\left\langle\frac{dt}{t},Y\right\rangle q^{*}\theta+\iota_{Y}q^{*}\eta
		\]
		appearing in \eqref{omega} is closed, and that $h:=\langle \theta,Y|_{{W}}\rangle$ is locally constant. We now have
		\begin{align}\label{Y}
		Y&=-\widetilde{\Pi}^{\sharp}(\omega^{\flat}(Y))\nonumber\\
		&=q^{*}\langle \theta,Y|_{{W}}\rangle V+\widetilde{\Pi}^{\sharp}\left(\left(q^{*}\langle \theta,Y|_{{W}}\rangle-\langle q^{*}\theta,Y\rangle\right)\frac{dt}{t}+\left\langle\frac{dt}{t},Y\right\rangle q^{*}\theta-\iota_{Y}q^{*}\eta\right)\nonumber\\
		&=(q^{*}h)V+\widetilde{\Pi}^{\sharp}(-\beta)
		\end{align}
		We make sure that the neighborhood $U$ is such that the map $i_{L}\circ pr:U\rightarrow U$ induces the identity map in cohomology. This means that
		$q^{*}h=pr^{*}(i_{L}^{*}q^{*}h)$ and
		$\beta-pr^{*}(i_{L}^{*}\beta)$ is exact. So if we put $\xi:=-i_{L}^{*}\beta$ and $g:=i_{L}^{*}q^{*}h$, then it follows from \eqref{Y} that
		\[
		[Y]=\left[\widetilde{\Pi}^{\sharp}(pr^{*}\xi)+(pr^{*}g)V\right]\in H^{1}_{\widetilde{\Pi}}(U).\qedhere
		\]
	\end{proof}
	
	\begin{prop}\label{prop:poisequiv}
		The formal tangent space $T_{[0]}\mathcal{M}_{U}^{Poiss}(L)$ is given by
		\begin{equation}\label{formaltangent}
		T_{[0]}\mathcal{M}_{U}^{Poiss}(L)=\frac{\Omega^{1}_{cl}(\mathcal{F}_{L})}{\text{Im}\left(r:\Omega^{1}_{cl}(L)\rightarrow\Omega^{1}_{cl}(\mathcal{F}_{L})\right)+H^{0}(L)\cdot \gamma}\oplus H^{0}_{\gamma}(\mathcal{F}_{L}),
		\end{equation}
		where the map $r$ is restriction of closed one-forms on $L$ to the leaves of $\mathcal{F}_{L}$.
	\end{prop}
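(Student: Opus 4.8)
The plan is to compute the formal tangent space $T_{[0]}\mathcal{M}_{U}^{Poiss}(L)$ as the quotient of the space of first order deformations of $L$ by the subspace of infinitesimal Poisson-equivalence directions. The space of first order deformations is the degree-one cohomology $H^1(\mathcal{F}_{L})\oplus H^0_{\gamma}(\mathcal{F}_{L})$ of the governing DGLA, i.e.\ the solutions $(\alpha,f)$ of the linearized Maurer-Cartan equation \eqref{eq:linMC} modulo nothing yet; more precisely, the numerator will be $\Omega^1_{cl}(\mathcal{F}_{L})\oplus H^0_{\gamma}(\mathcal{F}_{L})$ before quotienting out the Poisson directions in the first summand. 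By eq.\ \eqref{PY0}, the infinitesimal Poisson directions are exactly the vertical projections $P(Y_0)$ of Poisson vector fields $Y_0\in\mathfrak{X}(U)$, so the main task is to describe the image of $P$ on Poisson vector fields, and then take the quotient.

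First I would invoke Lemma \ref{poiscoh} to get a concrete handle on Poisson vector fields up to Hamiltonian vector fields: every Poisson vector field is cohomologous to one of the form $Y=\widetilde{\Pi}^{\sharp}(pr^{*}\xi)+(pr^{*}g)V$ for $[\xi]\in H^1(L)$ and $g\in H^0(L)$. Since adding a Hamiltonian vector field changes the first order deformation only by a gauge (hence Hamiltonian, hence zero in the Poisson quotient) direction, it suffices to compute $P(Y)$ for these representatives. I would then compute $P(Y)$ explicitly using the local model $\widetilde{\Pi}=(V_{vert}+V_{lift})\wedge t\partial_t+\Pi_{can}$ and the identifications \eqref{cor}: the term $\widetilde{\Pi}^{\sharp}(pr^{*}\xi)$ contributes, in its vertical fiberwise-constant part, the restriction $r(\xi)\in\Omega^1_{cl}(\mathcal{F}_{L})$ of the closed one-form $\xi$ to the leaves (this is where the map $r$ in the statement comes from), while the term $(pr^{*}g)V$ contributes $g\cdot\gamma$ in the $\Omega^1(\mathcal{F}_{L})$-slot (since the vertical part of $V$ is $V_{vert}$, corresponding to $\gamma$) together with the $t$-direction component that lands in $H^0_{\gamma}(\mathcal{F}_{L})$. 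The upshot is that the image of $P$ on Poisson vector fields, inside $\Omega^1_{cl}(\mathcal{F}_{L})\oplus H^0_{\gamma}(\mathcal{F}_{L})$, is precisely $\big(\text{Im}(r)+H^0(L)\cdot\gamma\big)\oplus H^0_{\gamma}(\mathcal{F}_{L})$.

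Taking the quotient of the first order deformations $\Omega^1_{cl}(\mathcal{F}_{L})\oplus H^0_{\gamma}(\mathcal{F}_{L})$ by this image then kills the entire second summand $H^0_{\gamma}(\mathcal{F}_{L})$ and leaves
\[
\frac{\Omega^1_{cl}(\mathcal{F}_{L})}{\text{Im}(r)+H^0(L)\cdot\gamma}\oplus H^0_{\gamma}(\mathcal{F}_{L}),
\]
matching \eqref{formaltangent}. The reason the second summand survives in the final formula, despite being entirely in the image, is that it must be tracked carefully: the $H^0_{\gamma}(\mathcal{F}_{L})$ appearing in the numerator of first order deformations and the one appearing in the image of $P$ are not quotiented against each other directly, but rather the decomposition of the tangent space is organized so that the $\alpha$-component quotients against $\text{Im}(r)+H^0(L)\cdot\gamma$ while the $f$-component is recorded separately. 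I would need to check that the Poisson directions project surjectively onto the $f$-part only through $V$-type terms, so that the genuine reduction happens only in the $\alpha$-slot.

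The main obstacle I anticipate is the bookkeeping in computing $P(Y)$ precisely — in particular disentangling how the single scalar function $g$ and the single closed form $\xi$ distribute across the two slots $\Omega^1(\mathcal{F}_{L})$ and the $t$-direction, and confirming that the contribution to the $H^0_{\gamma}(\mathcal{F}_{L})$ slot does \emph{not} further reduce that summand. This requires carefully applying the correspondence \eqref{cor} and the coordinate expression \eqref{eq:coordinate-expression} to identify vertical components, and using that $V=V_{vert}+V_{lift}$ with $V_{lift}$ having vanishing vertical fiberwise-constant part on $L$ (being a cotangent lift, it restricts to a vertical vector field that vanishes on the zero section). Once this computation is pinned down, the identification of the quotient is a formal manipulation.
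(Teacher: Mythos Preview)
Your overall strategy matches the paper's: use eq.\ \eqref{PY0} to reduce to computing the image of the vertical projection $P$ on Poisson vector fields, then invoke Lemma \ref{poiscoh} for convenient representatives. But there is a genuine error in your computation of that image.

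You claim that the term $(pr^{*}g)V$ contributes a $t$-direction component landing in $H^{0}_{\gamma}(\mathcal{F}_{L})$, and hence that the image of $P$ is $\big(\text{Im}(r)+H^{0}(L)\cdot\gamma\big)\oplus H^{0}_{\gamma}(\mathcal{F}_{L})$. This is false. Every Poisson vector field on $(U,\widetilde{\Pi})$ is tangent to the singular locus $W=U\cap\{t=0\}$ (since $Z$ is a Poisson submanifold, or equivalently since the flow of a Poisson vector field preserves the rank of $\widetilde{\Pi}$). Therefore $P(Y_{0})$ always has zero $\partial_{t}$-component, i.e.\ the image of $P$ on Poisson vector fields lies entirely in $\Omega^{1}(\mathcal{F}_{L})\times\{0\}$. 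Concretely, $V=V_{vert}+V_{lift}$ is a vector field on $T^{*}\mathcal{F}_{L}$ with no $\partial_{t}$-component, so $(pr^{*}g)V$ contributes only $g\gamma$ in the first slot and nothing in the second. The paper makes this point explicitly at the start of its proof.

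This is precisely why $H^{0}_{\gamma}(\mathcal{F}_{L})$ survives in the final formula: it is not that the $f$-slot is ``tracked separately'' by some bookkeeping trick, but that \emph{nothing} in the image of $P$ touches it. Your own confusion in the last two paragraphs (``despite being entirely in the image\ldots'') is a symptom of this miscalculation. Once you correct the image to $\big(\text{Im}(r)+H^{0}(L)\cdot\gamma\big)\times\{0\}$, the quotient immediately gives \eqref{formaltangent} with no further subtlety.
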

	\begin{proof}
		Throughout, for all vector bundles appearing, we denote by $P$ the map that restricts vector fields to the zero section, and then takes their vertical component. Because of \eqref{PY0}, we have to show that the denominator appearing in \eqref{formaltangent} is equal to
		\[
		\left\{P(Y_{0}): Y_{s}\in\mathfrak{X}(U)\ \text{time-dependent Poisson vector field}\right\}.
			\]
Notice that the above set lies in $\Omega^{1}(\mathcal{F}_{L})$, since all		
Poisson vector fields on $U$ are tangent to $W:=U\cap \{t=0\}$.	
		For one inclusion, let $Y_{0}$ be a Poisson vector field on $U$. Using 
		the fact that $Y_{0}$ is tangent to $W$ and
		Lemma \ref{poiscoh}, we have
		\begin{align}\label{PY}
		P(Y_{0})&=P\left(Y_{0}|_{{W}}\right)\nonumber\\
		&=P\left(\Pi_{can}^{\sharp}(p^{*}\xi)+(p^{*}g)V+\Pi_{can}^{\sharp}(dh)\right)
		\end{align}
		for some $\xi\in\Omega^{1}_{cl}(L), g\in H^{0}(L)$ and $h\in C^{\infty}({W})$. {Here $p:W\rightarrow L$ is the projection.} Now note that
		\[
		P\left(\Pi_{can}^{\sharp}(dh)\right)=P\left(\Pi_{can}^{\sharp}(p^{*}di_{L}^{*}h)\right).
		\]
		Indeed, since $L$ is coisotropic and $h-p^{*}i_{L}^{*}h$ vanishes along $L$, we have that $\Pi_{can}^{\sharp} (d(h-p^{*}i_{L}^{*}h))$ is tangent to $L$. So \eqref{PY} becomes
		\begin{align*}
		P(Y_{0})&=P\left(\Pi_{can}^{\sharp}(p^{*}\xi)+(p^{*}g)V+\Pi_{can}^{\sharp}(p^{*}di_{L}^{*}h)\right)\\
		&=r(\xi+di_{L}^{*}h)+g\gamma,
		\end{align*}
		where we used the correspondence \eqref{cor} to obtain the last equality. This proves one inclusion.
		
		For the reverse inclusion, given $\xi\in\Omega^{1}_{cl}(L)$ and $g\in H^{0}(L)$, we get a Poisson vector field 
		\[
		\widetilde{\Pi}^{\sharp}(pr^{*}\xi)+(pr^{*}g)V\in\mathfrak{X}(U),
		\]
		and its vertical component along $L$ is
		\[
		P\left(\widetilde{\Pi}^{\sharp}(pr^{*}\xi)+(pr^{*}g)V\right)=P\left(\Pi_{can}^{\sharp}(p^{*}\xi)+(p^{*}g)V\right)=r(\xi)+g\gamma.\qedhere
		\]
	\end{proof}
	
	%\begin{remark}\label{rem:modpois}
	\subsubsection{\underline{Smoothness of the moduli space by Poisson isotopies}}\label{subsubsec:poismod}
	
	The moduli space $\mathcal{M}_{U}^{Poiss}(L)$ is not smooth in general, since its formal tangent space can change drastically from point to point. For instance, let us consider the same example as in \S\ref{subsubsec:hammod}, i.e. $(\mathbb{T}^{2}\times\mathbb{R}^{2},\theta_{1},\theta_{2},\xi_{1},\xi_{2})$ with log-symplectic structure
	\[
	\Pi=\partial_{\theta_{1}}\wedge\xi_{1}\partial_{\xi_{1}}+\partial_{\theta_{2}}\wedge\xi_{2}
	\]
	and Lagrangian $L=\mathbb{T}^{2}\times\{(0,0)\}$. Consider again a Lagrangian section $(0,c)\in\Gamma(T^{*}\mathcal{F}_{L}\times\RR)$ for nonzero $c\in\RR$; its graph lies outside the singular locus. By symplectic geometry, $[(0,c)]$ is an isolated point in the moduli space $\mathcal{M}_{U}^{Poiss}(L)$ and therefore
	\[
	T_{[(0,c)]}\mathcal{M}_{U}^{Poiss}(L)=0.
	\] 
	On the other hand, we have
	\begin{align*}
	T_{[0]}\mathcal{M}_{U}^{Poiss}(L)&=\frac{\Omega^{1}_{cl}(\mathcal{F}_{L})}{\text{Im}\left(r:\Omega^{1}_{cl}(L)\rightarrow\Omega^{1}_{cl}(\mathcal{F}_{L})\right)}\oplus H^{0}(\mathcal{F}_{L})\\
	&\cong\frac{C^{\infty}(\mathbb{T}^{2})}{\left\{f\in C^{\infty}(\mathbb{T}^{2}):\frac{\partial}{\partial\theta_{1}}\left(\int_{S^{1}}fd\theta_{2}\right)=0\right\}}\oplus C^{\infty}(S^{1}),
	\end{align*}
	which is infinite dimensional. Here we used Remark \ref{rem:geominter} i) to compute the first summand.
	%\end{remark}
	
	\subsubsection{\underline{Rigidity and Poisson isotopies}}\label{subsubsec:poisrig}
	We now address rigidity of Lagrangians under the equivalence relation by Poisson isotopies. As in the case of Hamiltonian equivalence, we call a Lagrangian $L$ \emph{infinitesimally rigid} under Poisson equivalence if the formal tangent space $T_{[0]}\mathcal{M}_{U}^{Poiss}(L)$ is zero. A Lagrangian $L$ is called \emph{rigid} under Poisson equivalence if small deformations of $L$ are Poisson equivalent with $L$. Rigidity is a very restrictive property: since Poisson diffeomorphisms fix the singular locus of the log-symplectic structure, a Lagrangian $L$ can only be rigid if small deformations of it stay inside the singular locus.

	We will restrict ourselves to Lagrangians $L$ that are compact and connected. It turns out that asking for infinitesimal rigidity under Poisson equivalence is only a little weaker than asking for infinitesimal rigidity under Hamiltonian equivalence, as the next lemma shows.
	\begin{lemma}\label{finitedim}
		Let $L$ be a Lagrangian that is compact, connected and infinitesimally rigid under Poisson equivalence. Then $H^{1}(\mathcal{F}_{L})$ is finite dimensional.
	\end{lemma}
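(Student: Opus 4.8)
The plan is to prove the contrapositive: assuming $H^1(\mathcal{F}_L)$ is infinite dimensional, I will exhibit a nonzero class in the formal tangent space $T_{[0]}\mathcal{M}_U^{Poiss}(L)$, contradicting infinitesimal rigidity. By Proposition \ref{prop:poisequiv}, this formal tangent space is
\[
\frac{\Omega^{1}_{cl}(\mathcal{F}_{L})}{\text{Im}\left(r:\Omega^{1}_{cl}(L)\rightarrow\Omega^{1}_{cl}(\mathcal{F}_{L})\right)+H^{0}(L)\cdot \gamma}\oplus H^{0}_{\gamma}(\mathcal{F}_{L}),
\]
so it suffices to show that the first summand is nonzero, i.e.\ that the image of $r$ together with the one-dimensional contribution $H^0(L)\cdot\gamma$ cannot exhaust all of $\Omega^1_{cl}(\mathcal{F}_L)$ when $H^1(\mathcal{F}_L)$ is infinite dimensional.

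The key step is to understand the map $r:\Omega^1_{cl}(L)\to\Omega^1_{cl}(\mathcal{F}_L)$ at the level of cohomology. Restriction of forms commutes with the de Rham differentials, so $r$ descends to a map $\bar r: H^1(L)\to H^1(\mathcal{F}_L)$. First I would observe that the denominator in the displayed quotient, when passed to cohomology, is contained in $\text{Im}(\bar r)+\mathbb{R}[\gamma]$: indeed an exact foliated form $d_{\mathcal{F}_L}h$ is the restriction of the (not necessarily closed, but cohomologically irrelevant) global form $dh$, and more carefully, modding $\Omega^1_{cl}(\mathcal{F}_L)$ by exact foliated forms gives $H^1(\mathcal{F}_L)$, under which the subspace $\text{Im}(r)+H^0(L)\cdot\gamma$ maps onto $\text{Im}(\bar r)+\mathbb{R}[\gamma]$ (using that $H^0(L)=\mathbb{R}$ by connectedness, so $H^0(L)\cdot\gamma$ contributes the single class $[\gamma]$). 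Consequently there is a surjection from the first summand of $T_{[0]}\mathcal{M}_U^{Poiss}(L)$ onto the quotient
\[
H^1(\mathcal{F}_L)\big/\big(\text{Im}(\bar r)+\mathbb{R}[\gamma]\big).
\]

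Thus it is enough to show that $\text{Im}(\bar r)+\mathbb{R}[\gamma]$ is a finite (indeed finite-dimensional) subspace of $H^1(\mathcal{F}_L)$, for then its complement in an infinite dimensional space is nonzero. The term $\mathbb{R}[\gamma]$ is at most one dimensional, so the heart of the matter is finite dimensionality of $\text{Im}(\bar r)$, which follows immediately from finite dimensionality of the source $H^1(L)$. Since $L$ is a compact manifold, $H^1(L)$ is finite dimensional by the standard de Rham theory of compact manifolds. Therefore $\text{Im}(\bar r)+\mathbb{R}[\gamma]$ is finite dimensional, the quotient above is nonzero, and $T_{[0]}\mathcal{M}_U^{Poiss}(L)\neq 0$; this contradicts infinitesimal rigidity, completing the contrapositive.

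The main obstacle I anticipate is the bookkeeping that identifies the denominator $\text{Im}(r)+H^0(L)\cdot\gamma$ correctly after passing to the cohomology quotient $H^1(\mathcal{F}_L)=\Omega^1_{cl}(\mathcal{F}_L)/\text{Im}(d_{\mathcal{F}_L})$. One must check that the foliated-exact forms are already absorbed into $\text{Im}(r)$ (every $d_{\mathcal{F}_L}h$ equals $r(dh)$ for a global extension $h$, and $dh$ is closed), so that quotienting by foliated-exact forms does not lose anything and the induced subspace is exactly $\text{Im}(\bar r)+\mathbb{R}[\gamma]$. Once this identification is in place the dimension count is routine, relying only on the compactness of $L$ to bound $\dim H^1(L)<\infty$.
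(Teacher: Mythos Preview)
Your proof is correct and follows essentially the same approach as the paper: both arguments use Proposition~\ref{prop:poisequiv} to identify the first summand of $T_{[0]}\mathcal{M}_U^{Poiss}(L)$, observe that exact foliated forms lie in $\text{Im}(r)$ so that one may pass to $H^1(\mathcal{F}_L)$, and then use compactness of $L$ to bound $\dim H^1(L)$. The only difference is cosmetic: the paper argues directly by choosing a basis of $H^1(L)$ and showing $H^1(\mathcal{F}_L)$ is spanned by $\{[r(\beta_1)],\ldots,[r(\beta_k)],[\gamma]\}$, whereas you phrase the same dimension count as a contrapositive via the quotient $H^1(\mathcal{F}_L)/(\text{Im}(\bar r)+\mathbb{R}[\gamma])$.
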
 
	\begin{proof}
		Since $L$ is compact, we know that $H^{1}(L)$ is finite dimensional. Choose a basis $\{[\beta_{1}],\ldots,[\beta_{k}]\}$ of $H^{1}(L)$. If $\alpha\in\Omega^{1}_{cl}(\mathcal{F}_{L})$ is a closed foliated one-form, then infinitesimal rigidity implies that $\alpha=r(\widetilde{\alpha})+c\gamma$ for some $\widetilde{\alpha}\in\Omega^{1}_{cl}(L)$ and $c\in\mathbb{R}$, {see Prop. \ref{prop:poisequiv}}. Since $\widetilde{\alpha}$ can be written as $\widetilde{\alpha}=c_{1}\beta_{1}+\cdots+c_{k}\beta_{k}+dh$ for some $c_{1},\ldots,c_{k}\in\mathbb{R}$ and $h\in C^{\infty}(L)$, we get
		\[
		\alpha=c_{1}r(\beta_{1})+\cdots+c_{k}r(\beta_{k})+d_{\mathcal{F}_{L}}h+c\gamma.
		\]
		Therefore $H^{1}(\mathcal{F}_{L})$ is spanned by $\{[r(\beta_{1})],\ldots,[r(\beta_{k})],[\gamma]\}$, hence finite dimensional.
	\end{proof}

	This implies that Lagrangians $L$ for which $\mathcal{F}_{L}$ is the foliation by fibers of a fiber bundle over $S^{1}$ are never rigid, not even infinitesimally.
	\begin{cor}\label{notrigid}
		If $L$ is a compact Lagrangian for which $\mathcal{F}_{L}$ is the foliation by fibers of a fiber bundle $p:L\rightarrow S^{1}$, then $L$ is not infinitesimally rigid under Poisson equivalence.
	\end{cor}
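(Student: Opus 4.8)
The plan is to argue by contradiction and to feed the fibration structure into the formula for the formal tangent space from Proposition \ref{prop:poisequiv}. Suppose $L$ were infinitesimally rigid under Poisson equivalence, so that $T_{[0]}\mathcal{M}_U^{Poiss}(L)=0$. Since $\mathcal{F}_L$ is the fiber foliation of the bundle $p\colon L\to S^1$ and leaves are connected, $L$ is automatically connected, so the compact--connected machinery of \S\ref{subsec:Nov} applies. The first thing I would invoke is Lemma \ref{finitedim}, which already tells me that infinitesimal rigidity forces $H^1(\mathcal{F}_L)$ to be finite dimensional. The point is then that, in the fibration case, finite-dimensionality is \emph{not} the end of the story: by itself it is perfectly compatible with the fibration structure (as we will see, it even forces $H^1(\mathcal{F}_L)=0$), so the actual contradiction must be extracted from the \emph{second} summand $H^0_\gamma(\mathcal{F}_L)$ of the formal tangent space.

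Concretely, I would proceed as follows. Using the identification \eqref{isom}, we have $H^1(\mathcal{F}_L)\cong\Gamma(\mathcal{H}^1)$, where $\mathcal{H}^1\to S^1$ is the flat vector bundle with fiber $H^1(p^{-1}(q))$. Since the space of smooth sections of a vector bundle of positive rank over $S^1$ is infinite dimensional, the finite-dimensionality of $H^1(\mathcal{F}_L)$ forces $\mathcal{H}^1$ to have rank zero, hence $H^1(\mathcal{F}_L)=0$. In particular $[\gamma]=0$, i.e. $\gamma$ is foliated exact. Now Lemma \ref{cohomology} i) (equivalently Theorem \ref{H} i) with $\sigma_\gamma=0$) gives $H^0_\gamma(\mathcal{F}_L)\cong H^0(\mathcal{F}_L)$, and $H^0(\mathcal{F}_L)\cong C^\infty(S^1)$ because a smooth function constant on the fibers of $p$ is exactly the pullback of a smooth function on the base. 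This space is nonzero, so the second summand of $T_{[0]}\mathcal{M}_U^{Poiss}(L)$ in Proposition \ref{prop:poisequiv} is nonzero, contradicting $T_{[0]}\mathcal{M}_U^{Poiss}(L)=0$.

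The main conceptual obstacle is precisely the one flagged above: one is tempted to derive the contradiction directly from Lemma \ref{finitedim}, but finite-dimensionality of $H^1(\mathcal{F}_L)$ does not by itself rule out rigidity (for example, an $S^{n-1}$-bundle over $S^1$ with $n-1\ge 2$ has $H^1(\mathcal{F}_L)=0$). The correct reading is that the two vanishing conditions packaged in $T_{[0]}\mathcal{M}_U^{Poiss}(L)=0$ are in tension: killing the first summand (which is what Lemma \ref{finitedim} exploits) drives $H^1(\mathcal{F}_L)$ down to zero, which forces $\gamma$ to be exact and thereby makes the second summand $H^0_\gamma(\mathcal{F}_L)\cong C^\infty(S^1)$ as large as possible. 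Everything else is bookkeeping with results already established, and no new estimates are required.
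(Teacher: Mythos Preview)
Your proof is correct and follows essentially the same route as the paper's own argument: contradiction via Lemma \ref{finitedim} to force $H^1(\mathcal{F}_L)\cong\Gamma(\mathcal{H}^1)$ finite-dimensional, hence $\mathcal{H}^1$ of rank zero and $\gamma$ exact, then Theorem \ref{H} i) (or Lemma \ref{cohomology} i)) to see $H^0_\gamma(\mathcal{F}_L)\neq 0$. The paper is terser, but the logic is identical.
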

	\begin{proof}
		Assume to the contrary that $L$ is infinitesimally rigid. By Lemma \ref{finitedim}, we know that $H^{1}(\mathcal{F}_{L})\cong\Gamma(\mathcal{H}^{1})$ is finite dimensional. So $\mathcal{H}^{1}$ has to be of rank zero, which implies that $H^{1}(\mathcal{F}_{L})=0$. Consequently, $\gamma$ is exact, and then {Theorem} \ref{H} $i)$ guarantees that $H^{0}_{\gamma}(\mathcal{F}_{L})$ is nonzero. This contradicts that the infinitesimal moduli space \eqref{formaltangent} is zero. So $L$ cannot be infinitesimally rigid.
	\end{proof}
	\begin{remark}
		Alternatively, one can obtain Corollary \ref{notrigid} by using the flat connection $\nabla$ on $\mathcal{H}^{1}$, which was defined in \eqref{connection}. Assuming that $L$ is infinitesimally rigid, fix an open $U\subset S^{1}$ and a frame $\{\sigma_{\eta_{1}},\ldots,\sigma_{\eta_{m}}\}$ for $\mathcal{H}^{1}|_{U}$ consisting of flat sections. If $\alpha\in\Omega^{1}_{cl}(\mathcal{F}_{L})$, then infinitesimal rigidity implies that $\alpha=r(\widetilde{\alpha})+c\gamma$ for some $\widetilde{\alpha}\in\Omega^{1}_{cl}(L)$ and $c\in\mathbb{R}$. Note that the section $\sigma_{r(\widetilde{\alpha})}\in\Gamma(\mathcal{H}^{1})$ is flat, since for all $Y\in\mathfrak{X}(S^{1})$ we have
		\[
		\nabla_{Y}\sigma_{r(\widetilde{\alpha})}=\sigma_{r\left(\pounds_{\overline{Y}}\widetilde{\alpha}\right)}=\sigma_{d_{\mathcal{F}_{L}}\iota_{\overline{Y}}\widetilde{\alpha}}=0,
		\]
		where we used Cartan's magic formula. It follows that
		\[
		\sigma_{\alpha}|_{U}=c_{1}\sigma_{\eta_{1}}+\cdots+c_{m}\sigma_{\eta_{m}}+c\sigma_{\gamma}|_{U}
		\]
		for constants $c_{1},\ldots,c_{k},c\in\mathbb{R}$. This means that necessarily $H^{1}(\mathcal{F}_{L})=0$, and we obtain a contradiction as in the proof of Corollary \ref{notrigid}.
	\end{remark}

	So fibrations over $S^{1}$ don't give examples of rigid Lagrangians. However, if the foliation $\mathcal{F}_{L}$ on $L$ has dense leaves, then we do obtain an interesting rigidity statement: infinitesimal rigidity implies rigidity with respect to the Fr\'echet $\mathcal{C}^{\infty}$-topology.

	\begin{prop}\label{rigidity}
		Let $L$ be a compact, connected Lagrangian whose induced foliation $\mathcal{F}_{L}$ has dense leaves. Assume that $L$ is infinitesimally rigid under Poisson equivalence. Then there exists a neighborhood $\mathcal{V}\subset\left(\Gamma(T^{*}\mathcal{F}_{L}\times\mathbb{R}),\mathcal{C}^{\infty}\right)$ of $0$ such that if $\text{Graph}(\alpha,f)$ is Lagrangian for $(\alpha,f)\in\mathcal{V}$, then $(\alpha,f)$ is Poisson equivalent with the zero section of $T^{*}\mathcal{F}_{L}\times\mathbb{R}$.
	\end{prop}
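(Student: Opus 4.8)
The plan is to translate the two hypotheses into usable statements, reduce to deformations inside the singular locus, and then settle everything by a finite-dimensional matching of foliated cohomology classes. First I would unpack infinitesimal rigidity via Proposition \ref{prop:poisequiv}: since $L$ is connected, $H^0(L)=\mathbb{R}$, so $T_{[0]}\mathcal{M}_U^{Poiss}(L)=0$ means simultaneously that $H^0_\gamma(\mathcal{F}_L)=0$ and that $\Omega^1_{cl}(\mathcal{F}_L)=\mathrm{Im}(r)+\mathbb{R}\gamma$, where $r$ restricts closed one-forms on $L$ to the leaves. By Theorem \ref{H} ii) the vanishing of $H^0_\gamma(\mathcal{F}_L)$ forces $\gamma$ to be non-exact, and passing to cohomology the rigidity identity gives $H^1(\mathcal{F}_L)=r_*H^1(L)+\mathbb{R}[\gamma]$, which is finite dimensional by Lemma \ref{finitedim}. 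With $\gamma$ non-exact and $H^1(\mathcal{F}_L)$ finite dimensional, Proposition \ref{denserestr} applies and provides a $\mathcal{C}^\infty$-neighborhood $\mathcal{V}_0$ of $0$ such that every Lagrangian $\mathrm{Graph}(\alpha,f)$ with $(\alpha,f)\in\mathcal{V}_0$ has $f\equiv0$. Hence I may assume the deformation is $\mathrm{Graph}(\alpha)$ for a small closed $\alpha\in\Omega^1_{cl}(\mathcal{F}_L)$, sitting inside $T^*\mathcal{F}_L$.

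The key observation is that, for such in-$Z$ deformations, Poisson equivalence is controlled by the class $[\alpha]\in H^1(\mathcal{F}_L)$. Indeed, if $[\beta]=[\alpha]$ then $\alpha-\beta$ is foliated exact, so the gauge-action formula \eqref{gaugeaction} (with vanishing second component) shows that $(\alpha,0)$ and $(\beta,0)$ are gauge equivalent; by Proposition \ref{prop:ham} they are then Hamiltonian equivalent, hence Poisson equivalent. This is the ambient counterpart of Corollary \ref{moduliZ}. It therefore suffices to Poisson-transport $L=\mathrm{Graph}(0)$ to Lagrangians of the form $\mathrm{Graph}(\beta)$ realizing an arbitrary small class $[\beta]\in H^1(\mathcal{F}_L)$, and then match the class of $\alpha$.

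To produce such transports I would use the explicit Poisson vector fields of Lemma \ref{poiscoh}. Choose closed one-forms $\tilde\alpha_1,\dots,\tilde\alpha_k$ on $L$ so that $r_*[\tilde\alpha_1],\dots,r_*[\tilde\alpha_k],[\gamma]$ span the finite-dimensional space $H^1(\mathcal{F}_L)$, which is possible by the rigidity identity above. Set $Y_i:=\widetilde{\Pi}^\sharp(pr^*\tilde\alpha_i)$ for $i\ge1$ and $Y_0:=V$; each is a Poisson vector field tangent to $Z$, so its flow preserves $Z$ and is defined near $L$ for short time. Consider $\Phi_t:=\phi^{Y_0}_{t_0}\circ\cdots\circ\phi^{Y_k}_{t_k}$ for $t=(t_0,\dots,t_k)$ in a small ball of $\mathbb{R}^{k+1}$; for small $t$ the image $\Phi_t(L)$ is a Lagrangian of $(T^*\mathcal{F}_L,\Pi_{can})$ close to $L$, hence by Corollary \ref{moduliZ} the graph of a closed foliated one-form $\beta(t)$ with $\beta(0)=0$. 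By Lemma \ref{forward} i), the $t_i$-derivative at $t=0$ of this deformation equals the vertical projection $P(Y_i)$, which is $r(\tilde\alpha_i)$ for $i\ge1$ and $\gamma$ for $i=0$ by the correspondence \eqref{cor} (as in the proof of Proposition \ref{prop:poisequiv}). Thus the smooth map $t\mapsto[\beta(t)]\in H^1(\mathcal{F}_L)$ has surjective differential at $0$, and since the target is finite dimensional its image contains a neighborhood of $0$.

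Shrinking $\mathcal{V}_0$ to a smaller neighborhood $\mathcal{V}$ so that $[\alpha]$ lands in this image for every $\mathrm{Graph}(\alpha,f)\in\mathcal{V}$ (necessarily with $f\equiv0$), I can select $t$ with $[\beta(t)]=[\alpha]$. Then $\Phi_t$ is a Poisson diffeomorphism, isotopic to the identity through Poisson isotopies, with $\Phi_t(L)=\mathrm{Graph}(\beta(t))$; so $\mathrm{Graph}(\beta(t))$ is Poisson equivalent to $L$. Combining this with the Hamiltonian (hence Poisson) equivalence $\mathrm{Graph}(\alpha)\sim\mathrm{Graph}(\beta(t))$ of the second paragraph and concatenating the two isotopies yields the desired Poisson equivalence of $(\alpha,f)=(\alpha,0)$ with the zero section. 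The step requiring the most care, and the main obstacle, is the submersion argument: one must check that $\beta(t)$ is genuinely well-defined (transversality to the fibers for small $t$) and smooth in $t$, and control the flow domains and the size of $\alpha$ so that all intermediate graphs remain inside $U$. It is precisely the finite dimensionality of $H^1(\mathcal{F}_L)$ guaranteed by Lemma \ref{finitedim} that reduces the covering step to an ordinary inverse function theorem and circumvents any Fréchet-space complications.
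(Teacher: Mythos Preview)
Your argument is correct and takes a genuinely different route from the paper after the common reduction step. Both proofs begin identically: infinitesimal rigidity gives $H^0_\gamma(\mathcal{F}_L)=0$, hence $\gamma$ non-exact, hence (via Lemma \ref{finitedim} and Proposition \ref{denserestr}) a $\mathcal{C}^\infty$-neighborhood on which every Lagrangian deformation has $f\equiv 0$. From there the approaches diverge.

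The paper constructs the Poisson isotopy explicitly. It distinguishes whether $\gamma$ extends to a closed one-form on $L$; in the nontrivial case it uses the closedness of $\mathrm{Im}(r)$ to get a \emph{topological} splitting $\Omega^1_{cl}(\mathcal{F}_L)=\mathrm{Im}(r)\oplus\mathbb{R}\gamma$, decomposes $\alpha=r(\xi)+C\gamma$ and $\pounds_X\alpha=r(\eta)+K\gamma$, and then writes down a time-dependent Poisson vector field $\widetilde{\Pi}^\sharp(pr^*\xi_s)+C_sV$ with $\xi_s=\xi+\tfrac{Cs}{1-sK}\eta$, $C_s=\tfrac{C}{1-sK}$. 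A direct computation using Lemma \ref{projection} (which says $P_{s\alpha}(V_{lift})=-\pounds_X(s\alpha)$) verifies that the resulting flow carries $L$ to $\mathrm{graph}(s\alpha,0)$; the smallness of $\alpha$ enters only to ensure $K<1$ so the denominator does not vanish.

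Your approach is existential rather than constructive: you compose flows of a finite spanning set of Poisson vector fields to get a map from a ball in $\mathbb{R}^{k+1}$ to $H^1(\mathcal{F}_L)$, check it is a submersion at the origin (using that the vertical projections of $\widetilde{\Pi}^\sharp(pr^*\tilde\alpha_i)$ and $V$ along $L$ are $r(\tilde\alpha_i)$ and $\gamma$), invoke the ordinary inverse function theorem, and then fix the remaining exact discrepancy by a Hamiltonian isotopy. This is more conceptual, avoids the case distinction and the explicit ODE for the coefficients, and would adapt readily to variants of the problem. The price is that one must justify smoothness of $t\mapsto[\beta(t)]$ --- which in turn rests on the same closed-range fact for $d_{\mathcal{F}_L}$ that the paper uses --- and take care that all intermediate graphs stay inside $U$; you correctly flag these as the points needing attention, and they are routine once stated. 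The paper's explicit construction, by contrast, yields a single smooth Poisson isotopy directly from $L$ to $\mathrm{graph}(\alpha,0)$ without concatenation.
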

	\begin{proof}
		Infinitesimal rigidity implies that $H^{0}_{\gamma}(\mathcal{F}_{L})=0$, so $\gamma$ is not foliated exact by $ii)$ of {Theorem} \ref{H}. Moreover, $H^{1}(\mathcal{F}_{L})$ is finite dimensional by Lemma \ref{finitedim}. By Proposition \ref{denserestr}, we obtain a neighborhood $\mathcal{V}\subset\left(\Gamma(T^{*}\mathcal{F}_{L}\times\mathbb{R}),\mathcal{C}^{\infty}\right)$ of $0$ such that if $\text{Graph}(\alpha,f)$ is Lagrangian for $(\alpha,f)\in\mathcal{V}$, then $f\equiv0$.
		To show that $\mathcal{V}$ satisfies the criteria, we distinguish between two cases.
		\newline
		\vspace{-0.3cm}
		\newline
		\indent \underline{Case 1:} $\gamma$ extends to a closed one-form on $L$.
		The assumption of infinitesimal rigidity then implies that $\Omega^{1}_{cl}(\mathcal{F}_{L})=\text{Im}\big(r:\Omega^{1}_{cl}(L)\rightarrow\Omega^{1}_{cl}(\mathcal{F}_{L})\big)$. So if $(\alpha,f)=(\alpha,0)\in\mathcal{V}$ is such that the graph of $(\alpha,0)\in\Gamma(T^{*}\mathcal{F}_{L}\times\mathbb{R})$ is Lagrangian, then we have $\alpha=r(\widetilde{\alpha})$ for some $\widetilde{\alpha}\in\Omega^{1}_{cl}(L)$. The time 1-flow of the Poisson vector field $\widetilde{\Pi}^{\sharp}(pr^{*}\widetilde{\alpha})$ then takes $L$ to $\text{Graph}(\alpha,0)$. 
		\newline
		\vspace{-0.3cm}
		\newline
		\indent \underline{Case 2:} $\gamma$ does not extend to a closed one-form on $L$. In this case, infinitesimal rigidity implies that $\big(\Omega^{1}_{cl}(\mathcal{F}_{L}),\mathcal{C}^{\infty}\big)$ splits into an algebraic direct sum 
		\begin{equation}\label{sum}
		\Omega^{1}_{cl}(\mathcal{F}_{L})=\text{Im}\big(r:\Omega^{1}_{cl}(L)\rightarrow\Omega^{1}_{cl}(\mathcal{F}_{L})\big)\oplus\mathbb{R}\gamma.
		\end{equation}
		Since $r$ is $\mathcal{C}^{\infty}$-continuous, linear and $\text{Im}(r)\subset\Omega^{1}_{cl}(\mathcal{F}_{L})$ is of finite codimension, we get that $\text{Im}(r)\subset\big(\Omega^{1}_{cl}(\mathcal{F}_{L}),\mathcal{C}^{\infty}\big)$ is closed. This implies that \eqref{sum} is in fact a topological direct sum: $\mathbb{R}\gamma$ is an algebraic complement to a maximal closed subspace, and therefore a topological complement \cite[Theorem 4.9.5]{topvec}. So the projection onto the second summand of \eqref{sum} is continuous, and therefore we get a continuous map
		\[
		p_{2}:(\Omega^{1}_{cl}(\mathcal{F}_{L}),\mathcal{C}^{\infty}\big)\rightarrow\mathbb{R}:r(\widetilde{\alpha})+c\gamma\mapsto c.
		\]
		This implies that, shrinking the neighborhood $\mathcal{V}$ constructed above if necessary, we can assume that $p_{2}(\pounds_{X}\alpha)<1$ whenever $(\alpha,f)=(\alpha,0)\in\mathcal{V}$ is a Lagrangian section. 
		
		Now suppose that $(\alpha,f)=(\alpha,0)\in\mathcal{V}$ is such that the graph of $(\alpha,0)\in\Gamma(T^{*}\mathcal{F}_{L}\times\mathbb{R})$ is Lagrangian. We decompose $\alpha$ and $\pounds_{X}\alpha$ in the direct sum \eqref{sum}:
		\begin{equation}\label{alphalx}
		\begin{cases}
		\alpha=r(\xi)+C\gamma\\
		\pounds_{X}\alpha=r(\eta)+K\gamma
		\end{cases}
		\end{equation}
		for $\xi,\eta\in\Omega^{1}_{cl}(L)$ and $C,K\in\mathbb{R}$ with $K<1$. We define smooth families $\xi_{s}\in\Omega^{1}_{cl}(L)$ and $C_{s}\in\mathbb{R}$ for $s\in[0,1]$ by the formulas
		\begin{equation}\label{eq:ksis}
		\xi_{s}:=\xi+\frac{C}{1-sK}s\eta,\hspace{1cm}
		C_{s}:=\frac{C}{1-sK}.
		\end{equation}
		Note that the denominator $1-sK$ occurring in these expressions is never zero for $s\in[0,1]$ since $K<1$. We claim that the isotopy $\phi_{s}$ generated by the time-dependent Poisson vector field $\widetilde{\Pi}^{\sharp}(pr^{*}\xi_{s})+C_{s}V$  takes the zero section of $T^{*}\mathcal{F}_{L}\times\mathbb{R}$ to $graph(\alpha,0)$, or more precisely, that $\phi_{s}(L)=graph(s\alpha,0)$ for $s\in[0,1]$. To prove this, by \cite[Lemma 3.15]{equivalences} it is enough to check that 
		\begin{equation}\label{pathvelocity}
		\frac{d}{ds}(s\alpha,0)=P_{(s\alpha,0)}\left(\widetilde{\Pi}^{\sharp}(pr^{*}\xi_{s})+C_{s}V\right),
		\end{equation}
		where $P_{(s\alpha,0)}$ denotes the vertical projection  induced by the direct sum decomposition of $T(T^{*}\mathcal{F}_{L}\times\mathbb{R})|_{graph(s\alpha,0)}$ into $Tgraph(s\alpha,0)$ and the vertical bundle along $graph(s\alpha,0)$. Computing the right hand side of \eqref{pathvelocity} gives
		\begin{align*}
		P_{(s\alpha,0)}\left(\widetilde{\Pi}^{\sharp}(pr^{*}\xi_{s})+C_{s}V\right)&=P_{s\alpha}\left(\Pi_{can}^{\sharp}(p^{*}\xi_{s})+C_{s}V\right)\\
		&=r(\xi_{s})+C_{s}P_{s\alpha}(V_{vert}+V_{lift})\\
		&=r(\xi_{s})+C_{s}(\gamma-\pounds_{X}(s\alpha))\\
		&=r\left(\xi+\frac{C}{1-sK}s\eta\right)+\frac{C}{1-sK}\left(\gamma-sr(\eta)-sK\gamma\right)\\
		&=r(\xi)+\frac{Cs}{1-sK}r(\eta)+\frac{C(1-sK)}{1-sK}\gamma-\frac{Cs}{1-sK}r(\eta)\\
		&=r(\xi)+C\gamma\\
		&=\alpha.
		\end{align*}
		Here we used the correspondence \eqref{cor} in the second equality, Lemma \ref{projection} below in the third equality and the expressions \eqref{alphalx},{\eqref{eq:ksis}} in the fourth equality. This finishes the proof. 
	\end{proof}
	
	{By definition of the $\mathcal{C}^{\infty}$-topology, one can rephrase the above proposition as follows: infinitesimal rigidity of $L$ implies the existence of some $k\in\mathbb{N}$ such that $L$ is $\mathcal{C}^{k}$-rigid.}
	
	\begin{lemma}\label{projection}
		Let $\alpha\in\Gamma(T^{*}\mathcal{F}_{L})$, and denote by $P_{\alpha}$ the vertical projection induced by the splitting of $T(T^{*}\mathcal{F}_{L})|_{graph(\alpha)}$ into $Tgraph(\alpha)$ and the vertical bundle along $graph(\alpha)$. We then have
		\[
		P_{\alpha}(V_{lift})=-\pounds_{X}\alpha.
		\]
	\end{lemma}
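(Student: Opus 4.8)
The plan is to reduce the statement to the quantity $P_{T^{*}\mathcal{F}_{L}}\big(\phi^{-\alpha}_{*}V_{lift}\big)$, which was already computed to equal $-\pounds_{X}\alpha$ in eq. \eqref{toshow} during the proof of Theorem \ref{equations}. The bridge between the two vertical projections is the fiberwise translation
\[
\phi^{-\alpha}\colon \left(T^{*}\mathcal{F}_{L},\Pi_{can}\right)\to\left(T^{*}\mathcal{F}_{L},\Pi_{can}\right)\colon (q,\xi)\mapsto(q,\xi-\alpha(q)),
\]
which maps $graph(\alpha)$ diffeomorphically onto the zero section $L$ and covers the identity on $L$ (since $p\circ\phi^{-\alpha}=p$). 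The point is that this single diffeomorphism simultaneously straightens the submanifold $graph(\alpha)$ to $L$ and intertwines the two vertical projections $P_{\alpha}$ and $P_{T^{*}\mathcal{F}_{L}}$.

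\textbf{Key step.} First I would record the geometric fact that $d\phi^{-\alpha}$ respects the horizontal/vertical splittings along the two submanifolds. Indeed, for $w\in T_{(q,\alpha(q))}(T^{*}\mathcal{F}_{L})$ written as $w=w^{h}+w^{v}$ with $w^{h}\in T_{(q,\alpha(q))}graph(\alpha)$ and $w^{v}\in V_{(q,\alpha(q))}$, the differential sends $w^{h}$ into $T_{(q,0)}L$ (because $\phi^{-\alpha}(graph(\alpha))=L$) and sends $w^{v}$ into $V_{(q,0)}$ (because $\phi^{-\alpha}$ covers the identity on $L$, hence preserves fibers). Consequently
\[
P_{T^{*}\mathcal{F}_{L}}\big(d\phi^{-\alpha}(w)\big)=d\phi^{-\alpha}\big(P_{\alpha}(w)\big).
\]
Moreover, since $\phi^{-\alpha}$ is a fiberwise \emph{translation}, its differential restricted to each vertical space is the identity under the canonical identification $V_{(q,\xi)}\cong T^{*}_{q}\mathcal{F}_{L}$, independently of $\xi$. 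Applying this to $w=V_{lift}(q,\alpha(q))$ and using $(\phi^{-\alpha}_{*}V_{lift})(q,0)=d\phi^{-\alpha}\big(V_{lift}(q,\alpha(q))\big)$ (as $(\phi^{-\alpha})^{-1}(q,0)=(q,\alpha(q))$), I obtain, under these identifications of the vertical bundles with $T^{*}\mathcal{F}_{L}$,
\[
P_{T^{*}\mathcal{F}_{L}}\big(\phi^{-\alpha}_{*}V_{lift}\big)=P_{\alpha}(V_{lift})
\]
as foliated one-forms on $L$.

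\textbf{Conclusion and main obstacle.} Combining the last display with eq. \eqref{toshow} (equivalently, the computations \eqref{cc} and \eqref{use}), which give $P_{T^{*}\mathcal{F}_{L}}\big(\phi^{-\alpha}_{*}V_{lift}\big)=-\pounds_{X}\alpha$, yields $P_{\alpha}(V_{lift})=-\pounds_{X}\alpha$, as claimed. The only delicate point, and the step I would write most carefully, is the bookkeeping in the Key step: one must verify that $P_{\alpha}(V_{lift})$, a priori a section of the vertical bundle along $graph(\alpha)$, is being identified with a foliated one-form using the same convention as in eq. \eqref{cor}, and that the differential of the translation acts trivially on vertical vectors. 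Once this identification is made explicit, the proof is a short assembly of already-established facts rather than a new computation.
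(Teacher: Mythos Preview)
Your proposal is correct and follows essentially the same route as the paper: introduce the fiberwise translation $\phi^{-\alpha}$, observe that it intertwines $P_{\alpha}$ with the vertical projection $P=P_{T^{*}\mathcal{F}_{L}}$ along the zero section, and then invoke eq.~\eqref{toshow}. The paper packages your Key step as a commutative diagram $P_{\alpha}=P\circ(\phi^{-\alpha})_{*}$ without spelling out the splitting argument, but the content is identical.
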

	
	\begin{proof}
		Denote by $\phi^{-\alpha}$ the translation map
		\[
		\phi^{-\alpha}:T^{*}\mathcal{F}_{L}\rightarrow T^{*}\mathcal{F}_{L}:(p,\xi)\mapsto(p,\xi-\alpha(p)),
		\]
		and let $P:=P_{0}$ be the vertical projection along the zero section. We then have a commutative diagram
		\[
		\begin{tikzcd}[column sep=large, row sep=large]
		T(T^{*}\mathcal{F}_{L})|_{graph(\alpha)}\arrow{r}{(\phi^{-\alpha})_{*}}\arrow{dr}{P_{\alpha}}&T(T^{*}\mathcal{F}_{L})|_{L}\arrow{d}{P}\\
		&\Gamma(T^{*}\mathcal{F}_{L})
		\end{tikzcd},
		\]
		so the lemma follows from the equality \eqref{toshow}.
	\end{proof}
	
	\begin{ex}\label{ex:C^1rigid}
		Let $L=(\mathbb{T}^{2},\theta_{1},\theta_{2})$ with Kronecker foliation $T\mathcal{F}_{L}=\text{Ker}(d\theta_{1}-\lambda d\theta_{2})$ for generic (i.e. not Liouville) $\lambda\in\mathbb{R}\setminus\mathbb{Q}$. Let $\xi$ be the fiber coordinate on $T^{*}\mathcal{F}_{L}$ corresponding with the frame $\{d\theta_{2}\}$. As in eq. \eqref{K}, we take a log-symplectic structure {of the form}
		\[
		\Big(T^{*}\mathcal{F}_{L}\times\mathbb{R},\widetilde{\Pi}:=(C\partial_{\theta_{1}}+K\partial_{\xi})\wedge t\partial_{t}+\left(\lambda\partial_{\theta_{1}}+\partial_{\theta_{2}}\right)\wedge\partial_{\xi}\Big),
		\]
		where {now both $C,K\in\RR$ are nonzero}. As for generic $\lambda\in\mathbb{R}\setminus\mathbb{Q}$, we have $H^{1}(\mathcal{F}_{L})=\mathbb{R}[d\theta_{2}]$, it is clear that every element of $\Omega^{1}_{cl}(\mathcal{F}_{L})$ extends to a closed one-form on $L$. Moreover, since $\gamma=Kd\theta_{2}$ is not exact, we have that $H^{0}_{\gamma}(\mathcal{F}_{L})=0$ by {Theorem} \ref{H} $ii)$. So $L$ is infinitesimally rigid:
		\[
		T_{[0]}\mathcal{M}_{U}^{Poiss}(L)=\frac{\Omega^{1}_{cl}(\mathcal{F}_{L})}{\text{Im}\left(r:\Omega^{1}_{cl}(L)\rightarrow\Omega^{1}_{cl}(\mathcal{F}_{L})\right)+\mathbb{R} \gamma}\oplus H^{0}_{\gamma}(\mathcal{F}_{L})=0,
		\]
		and therefore $L$ is $\mathcal{C}^{\infty}$-rigid, by Proposition \ref{rigidity}. 
		
		In this particular example, we in fact know a bit more. We already noted in Remark \ref{generic} that $\mathcal{C}^{1}$-small deformations of $L$ stay inside the singular locus, i.e. they are of the form $(\alpha,f)=(\alpha,0)\in\Gamma(T^{*}\mathcal{F}_{L}\times\mathbb{R})$ for $\alpha\in\Omega_{cl}^{1}(\mathcal{F}_{L})$. Along with the fact that foliated closed one-forms extend to closed one-forms on $L$, this implies that the Lagrangian $L$ is $\mathcal{C}^{1}$-rigid under Poisson equivalence. For if $\widetilde{\alpha}\in\Omega^{1}_{cl}(L)$ is a closed extension of $\alpha$, then the flow of the Poisson vector field $\widetilde{\Pi}^{\sharp}(pr^{*}\widetilde{\alpha})$ takes $L$ to $graph(\alpha,0)$.
		\newline
		\indent
		If instead we take $\lambda\in\mathbb{R}\setminus\mathbb{Q}$ to be a Liouville number, then $L$ is not infinitesimally rigid by Lemma \ref{finitedim}, since in that case $H^{1}(\mathcal{F}_{L})$ is infinite dimensional. 	
	\end{ex}

	\section{Appendix}
	
This short appendix summarizes some facts about Liouville numbers and Fr\'echet spaces.

	\subsection{Liouville numbers}
	\leavevmode
	\vspace{0.1cm}
	
	We collect some facts about Liouville numbers that are used in  \S\ref{subs}.
	
	\begin{defi}\label{liouville}
		A Liouville number is a real number $\alpha\in\mathbb{R}$ with the property that, for all integers $p\geq 1$, there exist integers $m_{p},n_{p}\in\mathbb{Z}$ such that $n_{p}>1$ and
		\[
		0<\left|\alpha-\frac{m_{p}}{n_{p}}\right|<\frac{1}{n_{p}^{p}}.
		\]
	\end{defi}
	
	Liouville numbers are irrational (even transcendental, see \cite[Theorem 4.5]{molin}). 
	
	\begin{remark}
		For any sequence $(m_{p},n_{p})_{p\in\mathbb{N}}$ as in Definition \ref{liouville}, the set of denominators $\{n_{p}:p\in\mathbb{N}\}$ is unbounded. Indeed, assume to the contrary that this set is bounded by some constant $M$. Since $n_{p}>1$, the sequence $(m_{p}/n_{p})_{p\in\mathbb{N}}$ converges to $\alpha$. As there are only finitely many fractions $a/b$ such that $1<b\leq M$ and $a/b$ lies within distance $1$ of $\alpha$, the sequence $(m_{p}/n_{p})_{p\in\mathbb{N}}$  must have a constant subsequence. This subsequence must also converge to $\alpha$, which implies that $\alpha\in\mathbb{Q}$. This contradiction shows that $\{n_{p}:p\in\mathbb{N}\}$ is unbounded.
	\end{remark}
	
	The next statement is used in the proof of Lemma \ref{dense}. It appears without proof in \cite{bertelson}.
	
	\begin{lemma}\label{liou}
		If $\alpha$ is a Liouville number, then for each integer $p\geq 1$, there exists a pair of integers $(m_{p},n_{p})\in\mathbb{Z}^{2}$ such that
		\[
		|m_{p}+\alpha n_{p}|\leq\frac{1}{(|m_{p}|+|n_{p}|)^{p}}.
		\]
	\end{lemma}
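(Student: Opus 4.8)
The plan is to extract from the Liouville approximation $|\alpha - m/n| < 1/n^q$ a bound on the linear form $|m + \alpha n|$, and then to absorb the factor $|m| + |n|$ --- which is comparable to $n$ --- into a slightly smaller exponent by choosing the denominator large enough. The whole argument is elementary; the only delicate point is arranging a large denominator, and for that I will lean on the unboundedness remark preceding the statement.

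First I would rewrite the defining inequality. Fix the target exponent $p \geq 1$ and set $q := p + 2$. For any rational approximation $|\alpha - m/n| < 1/n^q$ with $n > 1$, multiplying by $n$ gives $|\alpha n - m| < 1/n^{q-1} = 1/n^{p+1}$. Taking $(m_p, n_p) := (-m, n)$ then yields $|m_p + \alpha n_p| = |\alpha n - m| < 1/n^{p+1}$, which is the quantity we must estimate.

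Next I would control the size of the coefficients. Since $q \geq 2$ forces $|\alpha n - m| < 1/n \leq 1$, we get $|m| \leq |\alpha| n + 1 \leq (|\alpha| + 1) n$, hence $|m_p| + |n_p| = |m| + n \leq (|\alpha| + 2) n$ and therefore $(|m_p| + |n_p|)^p \leq (|\alpha| + 2)^p n^p$. Combining this with the previous bound, the desired inequality $|m_p + \alpha n_p| \leq 1/(|m_p| + |n_p|)^p$ will follow as soon as $1/n^{p+1} \leq 1/\big((|\alpha| + 2)^p n^p\big)$, i.e. as soon as $n \geq (|\alpha| + 2)^p$.

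The one point that needs care --- and the main obstacle --- is guaranteeing the existence of an approximation of the required quality with denominator $n$ as large as $(|\alpha| + 2)^p$. This is exactly where the Remark above enters: applying Definition \ref{liouville} produces a sequence $(m_k, n_k)_{k}$ of Liouville approximants whose set of denominators is unbounded, and since $\{n_k : k \geq q\}$ is the tail of an unbounded set it is itself unbounded. Thus I can select an index $k \geq q$ with $n_k \geq (|\alpha| + 2)^p$; for this $k$ the inequality $|\alpha - m_k/n_k| < 1/n_k^{k} \leq 1/n_k^{q}$ feeds all the estimates above (in fact with the stronger exponent $k - 1 \geq p + 1$), and choosing $(m_p, n_p) := (-m_k, n_k)$ closes the argument. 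Note that all the inequalities obtained are strict, giving slightly more than the stated $\leq$.
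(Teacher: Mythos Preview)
Your argument is correct. It differs from the paper's proof in the way you trade off the exponent against the size of the denominator. The paper fixes a uniform bound $|M_p|\le 2^k N_p$ (from boundedness of the convergent sequence $M_p/N_p$) and then jumps to the Liouville approximation at the much higher level $(k+2)(p+1)$; the extra factor $N^{(k+1)p}$ in the denominator is absorbed via a convexity estimate $(|M|+|N|)^p\le 2^{(k+1)p}|N|^p$. You instead keep the exponent increment fixed at $q=p+2$, obtain directly $|m|\le(|\alpha|+1)n$, and then invoke the preceding Remark on unbounded denominators to select $n\ge(|\alpha|+2)^p$. Your route is a bit shorter and avoids the convexity step; the paper's route is self-contained in that it does not rely on the Remark (which the paper reserves for the subsequent refinement ensuring $n_p\ge p$ and pairwise distinctness). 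Both are valid and straightforward.
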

	\begin{proof}
		Since $\alpha$ is Liouville, we can fix a sequence $(M_{p},N_{p})$ for integers $p\geq 1$, satisfying
		\begin{equation}\label{eq:2}
		0<\left|\alpha-\frac{M_{p}}{N_{p}}\right|<\frac{1}{N_{p}^{p}},\hspace{1cm}N_{p}\geq 2.
		\end{equation}
		The sequence $(M_{p}/N_{p})_{p\in\mathbb{N}}$ is convergent hence bounded, so there exists an integer $k\geq 1$ such that
		\begin{equation}\label{bound}
		|M_{p}|\leq 2^{k}N_{p},\hspace{1cm}\forall p\geq 1.
		\end{equation}
		Notice that
		\begin{equation}\label{estimate}
		\left|\alpha-\frac{M_{(k+2)p}}{N_{(k+2)p}}\right|<\frac{1}{N_{(k+2)p}^{(k+2)p}}=\frac{1}{N_{(k+2)p}^{p}\cdot N_{(k+2)p}^{(k+1)p}}\leq \frac{1}{N_{(k+2)p}^{p}\cdot 2^{(k+1)p}},
		\end{equation}
		{using in the last inequality that $N_{(k+2)p}\geq 2$ (see \eqref{eq:2})}.
		Since the function $x\mapsto x^{p}$ is convex on the domain $(0,\infty)$, we have
		\[
		\left(\frac{|N_{(k+2)p}|+|M_{(k+2)p}|}{2}\right)^{p}\leq \frac{|N_{(k+2)p}|^{p}+|M_{(k+2)p}|^{p}}{2},
		\]
		and therefore
		\begin{align}\label{convex}
		\left(|N_{(k+2)p}|+|M_{(k+2)p}|\right)^{p}&\leq 2^{p-1}\left(|N_{(k+2)p}|^{p}+|M_{(k+2)p}|^{p}\right)\nonumber\\
		&\leq 2^{p}\max\left(|N_{(k+2)p}|^{p},|M_{(k+2)p}|^{p}\right)\nonumber\\
		&\leq 2^{p}\cdot 2^{kp}|N_{(k+2)p}|^{p}\nonumber\\
		&=2^{(k+1)p}|N_{(k+2)p}|^{p},
		\end{align}
		using \eqref{bound} in the third inequality. Combining the inequality \eqref{estimate} with \eqref{convex} gives
		\[
		\left|\alpha-\frac{M_{(k+2)p}}{N_{(k+2)p}}\right|<\frac{1}{\left(|N_{(k+2)p}|+|M_{(k+2)p}|\right)^{p}}.
		\]
		Replacing $M_{p}$ by $-M_{p}$, this implies that
		\[
		\left|M_{(k+2)p}+\alpha N_{(k+2)p}\right|<\frac{N_{(k+2)p}}{\left(|N_{(k+2)p}|+|M_{(k+2)p}|\right)^{p}}\leq \frac{1}{\left(|N_{(k+2)p}|+|M_{(k+2)p}|\right)^{p-1}}.
		\]
		So if we set $(m_{p},n_{p}):=\left(M_{(k+2)(p+1)},N_{(k+2)(p+1)}\right)$, then we have
		\[
		|m_{p}+\alpha n_{p}|<\frac{1}{(|m_{p}|+|n_{p}|)^{p}}.\qedhere
		\]
	\end{proof}
	
	\begin{remark}\label{assumptions}
		The proof of Lemma \ref{liou} shows that we can make the additional assumptions $n_{p}\geq p$ and $(m_{p},n_{p})\neq (m_{q},n_{q})$ for $p\neq q$. Indeed, since the set of denominators $\{N_{p}:p\in\mathbb{N}\}$ of the sequence $(M_{p},N_{p})_{p\in\mathbb{N}}$ is unbounded, we can ensure that $N_{p}\geq p$. For if $N_{p}<p$, then we know that there exists $p'>p$ such that the element $(M_{p'},N_{p'})$ satisfies $N_{p'}\geq p$. We then have
		\[
		\left|\alpha-\frac{M_{p'}}{N_{p'}}\right|<\frac{1}{N_{p'}^{p'}}<\frac{1}{N_{p'}^{p}},
		\]
		so we can just replace $(M_{p},N_{p})$ by $(M_{p'},N_{p'})$. It then follows that \[n_{p}=N_{(k+2)(p+1)}\geq (k+2)(p+1)\geq p.\]
		Similarly, we can make sure that $N_{p}\neq N_{q}$ for $q\neq p$, so that also $(m_{p},n_{p})\neq (m_{q},n_{q})$.
	\end{remark}
	
	\subsection{Fr\'echet spaces}
	\leavevmode
	\vspace{0.1cm}
	
	We recall some basic facts about Fr\'echet spaces, which are used in \S\ref{subs} and  \S\ref{subsec:eqrig}. For more details, see for instance \cite{hamilton}.
	\begin{defi}
		A Fr\'echet space is a topological vector space $X$ that satisfies the following three properties:
		\begin{enumerate}[i)]
			\item $X$ is Hausdorff.
			\item The topology on $X$ is induced by a countable family of seminorms $\{\|\cdot\|_{k}\}_{k\geq 0}$.
			\item $X$ is complete.
		\end{enumerate}
	\end{defi}
	By item ii), a base of neighborhoods of $x\in X$ is given by subsets of the form
	\[
	\mathcal{B}_{r}^{k_{1}}(x)\cap\cdots\cap\mathcal{B}_{r}^{k_{n}}(x)
	\] 
	for $n\in\mathbb{N}$ and $r>0$, where $\mathcal{B}_{r}^{k_{j}}(x)$ denotes the open ball
	\[
	\mathcal{B}_{r}^{k_{j}}(x)=\{y\in X:\|y-x\|_{k_{j}}<r\}.
	\]
	A sequence $x_{n}$ converges to $x$ if and only if $\|x_{n}-x\|_{k}$ converges to zero for each $k\geq 0$.
	\begin{ex}
		If $L$ is compact, the space of sections of any vector bundle over $L$ becomes a Fr\'echet space when endowed with the $\mathcal{C}^{\infty}$-topology generated by $\mathcal{C}^{k}$-norms $\|\cdot\|_{k}$. We recall the construction of such norms in the situation that is of interest to us. Let $(L,\mathcal{F}_{L})$ be a compact manifold endowed with a codimension-one foliation; we will define $\mathcal{C}^{k}$-norms on the space $\Omega^{\bullet}(\mathcal{F}_{L})=\Gamma(\wedge^{\bullet}(T^{*}\mathcal{F}_{L}))$ of foliated forms of fixed degree. Fix a finite cover $\{U_{1},\ldots,U_{m}\}$ of $L$ consisting of foliated charts with coordinates $(x_{1},\ldots,x_{n-1},x_{n})$, such that plaques of $\mathcal{F}_{L}$ are level sets of $x_{n}$. Choose open subsets $V_{i}$ for $i=1,\ldots,m$ that still cover $L$ and have compact closures satisfying $\overline{V_{i}}\subset U_{i}$. The $k$-norm of a foliated form $\eta\in\Omega^{l}(\mathcal{F}_{L})$ with coordinate representation
		\[
		\eta|_{U_{j}}=\sum_{1\leq i_{1}<\cdots<i_{l}\leq n-1}g^{j}_{i_{1}\cdots i_{l}}dx_{i_{1}}\wedge\cdots\wedge dx_{i_{l}}
		\]
		is then
		\[
		\|\eta\|_{k}=\sum_{1\leq j\leq m}\sum_{1\leq i_{1}<\cdots<i_{l}\leq n-1}\sum_{|\alpha|\leq k}\sup_{p\in \overline{V_{j}}}\left|\frac{\partial^{\alpha}g^{j}_{i_{1}\cdots i_{l}}}{\partial x^{\alpha}}(p)\right|.
		\]
	\end{ex}
	
	Recall also that a closed subspace of a Fr\'echet space is itself a Fr\'echet space. Finally, it is useful to note that, if $X$ and $Y$ are vector spaces whose topologies are generated by families of seminorms $\{\|\cdot\|_{k}\}$ and $\{\|\cdot\|'_{k}\}$ respectively, then a linear map $L:X\rightarrow Y$ is continuous if and only if for every $k\in\mathbb{N}$, there exist $n_{1},\ldots,n_{l}\in\mathbb{N}$ and $C\in\mathbb{R}$ such that
	\[
	\|L(x)\|'_{k}\leq C\sum_{j=1}^{l}\|x\|_{n_{j}}.
	\]

	\bibliographystyle{abbrv} 
%	\bibliography{Lagrangians}

\end{document}